\newcommand{\R}{{\mathbb R}}
\newcommand{\ren}{\R^{N}}
\renewcommand{\l }{\lambda }
\newcommand{\p}{\partial}
\renewcommand{\O}{\Omega }
\newcommand{\g}{\gamma }
\newcommand{\bra}{\langle}
\newcommand{\ket}{\rangle}
\newcommand{\iy}{\infty}
\newcommand{\onn}{\text{  on   }}
\newcommand{\inn}{\text{  in   }}
\newcommand{\dyle}{\displaystyle}
\newcommand{\dint}{\dyle\int}
\newcommand{\e }{\varepsilon}
\renewcommand{\ge }{\geqslant}
\renewcommand{\geq }{\geqslant}
\renewcommand{\le }{\leqslant}
\renewcommand{\leq }{\leqslant}
\newenvironment{pf}{    {\sc Proof}.\enspace}{\hfill\qed\medskip}
\newtheorem{Theorem}{Theorem}[section]
\newtheorem{Corollary}[Theorem]{Corollary}
\newtheorem{Lemma}[Theorem]{Lemma}
\newtheorem{Proposition}[Theorem]{Proposition}
\theoremstyle{definition} \newtheorem{Definition}[Theorem]{Definition}
\newtheorem{remark}[Theorem]{Remark}
\begin{document}

\title[Fractional heat equation involving Hardy Potential]
{Optimal results for the fractional heat equation involving the Hardy potential}
\thanks{Work partially supported by Project MTM2013-40846-P, MINECO, Spain}
\author[B. Abdellaoui, M. Medina, I. Peral, A. Primo ]{Boumediene abdellaoui, Mar\'{i}a Medina, Ireneo Peral, Ana Primo }

\address{\hbox{\parbox{5.7in}{\medskip     {B. Abdellaoui, Laboratoire d'Analyse Nonlin\'eaire et Math\'ematiques
Appliqu\'ees. \hfill \break\indent D\'epartement de
Math\'ematiques, Universit\'e Abou Bakr Belka\"{\i}d, Tlemcen,
\hfill\break\indent Tlemcen 13000, Algeria.}}}}
\address{\hbox{\parbox{5.7in}{\medskip    {M. Medina, I. Peral and A. Primo, Departamento de Matem\'aticas,\\ Universidad Aut\'onoma de Madrid,\\
        28049, Madrid, Spain. \\[3pt]
        \em{E-mail addresses: }\\{\tt boumediene.abdellaoui@uam.es, \tt ireneo.peral@uam.es, maria.medina@uam.es, ana.primo@uam.es
         }.}}}}

\date{\today}

\thanks{2010 {\it Mathematics Subject Classification. $35B25,35K58, 35B33, 47G20$.}   \\
   \indent {\it Keywords.}  Singular Fractional Laplacian heat equation, Hardy's
  inequality, existence and nonexistence results, instantaneous and complete blow up, Harnack inequality}

 \begin{abstract}
 In this paper we study the influence of the Hardy potential in the fractional heat equation. In particular, we consider the problem
$$(P_\theta)\quad
\left\{
\begin{array}{rcl}
 u_t+(-\Delta)^{s} u&=&\l\dfrac{\,u}{|x|^{2s}}+\theta u^p+ c f\mbox{ in } \Omega\times (0,T),\\
u(x,t)&>&0\inn \Omega\times (0,T),\\
u(x,t)&=&0\inn (\ren\setminus\Omega)\times[ 0,T),\\
u(x,0)&=&u_0(x) \mbox{ if }x\in\O,
\end{array}
\right.
$$
where $N> 2s$, $0<s<1$,  $(-\Delta)^s$ is the fractional Laplacian
of order $2s$, $p>1$, $c,\l>0$, $\theta=\{0,1\}$, and  $u_0,\, f\ge 0$ are in a
suitable class of functions.

The main results in the article are:
\begin{enumerate}
\item  Optimal results about \emph{existence} and \emph{instantaneous and complete blow up} in the linear problem $(P_0)$, where
the best constant in the fractional Hardy inequality, $\Lambda_{N,s}$, provides the threshold between existence and nonexistence.
To obtain local sharp estimates of the solutions it is required to prove a weak Harnack inequality for a weighted operator that
appears in a natural way.
\item  The existence of a critical power $p_+(s,\lambda)$ in the semilinear problem $(P_1)$ such that:
\begin{enumerate}
\item If $p> p_+(s,\lambda)$, the problem has no weak positive
supersolutions and a phenomenon of \emph{complete and instantaneous
blow up} happens.
\item If $p< p_+(s,\lambda)$, there exists a
positive solution  for a suitable class of nonnegative data.
\end{enumerate}
\end{enumerate}
 \end{abstract}
\maketitle
\section{Introduction and statement of the main results}
     In this work we will study the solvability of the following linear problem,
\begin{equation}\label{eq:prob}
\left\{
\begin{array}{rcl}
 u_t+(-\Delta)^{s} u&=&\l\dfrac{\,u}{|x|^{2s}}+ f\mbox{ in } \Omega\times (0,T),\\
u(x,t)&>&0\inn \Omega\times (0,T),\\
u(x,t)&=&0\inn (\ren\setminus\Omega)\times[ 0,T),\\
u(x,0)&=&u_0(x) \mbox{ if }x\in\O,
\end{array}
\right.
\end{equation}
and of the semilinear problem,
\begin{equation}\label{eq:probnonlinear}
\left\{
\begin{array}{rcl}
 u_t+(-\Delta)^{s} u&=&\l\dfrac{\,u}{|x|^{2s}}+ u^p+ f\mbox{ in } \Omega\times (0,T),\\
u(x,t)&>&0\inn \Omega\times (0,T),\\
u(x,t)&=&0\inn (\ren\setminus\Omega)\times[ 0,T),\\
u(x,0)&=&u_0(x) \mbox{ if }x\in\O,
\end{array}
\right.
\end{equation}
where $\Omega$ is a $C^{1,1}$ bounded domain in $\mathbb{R}^N$, $N> 2s$, $0<s<1$, $p>1$,  and $c$ and $\l$ are positive constants.
To avoid  the trivial case, we assume $0\in \Omega$.
We suppose  that $f$ and $u_0$ are non negative functions satisfying some hypotheses that we will precise later. By $(-\Delta)^s$ we denote the  fractional Laplacian of order $2s$, that is,
\begin{equation}\label{fraccionario}
(-\Delta)^{s}u(x):=a_{N,s}\mbox{ P.V. }\int_{\mathbb{R}^{N}}{\frac{u(x)-u(y)}{|x-y|^{N+2s}}\, dy},\, s\in(0,1),
\end{equation}
where
$$a_{N,s}:=2^{2s-1}\pi^{-\frac N2}\frac{\Gamma(\frac{N+2s}{2})}{|\Gamma(-s)|}$$
is the normalization constant so that the identity
$$(-\Delta)^{s}u=\mathcal{F}^{-1}(|\xi|^{2s}\mathcal{F}u),\, \xi\in\mathbb{R}^{N}, s\in(0,1),$$
holds for every $u\in \mathcal{S}(\mathbb{R}^N)$, the Schwartz class (see \cite{FLS}). This last identity justifies why we call fractional Laplacian to the integral operator. Notice that $(-\Delta)^{s}u$  is well defined if, for instance,
$u\in \mathcal{L}^{s}(\mathbb{R}^{N})\cap \mathcal{C}^{2s+\beta}_{\rm loc}(\mathbb{R}^{N})$ (or $\mathcal{C}^{1,2s+\beta-1}$ if $2s+\beta>1$), for some $\beta>0$.
Here
$$\mathcal{L}^{s}(\mathbb{R}^{N}):=\left\{
u:\mathbb{R}^{N}\to\mathbb{R}\quad \mbox{measurable}:\, \int_{\mathbb{R}^{N}}{\frac{|u(x)|}{1+|x|^{N+2s}}\, dx}<+\infty\right\},$$
endowed with the norm
$$\|u\|_{\mathcal{L}^{s}(\mathbb{R}^{N})}:=\int_{\mathbb{R}^{N}}{
\frac{|u(x)|}{ 1+|x|^{N+2s}}\, dx}.$$
Notice also that, for $u$ under these hypotheses, $(-\Delta)^{s}u$ is a continuous function. See \cite{S}.

In the case $s=1$, these problems correspond to the classical heat equation, and they have been deeply understood in the past years (see for instance
\cite{BaGo} for \eqref{eq:prob} and \cite{APP} for \eqref{eq:probnonlinear}). For $s\in (0,1)$, the fractional setting, there exists also a large  literature
dealing with the case $\l=0$. We refer for instance to \cite{CF, FK, IS} and the references therein.
A result on the uniqueness of positive regular solution to the linear problem can be found in \cite{BPSV}.

However, the case under consideration here, $\l>0$ and $s\in (0,1)$, is quite different; for instance,  any positive supersolution to problem \eqref{eq:prob} is
unbounded close to the origin, even for nice data. This fact, among other results, was proved in the local case by Baras-Goldstein in \cite{BaGo}. In the nonlocal framework, the precise rate of growth of the
solutions near the origin will be the key to obtain the optimal results. It is worthy to point out here the difference with the local case, where this rate is obtained just by solving an elementary linear differential equation.

The main results in this work can be summarized as follows.

First, to study the  local behavior of the solutions we need some sharp local estimates, that are based on a  \emph{Harnack inequality} for a related
problem resulting from the \emph{ground state transformation} by Frank, Lieb and Seiringer (\cite{FLS}), which is a problem with \emph{singular coefficients}.
This weak Harnack inequality gives the exact blow
up rate for the positive supersolutions near the $t$ axis. For the proof of this result, we closely follow the work of
Felsinger and Kassmann in \cite{FK}, where the authors develop a weak parabolic Harnack inequality for a general type of
nonlocal operators. This result does not apply straightforward to our singular operator, so, based on their scheme, we need
to check every step in the Moser's protocol (see \cite{Mos}). The Harnack inequality for singular weights can be useful in related problems.

The results obtained in this work for the linear problem \eqref{eq:prob} can be seen as the extension to the fractional setting
of those for the heat equation developed by P. Baras and J. A. Goldstein in \cite{BaGo}.
Nevertheless the proofs that we present are significantly different. More precisely, we obtain the optimal summability required to
the data in order to solve the problem and to prove the instantaneous
and complete blow up for $\lambda>\Lambda_{N,s}$  by using the results of Section \ref{HH}.
As a byproduct, we also prove the optimality of the power in the Hardy potential term, i.e., that $p>1$ is a \emph{supercritical power} for $\dfrac{u^p}{|x|^{2s}}$.
This result in the local framework was obtained by Brezis and Cabr\'{e} in
\cite{BC}.

Secondly, concerning the semilinear problem \eqref{eq:probnonlinear},  the main result that we obtain in this paper is that for all
$0<\lambda< \Lambda_{N,s}$  there exists a threshold exponent, $p_+(\lambda,s)$, for the existence of  positive solutions.
By \textit{threshold} we mean that when we consider an exponent  $p>p_+(\lambda,s)$,  there are no positive supersolutions even in the weak sense,
 while if $p<p_+(s,\lambda)$, it is possible to establish a suitable class of nonnegative data for which we can find a positive solution. We will see in particular that the threshold  exponent $p_+(s,\lambda)$ is the same as in the elliptic case (see \cite{BMP, F}).
In fact, this critical power is related to the possibility of finding a supersolution to the elliptic problem in the whole $\mathbb{R}^N$.
As  in the linear problem the main ingredient is the local  estimates of the solutions close to the origin.

The paper is organized as follows.

In Section \ref{2} we  describe the natural functional framework associated to the problems \eqref{eq:prob} and \eqref{eq:probnonlinear}. We define the two notions of solution we will use along the paper:  {\it weak} solutions and {\it energy} solutions. Moreover, we prove some comparison principles which are interesting themselves.

In Section \ref{3} we describe the radial solutions of the corresponding homogeneous elliptic problem in $\mathbb{R}^N$. These solutions will allow
us to precise the singularity of the supersolutions to problem \eqref{eq:prob} near the origin. This is a key point to perform
the so called \emph{ground state transformation} (see \cite{FLS}), and to obtain the nonexistence results afterwards.

Section \ref{HH} is devoted to obtain the weak Harnack inequality for the positive supersolutions of the problem resulting from the
\emph{ground state transformation} by Frank, Lieb and Seiringer, that introduces the difficulty of dealing with a kernel with \emph{singular coefficients}.

The goal of Section \ref{4} is to study the linear problem \eqref{eq:prob} and some consequences.

Finally, in Section \ref{5} we consider  the semilinear   problem
\eqref{eq:probnonlinear}. Furthermore, we prove a instantaneous and complete blow-up phenomenon.

We also include two appendices with  auxiliary results that seem to be useful in other applications.

Appendix A includes a H\"{o}lder
regularity result, that can be seen as the translation to the bounded domain
case  of some results in \cite{CF}. More precisely, we will prove that an energy solution is in fact a viscosity solution, and we can apply regularity results that ensure that it is indeed a \emph{strong solution} in the sense of Definition 1.3 in \cite{BPSV}.

In Appendix B we include some inequalities needed to prove the Harnack inequality in Section \ref{HH}.
As far as we know, these results involve some significative changes  respect to the standard ones, and therefore we consider them appropriate to be included here.

\section{Functional Framework: Some preliminary results}\label{2}

Along this paper we will always assume that $\Omega$ is a $C^{1,1}$ bounded domain of $\R^N$ with the origin inside.
We consider the fractional Sobolev space $H^{s}_0(\Omega)$ defined as
$$H^{s}_0(\Omega):=\{u\in H^s(\R^N) \hbox{ with } u=0 \hbox{ a.e. in } \R^N\setminus \Omega\},$$
endowed with the norm
$${\|u\|_{H^{s}_0(\Omega)}:=\left(\int_Q{\frac{|u(x)-u(y)|^2}{|x-y|^{N+2s}}\,dx\,dy}\right)^{1/2}},$$
where $Q=\R^{2N}\setminus(\mathcal{C}\Omega\times\mathcal{C}\Omega)$. The pair $(H^{s}_0(\Omega),\|\cdot\|_{H^{s}_0(\Omega)})$
yields a Hilbert space.
Moreover,
$$(-\Delta)^{s}:H^{s}_0(\Omega)\rightarrow H^{-s}(\Omega),$$
is a continuous operator, where $(-\Delta)^s$ is defined in \eqref{fraccionario}.

In what follows we will use the relation between the norm in the space $H_0^s(\Omega)$ and the $L^2$ norm of the fractional Laplacian, see \cite[Proposition~3.6]{DPV},
\begin{equation}\label{equivalencia}
\|u\|_{H^{s}_0(\Omega)}^2=2a_{N,s}^{-1}\|(-\Delta)^{s/2}u\|^2_{L^2(\R^N)}.
\end{equation}
It is easy to check that
for $u$ and $\varphi$ smooth enough, with vanishing conditions outside $\Omega$, we have the following duality product,
$$
2a_{N,s}^{-1}\int_{\mathbb{R}^{N}}{u(-\Delta)^s\varphi\, dx}=\int_{Q}{\frac{(u(x)-u(y))(\varphi(x)-\varphi(y))}{|x-y|^{N+2s}}\,dx\,dy},
$$
that in particular  implies the selfadjointness of $(-\Delta)^{s}$ in $H_0^s(\Omega)$.

We enunciate a Sobolev-type inequality that we will use throughout
the paper (see for example \cite{DPV} for a proof).
\begin{Theorem} {\it (Sobolev embedding).} \label{Sobolev}
Let $s\in(0,1)$. There exists a constant $S=S(N,s)$ such that, for
all $\phi\in \mathcal{C}^\infty_0(\ren)$, we have
$$\|\phi\|_{L^{2^{*}_{s}}(\mathbb{R}^{N})}^{2}\leq S\int_{\mathbb{R}^{N}}{\int_{\mathbb{R}^{N}}{\frac{|\phi(x)-\phi(y)|^{2}}{|x-y|^{N+2s}}\,dx\,dy}},$$
being
$$2^*_s=\frac{2N}{N-2s},$$
the so called fractional critical exponent.
\end{Theorem}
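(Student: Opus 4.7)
The plan is to reduce the fractional Sobolev inequality to the classical Hardy--Littlewood--Sobolev (HLS) inequality via the Riesz potential representation, using the Fourier-analytic identification of the Gagliardo seminorm with the homogeneous $\dot H^s$ norm.

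First, I would rewrite the right-hand side using the identity \eqref{equivalencia} (or equivalently, Plancherel applied to the singular integral representation of $(-\Delta)^{s/2}$): for $\phi\in\mathcal{C}_0^\infty(\R^N)$,
\begin{equation*}
\int_{\R^N}\int_{\R^N}\frac{|\phi(x)-\phi(y)|^2}{|x-y|^{N+2s}}\,dx\,dy \;=\; 2a_{N,s}^{-1}\,\|(-\Delta)^{s/2}\phi\|_{L^2(\R^N)}^2 \;=\; 2a_{N,s}^{-1}\int_{\R^N}|\xi|^{2s}|\widehat\phi(\xi)|^2\,d\xi.
\end{equation*}
Thus it suffices to prove $\|\phi\|_{L^{2^{*}_s}(\R^N)}\le C\,\|(-\Delta)^{s/2}\phi\|_{L^2(\R^N)}$.

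Next, I would set $u:=(-\Delta)^{s/2}\phi\in\mathcal{S}(\R^N)$ and invert the operator via the Riesz potential: on the Fourier side $\widehat\phi(\xi)=|\xi|^{-s}\widehat u(\xi)$, which in physical space means $\phi = I_s(u)$ up to a dimensional constant, where
\begin{equation*}
I_s(u)(x) \;=\; c_{N,s}\int_{\R^N}\frac{u(y)}{|x-y|^{N-s}}\,dy
\end{equation*}
is the Riesz potential of order $s$ (here the exponent $N-s$ corresponds to inverting $(-\Delta)^{s/2}$, so that inverting $(-\Delta)^s$ would give exponent $N-2s$). I would then invoke the HLS inequality: $\|I_\alpha u\|_{L^q(\R^N)}\le C\|u\|_{L^p(\R^N)}$ whenever $1<p<q<\infty$ and $1/q = 1/p - \alpha/N$. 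Choosing $\alpha=s$ and $p=2$ gives $q = 2N/(N-2s) = 2^*_s$, yielding $\|\phi\|_{L^{2^*_s}}\le C\|u\|_{L^2} = C\|(-\Delta)^{s/2}\phi\|_{L^2}$. Combining with the first step produces the claimed inequality with $S = S(N,s) = \tfrac{1}{2}a_{N,s}C^2$.

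The main obstacle is the HLS inequality itself, which is the nontrivial analytic input; I would quote it as a classical result (its standard proof proceeds via the layer-cake decomposition of $u$ and weak-type estimates for the Riesz potential, followed by Marcinkiewicz interpolation, or alternatively via symmetric decreasing rearrangement). An entirely alternative route that avoids Fourier analysis would be the Caffarelli--Silvestre extension: one lifts $\phi$ to $w(x,y)$ on $\R^N\times(0,\infty)$ solving a degenerate elliptic equation with weight $y^{1-2s}$, applies the weighted Sobolev trace inequality in the half-space, and returns to $\R^N$; this is the natural path if one prefers to bypass the HLS estimate, but it requires developing the weighted extension machinery. In either case the constant produced is explicit in $N$ and $s$, matching the statement.
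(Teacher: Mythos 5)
Your argument is correct: the identification of the Gagliardo seminorm with $2a_{N,s}^{-1}\|(-\Delta)^{s/2}\phi\|^2_{L^2(\ren)}$ via Plancherel, the Riesz-potential inversion $\phi=I_s\big((-\Delta)^{s/2}\phi\big)$, and the Hardy--Littlewood--Sobolev inequality with $\alpha=s$, $p=2$, $q=2N/(N-2s)=2^*_s$ fit together exactly as you describe, and your bookkeeping of the constant $S=\tfrac12 a_{N,s}C^2$ is right. Note, however, that the paper does not prove this theorem at all: it only cites \cite{DPV}, and the proof given there is deliberately elementary, avoiding both the Fourier transform and HLS and working directly with the double integral through a covering and level-set decomposition. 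So your route is genuinely different from the one the paper points to. What your approach buys is brevity and a transparent chain of identities, at the price of importing the full strength of HLS (itself established by rearrangement or by weak-type bounds for the Riesz potential plus Marcinkiewicz interpolation); the elementary proof in \cite{DPV} is longer but self-contained and carries over to $W^{s,p}$ for general $p$, where the Fourier route breaks down. One cosmetic slip: for $\phi\in\mathcal{C}^\infty_0(\ren)$ the function $u=(-\Delta)^{s/2}\phi$ is \emph{not} in $\mathcal{S}(\ren)$ (the multiplier $|\xi|^{s}$ is not smooth at the origin, so $u$ only decays like $|x|^{-N-s}$), but it does lie in $L^1\cap L^\infty\subset L^2$ and the representation $\phi=I_s u$ holds pointwise, which is all the HLS step requires, so the argument is unaffected.
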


The parabolic problems studied in this article are related to the following Hardy inequality, proved in \cite{He} (see also \cite{B, FLS, SW, Y}).

\begin{Theorem}\label{DH}{\it (Fractional Hardy inequality).}
For all $u\in \mathcal{C}^{\infty}_{0}(\ren)$ the following inequality holds,
\begin{equation}\label{Hardy}
\dint_{\ren} \,|\xi|^{2s} |\hat{u}|^2\,d\xi\geq
\Lambda_{N,s}\,\dint_{\ren} |x|^{-2s} u^2\,dx,
\end{equation}
where
\begin{equation}\label{bestC}
\Lambda_{N,s}= 2^{2s}\dfrac{\Gamma^2(\frac{N+2s}{4})}{\Gamma^2(\frac{N-2s}{4})}.
\end{equation}
The constant $\Lambda_{N,s}$ is optimal and not attained.
\end{Theorem}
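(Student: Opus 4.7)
My plan is to follow the \emph{ground state representation} of Frank--Lieb--Seiringer, which simultaneously delivers the inequality, identifies the sharp constant, and explains non-attainment. I would begin by singling out the candidate ground state $w(x):=|x|^{-(N-2s)/2}$ and verifying, either via Fourier transform (using $\widehat{|\cdot|^{-\alpha}}=c_{N,\alpha}\,|\cdot|^{\alpha-N}$) or directly from the singular integral representation~\eqref{fraccionario}, the pointwise eigenfunction-type identity
$$(-\Delta)^s w(x)=\Lambda_{N,s}\,|x|^{-2s}\,w(x)\qquad\text{for every }x\neq 0,$$
with $\Lambda_{N,s}$ the constant in \eqref{bestC}. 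The computation reduces to a Riesz-kernel evaluation in which $\Lambda_{N,s}$ emerges through $\Gamma$-function identities, and it already pins down $\Lambda_{N,s}$ as the unique possible sharp value.

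For the inequality itself, given $u\in\mathcal{C}^\infty_0(\mathbb{R}^N)$ I would set $v:=u/w$ and expand the quadratic form $\|(-\Delta)^{s/2}u\|_{L^2}^2$ via its nonlocal representation \eqref{equivalencia}. A short algebraic decomposition of $|(wv)(x)-(wv)(y)|^2$, followed by symmetrization in $(x,y)$ against the kernel $|x-y|^{-(N+2s)}$ and use of the identity from the previous paragraph, produces
$$\int_{\ren}|\xi|^{2s}|\hat u|^2\,d\xi-\Lambda_{N,s}\int_{\ren}\frac{u^2}{|x|^{2s}}\,dx=\frac{a_{N,s}}{2}\int_{\ren}\int_{\ren}\frac{|v(x)-v(y)|^2}{|x-y|^{N+2s}}\,w(x)\,w(y)\,dx\,dy\geq 0,$$
which is precisely \eqref{Hardy}. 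I would first justify this computation for $u$ supported away from the origin and then remove the restriction by a truncation/density argument.

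To prove sharpness of $\Lambda_{N,s}$, I would test \eqref{Hardy} with a truncated ground state of the form $u_\varepsilon(x):=\eta(\varepsilon|x|)\,\eta(\varepsilon^{-1}|x|)\,w(x)$ for a suitable smooth cut-off $\eta$, and show that both sides diverge at the same logarithmic rate as $\varepsilon\to 0$, so that the quotient converges to $\Lambda_{N,s}$. Non-attainment is an immediate corollary of the identity above: any extremizer $u$ would make the nonnegative double integral on the right vanish, hence $v$ would be constant and $u$ a multiple of $w$, which is impossible since $w\notin H^s(\mathbb{R}^N)$. The delicate point I expect is the rigorous derivation of the bilinear decomposition: $w$ is singular at the origin and lies outside the natural function space, so one must either restrict first to test functions vanishing near $0$ and pass to the limit, or interpret everything as principal values, and in either case controlling the contribution near $x=0$ relies crucially on $u\in\mathcal{C}^\infty_0$.
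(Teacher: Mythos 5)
The paper never proves Theorem \ref{DH}: the inequality is quoted from Herbst \cite{He} (see also \cite{FLS}), so there is no internal argument to compare yours against. Your route is the Frank--Lieb--Seiringer ground state representation, i.e.\ exactly the identity the paper later imports as Lemma \ref{GSrepresentation}, specialized to the endpoint weight $w(x)=|x|^{-(N-2s)/2}$, and your three ingredients (the eigenfunction identity $(-\Delta)^s w=\Lambda_{N,s}|x|^{-2s}w$, the quadratic-form decomposition, and truncated ground states for sharpness) are the standard ones; the outline is correct. Herbst's original argument, by contrast, is spectral in flavour: he computes the operator norm of $|x|^{-s}(-\Delta)^{-s/2}$ on $L^2$ by diagonalizing it via the Mellin transform and spherical harmonics, which delivers the constant, its optimality and its non-attainment in one stroke. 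Your approach buys a remainder identity (useful for non-attainment and for the weighted Harnack machinery of Section \ref{HH}); Herbst's buys a shorter path to the sharp constant.

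Two points deserve care. First, the ground state representation as stated in Lemma \ref{GSrepresentation} (and in \cite{FLS}) requires $0<\gamma<\frac{N-2s}{2}$ \emph{strictly}; your $w$ sits at the excluded endpoint $\gamma=\frac{N-2s}{2}$, where $w\notin \dot H^s$, $\int|x|^{-2s}w^2=\infty$, and the measure $|x|^{-\gamma}|y|^{-\gamma}\,dx\,dy$ is borderline. You flag this, but the cleanest repair is to avoid the endpoint: apply the representation for $\gamma<\frac{N-2s}{2}$, obtaining \eqref{Hardy} with constant $\Psi_{N,s}(\gamma)=\Lambda_{N,s}+\Phi_{N,s}(\gamma)$, and then let $\gamma\uparrow\frac{N-2s}{2}$ using the monotonicity and surjectivity of $\Psi_{N,s}$ (Proposition \ref{psi}); the endpoint identity is then only needed for non-attainment, where a Fatou passage to the limit in $\gamma$ suffices to conclude that an extremizer would force $u/w$ to be constant. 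Second, a small slip in the sharpness step: with a standard cutoff $\eta$ (equal to $1$ near $0$ and vanishing at infinity), the factor $\eta(\varepsilon^{-1}|x|)$ truncates the \emph{outer} region rather than the singularity at the origin; you want $\eta(\varepsilon/|x|)$, or $1-\eta(\varepsilon^{-1}|x|)$, for the inner truncation. With that fixed, both sides do diverge like $\log(1/\varepsilon)$ with the same leading coefficient, which gives optimality.
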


\begin{remark}

\

\begin{enumerate}
\item It can be checked that
$$\Lambda_{N,s}\to \Lambda_{N,1}:=\left(\dfrac{N-2}{2}\right)^2,$$
the classical Hardy constant, when $s$ tends to $1$. Moreover, by scaling it can be proved that the optimal constant is the same for every domain containing the pole of the Hardy potential.
\item The optimal constant defined in \eqref{bestC} coincides for every  bounded domain $\Omega$ containing the pole of the Hardy
potential. That is, if $0\in \Omega$, using \eqref{equivalencia} we can rewrite the Hardy inequality \eqref{Hardy} as
\begin{equation}\label{hardy}
\frac{a_{N,s}}{2}\int_{Q}{\frac{|u(x)-u(y)|^2}{|x-y|^{N+2s}}}\, dx\, dy\geq
\Lambda_{N,s}\int_{\Omega}{\frac{u^2}{|x|^{2s}}\,dx},\,u\in H_{0}^{s}(\Omega).
\end{equation}
The optimality of $\Lambda_{N,s}$ here follows by a scaling argument.
\end{enumerate}
\end{remark}

Consider the parabolic problem
$$
(P):=\left\{
\begin{array}{rcl}
u_t+(-\Delta)^{s} u&=& f(x,t,u)\mbox{ in } \Omega\times (0,T),\\
u(x,t)&=&0\inn (\ren\setminus\Omega)\times[ 0,T),\\
u(x,0)&=&u_0(x) \mbox{ if }x\in\O,
\end{array}
\right.
$$
Denote
\begin{equation*}\begin{split}
\mathcal{T}:=\{&\phi:\mathbb{R}^N\times [0,T]\rightarrow\mathbb{R},\,\hbox{ s.t. }-\phi_t+(-\Delta)^s\phi=\varphi,\,
\varphi\in L^\infty(\Omega\times (0,T))\cap \mathcal{C}^{\alpha, \beta}(\Omega\times (0,T)),\\
&\phi=0\inn (\ren\setminus \Omega)\times   {(0,T]},\phi(x,T)=0 \inn \Omega \}.
\end{split}\end{equation*}
Notice that every $\phi\in \mathcal{T}$ belongs in particular to $L^\infty(\Omega\times (0,T))$ (see \cite{LPPS}).
Moreover according with the results in Appendix A,  $\phi\in \mathcal{T}$ is a strong solution (see Definition 1.3 in \cite{BPSV}).

 We define the meaning of \emph{weak solution}.

\begin{Definition}\label{veryweak}  Assume $u_{0}\in L^{1}(\Omega)$.
We  say that $u\in \mathcal{C}([0,T); {L}^{1}(\O))$,  is a weak
supersolution (subsolution) of problem $(P)$
if $f(x,t,u)\in L^1(\Omega\times [0,T))$,          {\mbox{$u\geq (\leq)\,0$} in \mbox{$(\mathbb{R}^N\setminus \Omega)\times [0,T)$}, $u(x,0)\geq (\leq)\, u_0(x)$ in $\Omega$,} and for all nonnegative $\phi\in \mathcal{T}$ we have that
\begin{equation}\label{eq:subsuper}
\dint_0^T\dint_{\Omega}\,-\phi_t\, u\,dxdt+\dint_0^T\dint_{         {\R^N}} u (-\Delta)^{s}\phi\,dx\,dt\ge (\le)\\
\dint_0^T\dint_{\Omega}\,f\phi\,dxdt +\int_\Omega{u_0(x)\phi(x,0)\,dx}.
\end{equation}
If $u$ is super and subsolution  then we say that $u$
is a  weak solution to $(P)$.
\end{Definition}
The weak solution will be considered to formulate the optimal nonexistence results. For existence results, we will consider the classical notion of
{\it finite energy solutions}.
\begin{Definition}\label{energysol} Assume $u_{0}(x)\in L^{2}(\Omega)$.
We say that $u \in L^{2}(0,T;H^{s}(\mathbb{R}^{N}))$ with $u_t\in L^{2}(0,T; H^{-s}(\Omega))$ is a finite energy supersolution (respectively subsolution) of (P) if $f(x,t,u)\in L^2 (0,T; H^{-s} (\Omega)) $ and it satisfies
\begin{equation*}\begin{split}\label{para-wn}
\int_0^T \int_{\Omega}
u_{t} \varphi \,dx\,dt &+ \frac{a_{N,s}}{2}\int_0^T
 \int_{Q} \frac{\big( u(x,t) - u (y,t) \big)\big(\varphi(x,t)- \varphi(y,t) \big) }{|x-y|^{N+2s}} dx dy\,dt\\
 &\ge (\le) \dint_0^T\dint_{\Omega}\,f\varphi\,dx\,dt,
\end{split}\end{equation*}
for any nonnegative $ \varphi \in L^{2} (0,T ; H_0^s (\Omega))$, $\varphi=0$ in $(\mathbb{R}^N\setminus\Omega)\times (0,T)$.

If $u$ is super and subsolution then $u$ is a  finite energy solution.
\end{Definition}

\begin{remark}
If $u \in L^{2}(0,T;H_0^{s}(\Omega))$ and $u_t\in L^{2}(0,T; H^{-s}(\O))$, then by approximating with smooth functions and taking advantage of the hilbertian structure of the space, it can be checked that $u\in \mathcal{C}([0,T]; L^{2}(\Omega))$.

Notice that both definitions can be considered, by scaling, in $\mathbb{R}^{N}\times [T_1, T_2)$ with $[T_{1}, T_{2})\subset [0,T)$.
\end{remark}
The existence and uniqueness of an energy solution to problem (P) when $F$ is in the dual space $L^2(0,T;H^{-s}(\O))$ can be obtained by means of a direct Hilbert space approach.
See the result by A. N. Milgram in \cite{MI} based on a method of Vishik in \cite{V}, that is essentially an extension of the Lax-Milgram
theorem to parabolic problems. More precisely, we have the following result.

\begin{Theorem}
Let $f\in L^2(0,T;H^{-s}(\O))$, then problem \eqref{eq:prob} has a unique finite energy solution.
\end{Theorem}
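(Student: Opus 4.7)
The plan is to recast problem \eqref{eq:prob} as an abstract evolution problem in the Gelfand triple $H_0^s(\Omega) \hookrightarrow L^2(\Omega) \hookrightarrow H^{-s}(\Omega)$ and then apply the Vishik--Milgram extension of Lax--Milgram to parabolic problems. First I would introduce the spatial bilinear form
\[
B(u,\varphi) := \frac{a_{N,s}}{2}\int_Q \frac{(u(x)-u(y))(\varphi(x)-\varphi(y))}{|x-y|^{N+2s}}\,dx\,dy - \lambda\int_\Omega \frac{u\,\varphi}{|x|^{2s}}\,dx,
\]
on $H_0^s(\Omega)\times H_0^s(\Omega)$. Continuity is immediate from Cauchy--Schwarz combined with the Hardy inequality \eqref{hardy}. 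Coercivity is the decisive point. Under the (tacit) assumption $0<\lambda<\Lambda_{N,s}$, which is the only regime compatible with the existence results later in the paper, inequality \eqref{hardy} yields
\[
B(u,u) \ge \Bigl(1 - \frac{\lambda}{\Lambda_{N,s}}\Bigr)\,\frac{a_{N,s}}{2}\int_Q \frac{|u(x)-u(y)|^2}{|x-y|^{N+2s}}\,dx\,dy \ge c_{\lambda,N,s}\,\|u\|_{H_0^s(\Omega)}^2,
\]
with $c_{\lambda,N,s}>0$. This is the key estimate that turns the singular Hardy term into a perturbation compatible with the Hilbertian machinery.

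Once $B$ is coercive, the Vishik--Milgram scheme proceeds in a standard way. Set $\mathcal{V}:=L^2(0,T;H_0^s(\Omega))$, $\mathcal{V}^*:=L^2(0,T;H^{-s}(\Omega))$, and work in the Hilbert space $W(0,T):=\{u\in\mathcal{V}:u_t\in\mathcal{V}^*\}$, which embeds continuously in $C([0,T];L^2(\Omega))$ so that the initial trace is well defined. The weak formulation of \eqref{eq:prob} amounts to finding $u\in W(0,T)$ with $u(\cdot,0)=u_0$ such that
\[
\int_0^T \langle u_t(t),\varphi(t)\rangle\,dt + \int_0^T B(u(t),\varphi(t))\,dt = \int_0^T \langle f(t),\varphi(t)\rangle\,dt,
\]
for every $\varphi\in\mathcal{V}$. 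Applying Milgram's theorem (see \cite{MI}) to this bilinear form on $W(0,T)\times\mathcal{V}$, the coercivity of $B$ furnishes the a priori bound required to conclude existence of a solution $u\in W(0,T)$, which is precisely an energy solution in the sense of Definition \ref{energysol}.

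For uniqueness, let $w$ denote the difference of two such solutions, so that $w(\cdot,0)=0$ and $w$ solves the homogeneous version of \eqref{eq:prob}. Testing with $w$ itself is legitimate thanks to $W(0,T)\hookrightarrow C([0,T];L^2(\Omega))$ and the identity $\int_0^t\langle w_\tau,w\rangle\,d\tau=\tfrac12\|w(t)\|_{L^2}^2-\tfrac12\|w(0)\|_{L^2}^2$; this produces
\[
\tfrac12\,\|w(t)\|_{L^2(\Omega)}^2 + \int_0^t B(w(\tau),w(\tau))\,d\tau = 0,
\]
and coercivity of $B$ forces $w\equiv 0$. The principal obstacle in the whole argument is therefore concentrated in a single point: converting the fractional Hardy inequality into a coercivity estimate that survives the singularity of $|x|^{-2s}$ at the origin. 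Once that step is in hand, the remainder is a direct transcription of the Hilbertian theory of linear parabolic equations.
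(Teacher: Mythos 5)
Your argument is essentially the paper's: the result is obtained by the Vishik--Milgram (Lions) scheme for parabolic problems, with the fractional Hardy inequality \eqref{hardy} supplying exactly your coercivity constant $\bigl(1-\lambda/\Lambda_{N,s}\bigr)$, and uniqueness following from the energy identity. The only points where the paper's treatment goes beyond yours are that it puts the time derivative on the \emph{test} function (defining $L_\phi(u)$ with $-u\phi_t$ and an inner product $\langle\cdot,\cdot\rangle_*$ incorporating the initial trace, so that the Fr\'echet--Riesz/Lions machinery applies verbatim and $u_t\in L^2(0,T;H^{-s}(\Omega))$ is recovered afterwards from the equation rather than imposed a priori on the trial space), and that it also covers the endpoint $\lambda=\Lambda_{N,s}$ by replacing $H_0^s(\Omega)$ with the space $H(\Omega)$ of \eqref{Hardynorm} via the improved Hardy inequality \eqref{improvedHardy}.
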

See \cite[Theorem 26]{LPPS} for a detailed proof in this fractional framework.
\begin{remark}\label{coerciveoperator}
Notice that by defining
\begin{eqnarray}\label{Lhardy}
L_{\phi}(u)&:=& \int_0^T \int_{\Omega}
-u \phi_{t} \,dx\,dt + \frac{a_{N,s}}{2}\int_0^T
 \int_{Q} \frac{\big( u(x,t) - u (y,t) \big)\big(\phi(x,t)- \phi(y,t) \big) }{|x-y|^{N+2s}} dx dy\,dt\nonumber\\
 &&-\lambda\int_0^T\int_\Omega{\frac{u\phi}{|x|^{2s}}\,dx\,dt},
\end{eqnarray}
and
$$
\bra \varphi, \phi\ket _{*}=\frac{1}{2} \bra \varphi(x,0), \phi(x,0)\ket_{L^{2}(\Omega)}+\frac{a_{N,s}}{2}\left(1-\dfrac{\lambda}{\Lambda_{N,s}}\right)\bra\varphi, \phi\ket_{L^{2}(0,T; H_{0}^{s}(\Omega))},
$$
thanks to the Hardy inequality (see \eqref{Hardy}) one can reproduce the proof of \cite[Theorem 26]{LPPS} to assure the existence and uniqueness of an energy solution to the problem
$$
(P_\lambda):=\left\{
\begin{array}{rcl}
u_t+(-\Delta)^{s} u-\lambda\dfrac{u}{|x|^{2s}}&=& F(x,t)\mbox{ in } \Omega\times (0,T),\\
u(x,t)&=&0\inn (\ren\setminus\Omega)\times[ 0,T),\\
u(x,0)&=&u_0(x) \mbox{ if }x\in\O,
\end{array}
\right.
$$
for $F\in L^2(0,T;H^{-s}(\O))$, $u_0\in L^2(\Omega)$, and $\lambda<\Lambda_{N,s}$. For the case $\lambda=\Lambda_{N,s}$, consider  the
Hilbert space $H(\O)$ defined as the completion of
$\mathcal{C}^{\infty}_0(\O)$ with respect to the norm
\begin{equation}\label{Hardynorm}
\|u\|^2_{H(\Omega)}:=\frac{a_{N,s}}{2}\|u\|^2_{H_0^s(\Omega)}-\Lambda_{N,s}\int_{\Omega}{\frac{u^2}{|x|^{2s}}}\,dx.
\end{equation}
In \cite{Frank}, the author proves the following improved Hardy
inequality,
\begin{equation}\label{improvedHardy}
\frac{a_{N,s}}{2}\|u\|^2_{H_0^s(\Omega)}-\Lambda_{N,s}\int_{\Omega}{\frac{u^2}{|x|^{2s}}\,dx}\geq C (\O,q, N,s)\|u\|^2_{W^{\tau, 2}_0(\Omega)},
\end{equation}
for all $s/2<\tau<s$ (see also \cite{F, APP2} for
alternative proofs without using the Fourier transform). Thus we
can see that $H(\Omega)\subset W^{\tau, 2}_0(\Omega)$ and
therefore, $H(\Omega)$ is compactly embedded in $L^p(\Omega)$ for
all $1\leq p < 2^*_s$ (see \cite[Corollary 7.2]{DPV}). Therefore, the
proof remains the same considering $L_\phi(u)$ as in
\eqref{Lhardy} (setting $\lambda=\Lambda_{N,s}$), and defining the
scalar product $\langle\cdot,\cdot\rangle_*$ as
$$
\bra \varphi, \phi\ket _{*}=\frac{1}{2} \bra \varphi(x,0), \phi(x,0)\ket_{L^{2}(\Omega)}+\bra\varphi, \phi\ket_{L^{2}(0,T; H^{s}_{0}(\Omega))},
$$
where the last term follows from \eqref{Hardynorm}.
\end{remark}

In order to study monotonicity approaches, we will need to prove comparison results for both kind of solutions.

\begin{Lemma}{\it (Weak Comparison Principle).}\label{DebilComparison}
Let $0\leq\lambda\leq\Lambda_{N,s}$ and let $u,\, v\in \mathcal{C}([T_1,T_2);L^1(\Omega))$ be weak solutions to the problems
$$\begin{array}{ll}
\begin{cases}u_t+(-\Delta)^s u-\lambda\dfrac{u}{|x|^{2s}}=f_1 \hbox{ in }\Omega\times (T_1,T_2), \\
u=g_1\inn (\mathbb{R}^{N}\setminus\Omega)\times[ T_1,T_2),\\
u(x,T_1)=h_1(x)\inn\Omega,
\end{cases}

&

\begin{cases}v_t+(-\Delta)^s v-\lambda\dfrac{v}{|x|^{2s}}=f_2 \hbox{ in }\Omega\times (T_1,T_2), \\
v=g_2\inn (\mathbb{R}^{N}\setminus\Omega)\times[T_1,T_2),\\
v(x,T_1)=h_2(x)\inn\Omega,
\end{cases}
\end{array}$$
respectively, where $f_1,\,f_2\in L^{1}(\Omega\times (T_1, T_2)),\, g_1, g_2\in L^{1}((\ren\setminus\Omega)\times (T_1, T_2))
\mbox{ and }h_{1}, h_2\in L^{1}(\Omega)$.

If $f_1\leq f_2$  in $\Omega\times (T_1, T_2)$, $g_1\leq g_2$ in $(\ren\setminus\Omega)\times [T_1, T_2)$ and $h_1\leq h_2$ in $\Omega$, then $u\leq v\inn \mathbb{R}^{N}\times (T_1,T_2)$.
\end{Lemma}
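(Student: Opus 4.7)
The natural approach is duality against the backward adjoint problem associated with the operator $\mathcal{L}:=\partial_t+(-\Delta)^s-\lambda/|x|^{2s}$. By linearity and Definition~\ref{veryweak}, setting $w:=u-v$ yields an element of $\mathcal{C}([T_1,T_2);L^1(\Omega))$ that is a weak subsolution with nonpositive source $f_1-f_2$, nonpositive exterior datum $g_1-g_2$, and nonpositive initial datum $h_1-h_2$. It suffices to prove $w\le 0$ in $\Omega\times(T_1,T_2)$, since the sign outside $\Omega$ has been assumed.

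Given a nonnegative $\varphi\in \mathcal{C}^{\infty}_{0}(\Omega\times(T_1,T_2))$, the plan is to produce a nonnegative $\phi$ solving the backward dual problem
\begin{equation*}
-\phi_t+(-\Delta)^s\phi-\lambda\frac{\phi}{|x|^{2s}}=\varphi\ \text{in}\ \Omega\times(T_1,T_2),\quad \phi=0\ \text{in}\ (\R^N\setminus\Omega)\times(T_1,T_2),\quad \phi(\cdot,T_2)=0.
\end{equation*}
For $0\le\lambda\le\Lambda_{N,s}$, existence and uniqueness of a finite energy solution follow, after the time reversal $t\mapsto T_1+T_2-t$, from Remark~\ref{coerciveoperator}; at the endpoint $\lambda=\Lambda_{N,s}$ one uses the improved Hardy inequality \eqref{improvedHardy} to set up the Hilbert space framework. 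Nonnegativity $\phi\ge 0$ is obtained by testing the equation with $\phi^-$ and noting that the associated quadratic form is coercive by \eqref{hardy}, forcing $\phi^-\equiv 0$. Inserting such a $\phi$ in the weak-subsolution inequality for $w$ and rearranging the Hardy contribution would yield
\begin{equation*}
\int_{T_1}^{T_2}\!\!\int_\Omega w\,\varphi\,dx\,dt\le\int_{T_1}^{T_2}\!\!\int_\Omega(f_1-f_2)\phi\,dx\,dt+\int_\Omega(h_1-h_2)\phi(\cdot,T_1)\,dx+(\text{exterior terms})\le 0,
\end{equation*}
and since $\varphi\ge 0$ is arbitrary this gives $w\le 0$ a.e.\ in $\Omega\times(T_1,T_2)$.

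The principal obstacle is that the $\phi$ just constructed need not belong to the admissible class $\mathcal{T}$ of Definition~\ref{veryweak}, which demands $-\phi_t+(-\Delta)^s\phi\in L^\infty\cap\mathcal{C}^{\alpha,\beta}$, whereas the singular Hardy coefficient prevents $\phi/|x|^{2s}$ from being bounded near the origin. I would overcome this by truncation: replace $\lambda/|x|^{2s}$ by the bounded coefficient $V_n(x):=\min(\lambda/|x|^{2s},n)$ and solve the regularized backward problem $-\phi_{n,t}+(-\Delta)^s\phi_n-V_n\phi_n=\varphi$. Since $\varphi+V_n\phi_n$ is bounded and H\"older continuous, the regularity theory of Appendix~A places $\phi_n$ inside $\mathcal{T}$, and the Hardy-based argument still yields $\phi_n\ge 0$. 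Using $\phi_n$ in the weak formulation for $w$ produces an extra remainder
$\int_{T_1}^{T_2}\!\int_\Omega w\,\phi_n(\lambda/|x|^{2s}-V_n)\,dx\,dt$,
and controlling this uniformly in $n$ as $V_n\nearrow\lambda/|x|^{2s}$ is the delicate point on which the argument hinges. The plan is to exploit the uniform bound for $\phi_n$ in the energy space $H(\Omega)$ from \eqref{improvedHardy}, the integrability $\lambda w/|x|^{2s}\in L^1$ built into Definition~\ref{veryweak}, and dominated convergence to pass to the limit and recover $\int w\varphi\le 0$. The endpoint $\lambda=\Lambda_{N,s}$ is the most subtle instance, since the coercivity is weakest there and one must rely on the compactness supplied by Frank's improved Hardy inequality.
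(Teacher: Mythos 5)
Your overall strategy --- reduce by linearity to $w=u-v$, test against an approximate solution of the backward dual problem in which the Hardy potential has been truncated so that the test function lands in the admissible class $\mathcal{T}$, and then remove the truncation --- is essentially the route the paper takes. The paper builds its test functions by the monotone iteration $-(\varphi_n)_t+(-\Delta)^s\varphi_n=\lambda\varphi_{n-1}/(|x|^{2s}+\tfrac1n)+\Phi$ rather than by solving the truncated-coefficient problem $-\phi_{n,t}+(-\Delta)^s\phi_n-V_n\phi_n=\varphi$ directly, but both constructions produce nonnegative, bounded, regular test functions, so this difference is cosmetic.

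The genuine gap sits exactly where you flag it: the remainder $\int_{T_1}^{T_2}\!\int_\Omega w\,\phi_n\,(\lambda|x|^{-2s}-V_n)\,dx\,dt$ cannot be disposed of by dominated convergence with the ingredients you list. The $\phi_n$ are \emph{not} uniformly bounded in $L^\infty$: they increase to the solution $\phi$ of the dual problem with the full Hardy potential, and by the paper's own local estimates (Corollary \ref{coroHar}, obtained from the weighted Harnack inequality after the ground state transformation) any nonnegative supersolution of that equation satisfies $\phi\ge C|x|^{-\gamma}$ near the origin, so $\sup_n\|\phi_n\|_{L^\infty}=\infty$. A uniform bound in the energy space $H(\Omega)$ likewise gives no pointwise domination of $|w|\,\phi_n\,|x|^{-2s}$. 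What would actually be needed is $|w|\,|x|^{-2s-\gamma}\in L^1$ near the origin, which is strictly stronger than the integrability $\lambda w/|x|^{2s}\in L^1(\Omega\times(T_1,T_2))$ encoded in Definition \ref{veryweak} and is not among the hypotheses of the lemma --- it is of the same nature as the \emph{necessary} condition of Theorem \ref{necesario}, which one cannot presuppose here. So the passage $n\to\infty$ in the singular term is not justified as proposed. The paper's proof deliberately avoids this limit: it stays at finite $n$ and compares $\lambda\varphi_{n-1}/(|x|^{2s}+\tfrac1n)$ with $\lambda\varphi_n/|x|^{2s}$ through the monotonicity $\varphi_{n-1}\le\varphi_n$, so that the truncation discrepancy is absorbed by a sign argument at each step rather than by convergence of a remainder. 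To repair your version you should do the same: arrange the inequalities so that the error produced by the truncation has a favorable sign for every fixed $n$, instead of trying to make it vanish in the limit.
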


\begin{proof}
Define $w=v-u$. Hence, $w$ is a weak solution of
$$\begin{cases}w_t+(-\Delta)^s w-\lambda\dfrac{w}{|x|^{2s}}=f_2-f_1\ge 0 \hbox{ in }\Omega\times (T_1,T_2), \\
w=g_2-g_1\geq 0\inn (\mathbb{R}^{N}\setminus\Omega)\times[T_1,T_2),\\
w(x,T_1)=h_2-h_1\geq 0\inn\Omega.
\end{cases}$$
Consider now $\Phi\in \mathcal{C}_0^\infty(\Omega\times (T_1,T_2))$, $\Phi\ge 0$, and the solution $\varphi_{n}$ to the problem
\begin{equation}
\left\{\begin{array}{rcl}
-(\varphi_n)_t+(-\Delta)^s \varphi_n&=&\lambda\dfrac{\varphi_{n-1}}{|x|^{2s}+\frac{1}{n}}+\Phi \quad\mbox{in }\Omega\times (T_1,T_2),\\
\varphi_n&=&0\inn(\ren\setminus\Omega)\times[ T_1,T_2),\\
\varphi_n(x,T_2)&=&0\inn\Omega,
\end{array}\right.
\end{equation}
with
\begin{equation}
\left\{\begin{array}{rcl}
-(\varphi_0)_t+(-\Delta)^s \varphi_0&=&\Phi \quad\mbox{in }\Omega\times (T_1,T_2),\\
\varphi_0&=&0\inn(\ren\setminus\Omega)\times[T_1,T_2),\\
\varphi_0(x,T_2)&=&0\inn\Omega.
\end{array}\right.
\end{equation}

     Since $\varphi_n$ is regular in $\Omega\times  {[}T_1,T_2)$ and bounded in $\mathbb{R}^N\times (T_1,T_2)$
(see Appendix \ref{appreg}), this equation can be understood in a pointwise sense.
Moreover, by the Strong Comparison Principle, we know that $\varphi_n\ge 0$ and  $\varphi_{n-1}\leq \varphi_n$ in $\mathbb{R}^N\times {[}T_1,T_2)$ for all $n\in\mathbb{N}$.

     Hence, by the definition of weak solutions, and using that $w\geq 0$
\begin{eqnarray*}
\int_{T_1}^{T_2}\int_\Omega w\Phi\,dx\,dt&=&\int_{T_1}^{T_2}\int_\Omega w(-\varphi_n)_t\,dx\,dt+\int_{T_1}^{T_2}\int_\Omega w(-\Delta)^s \varphi_n\,dx\,dt-\lambda\int_{T_1}^{T_2}\int_\Omega \dfrac{w\varphi_{n-1}}{|x|^{2s}+\frac{1}{n}}\,dx\,dt\\
&\geq& \int_{T_1}^{T_2}\int_\Omega w(-\varphi_n)_t\,dx\,dt+\int_{T_1}^{T_2}\int_{\Omega} w(-\Delta)^s \varphi_n\,dx\,dt-\lambda\int_{T_1}^{T_2}\int_\Omega \dfrac{w\varphi_{n}}{|x|^{2s}}\,dx\,dt\\
&=&\int_{T_1}^{T_2}\int_\Omega (f_2-f_1)\varphi_n\,dx\,dt +\int_\Omega{w(x,T_1)\varphi_n(x,T_1)\,dx} \ge 0,
\end{eqnarray*}
for all $\Phi\in \mathcal{C}_0^\infty(\Omega\times (T_1,T_2))$, $\Phi\ge 0$. Thus, $w\ge 0$ in $\mathbb{R}^N\times (T_1,T_2)$, and therefore $u\leq v\inn \mathbb{R}^{N}\times (T_1,T_2)$.
\end{proof}

\begin{Corollary}{\it (Uniqueness of weak solutions for the linear problem).}
\\Let suppose $F\in L^1(\Omega\times(0,T))$. Then problem $(P_\lambda)$ has at most one nontrivial weak solution.
\end{Corollary}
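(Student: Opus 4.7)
The proof is essentially an immediate application of the Weak Comparison Principle established in Lemma \ref{DebilComparison}. The plan is to assume two weak solutions $u_1,u_2\in \mathcal{C}([0,T);L^1(\Omega))$ of $(P_\lambda)$ corresponding to the same data $F\in L^1(\Omega\times(0,T))$, the same exterior value $g\equiv 0$ on $(\mathbb{R}^N\setminus\Omega)\times[0,T)$, and the same initial datum $u_0$, and to deduce $u_1\equiv u_2$.

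More precisely, I would apply Lemma \ref{DebilComparison} on the interval $(T_1,T_2)=(0,T)$ twice: first with $(u,v)=(u_1,u_2)$ and data $f_1=f_2=F$, $g_1=g_2=0$, $h_1=h_2=u_0$, which satisfy trivially the inequalities $f_1\leq f_2$, $g_1\leq g_2$, $h_1\leq h_2$; this yields $u_1\leq u_2$ in $\mathbb{R}^N\times(0,T)$. Then I would apply the lemma again with the roles interchanged, $(u,v)=(u_2,u_1)$, to obtain $u_2\leq u_1$. Combining both inequalities gives $u_1=u_2$ a.e., which is exactly the uniqueness claim.

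There is essentially no obstacle here, since the hypothesis $0\leq \lambda\leq \Lambda_{N,s}$ required by Lemma \ref{DebilComparison} is the very range in which problem $(P_\lambda)$ is posed, and the regularity assumptions on the data in the comparison principle ($f_i\in L^1(\Omega\times(0,T))$, $g_i\in L^1((\mathbb{R}^N\setminus\Omega)\times(0,T))$, $h_i\in L^1(\Omega)$) are exactly those automatically enjoyed by any pair of weak solutions in the sense of Definition \ref{veryweak} with $F\in L^1(\Omega\times(0,T))$. The only subtle point worth remarking is that the argument uses the \emph{linearity} of $(P_\lambda)$ only implicitly, through the fact that the comparison principle already encodes it: one does not subtract $u_2-u_1$ and verify it solves a homogeneous problem, one simply compares directly.
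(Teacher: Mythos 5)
Your proposal is correct and is exactly the argument the paper intends: the corollary is stated as an immediate consequence of the Weak Comparison Principle, obtained by applying that lemma twice with identical data $f_1=f_2=F$, $g_1=g_2=0$, $h_1=h_2=u_0$ and the roles of the two solutions interchanged. No further comment is needed.
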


The comparison result for energy solutions can be proved in a standard way, so we skip the proof (see for example \cite{BMP} for a proof in the elliptic case).
\begin{Lemma}{\it Energy Comparison Principle.}\label{Comparison Principle}
Let $0\leq\lambda<\Lambda_{N,s}$ and let $u,\, v\in L^{2}(T_1,T_2;
         {H^{s}(\ren)})$ with $u_t,\,v_t\in L^{2}(T_1,T_2; H^{-s}(\O))$ be finite energy solutions to the problems
$$\begin{array}{ll}
\begin{cases}u_t+(-\Delta)^s u-\lambda\dfrac{u}{|x|^{2s}}=f_1 \hbox{ in }\Omega\times (T_1,T_2), \\
u=g_1\inn (\mathbb{R}^{N}\setminus\Omega)\times[ T_1,T_2),\\
u(x,T_1)=h_1(x)\inn\Omega,
\end{cases}
&
\begin{cases}v_t+(-\Delta)^s v-\lambda\dfrac{v}{|x|^{2s}}=f_2 \hbox{ in }\Omega\times (T_1,T_2), \\
v=g_2\inn (\mathbb{R}^{N}\setminus\Omega)\times[T_1,T_2),\\
v(x,T_1)=h_2(x)\inn\Omega,
\end{cases}
\end{array}$$
respectively, where $f_1, f_2\in L^2(T_1, T_2; H^{-s}(\Omega))$, $g_1, g_2\in L^{2}(T_1, T_2; L^{2}(\ren\setminus\Omega))$ and
 $h_1, h_2\in L^{2}(\Omega)$. If $f_1\leq f_2 \inn \Omega\times (T_1,T_2)$, $g_1\leq g_2\inn (\mathbb{R}^{N}\setminus\Omega)\times[T_1,T_2) $ and $h_1\leq h_2\inn\Omega$, then $u\leq v\inn \mathbb{R}^{N}\times (T_1,T_2)$.
\end{Lemma}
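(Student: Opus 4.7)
The plan is the standard energy/subtraction argument: set $w = u - v$, test the subtracted weak formulation with $w^+$, and use the strict Hardy inequality (available since $\lambda < \Lambda_{N,s}$) to deduce $w^+ \equiv 0$.

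First I would subtract the two finite-energy formulations. Because both $u$ and $v$ are finite-energy solutions, the difference $w \in L^{2}(T_1,T_2; H^{s}(\ren))$ with $w_t \in L^{2}(T_1,T_2; H^{-s}(\Omega))$ satisfies
\[\int_{T_1}^{T_2}\!\langle w_t, \varphi\rangle\, dt + \frac{a_{N,s}}{2}\int_{T_1}^{T_2}\!\!\int_Q \frac{(w(x,t)-w(y,t))(\varphi(x,t)-\varphi(y,t))}{|x-y|^{N+2s}}\, dx\, dy\, dt - \lambda\!\int_{T_1}^{T_2}\!\!\int_\Omega \frac{w\,\varphi}{|x|^{2s}}\, dx\, dt \leq 0\]
for every nonnegative $\varphi \in L^{2}(T_1,T_2; H^{s}_0(\Omega))$, with moreover $w \leq 0$ on $(\ren\setminus\Omega)\times (T_1,T_2)$ and $w(\cdot,T_1)\leq 0$ in $\Omega$. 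Since $w \leq 0$ outside $\Omega$, the positive part $w^+$ belongs to $L^{2}(T_1,T_2; H^{s}_0(\Omega))$ and is admissible as a test function on any subinterval $(T_1,\tau)$ with $\tau\in (T_1,T_2]$.

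Plugging in $\varphi = w^+\chi_{(T_1,\tau)}$ I would then estimate term by term. The time-derivative contribution is $\tfrac{1}{2}\|w^+(\cdot,\tau)\|^{2}_{L^{2}(\Omega)}$ via the chain-rule identity $\int_{T_1}^{\tau}\langle w_t, w^+\rangle\,dt = \tfrac{1}{2}\|w^+(\cdot,\tau)\|^{2}_{L^{2}} - \tfrac{1}{2}\|w^+(\cdot,T_1)\|^{2}_{L^{2}}$ combined with $w^+(\cdot,T_1)\equiv 0$. The nonlocal bilinear form dominates $\tfrac{a_{N,s}}{2}\iint \frac{(w^+(x)-w^+(y))^{2}}{|x-y|^{N+2s}}$ by the pointwise inequality
\[(w(x)-w(y))(w^+(x)-w^+(y)) \;\geq\; (w^+(x)-w^+(y))^{2},\]
which follows from $w = w^+-w^-$ together with $(w^+(x)-w^+(y))(w^-(x)-w^-(y))\leq 0$. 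The Hardy term simplifies to $-\lambda\int \frac{(w^+)^{2}}{|x|^{2s}}$ since $w\,w^+ = (w^+)^{2}$. Bounding this last integral via Theorem \ref{DH} by $\tfrac{\lambda}{\Lambda_{N,s}}\cdot\tfrac{a_{N,s}}{2}\|w^+\|_{H^{s}_0(\Omega)}^{2}$ yields
\[\tfrac{1}{2}\|w^+(\cdot,\tau)\|^{2}_{L^{2}(\Omega)} + \Bigl(1 - \tfrac{\lambda}{\Lambda_{N,s}}\Bigr)\frac{a_{N,s}}{2}\int_{T_1}^{\tau}\!\!\int_Q \frac{(w^+(x,t) - w^+(y,t))^{2}}{|x-y|^{N+2s}}\, dx\, dy\, dt \;\leq\; 0.\]
Since $\lambda < \Lambda_{N,s}$ both summands are nonnegative and hence vanish, giving $w^+(\cdot,\tau)\equiv 0$ for every $\tau\in (T_1,T_2)$, i.e., $u \leq v$ in $\ren\times (T_1,T_2)$.

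The only delicate point is the chain-rule identity for $w^+$, which does not follow immediately from the Lions--Magenes embedding $w \in \mathcal{C}([T_1,T_2]; L^{2}(\Omega))$ but requires a standard time-mollification of $w$, application of the smooth version of the identity, and passage to the limit using that $s\mapsto (s^+)^{2}/2$ is convex and $\mathcal{C}^{1,1}$. Everything else reduces to the pointwise monotonicity inequality above plus the fractional Hardy inequality, which is exactly why the result is "standard" in the strictly subcritical range $\lambda < \Lambda_{N,s}$ and why it breaks down at $\lambda = \Lambda_{N,s}$ (where coercivity is lost and one must use the improved Hardy inequality \eqref{improvedHardy} in a more delicate fashion).
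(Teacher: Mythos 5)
Your proof is correct and is precisely the standard energy argument that the paper itself invokes without writing out (the text states the result ``can be proved in a standard way'' and refers to the elliptic case in \cite{BMP}): subtract the formulations, test with $w^{+}$ on $(T_1,\tau)$, use the pointwise inequality $(w(x)-w(y))(w^{+}(x)-w^{+}(y))\ge (w^{+}(x)-w^{+}(y))^{2}$ together with $w^{+}\in L^{2}(T_1,T_2;H^{s}_{0}(\Omega))$ (which holds because $g_1\le g_2$ forces $w\le 0$ outside $\Omega$), and absorb the Hardy term by coercivity since $\lambda<\Lambda_{N,s}$. Your explicit flagging of the chain-rule identity for $\langle w_t,w^{+}\rangle$ and its justification by time-mollification addresses the only genuinely delicate point, so nothing is missing.
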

\begin{remark}
Notice that if $\lambda=\Lambda_{N,s}$, we can obtain the same result for $u,v\in L^2(T_1,T_2;H(\Omega))$, where $H(\Omega)$ was defined in \eqref{Hardynorm}, only by exactly repeating this proof.
\end{remark}

Finally, consider the problem
\begin{equation}\label{hom}
\left\{
\begin{array}{rcl}
u_t+(-\Delta)^{s} u&=&0\mbox{ in } \Omega\times (0,T),\\
u(x,t)&=&0\inn (\ren\setminus\Omega)\times[ 0,T),\\
u(x,0)&=&u_0(x)\gneq 0 \mbox{ if }x\in\O.
\end{array}
\right.
\end{equation}
We enunciate a \emph{Weak Harnack Inequality} that we will use along the paper (see \cite[Theorem 1.1]{FK} even in a more general setting).

\begin{Lemma}{\it (Weak Harnack Inequality). }\label{Harnack inequality}
If $u$ is a non negative supersolution of \eqref{hom} in $\Omega\times (0,T)$, then          {for every $t_0\in (0,T)$} there exists $r>0$ and a positive constant $C=
C(N,s, r, t_0,\beta)$ such that
$$
\iint_{R^-}u(x,t)\,dx\,dt\leq C\big(\mbox{ess}
\inf\limits_{R^+}u\big),
$$
where $R^-=B_{r}(0)\times (t_0-\frac{3}{4}\beta,
t_0-\frac{1}{4}\beta) $, $R^{+}=B_{r}(0)\times
(t_0+\frac{1}{4}\beta, t_0+\frac{3}{4}\beta)$.
\end{Lemma}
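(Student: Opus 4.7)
The plan is to reduce the statement to a direct application of the Felsinger--Kassmann weak parabolic Harnack inequality \cite[Theorem 1.1]{FK}, since problem \eqref{hom} is driven by the plain fractional Laplacian (no singular Hardy coefficient appears in the homogeneous equation). Concretely, the symmetric translation-invariant kernel $K(x,y) = a_{N,s}\,|x-y|^{-N-2s}$ satisfies pointwise comparability with itself, $\Lambda_1 K_{2s}(x,y) \leq K(x,y) \leq \Lambda_2 K_{2s}(x,y)$ with $\Lambda_1=\Lambda_2=a_{N,s}$, which is the class of kernels covered by \cite{FK}. Thus the only work is to set up the local geometry and verify that $u$ is admissible as a non-negative supersolution in the sense used there.

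First, since $0\in\Omega$ and $\Omega$ is open, for any fixed $t_0\in (0,T)$ I will choose $r>0$ and $\beta>0$ small enough that the parabolic cylinder $B_{2r}(0) \times (t_0-\beta, t_0+\beta)$ is compactly contained in $\Omega\times(0,T)$. Next I will observe that, because $u$ is a non-negative supersolution of \eqref{hom} on $\Omega\times(0,T)$ with exterior Dirichlet datum zero, one has $u\geq 0$ on the whole of $\mathbb{R}^N\times(0,T)$. In particular the ``tail'' contributions in the nonlocal energy/weak formulation are non-negative, so restricting the supersolution property to test functions supported in $B_{r}(0)\times(t_0-\beta,t_0+\beta)$ yields exactly the local supersolution notion required by \cite{FK}.

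With these checks in hand, I will invoke \cite[Theorem 1.1]{FK} applied to $u$ on the parabolic cylinder $B_{2r}(0)\times(t_0-\beta,t_0+\beta)$ (with the standard Moser-type rescaling built into that theorem), which produces the inequality
\[
\iint_{R^-} u(x,t)\,dx\,dt \leq C\,\bigl(\operatorname*{ess\,inf}_{R^+} u\bigr),
\]
with $R^{\pm}$ as stated and a constant $C$ depending on the kernel bounds $a_{N,s}$, on $N$, $s$, and on the geometry $r$, $t_0$, $\beta$ of the two sub-cylinders.

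I do not expect a genuine obstacle here: all of Moser's iteration machinery, the logarithmic lemma, and the covering argument needed to pass from $R^-$ to $R^+$ are already carried out in \cite{FK} for the class of kernels that contains the fractional Laplacian. The only point requiring care is making sure the zero exterior datum plus positivity legitimately removes the tail terms, so that we can quote the local statement of \cite[Theorem 1.1]{FK} rather than a global one. The singular-coefficient version of this argument, where these tail/localization steps become non-trivial, is precisely what Section \ref{HH} will develop; here none of that is needed.
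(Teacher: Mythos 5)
Your proposal is correct and coincides with what the paper does: Lemma \ref{Harnack inequality} is stated in the paper without proof, as a direct citation of \cite[Theorem 1.1]{FK}, exactly the reduction you carry out. Your additional checks (the kernel $a_{N,s}|x-y|^{-N-2s}$ lies in the Felsinger--Kassmann class, the cylinder fits inside $\Omega\times(0,T)$, and the global nonnegativity of $u$ coming from the zero exterior datum removes the tail term so the local statement of \cite{FK} applies) are precisely the routine verifications the paper leaves implicit.
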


As a consequence of this lemma, we can formulate the {\it strong maximum principle}.
\begin{Theorem}{\it (Strong Maximum Principle).}\label{SMP}
If $u$ is a non negative supersolution of \eqref{hom}, then $u(x,t)>0$ in $\Omega\times (0,T)$.
\end{Theorem}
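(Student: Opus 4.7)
The strategy is proof by contradiction, combining the Weak Harnack Inequality of Lemma \ref{Harnack inequality} with a chain/covering argument and the $L^{1}$-continuity of weak solutions to propagate a hypothetical interior zero of $u$ back to the initial time.

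Assume there exists $(x_{0},t_{0})\in\Omega\times(0,T)$ at which the essential infimum of $u$ over every sufficiently small cylinder vanishes. I would apply Lemma \ref{Harnack inequality} centered at $(x_{0},t_{0})$, with radius $r$ and time-scale $\beta$ chosen so that the forward Harnack cylinder $R^{+}$ lies inside $\Omega\times(0,T)$. Since $\mathrm{ess\,inf}_{R^{+}}u=0$, the inequality forces $\iint_{R^{-}}u\,dx\,dt=0$, and the non-negativity of $u$ gives $u\equiv 0$ a.e.\ on the full past cylinder $R^{-}=B_{r}(x_{0})\times\bigl(t_{0}-\tfrac{3}{4}\beta,\,t_{0}-\tfrac{1}{4}\beta\bigr)$.

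Next I would extend this vanishing throughout $\Omega\times(0,\tau)$ for some $\tau>0$ by iterating Lemma \ref{Harnack inequality}. Pick a slice $t^{\ast}$ on which $u(\cdot,t^{\ast})\equiv 0$ on $B_{r}(x_{0})$; for $x_{1}$ close to $x_{0}$ and an appropriate $t_{1}<t^{\ast}$, shrink the Harnack radius so the new forward cylinder at $(x_{1},t^{\ast})$ lies inside the already-known zero set, and reapply Lemma \ref{Harnack inequality} to obtain $u\equiv 0$ on a past cylinder around $(x_{1},t_{1})$. Using the translation invariance of $(-\Delta)^{s}$, the connectedness of $\Omega$, and finitely many overlapping balls (with the Harnack radius shrunk near $\partial\Omega$), this chain yields $u\equiv 0$ a.e.\ on $\Omega\times(0,\tau)$. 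Finally, since every weak solution satisfies $u\in C\bigl([0,T);L^{1}(\Omega)\bigr)$ by Definition \ref{veryweak}, sending $t\to 0^{+}$ gives $u(\cdot,0)=0$ in $L^{1}(\Omega)$, contradicting the supersolution hypothesis $u(\cdot,0)\ge u_{0}\gneq 0$.

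The main obstacle is the chaining step: each new application of Lemma \ref{Harnack inequality} requires its forward cylinder $R^{+}$ to lie inside the region where $u$ is already known to vanish, so that its essential infimum is genuinely zero. This forces a delicate but standard geometric bookkeeping on the radii and time-shifts---shrinking them in a controlled way both to stay inside $\Omega$ near the boundary and to overlap with the established zero set. No genuinely new analytic idea is needed beyond the ingredients supporting Lemma \ref{Harnack inequality}; the argument follows the classical Moser-type scheme for deriving a strong maximum principle from a weak parabolic Harnack inequality, as in \cite{FK}.
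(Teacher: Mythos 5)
Your proposal is correct and follows exactly the route the paper intends: the paper offers no written proof, merely asserting the strong maximum principle ``as a consequence'' of the weak Harnack inequality of Lemma \ref{Harnack inequality}, and your contradiction-plus-chaining argument (propagating the zero set backward through overlapping Harnack cylinders and then invoking $u\in\mathcal{C}([0,T);L^{1}(\Omega))$ to contradict $u_{0}\gneq 0$) is the standard way to fill that in, mirroring the cylinder-chaining the authors themselves carry out in Theorem \ref{th:blowup}. The only point to tidy is that the forward cylinder $R^{+}$ of Lemma \ref{Harnack inequality} does not contain its own center time, so you must shift the center to $t_{0}-\tfrac{1}{2}\beta$ (say) so that the hypothetical vanishing point actually lies in $R^{+}$; this is covered by the bookkeeping you already acknowledge.
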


\section{Local behavior of solutions of the stationary equation}\label{3}
     The purpose of this section is to analyze the behavior of the radial solutions to the homogeneous problem
\begin{equation}\label{problemHardy}
(-\Delta)^{s} u=\l\dfrac{\,u}{|x|^{2s}} \mbox{ in } \mathbb{R}^N\setminus\{0\}
\end{equation}
in a neighborhood of the origin, in order to use this information as a tool for proving the existence and nonexistence results.

\begin{Lemma} \label{singularity} Let $0<\lambda\leq \Lambda_{N,s}$. Then $v_{\pm\alpha}=|x|^{-\frac{N-2s}{2}\pm\alpha}$ are
solutions to
\begin{equation}\label{homogeneous}
(-\Delta)^s u= \lambda\frac{u}{|x|^{2s}}\inn (\ren\setminus{\{0\}}),
\end{equation}
where $\alpha$ is obtained by the identity
\begin{equation}\label{lambda}
\lambda=\lambda(\alpha)=\lambda(-\alpha)=\dfrac{2^{2s}\,\Gamma(\frac{N+2s+2\alpha}{4})\Gamma(\frac{N+2s-2\alpha}{4})}{\Gamma(\frac{N-2s+2\alpha}{4})\Gamma(\frac{N-2s-2\alpha}{4})}.
\end{equation}
\end{Lemma}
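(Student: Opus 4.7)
The plan is to reduce the lemma to the classical identity for the fractional Laplacian of a pure power,
$$(-\Delta)^{s} |x|^{-\beta} = C(\beta)\,|x|^{-\beta-2s}, \qquad x\in\mathbb{R}^{N}\setminus\{0\},$$
valid for $0<\beta<N-2s$, with
$$C(\beta)=2^{2s}\,\frac{\Gamma\bigl(\tfrac{\beta+2s}{2}\bigr)\,\Gamma\bigl(\tfrac{N-\beta}{2}\bigr)}{\Gamma\bigl(\tfrac{\beta}{2}\bigr)\,\Gamma\bigl(\tfrac{N-\beta-2s}{2}\bigr)}.$$
I would either cite this from \cite{FLS} or give a short derivation via the Fourier representation $\mathcal{F}((-\Delta)^{s}u)=|\xi|^{2s}\hat u$ combined with the tempered-distribution identity $\mathcal{F}(|x|^{-\beta})=\kappa_{N,\beta}\,|\xi|^{\beta-N}$, from which the ratio $\kappa_{N,\beta}/\kappa_{N,\beta+2s}$ produces precisely $C(\beta)$ once the normalization $a_{N,s}$ used in \eqref{fraccionario} is inserted. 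A direct computation from the singular integral definition, using the rescaling $y=|x|z$ and a Beta-function integral, is a valid alternative.

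Once this formula is in hand I first check that the relevant exponents lie in the admissible range. For $0<\lambda\leq\Lambda_{N,s}$, the parameter $\alpha$ produced by \eqref{lambda} satisfies $0\leq \alpha<\tfrac{N-2s}{2}$, so $\beta_{\pm}:=\tfrac{N-2s}{2}\mp\alpha$ lies in $(0,N-2s)$. In this range $v_{\pm\alpha}\in \mathcal{C}^{\infty}(\mathbb{R}^{N}\setminus\{0\})\cap \mathcal{L}^{s}(\mathbb{R}^{N})$, so the principal-value integral defining $(-\Delta)^{s}v_{\pm\alpha}$ converges pointwise at every $x\neq 0$ and the pure-power formula may be invoked classically.

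Plugging $\beta=\beta_{+}=\tfrac{N-2s}{2}-\alpha$ into $C(\beta)$, the four Gamma arguments reduce to $\tfrac{N+2s-2\alpha}{4}$, $\tfrac{N+2s+2\alpha}{4}$, $\tfrac{N-2s-2\alpha}{4}$ and $\tfrac{N-2s+2\alpha}{4}$, and these reassemble into exactly the right-hand side of \eqref{lambda}. Hence $(-\Delta)^{s}v_{+\alpha}=\lambda(\alpha)\,|x|^{-2s}v_{+\alpha}$. The choice $\beta_{-}$ merely exchanges $\alpha\leftrightarrow -\alpha$ in these four arguments; since the product of Gamma factors is invariant under that swap, we simultaneously obtain $(-\Delta)^{s}v_{-\alpha}=\lambda(\alpha)\,|x|^{-2s}v_{-\alpha}$ and the symmetry $\lambda(\alpha)=\lambda(-\alpha)$. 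The only delicate point in the whole argument is fixing the overall constant in the pure-power identity, in particular tracking the $a_{N,s}$ normalization so that the coefficient matches \eqref{lambda} on the nose; the rest is bookkeeping of Gamma-function arguments.
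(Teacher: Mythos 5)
Your proposal is correct and follows essentially the same route as the paper: the paper computes the Fourier transform of $v_{\pm\alpha}$ via the Stein--Weiss Hankel formula and reads off the constant as $m_{\alpha}m_{-\alpha}$, which is exactly the ratio $\kappa_{N,\beta}/\kappa_{N,\beta+2s}=C(\beta)$ you invoke for the pure power $|x|^{-\beta}$. Your Gamma-argument bookkeeping for $\beta_{\pm}=\tfrac{N-2s}{2}\mp\alpha$ checks out and reproduces \eqref{lambda}, including the symmetry $\lambda(\alpha)=\lambda(-\alpha)$.
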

\begin{proof}
Applying the Fourier transform of radial functions (see for instance \cite[Theorem 4.1]{SW}) it yields,

\begin{eqnarray*}
\mathcal{F} (v_{\alpha})(\xi)&=& \xi^{-\frac{N-1}{2}}\dint\limits_{0}^{\infty} (r\xi)^{\frac{1}{2}} J_{\frac{N-2}{2}}(r\xi)v_{\alpha}(r)\,r^{\frac{N-1}{2}}\,dr=\,\xi^{-\frac{N}{2}-s-\alpha}\dint_{0}^{\infty} (r\xi)^{s+\alpha} J_{\frac{N-2}{2}}(r\xi)\,d(r\xi)\\
&=&2^{\alpha+s}\dfrac{\Gamma(\frac{N+2s+2\alpha}{4})}{\Gamma(\frac{N-2s-2\alpha}{4})}\xi^{-\frac{N}{2}-s-\alpha},
\end{eqnarray*}
where $J_{\frac{N-2}{2}}$ denotes the Bessel function of the first kind
\begin{equation*}
  J_{\nu }(t)=\bigg(\frac{t}{2}\bigg)^{\!\!\nu }\sum\limits_{k=0}^{\infty }
  \dfrac{(-1)^{k}}{\Gamma (k+1)\Gamma (k+\nu +1)}\bigg(\frac{t}{2}\bigg)
  ^{\!\!2k}.
\end{equation*}

     Now, we notice that
$$
(-\Delta )^{s} v_{\alpha}=\mathcal F^{-1}(         {|\xi|}^{2s} \mathcal{F} (v_{\alpha})(\xi))= 2^{\alpha+s}\dfrac{\Gamma(\frac{N+2s+2\alpha}{4})}{\Gamma(\frac{N-2s-2\alpha}{4})} \mathcal{F}^{-1} (         {|\xi|}^{-\frac{N}{2}+s-\alpha})= \lambda |x|^{-2s} v_{\alpha}
$$
with $\lambda=\lambda(\alpha)$ equal to \eqref{lambda}.
\end{proof}
\begin{remark}
Notice that $\lambda(\alpha)= \lambda(-\alpha)=m_{\alpha}m_{-\alpha}$, with $m_{\alpha}= 2^{\alpha+s}\dfrac{\Gamma(\frac{N+2s+2\alpha}{4})}{\Gamma(\frac{N-2s-2\alpha}{4})}$.
\end{remark}

\begin{Lemma}\label{LambdaVsSing}
The following equivalence holds true:
$$
0<\lambda(\alpha)=\lambda(-\alpha)\leq \Lambda_{N,s}\mbox{ if and only if } 0\leq \alpha<\dfrac{N-2s}{2}.
$$
\end{Lemma}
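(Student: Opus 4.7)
The plan is to reduce to the case $\alpha \ge 0$ (using evenness of $\lambda$), and then show that $\lambda\colon [0,(N-2s)/2) \to (0,\Lambda_{N,s}]$ is a strictly decreasing continuous bijection. Two endpoint evaluations and one monotonicity argument suffice.

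First I would check the two boundary values. Setting $\alpha=0$ in the formula \eqref{lambda} immediately gives
$$\lambda(0)=2^{2s}\,\frac{\Gamma^{2}(\frac{N+2s}{4})}{\Gamma^{2}(\frac{N-2s}{4})}=\Lambda_{N,s}.$$
For the other endpoint, as $\alpha\to \frac{N-2s}{2}^{-}$, the four arguments of the Gamma functions in \eqref{lambda} tend to $N/2$, $s$, $(N-2s)/2$, and $0^{+}$ respectively. Since $\Gamma(x)\to+\infty$ as $x\to 0^{+}$ while the other three Gamma values are finite and positive, we get $\lambda(\alpha)\to 0$. Continuity of $\lambda$ on $[0,(N-2s)/2)$ is clear because the four Gamma arguments stay strictly positive on that interval.

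Next I would prove strict monotonicity by computing the logarithmic derivative:
$$\frac{d}{d\alpha}\log\lambda(\alpha)=\tfrac{1}{2}\Bigl[\psi\bigl(\tfrac{N+2s}{4}+\tfrac{\alpha}{2}\bigr)-\psi\bigl(\tfrac{N+2s}{4}-\tfrac{\alpha}{2}\bigr)-\psi\bigl(\tfrac{N-2s}{4}+\tfrac{\alpha}{2}\bigr)+\psi\bigl(\tfrac{N-2s}{4}-\tfrac{\alpha}{2}\bigr)\Bigr],$$
where $\psi=\Gamma'/\Gamma$. Setting $h(\beta):=\psi(\beta+\alpha/2)-\psi(\beta-\alpha/2)$, the bracket equals $h\bigl(\tfrac{N+2s}{4}\bigr)-h\bigl(\tfrac{N-2s}{4}\bigr)$. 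Since
$$h'(\beta)=\psi'(\beta+\alpha/2)-\psi'(\beta-\alpha/2),$$
and the trigamma function $\psi'(x)=\sum_{n\ge 0}(x+n)^{-2}$ is strictly decreasing on $(0,\infty)$, for any $\alpha>0$ we have $h'(\beta)<0$. Because $\frac{N+2s}{4}>\frac{N-2s}{4}$, this gives $h\bigl(\tfrac{N+2s}{4}\bigr)<h\bigl(\tfrac{N-2s}{4}\bigr)$, hence $\frac{d}{d\alpha}\log\lambda(\alpha)<0$ for every $\alpha\in(0,\tfrac{N-2s}{2})$.

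Combining continuity, the two boundary values and the strict monotonicity, $\lambda$ maps $[0,\tfrac{N-2s}{2})$ bijectively onto $(0,\Lambda_{N,s}]$, which is exactly the claimed equivalence. Evenness of $\lambda$ (already recorded in \eqref{lambda}) shows that no generality is lost in restricting to $\alpha\ge 0$. The only real subtlety is the monotonicity step; one might be tempted to expand the Gamma ratios directly, but the cleanest route is via the digamma identity above, using only the elementary fact that $\psi'$ is strictly decreasing on the positive axis.
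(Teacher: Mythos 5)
Your proof is correct, and its overall architecture (continuity on $[0,\tfrac{N-2s}{2})$, the value $\lambda(0)=\Lambda_{N,s}$, and strict monotonicity) coincides with the paper's. The difference lies in how the monotonicity is established. The paper avoids differentiating: it substitutes the Weierstrass product $\tfrac{1}{\Gamma(x)}=xe^{\gamma x}\prod_{n\ge1}(1+\tfrac{x}{n})e^{-x/n}$ into $\log\tfrac{1}{\lambda(\alpha)}$, obtaining a convergent series of terms of the form $\log\tfrac{a_n^2-\alpha'^2}{b_n^2-\alpha'^2}$ with $a_n>b_n$, and concludes from the elementary fact that $\zeta\mapsto\tfrac{a^2-\zeta^2}{b^2-\zeta^2}$ is increasing. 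You instead differentiate $\log\lambda$, organize the four digamma terms as a second difference $h(\tfrac{N+2s}{4})-h(\tfrac{N-2s}{4})$ with $h(\beta)=\psi(\beta+\tfrac{\alpha}{2})-\psi(\beta-\tfrac{\alpha}{2})$, and use that the trigamma function is strictly decreasing; this is valid since for $\beta\in[\tfrac{N-2s}{4},\tfrac{N+2s}{4}]$ and $\alpha<\tfrac{N-2s}{2}$ all arguments $\beta\pm\tfrac{\alpha}{2}$ stay positive. The two arguments are essentially the same inequality in different clothing (termwise differentiation of the paper's series produces exactly your trigamma sums), but yours is shorter and sidesteps the convergence bookkeeping, while the paper's stays at the level of elementary identities without invoking $\psi$ or $\psi'$. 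You also add the endpoint limit $\lambda(\alpha)\to0$ as $\alpha\to(\tfrac{N-2s}{2})^-$, which the paper leaves implicit; that makes the surjectivity onto $(0,\Lambda_{N,s}]$ explicit and is a small improvement.
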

For the reader convenience, we include an elemental proof of this Lemma (see also \cite{FLS, He}).
\begin{proof}
Notice that $\lambda(\alpha)$ is a positive continuous function for $0\leq \alpha<\dfrac{N-2s}{2}$, such that $\lambda(0)=\Lambda_{N,s}$. It is sufficient to prove that for fixed $s$, $\lambda(\alpha)$ is a decreasing function.

     Let consider the following representation of the Gamma function (see \cite{A} for more details):
$$
\dfrac{1}{\Gamma(x)}= x e^{\gamma x}\prod\limits_{n=1} ^{\infty} \big(1+\frac{x}{n}\big) e^{-\frac{x}{n}},
$$
where $\gamma$ is the Euler-Mascheroni constant (see for instance \cite{AS}). We aim to prove that $\log \dfrac{1}{\lambda(\alpha)}$ is an increasing function in $\alpha$.
$$
\begin{array}{rcl}
\log \dfrac{1}{\lambda(\alpha)}&=& \log \dfrac{1}{2^{2s}}\dfrac{\frac{1}{\Gamma (\frac{N+2s+2\alpha}{4})}\cdot \frac{1}{\Gamma(\frac{N+2s-2\alpha}{4})}}{\frac{1}{\Gamma(\frac{N-2s+2\alpha}{4})}\cdot\frac{1}{\Gamma(\frac{N-2s-2\alpha}{4})}}\\
\\
&=&-2s\cdot\log 2+\log \dfrac{(N+2s)^2-4\alpha^2}{(N-2s)^2-4\alpha^2}+2\gamma s+\sum\limits_{n=1}^{\infty} \Big[ \log\dfrac{(\frac{N+4n+2s}{4n})^2-\frac{\alpha^2}{4n^2}}{(\frac{N+4n-2s}{4n})^2-\frac{\alpha^2}{4n^2}}-\dfrac{2s}{n}\Big].
\end{array}
$$
Notice that the last term is a convergent series in the same way as
$$\prod\limits_{n=1} ^{\infty} \big(1+\frac{x}{n}\big) e^{-\frac{x}{n}}$$
is a convergent product.
We conclude just by noticing that if $a>b$ and $\zeta>0$, then $\dfrac{a^2-\zeta^2}{b^2-\zeta^2}$ is an increasing function in $\zeta$.
\end{proof}
\begin{remark}\label{gamma1}
Notice that we can explicitly construct two positive solutions to the homogeneous problem \eqref{homogeneous}. Henceforth, we denote
\begin{equation}\label{g1}
\gamma= \dfrac{N-2s}{2}-\alpha \hbox{ and } \bar\gamma= \dfrac{N-2s}{2}+\alpha,
\end{equation}
with $0<\gamma\leq \dfrac{N-2s}{2}\leq\bar\gamma<(N-2s)$. Since $N-2\gamma-2s={2}\alpha>0$ and $N-2\bar\gamma-2s=-{2}\alpha<0$, then {$(-\Delta)^{s/2}(|x|^{-\gamma})\in L^2(\Omega)$, but $(-\Delta)^{s/2}(|x|^{-\bar\gamma})$ does not.}
\end{remark}

We will use these results to study the unboundedness of any positive weak supersolution, and moreover, to obtain an explicit quantitative information on the growth around the origin when the summability of the datum is good enough.

\section{Weak Harnack inequality for a weighted problem }\label{HH}
Frank, Lieb and Seiringer proved in \cite[Proposition 4.1]{FLS} the following representation result.
\begin{Lemma}\label{GSrepresentation} (Ground State Representation)
Let $0<\gamma< \frac{N-2s}{2}$. If $\phi\in C_{0}^{\infty}
(\mathbb{R}^{N})$ and $\bar{\phi}(x):= |x|^{\gamma} \phi(x)$, then
\begin{equation}\label{GroundState}
\dint_{\mathbb{R}^{N}} \, |\xi|^{2s} |\hat{\phi}(\xi)|^{2}\,d\xi
-(\Lambda_{N,s}+\Phi_{N,s}(\gamma))
\dint_{\mathbb{R}^{N}}|x|^{-2s}|\phi(x)|^2\,dx=  {\frac{a_{N,s}}{2}}\dint\dint_{\mathbb{R}^{2N}}\,
\frac{|\bar{\phi}(x)-\bar{\phi}(y)|^2}{|x-y|^{N+2s}}\frac{dx}{|x|^{\gamma}}\,
\frac{dy}{|y|^{\gamma}},
\end{equation}
where
\begin{equation}\label{Phi}
\Phi_{N,s}(\gamma):=2^{2 s}\left(\frac{\Gamma\big(\frac{\gamma+2s}{2}\big)\Gamma\big(\frac{N-\gamma}{2}\big)}{\Gamma\big(\frac{N-\gamma-2s}{2}\big)\Gamma\big(\frac{\gamma}{2}\big)}-
\frac{\Gamma^2\big(\frac{N+2s}{4}\big)}{\Gamma^2\big(\frac{N-2s}{4}\big)}\right).
\end{equation}
\end{Lemma}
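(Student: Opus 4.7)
The plan is to reduce the identity to an algebraic decomposition of the singular kernel under the substitution $\phi = |x|^{-\gamma}\bar\phi$, and then to recognize the residual term as the Hardy-weighted quadratic form associated with $v(x):=|x|^{-\gamma}$. The starting point is the classical Plancherel-type representation
$$
\int_{\mathbb R^N}|\xi|^{2s}|\hat\phi(\xi)|^2\,d\xi
= \frac{a_{N,s}}{2}\iint_{\mathbb R^{2N}}\frac{|\phi(x)-\phi(y)|^2}{|x-y|^{N+2s}}\,dx\,dy,
$$
which is \eqref{equivalencia} extended to $\mathcal S(\mathbb R^N)$.

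The algebraic core of the proof is the pointwise identity
$$
\bigl(v(x)g(x)-v(y)g(y)\bigr)^2
= v(x)v(y)\bigl(g(x)-g(y)\bigr)^2
+ \bigl(v(x)-v(y)\bigr)\bigl(v(x)g(x)^2-v(y)g(y)^2\bigr),
$$
applied with $v(x)=|x|^{-\gamma}$ and $g=\bar\phi$. I would verify it by direct expansion. Dividing by $|x-y|^{N+2s}$ and integrating, the first term gives exactly $\iint\frac{|\bar\phi(x)-\bar\phi(y)|^2}{|x-y|^{N+2s}}\frac{dx}{|x|^\gamma}\frac{dy}{|y|^\gamma}$, the right-hand side of the lemma (up to the factor $a_{N,s}/2$). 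The second term is a product of two functions antisymmetric in $(x,y)$, so it is symmetric; symmetrizing and grouping yields
$$
\iint\frac{(v(x)-v(y))(v(x)g(x)^2-v(y)g(y)^2)}{|x-y|^{N+2s}}\,dx\,dy
= 2\int \bar\phi(x)^2\,v(x)\,\mathrm{P.V.}\!\int\frac{v(x)-v(y)}{|x-y|^{N+2s}}\,dy\,dx.
$$

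The inner principal-value integral is $\frac{1}{a_{N,s}}(-\Delta)^s v(x)$. Here is where the content of Lemma~\ref{singularity} enters: writing $\gamma=\frac{N-2s}{2}-\alpha$ with $\alpha\in(0,\frac{N-2s}{2})$, that lemma gives $(-\Delta)^s v(x)=\lambda(\alpha)\,|x|^{-2s}v(x)$ with $\lambda(\alpha)$ as in \eqref{lambda}. Using the substitutions $\frac{N+2s+2\alpha}{4}=\frac{N-\gamma}{2}$, $\frac{N+2s-2\alpha}{4}=\frac{\gamma+2s}{2}$, $\frac{N-2s+2\alpha}{4}=\frac{N-\gamma-2s}{2}$, $\frac{N-2s-2\alpha}{4}=\frac{\gamma}{2}$, one checks that $\lambda(\alpha)=\Lambda_{N,s}+\Phi_{N,s}(\gamma)$ with $\Phi_{N,s}$ given by \eqref{Phi}. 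Since $v(x)^2|x|^{-2s}\bar\phi(x)^2=|x|^{-2s}|\phi(x)|^2$, the remainder term equals $\frac{2}{a_{N,s}}(\Lambda_{N,s}+\Phi_{N,s}(\gamma))\int|x|^{-2s}|\phi|^2\,dx$, and moving it to the left-hand side gives exactly \eqref{GroundState}.

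The main obstacle I anticipate is a careful justification of the principal-value identity $(-\Delta)^s|x|^{-\gamma}=(\Lambda_{N,s}+\Phi_{N,s}(\gamma))|x|^{-\gamma-2s}$ in the weighted sense and the exchange of P.V.\ with integration against $\bar\phi^2$; this requires the test function $\phi$ to vanish near the origin or an approximation with $\chi_{\{|x-y|>\varepsilon\}}$ and a controlled passage to the limit, exploiting $\gamma<\frac{N-2s}{2}$ so that $|x|^{-\gamma}\in L^2_{\loc}$ and $|x|^{-\gamma-2s}|\phi|\in L^1_{\loc}$. The remaining computation, namely checking $\Lambda_{N,s}+\Phi_{N,s}(\gamma)=\lambda(\alpha)$ via Gamma-function arithmetic, is routine once the right change of variable between $\gamma$ and $\alpha$ is identified.
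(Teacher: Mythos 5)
Your argument is correct: the algebraic identity $(v(x)g(x)-v(y)g(y))^2=v(x)v(y)(g(x)-g(y))^2+(v(x)-v(y))(v(x)g(x)^2-v(y)g(y)^2)$ checks out by expansion, the Gamma-function bookkeeping $\lambda(\alpha)=\Lambda_{N,s}+\Phi_{N,s}(\gamma)$ under $\gamma=\tfrac{N-2s}{2}-\alpha$ is exact, and the remaining regularization issues you flag are the genuine ones. The paper itself does not prove this lemma but quotes it from Frank--Lieb--Seiringer, and your ground-state substitution combined with Lemma~\ref{singularity} is essentially their proof, so there is nothing further to reconcile.
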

A relevant fact for us is the following result.
\begin{Proposition}\label{psi}
Consider the function
$$
\begin{array}{rcl}
\Psi_{N,s}:[0, \frac{N-2s}{2}]&\rightarrow& [0,\Lambda_{N,s}]\\
&&\\
\gamma &\rightarrow& \Psi_{N,s}(\gamma):=\Lambda_{N,s}+\Phi_{N,s}(\gamma),
\end{array}
$$
where $\Phi_{N,s}$ is defined by \eqref{Phi}. Then $\Psi_{N,s}$  is strictly increasing and surjective.

\end{Proposition}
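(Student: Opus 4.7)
The plan is to reduce the claim directly to the monotonicity statement already contained in Lemma \ref{LambdaVsSing}, via a simple change of variable.

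First I would observe that, by the very definition of $\Lambda_{N,s}$ in \eqref{bestC}, the cancellation
\begin{equation*}
\Psi_{N,s}(\gamma)=\Lambda_{N,s}+\Phi_{N,s}(\gamma)=2^{2s}\frac{\Gamma\!\left(\frac{\gamma+2s}{2}\right)\Gamma\!\left(\frac{N-\gamma}{2}\right)}{\Gamma\!\left(\frac{N-\gamma-2s}{2}\right)\Gamma\!\left(\frac{\gamma}{2}\right)}
\end{equation*}
takes place, because the constant term in \eqref{Phi} is precisely $\Lambda_{N,s}/2^{2s}$. This is a single clean ratio of Gamma factors, which is the form we need.

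Next I would perform the substitution $\alpha:=\frac{N-2s}{2}-\gamma$, which is an order-reversing bijection of $\left[0,\frac{N-2s}{2}\right]$ onto itself. A direct replacement gives
$$\tfrac{\gamma+2s}{2}=\tfrac{N+2s-2\alpha}{4},\qquad \tfrac{N-\gamma}{2}=\tfrac{N+2s+2\alpha}{4},\qquad \tfrac{N-\gamma-2s}{2}=\tfrac{N-2s+2\alpha}{4},\qquad \tfrac{\gamma}{2}=\tfrac{N-2s-2\alpha}{4},$$
so that $\Psi_{N,s}(\gamma)=\lambda(\alpha)$ with $\lambda(\alpha)$ exactly the quantity defined in \eqref{lambda}. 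The two endpoints of the interval can then be read off immediately: at $\gamma=\frac{N-2s}{2}$ we have $\alpha=0$ and hence $\Psi_{N,s}(\frac{N-2s}{2})=\lambda(0)=\Lambda_{N,s}$, while at $\gamma=0$ we have $\alpha=\frac{N-2s}{2}$, making $\Gamma(\frac{N-2s-2\alpha}{4})=\Gamma(0)=+\infty$ appear in the denominator, so $\Psi_{N,s}(0)=0$.

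For the monotonicity, I would simply invoke the argument used in the proof of Lemma \ref{LambdaVsSing}: there one shows, via the Weierstrass product for $1/\Gamma$, that $\log(1/\lambda(\alpha))$ is a strictly increasing function of $\alpha\in\left[0,\frac{N-2s}{2}\right)$, so $\lambda$ is strictly decreasing in $\alpha$. Since $\gamma\mapsto\alpha$ is order-reversing, $\Psi_{N,s}$ is strictly increasing in $\gamma$. Combining strict monotonicity with continuity of the Gamma representation and the endpoint values $\Psi_{N,s}(0)=0$, $\Psi_{N,s}(\frac{N-2s}{2})=\Lambda_{N,s}$, the intermediate value theorem yields surjectivity onto $[0,\Lambda_{N,s}]$.

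The only mildly technical point, and thus the main place where care is needed, is the algebraic identification $\Psi_{N,s}(\gamma)=\lambda(\frac{N-2s}{2}-\gamma)$; once this is in hand, everything else is either a direct endpoint evaluation or a quote of the log-derivative computation already performed for Lemma \ref{LambdaVsSing}.
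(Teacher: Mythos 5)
Your proof is correct, and it follows exactly the route the paper itself takes: the paper gives no separate proof of Proposition \ref{psi} but records precisely your key identity $\Psi_{N,s}(\gamma)=\lambda\bigl(\tfrac{N-2s}{2}-\gamma\bigr)$ immediately after the statement, and delegates the strict monotonicity to the Weierstrass-product argument in the proof of Lemma \ref{LambdaVsSing}. Your algebraic verification of the Gamma-factor matching and of the endpoint values is accurate, so nothing further is needed.
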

Notice that         {, considering $\gamma$ defined in \eqref{g1},} $\lambda(\alpha)=\Psi_{         {N,s}}\left(\dfrac{N-2s}{2}-\alpha\right)$, and therefore
for any $0<\lambda<\Lambda_{N,s}$, there exists $\alpha\in          {(}0, \frac{N-2s}{2}         {)}$, such that
$$\lambda=\lambda(\alpha)=\dfrac{2^{2s}\,\Gamma(\frac{N+2s+2\alpha}{4})\Gamma(\frac{N+2s-2\alpha}{4})}{\Gamma(\frac{N-2s+2\alpha}{4})\Gamma(\frac{N-2s-2\alpha}{4})}.$$
Taking $0<\gamma=\frac{N-2s}{2}-\alpha< \frac{N-2s}{2}$, and
$\Lambda_{N,s}+\Phi_{N,s}(\gamma)=\lambda(\alpha)$, by Lemma \ref{GSrepresentation} we can write
the energy as
\begin{equation}\label{energyGS}
\dint_{\mathbb{R}^{N}} \, |\xi|^{2s} |\hat{u}(\xi,t)|^{2}\,d\xi
-\lambda(\alpha)\dint_{\mathbb{R}^{N}}|x|^{-2s}|u(x,t)|^2\,dx=  {\frac{a_{N,s}}{2}}\dint\dint_{\mathbb{R}^{2N}}\,
\frac{|v(x,t)-v(y,t)|^2}{|x-y|^{N+2s}}\frac{dx}{|x|^{\gamma}}\,
\frac{dy}{|y|^{\gamma}},
\end{equation}
where $v(x,t):=|x|^\gamma u(x,t)$. The Euler-Lagrange equation associated to this identity is
$$
(-\Delta)^{s} u-\l\dfrac{\,u}{|x|^{2s}}=|x|^\g L_\g v(x,t),
$$
where
$$
L_\g (v(x,t)):= a_{N,s}\:\:P.V. \int_{\mathbb{R}^{N}}
(v(x,t)-v(y,t))K(x,y)dy,
$$
and
$$K(x,y)=\dfrac{1}{|x|^\g}\dfrac{1}{|y|^\g}\dfrac{1}{|x-y|^{N+2s}},\quad 0<\g=\frac{N-2s}{2}-\alpha<\frac{N-2s}{2}  .$$
Thus we conclude that if $u$  is an energy solution of problem \eqref{eq:prob}  with $0<\lambda<\Lambda_{N,s}$, then
 $v$ solves the parabolic equation
\begin{equation}\label{peso1}
\left\{
\begin{array}{rcl}
 |x|^{-2\g}v_t+L_\g v&=&|x|^{-\g}f(x,t)\mbox{ in } \Omega\times (0,T),\\
v(x,t)&=&0\inn (\ren\setminus\Omega)\times[ 0,T),\\
v(x,0)&=&v_0(x):= |x|^\g u_0(x)\mbox{ if }x\in\O.
\end{array}
\right. \end{equation}
         {Therefore, if we want to analyze the behavior of $u$ near the origin, we may, equivalently, deal with the same question for $v$.} In particular, we will prove that the  weighted operator
$$|x|^{-2\g}v_t-L_\g v,$$
 satisfies a suitable weak Harnack inequality. In the local case, this kind of result can be obtained as a consequence of some results by Chiarenza-Frasca, Chiarenza-Serapioni and Gutierrez-Wheden, see \cite{CFr, Cs, GW} and the references therein.

Let us precise first the natural functional framework associated to the new problem \eqref{peso1}. For simplicity of typing we  denote
\begin{equation}\label{defMuNu}
 d\mu:=\dfrac{dx}{|x|^{2\g}},\quad \hbox{ and }
\quad d\nu:= K(x,y)dxdy.
\end{equation}
Let $\O\subseteq \ren$. We define the weighted Sobolev space $Y^{s,\g}(\O)$ as
$$
Y^{s,\g}(\O)\dyle := \Big\{ \phi\in L^2(\O,  d\mu):\dint_{\O}\dint_{\O}(\phi(x)-\phi(y))^2d\nu<+\infty\Big\}.
$$
It is clear that $Y^{s,\g}(\O)$ is a Hilbert space endowed with the
norm
$$
\|\phi\|_{  {Y^{s,\g}(\O)}}:=
\Big(\dint_{\O}|\phi(x)|^2d\mu
+\dint_{\O}\dint_{\O}(\phi(x)-\phi(y))^2d\nu\Big)^{\frac 12},
$$
and we define the space $Y^{s,\g}_0(\O)$ as the
completion of $\mathcal{C}^\infty_0(\O)$ with respect to this norm. In particular, we denote
$$
|||\phi|||_{Y^{s,\g}_0(\O)}:=
\Big(\dint_{\O}\dint_{\O}(\phi(x)-\phi(y)|^2d\nu\Big)^{\frac 12}.
$$
If $\O$ is bounded, the norms $|||\cdot|||_{Y^{s,\g}_0(\O)}$ and $ \|\cdot\|_{  {Y^{s,\g}(\O)}}$ are equivalent (see Theorem \ref{poincare} in Appendix B for more details).
If $\Omega=\ren$, using the definition of
$L_{\g}$, we obtain that for all $w_1,w_2\in Y^{s,\g}_0(\ren)$,
$$
\langle L_{\g}(w_1),w_2\rangle
=  \frac{a_{N,s}}{2}\dint_{\ren}\dint_{\ren}(w_1(x)-w_1(y))(w_2(x)-w_2(y))d\nu.
$$
Let us begin by the following natural definition.
\begin{Definition}\label{super}
Let $v \in \mathcal{C}(  {[0,T)}, L^2(\ren, d\mu))\cap
L^{2}(0,T;Y^{s,\g}(\ren))$. We say that $v$ is a
supersolution to problem \eqref{peso1} if $v(x,0)\ge v_0(x)$ and
for all $\O_1\subset\subset \O$, for all $[t_1,t_2]\subset (0,T)$
we have
\begin{equation*}
\begin{split}\label{para-wnn}
  {\int_{t_1}^{t_2}\int_{\Omega_1}{-\varphi_t v\,d\mu         {dt}}}&   {+\frac{a_{N,s}}{2}         {\int_{t_1}^{t_2}}\int_{\tilde{Q}}( v(x,t) - v (y,t))
(\varphi(x,t)- \varphi(y,t))d\nu\,dt}\\
 &  {\ge \dint_{t_1}^{t_2}\dint_{\Omega_1}\,f\varphi\,|x|^{-\gamma}dx\,dt+
 \int_{\Omega_1} \varphi(x,t_1)v(x,t_1)d\mu-\int_{\Omega_1}
\varphi(x,t_2)v(x,t_2)d\mu,}
\end{split}\end{equation*}
for any nonnegative $ \varphi \in L^{2} (t_1,t_2;
Y^{s,\g}_0(\Omega_1))$, such that $\varphi_t\in L^2(t_1,t_2;Y^{-s,\gamma}(\Omega_1))$,  where $\tilde{Q}:=\mathbb{R}^{2N}\setminus(\mathcal{C}\Omega_1\times\mathcal{C}\Omega_1)$.
\end{Definition}

The main  result of this section is the next Theorem.
\begin{Theorem}{\it(Weak Harnack Inequality)}\label{harnack}
Assume that $f$ (resp. $v_0$) $\ge 0$ in $\O\times (0,T)$ (resp.
in $\O$). Let $v \in \mathcal{C}(  {[0,T)}, L^2(\ren, d\mu))\cap
L^{2}(0,T;Y^{s,\g}(\ren))$ be  a
supersolution to \eqref{peso1} with $v\gneqq 0$ in $\ren\times
(0,T)$.

Then for any
$q<1+\frac{2s}{N}$, we have
\begin{equation}\label{main}
\Big(\int\!\!\!\int_{Q_1}v^qd\mu dt\Big)^{\frac 1q}\le C \inf_{Q_2}v,
\end{equation}
where $Q_1=B_r(x_0)\times (t_1,t_2), Q_2=B_r(x_0)\times (t_3,t_4)$
with $  {0<}t_1<t_2<t_3<t_4<T$, $  {B_r(x_0)\subset\Omega}$ and $C=C(N,r,t_1,t_2,t_3,t_4)>0$.
\end{Theorem}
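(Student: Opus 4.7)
\emph{Plan.} The strategy will be Moser's iteration for parabolic equations, adapted to the weighted nonlocal setting in the manner of Felsinger--Kassmann, with each step rechecked to accommodate the singular factor $|x|^{-\gamma}|y|^{-\gamma}$ in $K$ and the weight $|x|^{-2\gamma}$ in $d\mu$. The scheme has three structural pieces: Caccioppoli-type energy estimates for the powers $v^p$, a weighted fractional Sobolev--Poincar\'e embedding on $Y^{s,\gamma}_0$ (the subject of Appendix~B) to fuel a De~Giorgi--Moser iteration, and a logarithmic estimate which, via a Bombieri--Giusti crossover, bridges positive and negative Lebesgue exponents for $v$.

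\emph{Caccioppoli and Moser iterations.} First I would test Definition~\ref{super} against $\eta(x)^2\psi(t)\,\bar v^{p-1}$, where $\bar v=\min(v,M)$ is a truncation used to make the choice admissible, $p<0$ or $p\in(0,1)$, and $\eta,\psi$ are spatial/temporal cutoffs supported in concentric parabolic subcylinders of $B_r(x_0)\times(t_1,t_4)$. Expanding the bilinear form and using a weighted version of the pointwise algebraic inequality
\[
(a-b)\bigl(\eta_1^2 a^{p-1}-\eta_2^2 b^{p-1}\bigr)\ge c_p\bigl(\eta_1 a^{p/2}-\eta_2 b^{p/2}\bigr)^2-C_p(\eta_1-\eta_2)^2(a^p+b^p)
\]
paired with the kernel $K(x,y)$ (this is where Appendix~B is invoked), one obtains an energy inequality of the shape
\[
\sup_{t\in I'}\int_{B'}\bar v^p\eta^2\,d\mu + c\int_{I'}|||\eta\,\bar v^{p/2}|||^2\,dt \le C\int_I\!\int_B \bar v^p\bigl(|\partial_t(\eta^2\psi)|+\text{cutoff/tail terms}\bigr)\,d\mu\,dt.
\]
The nonlocal tails $\int_{|y-x_0|>2r}K(x,y)\,dy$ are finite because $\gamma<(N-2s)/2$ and $N>2s$. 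Feeding this into the weighted fractional Sobolev embedding (obtained from the substitution $w=|x|^\gamma u$ together with Theorem~\ref{Sobolev}, up to an Appendix~B inequality), the standard Moser cascade on shrinking cylinders yields, for a suitable small $p_0>0$, the reverse-H\"older bound
\[
\inf_{Q_2} v \ge c\,\Bigl(\iint_{Q^{\mathrm{mid}}} v^{-p_0}\,d\mu\,dt\Bigr)^{-1/p_0},
\]
while iterating with positive exponents below the Sobolev threshold gives $\|v\|_{L^q(Q_1,d\mu\,dt)}\le C\,\|v\|_{L^{p_0}(Q^{\mathrm{mid}},d\mu\,dt)}$ for every $q<1+2s/N$.

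\emph{Log estimate and crossover.} Testing Definition~\ref{super} against $\eta(x)^2/v$ and applying the weighted nonlocal analogue of $(a-b)(a^{-1}-b^{-1})\le -(\log a-\log b)^2/\max(a,b)^2$ (again drawn from Appendix~B) produces a parabolic $\mathrm{BMO}$-type estimate for $w:=-\log v$. A John--Nirenberg / Bombieri--Giusti argument in the metric measure space $(\R^N,d\mu)$ then provides $p_0>0$ and $C>0$ with
\[
\Bigl(\iint_Q e^{p_0 w}\,d\mu\,dt\Bigr)\Bigl(\iint_Q e^{-p_0 w}\,d\mu\,dt\Bigr)\le C,
\]
i.e., the comparability of $\bigl(\iint v^{p_0}\bigr)^{1/p_0}$ and $\bigl(\iint v^{-p_0}\bigr)^{-1/p_0}$ on the relevant cylinder. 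Chaining this comparability with the two Moser cascades above delivers \eqref{main}.

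\emph{Main obstacle.} The genuine difficulty lies in the Caccioppoli step: the standard pointwise algebraic identities used to generate lower bounds for nonlocal Dirichlet forms are not sensitive to the asymmetric weight $|x|^{-\gamma}|y|^{-\gamma}$, and must be upgraded so that the square $(\eta_1 a^{p/2}-\eta_2 b^{p/2})^2$ survives multiplication by $K(x,y)$; producing these weighted algebraic inequalities is precisely the content of Appendix~B and the reason the Felsinger--Kassmann blueprint cannot be applied mechanically. A secondary, but essential, technical point is the uniform control of the nonlocal tails when $B_r(x_0)$ is close to the origin: the singular weight forces the constant $C$ in \eqref{main} to depend on $r$, $x_0$ and the time parameters, which is exactly the dependence displayed in the statement.
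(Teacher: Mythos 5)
Your plan coincides with the paper's proof: a Felsinger--Kassmann-style Moser iteration with the weighted kernel, namely Caccioppoli estimates for negative and small positive powers of $v$ fed into the weighted Sobolev inequality of Appendix~B, a logarithmic level-set estimate obtained by testing with $\psi^2/v$ together with the weighted Poincar\'e--Wirtinger inequality, and the Bombieri--Giusti lemma to cross from negative to positive exponents. The only minor discrepancy is one of attribution: the pointwise algebraic inequalities are taken unchanged from Felsinger--Kassmann (they do not see the kernel), and the real weighted work in Appendix~B and in the Caccioppoli step is the functional inequalities and the region-by-region estimation of the cutoff terms $(\psi(x)-\psi(y))^2K(x,y)$ --- but this does not affect the correctness of your outline.
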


The proof of this result follows  the classical arguments by Moser (see \cite{Mos}) with some necessary adaptations. In the context of the parabolic fractional-like operators the precedent work is the interesting  paper by Felsinger and Kassmann, \cite{FK}, that we will closely follow here, adapting the proofs to the weighted operator appearing in \eqref{peso1}.  To make the paper self-contained, we include here the corresponding proofs.

First of all, we will need an iteration result, originally proved
in \cite{Mos} and extended by Bombieri and Giusti in \cite{BG} to the case of general measures in the elliptic setting (see also  \cite[Lemma 2.2.6]{SC}).
\begin{Lemma}\label{clasi}
Let $\{U(r)\}_{\theta\le r\le 1}$ be a nondecreasing family of bounded
domains $U(r)\subset \mathbb{R}^{N+1}$,   {and let $m$, $c_0$ be positive constants, $\eta\in (0,1)$, $\theta\in [\frac 12,1]$ and $0<p_0\leq +\infty$}. Let $w$ be a positive,
measurable function defined on $U(1)$ satisfying
$$
\|w\|_{L^{p_{0}}(U(r), d\mu dt)}\le
\Big(\frac{c_0}{(R-r)^m  {|U(1)|_{d\mu\times dt}}}\Big)^{\Big(\frac{1}{p_0}-\frac{1}{p}\Big)}\Big(\int\!\!\!\!\int_{U(  {R})}w^{p}  {d\mu\,dt}\Big)^{\frac{1}{p}}
$$
  {for all} $r,R\in [\theta, 1]$,   {$r<R$},   {and for all} $p\in (0, \min\{\eta p_0, 1\})$.

     Assume also that
$$
\forall s>0:\; |U(1)\cap\{\log w>s\}|_{  {d\mu\times dt}} \le\frac{  {c_0}|U(1)|_{  {d\mu\times dt}}}{  {s}}.
$$
Then there exists $C(\theta,\eta,  {c_0,m},p_0)$ such that
$$
\Big(\int\!\!\!\!\int_{U(\theta)}w^{p_0}  {d\mu\,dt}\Big)^{\frac{1}{p_0}}\le
C|U(1)|_{  {d\mu\times dt}}^{\frac{1}{p_0}}.
$$
\end{Lemma}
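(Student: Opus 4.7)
This lemma is a measure-theoretic version of the classical Bombieri--Giusti iteration, and my plan is to adapt the proof of \cite[Lemma~2.2.6]{SC} to the weighted measure $d\mu\times dt$ on $U(r)\subset\mathbb{R}^{N+1}$. The two hypotheses play complementary roles: (i) is a reverse-H\"{o}lder inequality that upgrades an $L^p$ bound on $U(R)$ to an $L^{p_0}$ bound on the smaller $U(r)$, while (ii) is a BMO-type tail estimate on $\log w$ which substitutes for the a priori $L^p$ bound that one would normally feed into a reverse-H\"{o}lder argument.

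The first step is to synthesize a small-exponent integral bound out of (ii). Fix $\bar p\in(0,\min\{\eta p_0,1\})$. For any $s>0$, splitting $w$ at the level $e^{s}$ gives
$$\iint_{U(R)}w^{\bar p}\,d\mu\,dt\le e^{\bar p s}|U(1)|+\iint_{U(R)\cap\{\log w>s\}}w^{\bar p}\,d\mu\,dt,$$
and H\"{o}lder with exponents $(p_0/\bar p,\,p_0/(p_0-\bar p))$, combined with (ii), bounds the last integral by $\|w\|_{L^{p_0}(U(R))}^{\bar p}\bigl(c_0|U(1)|/s\bigr)^{1-\bar p/p_0}$. Inserting this into the reverse-H\"{o}lder inequality provided by (i) with $p=\bar p$, and writing $\tilde h(r):=\|w\|_{L^{p_0}(U(r))}\,|U(1)|^{-1/p_0}$, one arrives at the master inequality
$$\tilde h(r)\le A(R-r)^{-\beta}e^{s}+B(R-r)^{-\beta}s^{-\delta}\tilde h(R),\qquad\theta\le r<R\le 1,\ s>0,$$
for exponents $\beta,\delta>0$ and constants $A,B$ depending only on $c_0,m,p_0,\bar p,\eta$.

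The second and decisive step is the iteration. I would fix a dyadic sequence $r_k=\theta+(1-\theta)2^{-k}$, so that $r_0=1$ and $r_k\to\theta$, and at each stage choose $s=s_k$ large enough that the coefficient $B(r_k-r_{k+1})^{-\beta}s_k^{-\delta}$ does not exceed a prescribed $\varepsilon_k<1$. The resulting recursion
$$\tilde h(r_{k+1})\le A(r_k-r_{k+1})^{-\beta}e^{s_k}+\varepsilon_k\,\tilde h(r_k)$$
then unrolls into a series for $\tilde h(\theta)$ which, after a Bombieri--Giusti balancing of $s_k$ against $\varepsilon_k$, converges to a constant $C=C(\theta,\eta,c_0,m,p_0)$ independent of $w$. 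This is precisely the desired bound $\|w\|_{L^{p_0}(U(\theta))}\le C|U(1)|^{1/p_0}$.

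The main obstacle is this last balance. A fixed $\varepsilon<1$ would force $s_k\sim 2^{k\beta/\delta}$, whence $e^{s_k}$ grows super-exponentially in $k$ and the series diverges. The Bombieri--Giusti resolution is to let $\varepsilon_k$ tend to zero slowly, so that the tail products $\prod_{j\ge k}\varepsilon_j$ beat the growth of $e^{s_k}$, while at the same time $\prod_{k\ge 0}\varepsilon_k=0$, which ensures that the (possibly infinite) initial quantity $\tilde h(1)$ drops out of the final estimate. Arranging these three requirements simultaneously is the genuinely delicate point of the argument and is where the sharp decay rate $1/s$ in hypothesis (ii) enters in an essential way; once the balance is set, the remainder of the proof is a routine unrolling of the recursion.
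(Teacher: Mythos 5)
First, a point of comparison: the paper does not prove this lemma at all --- it is quoted from Bombieri--Giusti \cite{BG} and \cite[Lemma 2.2.6]{SC} --- so your proposal is supplying an argument where the paper supplies only a citation. Your first step is the correct standard reduction: splitting $w$ at the level $e^{s}$, applying H\"older against the $L^{p_0}$ norm, and using the measure bound on $\{\log w>s\}$ does yield the master inequality
$\tilde h(r)\le A(R-r)^{-\beta}e^{s}+B(R-r)^{-\beta}s^{-\delta}\tilde h(R)$ with $\delta=\frac1{\bar p}-\frac1{p_0}$.

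The gap is in the iteration, and it is fatal as proposed. Unrolling $\tilde h(r_{k+1})\le a_k+\varepsilon_k\tilde h(r_k)$ down to level $n$ gives
$\tilde h(r_n)\le\sum_{k<n}a_k\prod_{k<j<n}\varepsilon_j+\tilde h(1)\prod_{j<n}\varepsilon_j$, in which the last term $a_{n-1}=A(r_{n-1}-r_n)^{-\beta}e^{s_{n-1}}$ carries the \emph{empty} product, i.e.\ coefficient $1$. Since $\varepsilon_k<1$ forces $s_k\ge(B2^{k\beta}/\varepsilon_k)^{1/\delta}\to\infty$, this term diverges no matter how you tune $\varepsilon_k$; and your proposed fix --- sending $\varepsilon_k\to0$ so that tail products beat $e^{s_k}$ --- is self-defeating, because shrinking $\varepsilon_k$ forces $s_k$ (hence $e^{s_k}$) still larger, and in any case the tail products never multiply the offending last term. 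Optimizing over $n$ only yields a bound depending on $\tilde h(1)$, not a uniform constant. The genuine Bombieri--Giusti mechanism, which you never invoke, is to choose the truncation level $s$ (equivalently the small exponent $p$ in hypothesis (i) --- note that the hypothesis holds for \emph{all} small $p$, a freedom your argument does not use) in terms of the unknown $\tilde h(R)$ itself: taking $e^{s}=\tilde h(R)^{1/2}$ and $p\sim\log\varphi(R)/\varphi(R)$ with $\varphi=\log\tilde h$ produces the dichotomy ``either $\varphi(R)\le C(1+\log\frac1{R-r})$ is already bounded, or $\varphi(r)\le\frac34\varphi(R)+C\log\frac1{R-r}$,'' and it is this recursion on $\log\tilde h$ --- not on $\tilde h$ --- that iterates convergently along $r_k\downarrow\theta$. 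You would also need to address the qualitative finiteness of $\tilde h(r)$ for $r<1$ before either recursion can be started.
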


     Hereafter, we will make use of the following notation. Given $r>0$, we define
\begin{equation}\label{defI}
I_-(r):=(-r^{2s},0),\qquad I_+(r):=(0,r^{2s}),
\end{equation}
\begin{equation}\label{defQ}
Q_-(r):= B_r(0)\times I_-(r),\qquad Q_+(r):=B_r(0)\times I_+(r).
\end{equation}

The first step to prove Theorem \ref{harnack} is to establish the
next estimate (see \cite[Proposition 3.4]{FK}). Notice that we
just have to consider the case where $B_r(x_0)=B_r(0)$. For
simplicity, we will write $B_r$ instead of $B_r(0)$.
\begin{Lemma}\label{negative}
Assume that $\frac 12\le r<R\le 1$ and let $p>0$.  Consider
$v\gvertneqq 0$ a supersolution to \eqref{peso1}, then
\begin{equation}\label{est1}
\Big(\int\!\!\!\int_{Q_-(r)}v^{-\tau p}d\mu dt\Big)^{\frac{1}{\tau}}\le
A \int\!\!\!\int_{Q_-(R)}v^{-p}d\mu dt,
\end{equation}
where $\tau:=1+\frac{2s}{N}$ and
$$
A:=A(  {N,s,p},r,R, \gamma)=C  {(N,s,\gamma)}(p+1)^2\left(\frac{1}{(R-r)^{2s}}+\frac{1}{R^{2s}-r^{2s}}\right)^\tau.
$$
\end{Lemma}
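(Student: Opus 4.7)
The plan is to follow Moser's iteration scheme adapted to the weighted parabolic operator in \eqref{peso1}, closely mirroring the classical argument for negative exponents but with the usual difference quotients replaced by the nonlocal bilinear form associated with $d\nu$. The first step is to test the supersolution inequality against $\varphi = \eta^2(x)\chi^2(t)\,(v+\varepsilon)^{-p-1}$, where $\eta \in \mathcal{C}^\infty_0(B_R)$ is a spatial cutoff with $\eta \equiv 1$ on $B_r$, $|\nabla\eta| \le C/(R-r)$, and $\chi$ is a smooth temporal cutoff with $\chi \equiv 1$ on $I_-(r)$, supported in $I_-(R)$, satisfying $|\chi'| \le C/(R^{2s}-r^{2s})$. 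The parameter $\varepsilon>0$ is introduced to regularize the negative power, and will be removed at the end by monotone convergence. Because $\varphi \ge 0$ and $f \ge 0$, the forcing term on the right-hand side may be discarded, leaving a two-sided inequality relating the time-derivative term and the nonlocal quadratic form in $v$.

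The time-derivative term, after the identity $(v+\varepsilon)^{-p-1}\,\partial_t v = -\tfrac{1}{p}\,\partial_t(v+\varepsilon)^{-p}$ and one integration by parts in $t$, produces the supremum-in-time contribution
\[
\sup_{t\in I_-(r)} \int_{B_r}(v+\varepsilon)^{-p}\,d\mu \;\le\; \frac{C}{R^{2s}-r^{2s}}\int\!\!\!\int_{Q_-(R)}(v+\varepsilon)^{-p}\,d\mu\,dt \;+\;(\text{nonlocal term}),
\]
since $\chi(x,t_1)=0$ by construction. The nonlocal quadratic term is handled by an algebraic inequality of the type
\[
(a-b)\bigl(a^{-p-1}\eta_a^2 - b^{-p-1}\eta_b^2\bigr) \;\le\; -\,c_1(p)\bigl(a^{-p/2}\eta_a - b^{-p/2}\eta_b\bigr)^2 + c_2(p)\,(\eta_a-\eta_b)^2\,\max(a,b)^{-p},
\]
valid for $a,b>0$, whose suitable nonlocal version should be one of the inequalities included in Appendix~B. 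This inequality produces a negative contribution controlling the $Y^{s,\gamma}$-seminorm of $w := \eta\,(v+\varepsilon)^{-p/2}$ tested against $d\nu$, plus an error term that is bounded using $|\eta(x)-\eta(y)|^2 \le C\,|x-y|^2/(R-r)^2$ on the near-diagonal part and integrability of the kernel on the far-diagonal part, yielding an extra $(R-r)^{-2s}$ factor. Altogether one obtains
\[
\int\!\!\!\int \bigl(w(x,t)-w(y,t)\bigr)^2\,d\nu\,dt \;\le\; C\,(p+1)^2\!\left(\frac{1}{(R-r)^{2s}}+\frac{1}{R^{2s}-r^{2s}}\right)\!\int\!\!\!\int_{Q_-(R)}(v+\varepsilon)^{-p}\,d\mu\,dt.
\]

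The final step is to combine the spatial $L^\infty_t L^2(d\mu)$ bound with the spatial energy bound via a weighted fractional Sobolev embedding $Y^{s,\gamma}_0(B_R) \hookrightarrow L^{2^*_s}(B_R, d\mu)$, another ingredient that should appear in Appendix~B. Integrating the Sobolev estimate in $t$ gives $w \in L^2_t L^{2^*_s}_x(d\mu)$, and a standard interpolation between $L^\infty_t L^2_x(d\mu)$ and $L^2_t L^{2^*_s}_x(d\mu)$ yields $w \in L^{2\tau}_{t,x}(d\mu\times dt)$ with $\tau = 1 + \tfrac{2s}{N}$. Rewriting $w^{2\tau}=\eta^{2\tau}(v+\varepsilon)^{-p\tau}$ on $Q_-(r)$ (where $\eta=\chi=1$) and letting $\varepsilon \to 0^+$ produces exactly \eqref{est1} with the claimed constant $A$.

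The main technical obstacle is the derivation of the nonlocal algebraic inequality for the bilinear form applied to negative powers, carried out with explicit dependence on $p$ so as to recover the factor $(p+1)^2$, together with the verification that the weighted fractional Sobolev embedding $Y^{s,\gamma}_0 \hookrightarrow L^{2^*_s}(d\mu)$ holds with a constant independent of the cylinder scale; these are precisely the ingredients deferred to Appendix~B. Once those are in hand, the rest of the argument is the standard energy+Sobolev+interpolation pipeline.
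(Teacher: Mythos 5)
Your overall route coincides with the paper's: test with a negative power of $v$ times cutoffs, use a pointwise inequality for the nonlocal bilinear form (this is \cite[Lemma 3.3]{FK}, not an Appendix~B item), extract the $\sup_t L^2(d\mu)$ bound together with the $d\nu$-energy bound, and close with the weighted Sobolev inequality of Theorem \ref{sobolev} plus the H\"older interpolation giving the exponent $2\tau$. The choice $\eta^2\chi^2(v+\varepsilon)^{-p-1}$ versus the paper's $\psi^{q+1}v^{-q}$ followed by multiplication with a time cutoff is immaterial.

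The genuine gap is in your treatment of the cutoff error term. You dispose of $\iint (v/\psi)^{1-q}(\psi(x)-\psi(y))^2\,d\nu$ by the standard near-diagonal Lipschitz / far-diagonal kernel-integrability splitting, which is the argument for the \emph{unweighted} kernel. Here $d\nu$ carries the factor $|x|^{-\gamma}|y|^{-\gamma}$, while the right-hand side of \eqref{est1} is measured in $d\mu=|x|^{-2\gamma}dx$, so after integrating in $y$ one must effectively replace $|y|^{-\gamma}$ by $C|x|^{-\gamma}$ --- and this is false pointwise when $|y|\ll |x|$. The bulk of the paper's proof is devoted precisely to this: it splits $B_R\times\mathbb{R}^N$ into the regions where $|x|\le |y|$, where $\tfrac12\le r\le |x|,|y|\le 1$ (so the two weights are comparable up to a factor $2^\gamma$), and where $|y|\le |x|/2$ (so $|x-y|\ge |x|/2\ge \tfrac14$, the kernel is bounded, and $\int_{|y|\le |x|/2}|y|^{-\gamma}dy\le C|x|^{N-\gamma}\le C|x|^{-\gamma}$ since $|x|\le 1$). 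This is also exactly where the hypothesis $\tfrac12\le r<R\le 1$ enters; your outline never uses it. By contrast, the two items you single out as the main obstacles --- the algebraic inequality with the $(p+1)^2$ dependence and the weighted Sobolev embedding --- are respectively quoted from \cite{FK} and proved in Appendix~B, so they are not where the new work lies.
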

\begin{proof} Without loss of generality we can assume that $v\geq\varepsilon>0$ in
$Q_-(         {R})$ (otherwise we can deal with $v+\e$ and let $\e\to 0$ at
the end). Let $q>1$ and $\psi\in Y^{s,\g}_{0}(B_R)\cap
L^\infty(B_{  {R}})$ be a nonnegative {radial}
cutoff function such that $\text{supp}\, (\psi)\subseteq
B_{R}$ with $r<R$,          {$0\leq\psi\leq 1$}, $\psi=1$ in
$B_r$ and
\begin{equation}\label{propertyPsi}
\dfrac{(\psi(x)-\psi(y))^2}{|x-y|^2}\le
\frac{C}{(R-r)^2}.
\end{equation}

     Using $\psi^{q+1}v^{-q}$ as a test function in \eqref{peso1}, it
follows that
$$
\int_{B_R}\psi^{q+1}v^{-q}v_t d\mu+
{\frac{a_{N,s}}{2}}\int_{\mathbb{R}^{N}}\int_{\mathbb{R}^{N}}
(v(x,t)-v(y,t))(\psi^{q+1}(x)v^{-q}(x,t)-\psi^{q+1}(y)v^{-q}(y,t))
d\nu\ge 0.
$$
Hence, using a pointwise inequality proved in \cite[Lemma 3.3]{FK}, it can be deduced that
\begin{equation*}\begin{split}
\dyle \frac{1}{q-1}\int_{B_R}&\psi^{q+1}(v^{1-q})_t
d\mu+\frac{1}{q-1}
  {\frac{a_{N,s}}{2}}\int_{B_r}\int_{B_r}
(v^{\frac{1-q}{2}}(x,t)-v^{\frac{1-q}{2}}(y,t))^2d\nu \\
&\dyle \le
C          {q}  {\frac{a_{N,s}}{2}}\int_{\mathbb{R}^{N}}\int_{\mathbb{R}^{N}}\left(
\left(\frac{v(x,t)}{\psi(x)}\right)^{1-q}  {+}\left(\frac{v(y,t)}{\psi(y)}\right)^{1-q}\right)
(\psi(x)-\psi(y))^2 d\nu.
\end{split}\end{equation*}
Furthermore,
\begin{equation*}\begin{split}
\dyle\int_{\mathbb{R}^{N}}\int_{\mathbb{R}^{N}}&\left(
\left(\frac{v(x,t)}{\psi(x)}\right)^{1-q}  {+}\left(\frac{v(y,t)}{\psi(y)}\right)^{1-q}\right)
(\psi(x)-\psi(y))^2 d\nu \\
&\le \dyle          {2}\int_{B_R} \int_{B_R}
\frac{v^{1-q}(x,t)}{|x|^\g}\frac{(\psi(x)-\psi(y))^2dxdy}{|y|^\g|x-y|^{N+2s}}\\
&+         {4}\int_{\ren\backslash B_R}
\int_{B_R}
\frac{v^{1-q}(x,t)}{|x|^\g}\frac{(\psi(x)-\psi(y))^2dxdy}{|y|^\g|x-y|^{N+2s}}.
\end{split}\end{equation*}
We set
$$
I=\int_{B_R} \int_{B_R}
\frac{v^{1-q}(x,t)}{|x|^\g}\frac{(\psi(x)-\psi(y))^2dxdy}{|y|^\g|x-y|^{N+2s}},
$$
and
$$
J=\int_{{\ren\backslash B_R}} \int_{{B_R}}
\frac{v^{1-q}(x,t)}{|x|^\g}\frac{(\psi(x)-\psi(y))^2dxdy}{|y|^\g|x-y|^{N+2s}}.
$$
Let begin by estimating the term $J$. Taking into consideration that $\dfrac{1}{|y|^\g}\le
\dfrac{1}{|x|^\g}$ for $x\in B_R$ and $y\in \ren\backslash B_R$
and using Fubini, we reach that
\begin{equation*}
\begin{array}{lll}
J &\le &\dyle \int_{{B_R}}
\frac{v^{1-q}(x,t)}{|x|^{2\g}}\int_{\{\ren\backslash B_R\}\cap
\{|x-y|>R-r\}} \frac{(\psi(x)-\psi(y))^2dydx}{|x-y|^{N+2s}} \\
\\& + &\dyle \int_{{B_R}}
\frac{v^{1-q}(x,t)}{|x|^{2\g}}\int_{\{\ren\backslash B_R\}\cap
\{|x-y|\le R-r\}}
\frac{(\psi(x)-\psi(y))^2dydx}{|x-y|^{N+2s}}\\
&&\\
&\le & J_1+J_2.
\end{array}
\end{equation*}
Setting $\rho=|x-y|$, we get
\begin{equation*}
\begin{array}{lll}
J_1 & \le & \dyle 4\int_{{B_R}}
\frac{v^{1-q}(x,t)}{|x|^{2\g}}\int_{\{\ren\backslash B_R\}\cap
\{|x-y|>R-r\}} \frac{dydx}{|x-y|^{N+2s}}\\ \\& \le & \dyle
4\int_{{B_R}} \frac{v^{1-q}(x,t)}{|x|^{2\g}}dx
\int_{R-r}^\infty\rho^{-1-2s}d\rho\\ \\ &\le & \dyle
\dfrac{C}{(R-r)^{2s}}\int_{{B_R}}
\frac{v^{1-q}(x,t)}{|x|^{2\g}}dx.
\end{array}
\end{equation*}
We estimate now the term $J_2$. Using \eqref{propertyPsi}, it
follows that
\begin{equation*}
\begin{array}{lll}
J_2 &\le & \dyle \frac{         {C}}{(R-r)^2}\int_{{B_R}}
\frac{v^{1-q}(x,t)}{|x|^{2\g}}\int_{\{\ren\backslash B_R\}\cap
\{|x-y|\le R-r\}} \frac{|x-y|^2dydx}{|x-y|^{N+2s}}\\ \\ &\le &
\dyle \frac{         {C}}{(R-r)^2}\int_{{B_R}}
\frac{v^{1-q}(x,t)}{|x|^{2\g}}dx \int_0^{R-r}\rho^{1-2s}d\rho\\
\\ &\le & \dyle \dfrac{C}{(R-r)^{2s}}\int_{{B_R}}
\frac{v^{1-q}(x,t)}{|x|^{2\g}}dx,
\end{array}
\end{equation*}
         {with $C$ possibly changing from line to line.} Hence
$$
J\le\dfrac{C}{(R-r)^{2s}}\int_{{B_R}}v^{1-q}(x,t)d\mu.
$$
We deal now with $I$. Using the definition of $\psi$, we get easily that
\begin{equation*}
\begin{array}{lll}
I&=& \dyle\iint_{B_R\times B_R\backslash B_r\times B_r}
\frac{v^{1-q}(x,t)}{|x|^\g}\frac{(\psi(x)-\psi(y))^2dxdy}{|y|^\g|x-y|^{N+2s}}\\
\\ &=& \dyle          {\int_{B_R\backslash B_r}\int_{B_r}}
\frac{v^{1-q}(x,t)}{|x|^\g}\frac{(\psi(x)-\psi(y))^2dxdy}{|y|^\g|x-y|^{N+2s}}
+\int_{B_R\backslash B_r}\int_{B_R\backslash
B_r}\frac{v^{1-q}(x,t)}{|x|^\g}\frac{(\psi(x)-\psi(y))^2dxdy}{|y|^\g|x-y|^{N+2s}}\\
\\ & + & \dyle          {\int_{B_r}\int_{B_R\backslash B_r}}
\frac{v^{1-q}(x,t)}{|x|^\g}\frac{(\psi(x)-\psi(y))^2dxdy}{|y|^\g|x-y|^{N+2s}}\\
&&\\
&=& I_1+I_2+I_3.
\end{array}
\end{equation*}
Let begin by estimate $I_1$. Since $(x,y)\in B_r\times
B_R\backslash B_r$, then $|x|\le |y|$, hence
\begin{equation*}
\begin{array}{lll}
I_1 &          {\leq} & \dyle         {\int_{B_R\backslash B_r}\int_{B_r}}
\frac{v^{1-q}(x,t)}{|x|^{2\g}}\frac{(\psi(x)-\psi(y))^2dxdy}{|x-y|^{N+2s}}\\
\\ &\le & \dyle \int_{{B_r}}
\frac{v^{1-q}(x,t)}{|x|^{2\g}}\int_{{\{B_R\backslash B_r\}\cap
\{|x-y|>R-r\}}} \frac{(\psi(x)-\psi(y))^2dydx}{|x-y|^{N+2s}}\\ \\&
+ &\dyle \int_{{B_r}}
\frac{v^{1-q}(x,t)}{|x|^{2\g}}\int_{{\{B_R\backslash B_r\}\cap
\{|x-y|\le R-r\}}}
\frac{(\psi(x)-\psi(y))^2dydx}{|x-y|^{N+2s}}\\
&&\\
&\le & I_{11}+I_{12}.
\end{array}
\end{equation*}

     As in the previous computations, by setting $\rho=|x-y|$ and using
the fact that $\psi$ is bounded, we conclude that
$$
I_{11}\le 4\int_{{B_r}}
\frac{v^{1-q}(x,t)}{|x|^{2\g}}\int_{R-r}^\infty
\rho^{-1-2s}d\rho\le \dfrac{C}{(R-r)^{2s}}\int_{{B_r}}
\frac{v^{1-q}(x,t)}{|x|^{2\g}}dx.
$$
In the same way and using the fact that
$\displaystyle\dfrac{(\psi(x)-\psi(y))^2}{|x-y|^2}\le
\frac{C}{(R-r)^2}$, we get
$$
I_{12}\le \dfrac{C}{(R-r)^2}\int_{{B_r}}
\frac{v^{1-q}(x,t)}{|x|^{2\g}}\int_0^{R-r}\rho^{1-2s}d\rho\le
\dfrac{C}{(R-r)^{2s}}\int_{{B_r}}
\frac{v^{1-q}(x,t)}{|x|^{2\g}}dx.
$$
Therefore,
$$
I_{1}\le \dfrac{C}{(R-r)^{2s}}\int_{{B_R}}
\frac{v^{1-q}(x,t)}{|x|^{2\g}}dx.
$$
We deal now with $I_2$. Since $\frac 12\le r\le |x|, |y|\le R<1$, then
\begin{equation*}
\begin{array}{lll}
I_2 & \le & 2^\gamma \dyle\int_{B_R\backslash
B_r}\int_{B_R\backslash B_r}
\frac{v^{1-q}(x,t)}{|x|^{2\g}}\frac{(\psi(x)-\psi(y))^2dxdy}{|x-y|^{N+2s}}\\
\\ &\le & \dyle \int_{{B_R\backslash B_r}}
\frac{v^{1-q}(x,t)}{|x|^{2\g}}\int_{{\{B_R\backslash B_r\}\cap
\{|x-y|>R-r\}}} \frac{(\psi(x)-\psi(y))^2dydx}{|x-y|^{N+2s}}\\ \\&
+ &\dyle \int_{{B_R\backslash B_r}}
\frac{v^{1-q}(x,t)}{|x|^{2\g}}\int_{{\{B_R\backslash B_r\}\cap
\{|x-y|\le R-r\}}}
\frac{(\psi(x)-\psi(y))^2dydx}{|x-y|^{N+2s}}\\
&&\\
&\le & \dyle          {\dfrac{C}{(R-r)^{2s}}\int_{{B_R}}
\frac{v^{1-q}(x,t)}{|x|^{2\g}}dx,}
\end{array}
\end{equation*}
         {by repeating the computations performed for $I_1$.} Let consider now the term $I_3$ which is the most complicated.
\begin{equation*}
\begin{array}{lll}
I_3 &=& \dyle \int_{{r\le |x|\le
R}}\frac{v^{1-q}(x,t)}{|x|^{\g}}\Big( \int_{|y|\le \frac{|x|}{2}}
\dfrac{(\psi(x)-\psi(y))^{2}}{|x-y|^{N+2s}|y|^{\gamma}}dy\Big)
dx\\ \\ &+& \dyle \int_{{r\le |x|\le
R}}\frac{v^{1-q}(x,t)}{|x|^{\g}}\Big( \int_{\frac{|x|}{2}\le
|y|\le r}
\dfrac{(\psi(x)-\psi(y))^{2}}{|x-y|^{N+2s}|y|^{\gamma}}dy\Big)
dx\\
&&\\
&=& I_{31}+I_{32}.
\end{array}
\end{equation*}
If $|y|\le \dfrac{|x|}{2}$, then $|x-y|\ge \dfrac{|x|}{2}\ge
\dfrac{r}{2}\ge \dfrac{1}{4}$, and thus,
\begin{equation*}
\begin{array}{lll}
I_{31} &\le & C \dyle\int_{{r\le |x|\le
R}}\frac{v^{1-q}(x,t)}{|x|^{\g}}\Big( \int_{|y|\le
\frac{|x|}{2}}\dfrac{1}{|y|^{\gamma}}dy\Big) dx\\ \\ &\le & C
\dyle \int_{{r\le |x|\le R}}\frac{v^{1-q}(x,t)}{|x|^{\g}}\Big(
\int_0^{\frac{|x|}{2}}\rho^{N-1-\g}d\rho\Big)dx\\ \\ &\le &
\dyle\dfrac{C}{(R-r)^{2s}}\int_{{B_R}}
\frac{v^{1-q}(x,t)}{|x|^{2\g}}dx. \end{array}\end{equation*}
To
estimate $I_{32}$, we use the fact that $\dfrac{1}{|y|^\gamma}\le
\dfrac{2^\gamma}{|x|^\gamma}$, hence
\begin{equation*}
\begin{array}{lll}
I_{32} &\le & C \dyle \int_{{r\le |x|\le
R}}\frac{v^{1-q}(x,t)}{|x|^{2\g}} \Big( \int_{\frac{|x|}{2}\le
|y|\le r}
         {\dfrac{(\psi(x)-\psi(y))^{2}}{|x-y|^{N+2s}}dy}\Big)
dx\\ \\ &\le & C \dyle \int_{{r\le |x|\le
R}}\frac{v^{1-q}(x,t)}{|x|^{2\g}} \Big( \int_{\{\frac{|x|}{2}\le
|y|\le r\}\cap \{|x-y|>R-r\}}
         {\dfrac{(\psi(x)-\psi(y))^{2}}{|x-y|^{N+2s}}dy}\Big)
dx\\ \\ &+& C\dyle \int_{{r\le |x|\le
R}}\frac{v^{1-q}(x,t)}{|x|^{2\g}} \Big( \int_{\{\frac{|x|}{2}\le
|y|\le r\}\cap \{|x-y|\le R-r\}}
         {\dfrac{(\psi(x)-\psi(y))^{2}}{|x-y|^{N+2s}}dy}\Big)
dx.
\end{array}
\end{equation*}
Using the same computations
as in the estimates of $I_{11}$ and $I_{12}$ it follows that
$$
I_{32}\le \dfrac{C}{(R-r)^{2s}}\int_{{B_R}}
\frac{v^{1-q}(x,t)}{|x|^{2\g}}dx.
$$
Combining the estimates above, there results that
\begin{eqnarray*}
&\dyle
\int_{B_R}\psi^{q+1}(v^{1-q})_td\mu+\frac{a_{N,s}}{2}\int_{B_r}\int_{B_r}
(v^{\frac{1-q}{2}}(x,t)-v^{\frac{1-q}{2}}(y,t))^2d\nu\\
&\le
\dyle\frac{C q^2}{(R-r)^{2s}}  {\frac{a_{N,s}}{2}}\int_{B_R}
v^{1-q}(x,t)d\mu.
\end{eqnarray*}

     Set now $\theta(t):=[\min\{\dfrac{t+R^{2s}}{R^{2s}-r^{2s}}, 1\}]_+$. Then multiplying the last inequality by $\theta$, integrating in
time in $(-R^{2s}, t)$ with $t\in (-r^{2s},0)$, and noticing that $\theta(t)=1$ for $t\ge -r^{2s}$ and
$|\theta'(t)|\le \frac{1}{R^{2s}-r^{2s}}$, it follows that
\begin{equation}\label{supw}\begin{split}
\sup_{t\in I_-(r)}\int_{B_r}(v^{1-q})d\mu&+
\frac{a_{N,s}}{2}\int\!\!\!\int_{Q_-(r)}\int_{B_r}
(v^{\frac{1-q}{2}}(x,t)-v^{\frac{1-q}{2}}(y,t))^2d\nu d t\\
&\le
C q^2  {\frac{a_{N,s}}{2}}\left(\frac{1}{(R-r)^{2s}}+\frac{1}{R^{2s}-r^{2s}}\right)         {\iint_{Q_-(R)}}
v^{1-q}(x,t)d\mu         {dt}.
\end{split}\end{equation}
Recalling that $\tau:=1+\frac{2s}{N}$, let us define
$w:=v^{\frac{1-q}{2}}$. Then, applying H\"{o}lder inequality, we get
\begin{eqnarray*}
\displaystyle \int\!\!\!\int_{Q_-(r)}w^{2\tau}d\mu dt&=&
\int\!\!\int_{Q_-(r)}w^2w^{\frac{4s}{N}}d\mu dt\\
&\le& \dyle
\int_{I_-(r)}\Big(\int_{B_r}w^{2}d\mu\Big)^{\frac{2s}{N}}
\Big(\int_{B_r}w^{2^*_s}d\mu\Big)^{\frac{2}{2^*_s}}dt.
\end{eqnarray*}
Since $\g>0$ and $R\le 1$, we conclude that
\begin{eqnarray*}
\dyle \int\!\!\!\int_{Q_-(r)}w^{2\tau}d\mu dt&\le & \int_{I_-(r)}\Big(\int_{B_r}w^2(x,t)d\mu\Big)^{\frac{2s}{N}} \Big(\dyle
\int_{B_r}\frac{w^{2^*_s}}{|x|^{2^*_s\g}}dx\Big)^{\frac{2}{2^*_s}}dt.
\end{eqnarray*}
Now, using the Sobolev inequality obtained in Theorem
\ref{sobolev} in the Appendix,
\begin{eqnarray*}
\dyle\int\!\!\!\int_{Q_-(r)}w^{2\tau}d\mu dt&\le& C\sup_{t\in
I_-(r)}\dyle\Big(\int_{B_r}w^2(x,t)d\mu\Big)^{\frac{2s}{N}}\\
&&\times \dyle \Big( \int\!\!\!\int_{Q_-(r)}         {\int_{B_r}}(w(x,t)-w(y,t))^2         {d\nu}
dt+ r^{-2s}\int \!\!\!\int_{Q_-(r)}w^2d\mu dt\Big).
\end{eqnarray*}
Applying \eqref{supw} twice at this inequality, and recalling that $\dfrac{1}{2}\leq r\leq 1$, it can be checked that
\begin{equation*}
\dyle\int\!\!\!\int_{Q_-(r)}w^{2\tau}d\mu dt\le A(q,r,R,N,s)\Big(
\int\!\!\!\int_{Q_-(R)}w^2d\mu dt\Big)^\tau
\end{equation*}
where
$$
A(q,r,R,N,s)=C(N,s,\gamma) q^2\left(\frac{1}{(R-r)^{2s}}+\frac{1}{R^{2s}-r^{2s}}\right)^\tau.
$$
Setting $p:=q-1$, we conclude the proof.
\end{proof}

As an application of the previous estimate, we
reach a control of $\sup_{Q_-(r)} v^{-1}$. More precisely, we have the following result.
\begin{Lemma}\label{lema2}
Assume that $\frac 12\le r<R\le 1$ and $p\in (0,1]$. Then, there
exists a constant $C=C(N,s,\gamma)>0$ such that every $v\gneq 0$
supersolution to the problem \eqref{peso1} satisfies
\begin{equation}\label{estim2}
\sup_{Q_-(r)} v^{-1}\le \left(\frac{C}{\alpha(r,R)}\right)^{\frac
1p}\left(\iint_{Q_-({R})}v^{-p}d\mu{dt}\right)^{\frac{1}{p}},
\end{equation}
where
$$
\alpha(r,R)= \left\{
\begin{array}{lll}
(R-r)^{N+2s} &\mbox{  if   } & s\ge \frac 12,\\
(R^{2s}-r^{2s})^{\frac{N+2s}{2s}} &\mbox{  if   } & s<\frac 12.\\
\end{array}
\right.
$$
\end{Lemma}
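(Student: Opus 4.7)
The plan is a standard Moser iteration applied to Lemma \ref{negative}. Rewriting that lemma as the reverse H\"older inequality
$$
\|v^{-1}\|_{L^{p\tau}(Q_-(r),\,d\mu\,dt)} \le A^{1/p}\,\|v^{-1}\|_{L^{p}(Q_-(R),\,d\mu\,dt)},
$$
with $\tau = 1 + 2s/N > 1$, I iterate along the geometric sequence of exponents $p_k := p\tau^k$ and along a shrinking sequence of radii $r_k$ interpolating between $R$ and $r$. Since $p_k \to \infty$ and $d\mu\,dt$ gives finite mass to each $Q_-(R)$, the left-hand sides converge to $\operatorname{ess\,sup}_{Q_-(r)} v^{-1}$, and the task reduces to quantifying the accumulated constant $\prod_k A_k^{1/p_k}$.

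The explicit form of $\alpha(r,R)$ in the regimes $s \ge 1/2$ and $s < 1/2$ dictates the choice of $r_k$. For $1/2 \le r < R \le 1$ and $s \ge 1/2$, convexity of $t \mapsto t^{2s}$ yields $R^{2s} - r^{2s} \ge (R-r)^{2s}$, so the $(R-r)^{-2s}$ term dominates the constant in Lemma \ref{negative}; I then set $r_k := r + (R-r)\,2^{-k}$, giving $r_k - r_{k+1} = (R-r)\,2^{-k-1}$. For $s < 1/2$ the reverse inequality holds, $(R^{2s}-r^{2s})^{-1}$ dominates, and I take $r_k$ via $r_k^{2s} := r^{2s} + (R^{2s}-r^{2s})\,2^{-k}$, so that $r_k^{2s} - r_{k+1}^{2s} = (R^{2s}-r^{2s})\,2^{-k-1}$. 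In either case $r_k \in [1/2,1]$, so Lemma \ref{negative} applies at every step.

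Applying Lemma \ref{negative} at step $k$ with the pair $(r_{k+1}, r_k)$ and exponent $p_k$ gives
$$
\|v^{-1}\|_{L^{p_{k+1}}(Q_-(r_{k+1}))} \le A_k^{1/p_k}\,\|v^{-1}\|_{L^{p_k}(Q_-(r_k))},
$$
with $A_k$ controlled by $C\,(p_k+1)^2\,\delta_k^{-1}$, where $\delta_k := (r_k - r_{k+1})^{2s}$ in the first case and $\delta_k := r_k^{2s} - r_{k+1}^{2s}$ in the second. Iterating $n$ times and taking logarithms, the series $\sum_k p_k^{-1}\log(p_k+1)$ and $\sum_k p_k^{-1}\log 2^{k+1}$ converge absolutely because $p_k^{-1} = p^{-1}\tau^{-k}$ decays geometrically, contributing only a finite multiplicative constant independent of $r, R$. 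Using $\tau - 1 = 2s/N$, the exponent of the overall scale factor accumulates to
$$
\sum_{k\ge 0}\frac{2s}{p_k}\;=\;\frac{2s}{p}\cdot\frac{\tau}{\tau-1}\;=\;\frac{N+2s}{p}\qquad\Bigl(\text{resp.\ }\frac{N+2s}{2sp}\Bigr),
$$
which is precisely the power of $(R-r)^{-1}$ (resp.\ of $(R^{2s}-r^{2s})^{-1}$) appearing in $\alpha(r,R)^{-1/p}$. Sending $n \to \infty$ and using $\|f\|_{L^{p_n}(U)} \to \|f\|_{L^\infty(U)}$ on the finite measure space $(Q_-(r), d\mu\,dt)$ yields \eqref{estim2}.

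The main difficulty is organizational bookkeeping: managing the two regimes in parallel, verifying that $(p_k+1)^{2/p_k}$ is summable after taking logarithms, and absorbing the geometric $2^{k+1}$ factors into harmless constants. No analytic ingredient beyond Lemma \ref{negative} is required; the argument is just the standard parabolic $L^\infty$-step of Moser's scheme, transplanted to the weighted setting of $d\mu\,dt$.
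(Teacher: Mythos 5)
Your proposal is correct and follows essentially the same route as the paper: both iterate Lemma \ref{negative} along the exponents $p_k=p\tau^k$ and a shrinking family of radii between $R$ and $r$, in the spirit of Moser (the paper outsources the bookkeeping to Theorem 3.5 of Felsinger--Kassmann, while you carry it out explicitly, including the two radius sequences adapted to $s\ge\frac12$ and $s<\frac12$ and the computation $\sum_k 2s/p_k=(N+2s)/p$). No substantive difference in method.
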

\begin{proof}
We consider
$$
\mathcal{M}(r,p):=
\left(\int\!\!\!\int_{Q_-(r)}v^{-p}d\mu\,  {dt}\right)^{\frac{1}{p}}.
$$
By Lemma \ref{negative}, we have
$$
\mathcal{M}(r,\tau p)\le A^{\frac{1}{p}}\mathcal{M}(R,p),
$$
where $\tau:=1+\frac{2s}{N}$. Construct now the sequences $\{r_i\}_{i\in \mathbb{N}}$ and
$\{p_i\}_{i\in \mathbb{N}}$ by setting $r_0:=R>r_1>r_2>...>r$ and
$p_i:=p\tau^i$. Using again Lemma \ref{negative}, we obtain
$$
\mathcal{M}(r,p_{m+1})\le\mathcal{M}(r_{m+1},p_{m+1})\le
A_{{m}}^{\frac{1}{{\tau^m p}}}\mathcal{M}(r_m,p_m),
$$
{with
$A_m:=C(p_m+1)^2\left((r_m-r_{m+1})^{-2s}+(r_m^{2s}-r_{m+1}^{2s})^{-1}\right)$.}
Iterating this and following the arguments in \cite[Theorem
3.5]{FK}, we conclude the result.
\end{proof}

We prove now a control of a small positive exponents of $u$.
\begin{Lemma}\label{tres}
Suppose that $\frac 12\le r<R\le   {1}$, and fix $q\in
(0,\tau^{-1}{]}$, {with $\tau:=1+\frac{2s}{N}$}. Then, if $v\gneq
0$ is a supersolution to \eqref{peso1}, we have
\begin{equation}\label{est3}
\dyle\Big(\int\!\!\!\int_{Q_+(r)}v^{q\tau}d\mu dt\Big)^{\frac{1}{\tau}}\le
\alpha \int\!\!\!\int_{Q_+(R)}v^{q}d\mu dt,
\end{equation}
where
$$
\alpha=\alpha(N,s,r,R,\gamma)=C(  {N,s,\gamma})\left(\frac{1}{(R-r)^{2s}}+\frac{1}{R^{2s}-r^{2s}}\right).
$$
\end{Lemma}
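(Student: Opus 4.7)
The plan is to mimic the scheme of Lemma~\ref{negative}, replacing the negative-exponent test function by a positive-exponent one and the backward cylinder by the forward cylinder $Q_{+}(r)$. After a standard truncation $v\rightsquigarrow v+\varepsilon$ (with $\varepsilon\to 0$ at the end), I would choose a radial nonnegative spatial cut-off $\psi\in Y^{s,\gamma}_{0}(B_{R})\cap L^{\infty}(B_{R})$ with $\psi\equiv 1$ on $B_{r}$, $\mathrm{supp}\,\psi\subset B_{R}$, satisfying \eqref{propertyPsi}; and a time cut-off $\theta\in C^{1}([0,R^{2s}])$ with $\theta(0)=\theta(R^{2s})=0$, $\theta\equiv 1$ on an interval $[\delta,t^{\star}]\subset I_{+}(r)$ (for arbitrary $t^{\star}\in I_{+}(r)$ and arbitrarily small $\delta>0$), linearly interpolated otherwise, so that $|\theta'|\le \delta^{-1}+(R^{2s}-t^{\star})^{-1}$. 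I then take $\varphi(x,t):=\psi^{2}(x)\theta(t)v^{q-1}(x,t)$ as test function in Definition~\ref{super}: this is admissible since $q-1\in(-1,0)$ and $v\ge\varepsilon>0$.

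The parabolic term reduces, via the chain rule, to
\[
\int\!\!\!\int -\varphi_{t}v\,d\mu\,dt=-\tfrac{1}{q}\int\!\!\!\int \psi^{2}\theta'\, v^{q}\,d\mu\,dt,
\]
which is controlled by $\tfrac{C}{\delta}\int_{0}^{\delta}\!\int_{B_{R}}v^{q}\,d\mu\,dt+\tfrac{C}{R^{2s}-t^{\star}}\int\!\!\!\int_{Q_{+}(R)}v^{q}\,d\mu\,dt$. For the nonlocal term I invoke a pointwise algebraic inequality for $q\in(0,1)$, of the form
\[
(a-b)\bigl(A\,a^{q-1}-B\,b^{q-1}\bigr)\le -c_{q}\,\min\{A,B\}\bigl(a^{q/2}-b^{q/2}\bigr)^{2}+C_{q}\,(\sqrt{A}-\sqrt{B})^{2}\bigl(a^{q}+b^{q}\bigr),
\]
applied with $a=v(x,t)$, $b=v(y,t)$, $A=\psi^{2}(x)$, $B=\psi^{2}(y)$. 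The first right-hand term yields the quadratic spatial energy of $w:=v^{q/2}$, while the error is absorbed using $(\sqrt{A}-\sqrt{B})^{2}\le(\psi(x)-\psi(y))^{2}$ and the same kernel decomposition performed in Lemma~\ref{negative} (splitting into $I_{1},I_{2},I_{3}$ and $J_{1},J_{2}$ and estimating with $|y|^{-\gamma}\le 2^{\gamma}|x|^{-\gamma}$ in the respective regions), bounding the error by $C(N,s,\gamma)(R-r)^{-2s}\int_{B_{R}}v^{q}\,d\mu$.

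Putting these estimates together, taking the supremum over $t^{\star}\in I_{+}(r)$, and letting $\delta\downarrow 0$ (so that the rising-part contribution vanishes because $\psi^{2}\theta(0)v_{0}^{q}=0$), I would obtain the energy estimate
\[
\sup_{t\in I_{+}(r)}\int_{B_{r}}\psi^{2}v^{q}\,d\mu+\int\!\!\!\int_{Q_{+}(r)}\!\int_{B_{r}}\bigl(v^{q/2}(x,t)-v^{q/2}(y,t)\bigr)^{2}\,d\nu\,dt\le C\!\left(\tfrac{1}{(R-r)^{2s}}+\tfrac{1}{R^{2s}-r^{2s}}\right)\!\int\!\!\!\int_{Q_{+}(R)}v^{q}\,d\mu\,dt.
\]
Setting $w:=v^{q/2}$, H\"older in space (with exponents $\tfrac{2s}{N}$ and $\tfrac{2}{2^{*}_{s}}$) combined with the weighted Sobolev inequality of Theorem~\ref{sobolev} (Appendix~B), applied exactly as in the closing lines of the proof of Lemma~\ref{negative}, produces
\[
\int\!\!\!\int_{Q_{+}(r)}w^{2\tau}\,d\mu\,dt\le \alpha\,\Bigl(\int\!\!\!\int_{Q_{+}(R)}w^{2}\,d\mu\,dt\Bigr)^{\tau},
\]
which upon substituting $w^{2}=v^{q}$, $w^{2\tau}=v^{q\tau}$ is precisely the bound of the lemma.

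The main obstacle I anticipate is the \emph{sign management} in the energy inequality. Unlike in Lemma~\ref{negative}, a naive time cut-off here does not immediately produce an upper bound on $\sup_{t}\int v^{q}\,d\mu$ from the supersolution property; it is the simultaneous choice $\theta(0)=\theta(R^{2s})=0$ and the bound on $|\theta'|$ that extracts the desired supremum while discarding the (beneficial-sign) initial-data contribution. Verifying the pointwise algebraic inequality for $q\in(0,1)$, which is the positive-exponent counterpart of the inequality from Lemma~3.3 of~\cite{FK}, is elementary via the mean-value theorem but requires a small dedicated calculation.
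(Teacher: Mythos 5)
Your overall strategy coincides with the paper's, which simply defers to the scheme of Lemma \ref{negative} and to \cite[Proposition 3.6]{FK}: same test function $\psi^2 v^{q-1}=\psi^2v^{-a}$ with $a=1-q$, same substitution $w=v^{q/2}$, same kernel decomposition for the error term, and the same H\"older--Sobolev closing step via Theorem \ref{sobolev}. The pointwise inequality you invoke is the $a\in(0,1)$ case of \cite[Lemma 3.3]{FK}, and the error estimates via the splittings $I_1,I_2,I_3,J_1,J_2$ do carry over verbatim with $v^{1-q}$ replaced by $v^{q}$.

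There is, however, a genuine gap in how you extract $\sup_{t\in I_+(r)}\int_{B_r}v^{q}\,d\mu$, and this term is indispensable for the final step (it supplies the factor $\big(\int_{B_r}w^2\,d\mu\big)^{2s/N}$ in the H\"older--Sobolev argument borrowed from the end of Lemma \ref{negative}). With your cut-off vanishing at both $t=0$ and $t=R^{2s}$ and the weak formulation integrated over the whole interval, the boundary terms of Definition \ref{super} drop out and the inequality you obtain reads
$c\int\theta\,\mathcal{E}_{\mathrm{loc}}(v^{q/2})\,dt\le -\tfrac{1}{q}\int\theta'\big(\int\psi^2v^q\,d\mu\big)\,dt+\int\theta\,E(t)\,dt$:
the quantity $\int\psi^2 v^q(\cdot,t^\star)\,d\mu$ never appears, so ``taking the supremum over $t^\star$'' has nothing to act on. (Your reason for discarding the rising part is also off: $\tfrac{1}{\delta}\int_0^\delta\int\psi^2v^q\,d\mu\,dt\to\int\psi^2v^q(\cdot,0^+)\,d\mu\neq 0$ in general; that contribution is harmless only because $\theta'>0$ there, so it enters with a favourable sign and can simply be dropped.) The correct mechanism, for a supersolution tested against the decreasing power $v^{q-1}$, is that $t\mapsto\int\psi^2v^q\,d\mu$ is almost nondecreasing: one must work on the interval $(t^\star,R^{2s})$ with a \emph{nonincreasing} cut-off equal to $1$ at $t^\star$ and vanishing at $R^{2s}$, so that the boundary term $\int\varphi(x,t^\star)v(x,t^\star)\,d\mu=\int\psi^2v^q(\cdot,t^\star)\,d\mu$ of Definition \ref{super} lands on the correct side and is controlled by $\tfrac{1}{R^{2s}-r^{2s}}\iint_{Q_+(R)}v^q\,d\mu\,dt$ plus the error --- the time-reversed counterpart of the cut-off used in Lemma \ref{negative}, not a symmetric one. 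With this correction the rest of your argument closes as in the paper.
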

\begin{proof}
The proof follows similarly to the one of Lemma \ref{negative}
(see \cite[Proposition 3.6]{FK} for a detailed proof with the
Lebesgue measure). We set $a:=(1-q)\in [1-\tau^{-1}, 1)$ and
$w(x,t):=v^{\frac{{1-a}}{2}}$. Then, using $v^{{-a}}\psi^{{2}}$
{(with $\psi$ defined in the proof of Lemma \ref{negative})} as a
test function in \eqref{peso1}, we reach that
\begin{eqnarray*}
&\dyle\sup_{t\in I_+(r)}\int_{B_r}w^2(x,t)d\mu+
C(q)  {\frac{a_{N,s}}{2}}\int_{Q_+(r)}\int_{B_r}
(w^2(x,t)-w^2(y,t))^2d\nu d{t}\\
&\le\dyle
C(q)  {\frac{a_{N,s}}{2}}\left(\frac{1}{(R-r)^{2s}}+\frac{1}{R^{2s}-r^{2s}}\right)\int_{Q_+(R)}
w^2(x,t)d\mu  {dt}.
\end{eqnarray*}
Using  Theorem \ref{sobolev} and proceeding as in Lemma
\ref{negative}, we get \eqref{est3}.
\end{proof}

Define
$$
\mathcal{H}(r,  {q})=\Big(\int_{Q_+(r)}v^{q}d\mu
dt\Big)^{\frac{1}{q}}.
$$

     From \eqref{est3}, we get $\mathcal{H}(r,\tau
  {q})\le
\alpha^{\frac{1}{  {q}}}\mathcal{H}(R,  {q})$. Let define $q_j:=\tau^{-j}$ and
$$         {r_j:=\begin{cases}
r+\frac{R-r}{2^j}\mbox{ if }s\geq\frac 12,\\
\left(r^{2s}+\frac{R^{2s}-r^{2s}}{2^j}\right)^{1/2s}\mbox{ if }s<\frac 12.
\end{cases}}$$
By Lemma \ref{tres} for $r_n$ and $r_{n-1}$, it follows
that
\begin{equation}\label{iter200}
\mathcal{H}(r_n,q_1\tau)\le
\alpha_n^{\tau}\mathcal{H}(r_{n-1},{q_1}),
\end{equation}
where $\alpha_n=C(N,s,\gamma)\left(\dfrac{1}{(r_{n-1}-r_n)^{2s}}+\dfrac{1}{r_{n-1}^{2s}-r_n^{2s}}\right).$
By using the definition of $r_n$ and considering that $r\ge \frac
12$, we get
$$
\alpha_n\le C(N,s,\gamma)\dfrac{2^{2ns}}{(R-r)^{2s}}.
$$
Hence
\begin{equation}\label{itt}
\mathcal{H}(r_n,q_1\tau)\le
\left(C(N,s,\gamma)\dfrac{2^{2ns}}{(R-r)^{2s}}\right)^{\tau}
\mathcal{H}(r_{n-1},{q_1}).
\end{equation}
Iterating this inequality (see \cite[Theorem 3.7]{FK}), we reach the next result.
\begin{Lemma}\label{lema5}
Assume that $\frac{1}{2}\le r<R\le 1$ and $q\in(0,\tau^{-1})$, {with
$\tau:=1+ \frac{2s}{N}$}. {Then, every supersolution $v\gneq 0$ of problem \eqref{peso1} satisfies}
\begin{equation}\label{estim5}
\int\!\!\!\int_{Q_+(r)} v{\,d\mu dt}\le \left(\frac{C}{|Q_+(1)|_{d\mu\times
dt}\overline{\alpha}(r,R)}\right)^{\frac{1-  {q}}{
{q}}}\left(\int\!\!\!\int_{Q_+(R)}v^{q}d\mu dt\right)^{\frac{1}{q}},
\end{equation}
where $C=C(N,s,\gamma)>0$ and
$$
{\overline{\alpha}(r,R)=
\begin{cases}
(R-r)^{\omega_1}\qquad\hbox{if }s\geq\frac{1}{2},\\
(R^{2s}-r^{2s})^{\omega_2}\qquad\hbox{if }s<\frac{1}{2},
\end{cases}}
$$
with $\omega_1, \omega_2>0$ depending only on $s,N$.
\end{Lemma}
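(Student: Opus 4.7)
The plan is to iterate the reverse-H\"older estimate of Lemma \ref{tres} in the Moser fashion, following the single-step relations \eqref{iter200}--\eqref{itt} and the scheme of \cite[Theorem 3.7]{FK}. I use the sequences introduced just before the statement, namely $q_j=\tau^{-j}$ and the decreasing radii $r_j$ with $r_0=R$, $r_j\to r$, and $r_{j-1}-r_j\sim(R-r)/2^j$ (resp.\ $r_{j-1}^{2s}-r_j^{2s}\sim(R^{2s}-r^{2s})/2^j$ when $s<1/2$). Applying Lemma \ref{tres} with input exponent $q_j$ (recalling $1/q_j=\tau^j$), outer radius $r_{k-1}$ and inner radius $r_k$ gives
$$
\mathcal{H}(r_k,q_{j-1}) \le \alpha_k^{\tau^j}\, \mathcal{H}(r_{k-1},q_j),
$$
where $\alpha_k$ is bounded as in \eqref{itt} by $C(N,s,\gamma)\,2^{2ks}/(R-r)^{2s}$ in the case $s\ge 1/2$, and analogously in terms of $R^{2s}-r^{2s}$ when $s<1/2$.

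Starting from $\mathcal{H}(r_n,1)=\mathcal{H}(r_n,q_0)$ and chaining $n$ such steps (decreasing $k$ from $n$ to $1$ while increasing $j$ from $1$ to $n$) yields
$$
\mathcal{H}(r_n,1) \le \Big(\prod_{j=1}^n \alpha_{n-j+1}^{\tau^j}\Big)\mathcal{H}(R,q_n).
$$
For a general $q\in(0,\tau^{-1}]$ I would choose the integer $n$ with $q_n=\tau^{-n}\le q<\tau^{-(n-1)}$, and control $\mathcal{H}(R,q_n)$ by $\mathcal{H}(R,q)$ via H\"older's inequality on the finite measure space $(Q_+(R),d\mu\times dt)$, producing only a factor absorbable into the constant using $|Q_+(R)|_{d\mu\times dt}\le|Q_+(1)|_{d\mu\times dt}$. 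The key observation is that at the terminal step $\tau^n\sim 1/q$, so that
$$
\sum_{j=1}^n\tau^j = \frac{\tau(\tau^n-1)}{\tau-1} \sim \frac{\tau}{\tau-1}\cdot\frac{1-q}{q},
$$
which is precisely the exponent $(1-q)/q$ appearing in the statement (up to a dimensional constant), and the remaining logarithmic contributions $\sum j\,\tau^j$ coming from the $2^{2ks}$-factor in $\alpha_k$ are bounded by the same quantity up to an additive constant absorbable into a global $K=K(N,s,\gamma)$.

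Combining the estimates gives $\int_{Q_+(r)}v\,d\mu\,dt\le\mathcal{H}(r_n,1)\le \big(K/\overline{\alpha}(r,R)\big)^{(1-q)/q}\,\mathcal{H}(R,q)$, where $\overline{\alpha}$ is $(R-r)^{\omega_1}$ with $\omega_1=N+2s$ or $(R^{2s}-r^{2s})^{\omega_2}$ with $\omega_2=(N+2s)/(2s)$ according to the case. The precise form stated in the lemma is then obtained by absorbing the universal constant $|Q_+(1)|_{d\mu\times dt}$ (finite since $2\gamma<N$) into $K$, a cosmetic factorization chosen to match the hypothesis of the Bombieri--Giusti iteration (Lemma \ref{clasi}) used later in the proof of Theorem \ref{harnack}. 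The principal technical obstacle is the bookkeeping in the chained product: the exponents $\tau^j$ grow geometrically and without the identity $\tau^n\sim 1/q$ at the stopping time the accumulated constant would blow up; verifying that this identity converts the otherwise-divergent sums into expressions of order $(1-q)/q$, in both cases $s\ge 1/2$ and $s<1/2$ in parallel, is the only delicate point, and no new analytic input beyond Lemma \ref{tres} is required.
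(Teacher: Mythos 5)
Your proposal is correct and follows essentially the same route as the paper, which establishes the single-step estimate $\mathcal{H}(r_n,q_1\tau)\le\alpha_n^{\tau}\mathcal{H}(r_{n-1},q_1)$ with $\alpha_n\le C\,2^{2ns}(R-r)^{-2s}$ and then simply cites the iteration of \cite[Theorem 3.7]{FK}; your chaining, the stopping rule $\tau^n\sim 1/q$, and the absorption of $|Q_+(1)|_{d\mu\times dt}$ to match Lemma \ref{clasi} are exactly the intended argument. The only (harmless) imprecision is that the secondary sum produced by the $2^{2ks}$ factors is $\sum_{j}(n-j+1)\tau^{j}$ rather than $\sum_j j\tau^j$, which is indeed $O(\tau^{n})=O((1-q)/q)$ as you conclude.
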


In order to apply Lemma \ref{clasi} we need to estimate
\begin{equation}\label{estQlog}
|Q_+(1)\cap\{\log v<-m-a\}|_{d\mu\times dt} \;\;\mbox{  and
}\;\;|Q_-(1)\cap\{\log v>m-a\}|_{d\mu\times dt},
\end{equation}
where $m,{a}>0$ {and $v$ is a supersolution to \eqref{peso1}}. To
do this, we need the following auxiliary result, whose proof immediately follows from Lemma 4.1 in \cite{FK}.
\begin{Lemma}\label{estLog}
Let $I\subset\mathbb{R}$ and $\psi:\mathbb{R}^N\rightarrow [0,+\infty)$ be a continuous function satisfying supp$(\psi)= \overline{B_R}$ for some $R>0$
and $|||\psi|||_{Y_0^{s,\gamma}(\mathbb{R}^N)}\leq C$. Then, for $v:\mathbb{R}^N\times I\rightarrow[0,+\infty)$, the following inequality holds,
\begin{equation*}\begin{split}
\int_{\mathbb{R}^N}\int_{\mathbb{R}^N}&(v(x,t)-v(y,t))(-\psi^2(x)v^{-1}(x,t)+\psi^2(y)v^{-1}(y,t))\,d\nu\\
&\geq \int_{B_R}\int_{B_R}\psi(x)\psi(y)\left(\log{\frac{v(y,t)}{\psi(y)}}-\log{\frac{v(x,t)}{\psi(x)}}\right)^2\,d\nu-3\int_{\mathbb{R}^N}\int_{\mathbb{R}^N}(\psi(x)-\psi(y))^2\,d\nu.
\end{split}\end{equation*}
\end{Lemma}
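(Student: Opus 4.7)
The plan is to reduce the stated inequality to a pointwise algebraic estimate in the four numbers $v(x,t),v(y,t),\psi(x),\psi(y)$ and then to integrate against the measure $d\nu$; the support condition on $\psi$ will automatically restrict the first term on the right-hand side to $B_R\times B_R$.

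Fix $x,y\in\ren$ and $t\in I$ and write $a:=v(x,t)$, $b:=v(y,t)$, $\alpha:=\psi(x)$, $\beta:=\psi(y)$, with $a,b>0$ and $\alpha,\beta\ge 0$. When $\alpha\beta>0$, a direct algebraic expansion followed by the substitution $\sigma:=b\alpha/(a\beta)$ yields the identity
\[
(a-b)\Bigl(\frac{\beta^2}{b}-\frac{\alpha^2}{a}\Bigr)=\alpha\beta\Bigl(\sigma+\frac{1}{\sigma}-2\Bigr)-(\alpha-\beta)^2.
\]
Since $\log\sigma=\log(v(y,t)/\psi(y))-\log(v(x,t)/\psi(x))$, the lemma reduces to the elementary scalar inequality
\[
\sigma+\frac{1}{\sigma}-2\;\ge\;(\log\sigma)^2\qquad\text{for every }\sigma>0,
\]
which follows by rewriting the left-hand side as $\bigl(\sqrt{\sigma}-1/\sqrt{\sigma}\bigr)^2$ and deducing $|\sqrt{\sigma}-1/\sqrt{\sigma}|\ge 2|\log\sqrt{\sigma}|$ from the mean value theorem (or by checking the sign of the difference of derivatives at $\sigma=1$).

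The degenerate cases $\alpha=0$ or $\beta=0$ are handled separately. If $\beta=0$ (so $y\notin\overline{B_R}$) then $(a-b)(-\alpha^2/a)=-\alpha^2+\alpha^2 b/a\ge -\alpha^2=-(\alpha-\beta)^2$, while the logarithmic contribution is absent because of the support condition on $\psi$; the case $\alpha=0$ is symmetric, and $\alpha=\beta=0$ is trivial. Combining these, one obtains the pointwise estimate
\[
(a-b)\Bigl(\frac{\beta^2}{b}-\frac{\alpha^2}{a}\Bigr)\;\ge\;\alpha\beta\Bigl(\log\tfrac{b}{\beta}-\log\tfrac{a}{\alpha}\Bigr)^2-3(\alpha-\beta)^2,
\]
valid for all $(x,y)\in\ren\times\ren$, with the first term on the right supported in $B_R\times B_R$; the constant $3$ provides comfortable margin (in fact $1$ would suffice).

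Integrating this pointwise bound against $d\nu$ and restoring the original notation for $v$ and $\psi$ produces exactly the conclusion. The only genuinely nontrivial ingredient is the scalar inequality $\sigma+1/\sigma-2\ge(\log\sigma)^2$; the remainder is algebraic bookkeeping together with a careful account of the support of $\psi$ to localize the logarithmic term on $B_R\times B_R$.
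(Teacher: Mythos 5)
Your argument is correct. Note that the paper does not actually prove this lemma: it simply states that it ``immediately follows from Lemma 4.1 in [FK]'', so any self-contained proof is a genuine addition. What you do is in the same spirit as the Felsinger--Kassmann argument (a pointwise inequality in the four quantities $v(x,t),v(y,t),\psi(x),\psi(y)$, then integration against $d\nu$), but your route through the exact identity
\[
(a-b)\Bigl(\tfrac{\beta^2}{b}-\tfrac{\alpha^2}{a}\Bigr)=\alpha\beta\Bigl(\sigma+\tfrac1\sigma-2\Bigr)-(\alpha-\beta)^2,\qquad \sigma=\tfrac{b\alpha}{a\beta},
\]
combined with the scalar bound $\sigma+1/\sigma-2=(\sqrt\sigma-1/\sqrt\sigma)^2\ge(\log\sigma)^2$ (which follows since $f(u)=u-1/u-2\log u$ satisfies $f(1)=0$ and $f'(u)=(1-1/u)^2\ge0$), is cleaner and in fact yields the inequality with constant $1$ in place of $3$; the constant $3$ in the statement is just inherited from [FK]. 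I verified the identity and the degenerate cases $\psi(x)\psi(y)=0$, and the localization of the logarithmic term to $B_R\times B_R$ via $\operatorname{supp}\psi=\overline{B_R}$ is handled correctly. Two cosmetic caveats, which are defects of the statement rather than of your proof: the lemma allows $v=0$, where $v^{-1}$ is undefined (in the application one has $v\ge\varepsilon>0$, as you implicitly assume); and at points of $\overline{B_R}$ where $\psi$ vanishes the quantity $\log(v/\psi)$ is formally infinite but is multiplied by $\psi(x)\psi(y)=0$, which your case analysis interprets in the only sensible way.
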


With this result, we can establish the estimates in \eqref{estQlog} {(see  \cite[Proposition 4.2]{FK} for more details)}.

\begin{Lemma}\label{log}
Assume that $v$ is a supersolution to \eqref{peso1} in the
cylinder $Q:= B_2\times (-1,1)$, then there exists a positive
constant $C=C(N,s,         {\gamma})$ such that for some constant $a=a(v)$, we have
\begin{equation}\label{log1}
\forall m>0: |Q_+(1)\cap\{\log v<-m-a\}|_{d\mu\times dt}\le
\frac{  {C}|B_1|_{d\mu}}{m},
\end{equation}
and
\begin{equation}\label{log2}
\forall m>0: |Q_-(1)\cap\{\log v>m-a\}|_{d\mu\times dt}\le
\frac{  {C}|B_1|_{d\mu}}{m}.
\end{equation}
\end{Lemma}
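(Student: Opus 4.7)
The plan is to adapt the Moser logarithmic lemma \cite{Mos} to the weighted nonlocal setting, following the parabolic-nonlocal scheme of \cite[Proposition 4.2]{FK}. Fix a radial cutoff $\psi\in\mathcal{C}_0^\infty(\mathbb{R}^N)$ with $\psi\equiv 1$ on $B_1$, $\mathrm{supp}(\psi)\subset B_{3/2}$, $0\le\psi\le 1$, and $|||\psi|||_{Y_0^{s,\gamma}(\mathbb{R}^N)}\le C(N,s,\gamma)$ (finite because $\psi$ is Lipschitz and smoothly cut off away from the origin). Testing the supersolution inequality of Definition \ref{super} against $\varphi:=\psi^2/(v+\varepsilon)$, passing to the limit $\varepsilon\to 0$, and discarding the non-negative contribution of $f$, the identity $-\varphi_t v=\psi^2\partial_t\log v$ yields, for a.e.\ $t\in(-1,1)$,
$$\int_{\mathbb{R}^N}\psi^2\,\partial_t\log v(\cdot,t)\,d\mu \;\ge\; -\frac{a_{N,s}}{2}\int\!\!\int(v(x,t)-v(y,t))\Big(\frac{\psi^2(x)}{v(x,t)}-\frac{\psi^2(y)}{v(y,t)}\Big)d\nu.$$
Lemma \ref{estLog} applied to this $\psi$ converts the right-hand side into $\frac{a_{N,s}}{2}\int\!\int\psi(x)\psi(y)|\log(v(x,t)/\psi(x))-\log(v(y,t)/\psi(y))|^2\,d\nu$ minus $\frac{3a_{N,s}}{2}|||\psi|||^2_{Y_0^{s,\gamma}}\le C_1$.

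Set $W(t):=(\int\psi^2\,d\mu)^{-1}\int\psi^2\log v(\cdot,t)\,d\mu$. Since $\psi\equiv 1$ on $B_1$, the preceding nonlocal form dominates $\int\!\int_{B_1\times B_1}(\log v(x,t)-\log v(y,t))^2\,d\nu$, which by the weighted Poincaré inequality of Appendix B (Theorem \ref{poincare}) is in turn $\ge (\kappa/|B_1|_{d\mu})\int_{B_1}(\log v(x,t)-W(t))^2\,d\mu$, modulo a harmless universal factor coming from switching between the $d\mu$-average on $B_1$ and the $\psi^2 d\mu$-average $W(t)$. Since $\int\psi^2\,d\mu\sim|B_1|_{d\mu}$, dividing through yields the parabolic ODI
$$W'(t)+C_2\;\ge\;\frac{\kappa}{|B_1|_{d\mu}}\int_{B_1}(\log v(x,t)-W(t))^2\,d\mu,\qquad\text{a.e.\ }t\in(-1,1).$$

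Choose $a:=-W(0)$, so that $a=a(v)$. Discarding the non-negative right-hand side gives $W'(t)\ge -C_2$, hence for $t>0$ we have $W(t)+a\ge -C_2 t$, and for $t<0$ we have $W(t)+a\le C_2|t|$. Thus on $Q_+(1)$ the set $\{\log v+a<-m\}$ is contained in $\{|\log v-W(t)|>m-C_2\}$, and symmetrically on $Q_-(1)$ with the opposite sign. Feeding the $L^2$-oscillation control produced by integrating the ODI into the standard parabolic John--Nirenberg iteration (exactly as in \cite[Proposition 4.2]{FK}, ultimately going back to \cite{Mos,BG}) upgrades this to the weak-$L^1$ decay
$$|Q_+(1)\cap\{\log v<-m-a\}|_{d\mu\times dt}+|Q_-(1)\cap\{\log v>m-a\}|_{d\mu\times dt}\;\le\;\frac{C|B_1|_{d\mu}}{m},$$
for all $m>0$, which is exactly \eqref{log1}--\eqref{log2}. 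The symmetry between $Q_+$ and $Q_-$ reflects the time-reversal symmetry of the ODI once the mean is pinned at $t=0$.

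The principal obstacle is that the weight $|x|^{-\gamma}$ is singular inside $B_1$, so both Lemma \ref{estLog} and the weighted Poincaré of Appendix B must be used with constants that do not degenerate near the pole; this is precisely the role of those two ingredients. A secondary technicality is justifying $\psi^2/(v+\varepsilon)$ as a test function in Definition \ref{super} and then letting $\varepsilon\to 0$, which is routine since $Y_0^{s,\gamma}$ is Hilbertian and $\psi^2/(v+\varepsilon)\in L^\infty\cap Y_0^{s,\gamma}(B_{3/2})$ for $\varepsilon>0$. Once these inputs are in place, the Moser--John--Nirenberg scheme transfers essentially verbatim from the Lebesgue setting of \cite{FK} to the present weighted framework.
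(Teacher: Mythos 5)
Your overall architecture matches the paper's: test with $\psi^2/v$ (after reducing to $v\ge\varepsilon$), apply Lemma \ref{estLog} to extract the nonlocal Dirichlet form of $\log(v/\psi)$ at the cost of $3|||\psi|||^2_{Y_0^{s,\gamma}}\le C$, pass to the weighted mean $W(t)$ via a Poincar\'e--Wirtinger step, obtain a differential inequality for $W$, pin $a=\mp W(0)$, and use the one-sided monotonicity of $W(t)+C_2t$. Two corrections, one minor and one substantive. Minor: the mean-oscillation inequality you need is Theorem \ref{PW} (the weighted Poincar\'e--Wirtinger inequality, with the $\min\{\psi(x),\psi(y)\}$ kernel weight), not Theorem \ref{poincare}, which is a Poincar\'e inequality for functions in $\mathcal{C}^\infty_0(\Omega)$ and does not apply to $\log v$ on $B_1$.

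The substantive gap is your final step. You claim that ``integrating the ODI'' yields an $L^2$-oscillation control which, via Chebyshev and a ``parabolic John--Nirenberg iteration,'' gives the $1/m$ decay. Integrating
$W'(t)+C_2\ge \frac{\kappa}{|B_1|_{d\mu}}\int_{B_1}(\log v-W(t))^2\,d\mu$
over $(0,1)$ gives
$\frac{\kappa}{|B_1|_{d\mu}}\int_0^1\!\int_{B_1}(\log v-W)^2\,d\mu\,dt\le W(1)-W(0)+C_2$,
and the right-hand side is \emph{not} bounded: the ODI only controls $W$ from below ($W'\ge -C_2$), so $W(1)-W(0)$ can be arbitrarily large and no $L^2$ bound on the oscillation follows. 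Moreover, neither the paper nor \cite[Proposition 4.2]{FK} uses a John--Nirenberg iteration at this stage; the iteration (Lemma \ref{clasi}, of Bombieri--Giusti type) comes afterwards and \emph{consumes} the weak-$L^1$ bound that this lemma must produce. The correct mechanism, which is the whole point of Moser's device, is to restrict to the super-level set $G^+_m(t)=\{x\in B_1:\ \tilde w(x,t)>m+a\}$ (with $\tilde w,\tilde W$ the drift-corrected quantities and $\tilde W(t)\le a$ for $t>0$), observe that there the integrand is at least $(m+a-\tilde W(t))^2$, divide the resulting inequality
$-\tilde W'(t)\ge \frac{C_1}{|B_1|_{d\mu}}\,|G^+_m(t)|_{d\mu}\,(m+a-\tilde W(t))^2$
by $(m+a-\tilde W(t))^2$, and integrate in $t$: the left side telescopes to $\frac{1}{m+a-\tilde W(0)}-\frac{1}{m+a-\tilde W(1)}\le \frac1m$, giving exactly $\int_0^1|G^+_m(t)|_{d\mu}\,dt\le \frac{|B_1|_{d\mu}}{C_1 m}$ regardless of how far $W$ drifts. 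Without this weighted integration your route does not close.
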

\begin{proof}
{Suppose that $v\geq\varepsilon>0$ in $Q$}. Let $\psi$ be such that
$\psi^2=((\frac 32 -         {\frac{|x|}{2}})\wedge 1)\vee 0$,
  and let us denote
$w(x,t):=-\log\dfrac{v(x,t)}{\psi(x)}$. Using
$\dfrac{\psi^2}{{v}}$ as a test function in \eqref{peso1} and
{noticing that $\text{supp} (\psi^2)\subseteq B_{3/2}$}, {by Lemma
\ref{estLog} and the fact that
$|||\psi|||_{Y_0^{s,\gamma}(\mathbb{R}^N)}\leq C$, there follows}
\begin{eqnarray*}
&\dyle\int_{B_{3/2}}\psi^{2}w_t
d\mu+  {\frac{a_{N,s}}{2}}\int_{B_{
3/2}}\int_{B_{3/2}}\psi(x)\psi(y)
(w(x,t)-w(y,t))^2d\nu\\
&\displaystyle \le 3{\frac{a_{N,s}}{2}}\dyle
\int_{\mathbb{R}^{N}}\int_{\mathbb{R}^{N}}(\psi(x)-\psi(y))^2 d\nu
\leq C.
\end{eqnarray*}
We set $W(t):=\dfrac{\int_{B_{
3/2}}\psi^{2}w(x,t)d\mu}{\int_{B_{3/2}}\psi^{2}d\mu}$, then by
the Poincar\'e type inequality obtained in Theorem \ref{PW}, we
reach that
\begin{eqnarray*}
\dyle\int_{B_{3/2}}\psi^{2}w_t d\mu+C\int_{B_{
3/2}}(w(x,t)-W(t))^2\psi(x)^{  {2}}d\mu\le
C.
\end{eqnarray*}
Let $(t_1,t_2)\subset (-1,1)$. Integrating in time the previous
inequality, {dividing by $\displaystyle
\int_{B_{3/2}}\psi^2\,d\mu$, and noticing that
$$
\int_{B_{3/2}}\psi^2\,d\mu\leq 2^{N-2\gamma}|B_1|_{d\mu},
$$
one gets}
$$
\dfrac{W(t_2)-W(t_1)}{t_2-t_1}+\frac{C_1}{|B_1|_{d\mu}(t_2-t_1)}\int_{t_1}^{t_2}\int_{B_1}(w(x,t)-W(t))^2d\mu\le
C_2.
$$
We can suppose that $W$ is differentiable. In the contrary case, it is possible to follow a discretization  argument as in
\cite[Proposition 4.2]{FK}. By letting $t_2\to t_1$, we get
\begin{equation}\label{derW0}
W'(t)+\frac{C_1}{|B_1|_{d\mu}}\int_{B_1}(w(x,t)-W(t))^2d\mu\le
C_2 \:\qquad a.e \mbox{  in  }(-1,1).
\end{equation}
Define $\tilde{W}(t)=V(t)-C_2t$ and $\tilde{w}(x,t)=v(x,t)-C_2t$. From \eqref{derW0}, it follows that
\begin{equation}\label{derW}
\tilde{W}'(t)+\frac{C_1}{|B_1|_{d\mu}}\int_{B_1}(\tilde{w}(x,t)-\tilde{W}(t))^2d\mu\leq 0 \:\qquad a.e \mbox{  in  }(-1,1).
\end{equation}
Notice that from \eqref{derW} we
deduce $\tilde{W}'(t)\leq 0$, and therefore calling $a(v):=W(0)$
there results
$$  {\tilde{W}(t)\leq W(0)=:a({v})\mbox{  for all }t\in (0,1).}$$
         {Let $t\in (0,1)$.} Then if we define
$$
G^+_{  {m}}(t):=\{x\in B_1(0): \tilde{w}(x,t)>m+a\},
$$
for $x\in   {G}^+_m(t)$, we have
$$
\tilde{w}(x,t)-\tilde{W}(t)\ge m+a-\tilde{W}(t)>0.
$$
Thus
$$
\tilde{W}'(t)+\frac{C_1}{|B_1|_{d\mu}}|G^+_m(t)|_{d\mu}(m+a-\tilde{W}(t))^2\le
0.
$$
Hence
$$
\frac{-\tilde{W}'(t)}{(m+a-\tilde{W}(t))^2}\ge
\frac{C_1}{|B_1|_{d\mu}}|  {G}^+_m(t)|_{d\mu}.
$$
Integrating the previous differential inequality {for $t\in
(0,1)$} and substituting $\tilde{w}$ by its value, yields
$$
|Q_+(1)\cap\{\log
v+C_2t<-  {m}-a\}|_{  {d\mu\times
dt}}\le\frac{C_1|B_1|_{d\mu}}{m}.
$$
Now
\begin{eqnarray*}
|Q_+(1)\cap\{\log
v<-  {m}-a\}|_{  {d\mu\times dt}} &\le &
|Q_+(1)\cap\{\log
v+C_2t<-           {\frac{m}{2}}-a\}|_{  {d\mu\times dt}}\\
&+&|Q_+(1)\cap\{C_2t>\frac{m}{2}\} |_{  {d\mu\times dt}}\\ &\le
&\frac{C|B_1|_{d\mu}}{m},
\end{eqnarray*}
what finishes the proof of \eqref{log1}. Estimate \eqref{log2}
follows using the same approach.
\end{proof}

We are now able to prove the weighted weak Harnack inequality.

\begin{proof}[Proof of Theorem \ref{harnack}]
Roughly speaking, the key to prove this result will be to define appropriate functions and parameters so that we can deduce the result from Lemma \ref{clasi}.
Indeed, we divide the proof in two cases.   {Let $0<r<1$ such that $B_r\subset\Omega$.}
\begin{enumerate}
\item Assume first that $s\ge \frac 12$.

     We set $\theta_1=\theta_2=\frac 12$ and define $U_1(r)=B_r\times
(1-r^{2s}, 1)$,  $U_2(r)=B_r\times (-1, -1+r^{2s})$. In the same
way we consider $U_{  {1}}(1)=Q_+(1)$ and ${U}_2(1)=Q_-(1)$.

     Let $w_1:=e^{-a}v^{-1}, w_2:=e^{a}v$ where $a=a(v)$ was defined in
Lemma \ref{log}. From this result we obtain that
$$
|Q_+(1)\cap\{\log w_1>m\}|_{d\mu\times dt}\le
\frac{C|B_1|_{d\mu}}{m},
$$
and
$$
|Q_-(1)\cap\{\log w_2>m\}|_{d\mu\times dt}\le
\frac{C|B_1|_{d\mu}}{m}.
$$
Using Lemma \ref{lema2}, it follows that $(w_1,U_1(r))$ satisfies
the conditions of Lemma \ref{clasi} with $p_0=\infty$ and $\eta$
any positive constant. Moreover, by Lemma \ref{lema5}, $(w_2,
{U}_2(r))$ satisfies the same conditions with $  {p_0}=1$ and
$\eta=\frac{N}{N+{2s}}<1$. Hence we conclude that
$$
\sup_{U_1(\frac{1}{2})} w_1\le C\;\;\mbox{  and
}\;\;\|w_2\|_{L^1({U_2}(\frac{1}{2}), d\mu)}\le  {\tilde{C}}.
$$
Using these estimates and the definitions of $w_1$ and $w_2$ , we
get
$$
\|v\|_{L^1({U}_2(\frac 12),  {d\mu})}\le C
\inf_{U_{  {1}}(\frac 12)} v,
$$
and the result follows in this case.
\item If $0<s<\frac 12$, we have to change the domains by  setting
$\theta_1=\theta_2=(\frac 12)^{2s}$ and
$U_1(r)=B_{r^{\frac{1}{2s}}}\times (1-r, 1)$,
${U}_2(r)=B_{r^{\frac{1}{2s}}}\times (-1,-1+r)$. Then
the same arguments as in the previous case allow us to conclude.
\end{enumerate}
\end{proof}
From the weighted weak Harnack inequality, we immediately deduce the next Corollary.
         {\begin{Corollary}\label{coroHar}
Let $\l\le
\Lambda_{N,s}$. Assume that $u$ is a nonnegative function such
that $u\not\equiv 0$,\, $u\in L^1(\Omega\times (0,T))$ and
$\dfrac{u}{|x|^{2s}}\in L^1(\Omega\times (0,T))$. If $u$ satisfies $u_t+(-\Delta)^{s}
u-\l\dfrac{u}{|x|^{2s}}\geq 0$ in the weak sense in $\Omega\times (0,T)$, then there exists $r_1>0$ and $t_2>t_1>0$, and a constant  $C=C(N,r_1,t_1,t_2)$ such that for each cylinder $B_{r}(0)\times (t_1,t_2)\subset\subset \Omega\times (0,T)$, $0<r<r_1$,
$$u\geq C |x|^{-\frac{N-2s}{2}+\alpha}\hbox{ in }  B_{r}(0)\times (t_1,t_2),$$ where $\alpha$ is the singularity of the homogeneous problem given in Lemma \ref{singularity}.
In particular, for $r$ conveniently small we can assume that $u>1$ in $B_{r}(0)\times (t_1,t_2).$
\end{Corollary}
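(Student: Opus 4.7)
The plan is to reduce everything to the weighted weak Harnack inequality of Theorem \ref{harnack} via the ground state transformation. Set $\gamma=\frac{N-2s}{2}-\alpha\in (0,\frac{N-2s}{2}]$, so that by Proposition \ref{psi} and Lemma \ref{singularity}, $\Psi_{N,s}(\gamma)=\lambda(\alpha)=\lambda$. Define
\[
v(x,t):=|x|^{\gamma}u(x,t).
\]
Arguing as in the derivation of \eqref{peso1}, if $u$ is a weak supersolution of $u_t+(-\Delta)^s u-\lambda u/|x|^{2s}\ge 0$ in $\Omega\times(0,T)$, then testing with functions of the form $\phi(x,t)=|x|^{\gamma}\varphi(x,t)$ and using the pointwise identity $(-\Delta)^s u-\lambda u/|x|^{2s}=|x|^{\gamma}L_\gamma v$ shows that $v$ is a weak supersolution of $|x|^{-2\gamma}v_t+L_\gamma v\ge 0$ in the sense of Definition \ref{super}.

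Now fix $r_1>0$ small enough that $\overline{B_{r_1}(0)}\subset\Omega$, and choose $0<s_1<t_1<t_2<s_2<T$. Applying Theorem \ref{harnack} to $v$ in cylinders $Q_1=B_{r_1}(0)\times(s_1,t_1)$ and $Q_2=B_{r_1}(0)\times(t_1,t_2)$, we obtain for any $q<1+2s/N$
\[
\Big(\iint_{Q_1} v^{q}\,d\mu\,dt\Big)^{1/q}\le C\,\inf_{Q_2} v.
\]
Since $u\gneqq 0$ is a nonnegative supersolution, the strong maximum principle (Theorem \ref{SMP} when $\lambda=0$, and for general $0<\lambda\le\Lambda_{N,s}$ obtained by a standard chaining argument built directly from the weak Harnack inequality of Theorem \ref{harnack} applied on overlapping small cylinders) implies $u>0$ a.e.\ in $\Omega\times(0,T)$. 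Hence $v>0$ a.e.\ on $Q_1$, so the left-hand side above is strictly positive and
\[
\inf_{Q_2} v\ge c_0>0
\]
for some constant $c_0=c_0(N,s,\gamma,r_1,s_1,t_1,t_2,u)$.

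Translating back, for every $(x,t)\in B_r(0)\times(t_1,t_2)$ with $0<r\le r_1$,
\[
u(x,t)=|x|^{-\gamma}v(x,t)\ge c_0\,|x|^{-\gamma}=c_0\,|x|^{-\frac{N-2s}{2}+\alpha},
\]
which is the claimed estimate with $C=c_0$. For the final statement, note that $|x|^{-\gamma}\to\infty$ as $x\to 0$, so shrinking $r$ if necessary we may arrange $c_0\,|x|^{-\gamma}\ge c_0\,r^{-\gamma}>1$ in $B_r(0)\times(t_1,t_2)$.

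The main obstacle is Step~1: one must verify rigorously that the ground state transformation sends weak supersolutions (in the sense of Definition \ref{veryweak}, tested against the class $\mathcal{T}$) to weighted weak supersolutions (in the sense of Definition \ref{super}, tested against functions in $L^2(t_1,t_2;Y^{s,\gamma}_0(\Omega_1))$). The delicate point is justifying the admissibility of test functions of the form $|x|^{\gamma}\varphi$ near the origin and passing rigorously from the pointwise Euler--Lagrange relation \eqref{energyGS} to the variational inequality for $v$, which is handled by a truncation/approximation argument away from $0$ combined with the finite energy bounds supplied by the improved Hardy inequality \eqref{improvedHardy}.
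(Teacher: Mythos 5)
Your argument is correct and is precisely the route the paper intends: the Corollary is stated as an immediate consequence of Theorem \ref{harnack} applied to $v=|x|^{\gamma}u$, which is exactly your Steps 1--4, and the technical point you flag (passing from the $L^1$ weak supersolution property of $u$ in the sense of Definition \ref{veryweak} to the weighted energy formulation of Definition \ref{super}) is the same one the paper leaves implicit. Two small repairs: the cylinders in Theorem \ref{harnack} require strictly separated time intervals ($t_2<t_3$), so take $Q_1=B_{r_1}(0)\times(s_1,s_1')$ with $s_1'<t_1$ rather than letting the intervals touch at $t_1$; and when $\lambda=\Lambda_{N,s}$ one has $\gamma=\frac{N-2s}{2}$, the endpoint of the range $0<\gamma<\frac{N-2s}{2}$ in which the weighted Harnack machinery of Section \ref{HH} is actually developed, so that borderline case deserves a separate limiting remark.
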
}
Finally, to end this section, we can establish a boundedness condition on the solutions of \eqref{peso1}.
\begin{Proposition}\label{boundednessV}
Let $v\in \mathcal{C}(  {[0,T)};L^2(\ren,d\mu))\cap L^2(0,T;Y^{s,\g}_{0}(\ren))$ be  a
solution to \eqref{peso1}  with $u_0\in L^\infty(\Omega)$.
If $f\in L^r(0,T; L^{q}(\Omega)) \mbox{ with } r, q>1$ and $\dfrac{1}{r}+\dfrac{N}{2qs}< 1$, then $v\in L^\infty(\Omega\times (0,T))$.
\end{Proposition}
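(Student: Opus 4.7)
\begin{pf}[Proof proposal for Proposition \ref{boundednessV}.]
The plan is to perform a Moser iteration adapted to the weighted framework, using the measure $d\mu = |x|^{-2\gamma}dx$ and the bilinear form defined by $d\nu$. Without loss of generality I would reduce to the case $k_0 := \|u_0\|_{L^\infty(\Omega)} + 1$ and then for $k \ge k_0$ test the equation with truncations of $w_k := (v-k)_+$.

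First, fix $\beta \ge 1$ and use $\varphi = w_k^{2\beta-1}$ (or a Lipschitz truncation thereof to ensure admissibility in $L^2(0,T;Y^{s,\gamma}_0(\Omega))$) as a test function in Definition \ref{super}. Applying the chain rule to the time derivative and the pointwise algebraic inequality
$$(a-b)\bigl(a^{2\beta-1}-b^{2\beta-1}\bigr) \ge \frac{2\beta-1}{\beta^2}\bigl(a^\beta - b^\beta\bigr)^2$$
inside the double integral against $d\nu$ (in the spirit of Lemma \ref{negative}), I obtain the energy estimate
$$\sup_{t\in(0,T)} \int_\Omega w_k^{2\beta}(x,t)\,d\mu \;+\; \frac{C}{\beta}\int_0^T\!\!\iint_{\mathbb{R}^{2N}}\!\bigl(w_k^\beta(x,t)-w_k^\beta(y,t)\bigr)^2 d\nu\,dt \;\le\; C\beta \int_0^T\!\!\int_\Omega f\, w_k^{2\beta-1}|x|^{-\gamma}\,dx\,dt.$$
The boundary term at $t=0$ vanishes because $w_k(\cdot,0) \equiv 0$ by the choice of $k$.

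Second, I would apply the weighted Sobolev inequality from Theorem \ref{sobolev} (Appendix B) to the function $w_k^\beta$, yielding
$$\Bigl(\int_\Omega w_k^{\beta\cdot 2^*_s}\,d\mu\Bigr)^{2/2^*_s} \;\le\; C\iint_{\mathbb{R}^{2N}}\bigl(w_k^\beta(x)-w_k^\beta(y)\bigr)^2\,d\nu.$$
Combining this with the $L^\infty_t L^2_\mu$ control of $w_k^\beta$ given by the energy estimate, an interpolation in the spirit of the one used in the proof of Lemma \ref{negative} gives
$$\|w_k\|_{L^{2\beta\tau}(Q,\,d\mu\times dt)}^{2\beta} \;\le\; C\beta^2 \int_0^T\!\int_\Omega f\, w_k^{2\beta-1}|x|^{-\gamma}\,dx\,dt,$$
where $\tau = 1 + \frac{2s}{N}$ and $Q = \Omega\times(0,T)$.

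Third, the crux is the estimate of the forcing integral via Hölder. Writing $f\,w_k^{2\beta-1}|x|^{-\gamma} = \bigl(f|x|^{\gamma}\bigr)\cdot\bigl(w_k^{2\beta-1}|x|^{-2\gamma}\bigr)$ and applying Hölder in space with exponents $(q,q')$ and in time with exponents $(r,r')$, together with the embedding yielded by the previous step, one checks that the exponent condition $\frac{1}{r}+\frac{N}{2qs} < 1$ is precisely what is needed so that the power of $\|w_k\|_{L^{2\beta\tau}(Q,d\mu\times dt)}$ produced on the right is \emph{strictly less} than $2\beta$. Absorbing a bit of the $L^{2\beta\tau}$ norm into the left hand side (using Young's inequality) and bounding the local $L^{q'r'}$ integrals of $|x|^{-\gamma q'}$ by means of $0 < \gamma < (N-2s)/2$, this yields a recursive inequality
$$\|w_k\|_{L^{2\beta\tau}(Q,d\mu\times dt)} \;\le\; \bigl(C\beta^2\bigr)^{1/(2\beta)}\,\bigl(\|f\|_{L^r_tL^q_x}\bigr)^{1/(2\beta)}\,\|w_k\|_{L^{2\beta}(Q,d\mu\times dt)}^{1-\delta/\beta},$$
for some $\delta>0$ depending on $r,q,s,N$.

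Finally, Moser iteration with $\beta_n = \tau^n$ yields a geometrically convergent product of the prefactors and a finite limit of the exponent on $\|w_k\|$; passing to the limit $n\to\infty$ shows $\|w_k\|_{L^\infty(Q)} \le C$ with $C$ depending only on $N,s,\gamma,\Omega,T,\|f\|_{L^r_t L^q_x},\|u_0\|_\infty$. This gives $v \in L^\infty(\Omega\times(0,T))$.

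The step I expect to be the main obstacle is the Hölder bookkeeping in the third paragraph: one must simultaneously accommodate the singular weight $|x|^{-\gamma}$ coming from the structure of \eqref{peso1}, the parabolic measure $d\mu\times dt$ used in the iteration, and the anisotropic Lebesgue norm $L^r_tL^q_x$ of the datum, and check that the resulting exponent deficit is positive precisely when $\frac{1}{r}+\frac{N}{2qs}<1$. The restriction $\gamma < (N-2s)/2$ is exactly what keeps $|x|^{-\gamma}$ locally integrable with room to spare, so the argument goes through.
\end{pf}
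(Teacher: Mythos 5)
Your proposal is correct in outline, but it follows a genuinely different iteration scheme from the one the paper (very tersely) invokes. The paper's proof tests the equation with the level--set truncations $G_k(v)$, controls the combined norm $|||G_k(v)|||^2=\|G_k(v)\|_{L^\infty(0,T;L^2(\Omega,d\mu))}^2+\|G_k(v)\|^2_{L^2(0,T;Y^{s,\gamma}_0(\Omega))}$, and then runs a Stampacchia--type iteration on the measure of the level sets $A_k=\{v>k\}$, as in Theorem 29 of \cite{LPPS} (this is the route of Aronson--Serrin as adapted there): one shows $|||G_k(v)|||^2\le C\,|A_k|^{1+\varepsilon}_{d\mu\times dt}$ with $\varepsilon>0$ exactly when $\frac1r+\frac{N}{2qs}<1$, and Stampacchia's lemma gives a finite level $k_*$ with $|A_{k_*}|=0$. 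You instead run a Moser iteration over the powers $\beta_n=\tau^n$ applied to a single truncation $w_k=(v-k)_+$. Both schemes rest on the same two ingredients — the energy estimate obtained by testing with (truncated) powers of $w_k$, and the weighted Sobolev inequality of Appendix B giving the parabolic embedding $L^\infty_tL^2(d\mu)\cap L^2_tY^{s,\gamma}_0\hookrightarrow L^{2\tau}(d\mu\,dt)$ with $\tau=1+\frac{2s}{N}$ — and in both the Aronson--Serrin exponent condition is what makes the deficit positive. The Stampacchia route is shorter and yields an explicit bound $\|v\|_\infty\le k_0+d$; the Moser route is heavier in bookkeeping but localizes better. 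Your identification of the two delicate points is accurate: the weight mismatch between $d\mu=|x|^{-2\gamma}dx$ and the Sobolev weight $|x|^{-2^*_s\gamma}$ (resolved as in Lemma \ref{negative} by $|x|\le 1$ after rescaling), and the H\"older splitting of $\int f\,w_k^{2\beta-1}|x|^{-\gamma}$, where one should note that the hypothesis forces $q>\frac{N}{2s}$, hence $q'<\frac{N}{N-2s}$ and $\gamma q'<\frac N2$, so all the residual powers of $|x|$ are locally integrable with room to spare. I would only ask you to write out that H\"older step fully, since $\|w_k^{2\beta-1}|x|^{-2\gamma}\|_{L^{q'}}$ carries the weight $|x|^{-2\gamma q'}$ rather than $|x|^{-2\gamma}$ and needs one further splitting before it can be absorbed into the $L^{2\beta\tau}(d\mu\,dt)$ norm; this is routine but it is where the computation actually lives.
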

\begin{proof}
The proof follows  the same idea of the classical result by D.G. Aronson and J.Serrin in \cite{AS}.
We test with $G_k(v)\in Y_{0}^{s,\gamma}(\Omega)$ in \eqref{peso1}, and defining
$$|||G_k(v)|||^2:=\|G_k(v)\|_{L^\infty(0,T; L^2(\Omega,d\mu))}^2+\|G_k(v)\|^2_{L^2(0,T;Y_0^{s,\g}(\Omega))},$$
the result is obtained as a simplified version of \cite[Theorem 29]{LPPS}.
The presence of the singular term is handled in a straightforward way, so we skip the details.
\end{proof}
\begin{remark}
Apart from the integral version we proved, if the solution is bounded we can prove the strong Harnack inequality by classical arguments. We skip the details because we do not use this inequality in the applications.
\end{remark}
\begin{Corollary}
If $u$ is a solution to \eqref{eq:prob} with a sufficient regular datum $f$, then $u\leq C |x|^{-\frac{N-2s}{2}+\alpha}$ in $\Omega\times (0,T)$.
\end{Corollary}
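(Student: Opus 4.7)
The strategy is a direct application of the ground state transformation combined with the boundedness result in Proposition \ref{boundednessV}. The point is that the singularity rate $|x|^{-\frac{N-2s}{2}+\alpha}=|x|^{-\gamma}$ is precisely what the transformation removes, so the corollary reduces to showing that the transformed variable is bounded.

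Concretely, set $\gamma=\frac{N-2s}{2}-\alpha$ as in \eqref{g1} and let $v(x,t):=|x|^{\gamma}u(x,t)$. By the discussion following \eqref{energyGS}, if $u$ is an energy solution to \eqref{eq:prob} then $v$ is a solution of the weighted problem \eqref{peso1} with source $|x|^{-\gamma}f$ and initial datum $v_0=|x|^{\gamma}u_0$. Since $0<\gamma<\frac{N-2s}{2}$ and $\Omega$ is bounded with $0\in\Omega$, the weight $|x|^{\gamma}$ is bounded on $\Omega$, so $v_0\in L^{\infty}(\Omega)$ whenever $u_0\in L^{\infty}(\Omega)$.

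The next step is to verify that the weighted datum $|x|^{-\gamma}f$ satisfies the integrability condition required by Proposition \ref{boundednessV}, namely $|x|^{-\gamma}f\in L^{r}(0,T;L^{q}(\Omega))$ with $\tfrac{1}{r}+\tfrac{N}{2qs}<1$. This is exactly the meaning of the phrase \emph{sufficiently regular datum}: since $|x|^{-\gamma}\in L^{\sigma}(\Omega)$ for every $\sigma<N/\gamma$, by H\"older's inequality it suffices to assume $f\in L^{r}(0,T;L^{\tilde{q}}(\Omega))$ for some $\tilde{q}>q$ with $\tfrac{1}{q}=\tfrac{1}{\tilde{q}}+\tfrac{\gamma}{N}$ (and with room to spare to keep the scaling condition strict). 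With this in hand, Proposition \ref{boundednessV} yields $\|v\|_{L^{\infty}(\Omega\times(0,T))}\le C$ for a constant depending on $N,s,\gamma,\Omega,T$, the norms of $f$, and $\|u_0\|_{\infty}$.

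Finally, inverting the ground state transformation gives
\[
u(x,t)=|x|^{-\gamma}v(x,t)\le C\,|x|^{-\gamma}=C\,|x|^{-\frac{N-2s}{2}+\alpha}
\]
pointwise in $\Omega\times(0,T)$, which is the claimed bound. The only genuine difficulty is bookkeeping: one has to verify that the energy class of $u$ is compatible with the class in which Proposition \ref{boundednessV} is stated (via the identity \eqref{energyGS} this is automatic for $0<\lambda<\Lambda_{N,s}$), and to quantify precisely how much integrability of $f$ is needed to absorb the $|x|^{-\gamma}$ weight while preserving the parabolic scaling condition $\tfrac{1}{r}+\tfrac{N}{2qs}<1$. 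Combined with Corollary \ref{coroHar}, which gives the matching lower bound, this pins down the exact blow-up rate $|x|^{-\frac{N-2s}{2}+\alpha}$ of the positive solutions near the singular axis.
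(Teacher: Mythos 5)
Your proposal is correct and follows exactly the route the paper intends: the corollary is stated immediately after Proposition \ref{boundednessV} precisely because the bound follows by applying that proposition to $v=|x|^{\gamma}u$, which solves \eqref{peso1}, and then undoing the ground state transformation. Your additional bookkeeping on how much integrability of $f$ is needed to absorb the weight $|x|^{-\gamma}$ is a reasonable way to make precise the paper's phrase ``sufficiently regular datum,'' but it does not change the argument.
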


\section{The Linear Problem: Dependence on the spectral parameter $\lambda$.}\label{4}
     Along this section we will study the problem
\begin{equation}\label{problemBG}
\left\{
\begin{array}{rcl}
 u_t+(-\Delta)^{s} u&=&\l\dfrac{\,u}{|x|^{2s}}+g(x,t)\mbox{ in } \Omega\times (0,T),\\
u(x,t)&=&0\inn (\ren\setminus\Omega)\times[ 0,T),\\
u(x,0)&=&u_0(x)\gneq 0 \mbox{ if }x\in\O,
\end{array}
\right.
\end{equation}
where $g(x,t)$ is a nonnegative function. The goal will be to establish some necessary and sufficient conditions on $g$ and $u_0$ in order to find solutions of this problem. These results correspond to the ones obtained by P. Baras and J. A. Goldstein  for the heat equation in  presence of the inverse square potential (see \cite{BaGo}).

     First, we deal with the necessary summability conditions on $g$ and $u_0$.
\begin{Theorem}\label{necesario}
Let $0<\l\le \Lambda_{N,s}$. Assume that $\tilde{u}$  is a positive weak supersolution to the problem \eqref{problemBG}. Then $g$ and $u_0$
must satisfy
$$\dint_{t_1}^{t_2}\dint_{B_{r}(0)}|x|^{-\gamma}g\,dx\,dt<+\iy,\quad \int_{B_r(0)}|x|^{-\gamma}u_0\,dx<+\iy,$$
for any cylinder $B_{r}(0)\times (t_1,t_2)\subset\subset\Omega\times (0,T)$, where $\gamma$ was defined in \eqref{g1}.
\end{Theorem}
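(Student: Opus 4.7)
My approach is to argue by duality, testing the weak supersolution inequality against a carefully chosen non-negative solution of the backward dual problem, whose $|x|^{-\gamma}$ lower bound near the origin is supplied by Corollary \ref{coroHar}.

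Fix a cylinder $B_r(0)\times(t_1,t_2)\subset\subset\Omega\times(0,T)$ on which integrability is sought, and choose $\psi\in\mathcal{C}_0^\infty(\Omega\times(0,T))$ with $\psi\gneq 0$, supported in a cylinder $B_\rho(0)\times(t_3,t_4)$ taken strictly later in time (e.g.\ $t_2<t_3<t_4<T$, $\rho<r$). The plan is to construct a test function $\phi$ satisfying, in an appropriate weak sense,
\begin{equation*}
-\phi_t+(-\Delta)^s\phi-\lambda\frac{\phi}{|x|^{2s}}=\psi\ \text{in}\ \Omega\times(0,T),\quad \phi\equiv 0\ \text{in}\ (\mathbb{R}^N\setminus\Omega)\times[0,T),\quad \phi(\cdot,T)\equiv 0,
\end{equation*}
as the monotone limit of solutions $\phi_\varepsilon\in\mathcal{T}$ of the problem obtained by replacing $1/|x|^{2s}$ with $1/(|x|^{2s}+\varepsilon)$. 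Existence of each $\phi_\varepsilon$ in the class $\mathcal{T}$ follows from the Hilbert-space theory recalled in Remark \ref{coerciveoperator} combined with the regularity of Appendix A, while Lemma \ref{Comparison Principle} gives monotonicity of $\{\phi_\varepsilon\}$ as $\varepsilon\downarrow 0$.

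Substituting $\phi_\varepsilon$ into the weak supersolution inequality \eqref{eq:subsuper} for $\tilde u$ (with $f=\lambda\tilde u/|x|^{2s}+g$) and using the equation for $\phi_\varepsilon$, the two $\lambda$-terms recombine as $\lambda\varepsilon\int\!\!\int\tilde u\,\phi_\varepsilon/[|x|^{2s}(|x|^{2s}+\varepsilon)]\geq 0$, which may be discarded to obtain
\begin{equation*}
\int_0^T\!\!\int_\Omega\tilde u\,\psi\,dx\,dt\;\geq\;\int_0^T\!\!\int_\Omega g\,\phi_\varepsilon\,dx\,dt+\int_\Omega u_0(x)\,\phi_\varepsilon(x,0)\,dx.
\end{equation*}
The left-hand side is finite because $\psi\in L^\infty$ is compactly supported away from the origin and $\tilde u\in\mathcal{C}([0,T);L^1(\Omega))$. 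Monotone convergence as $\varepsilon\downarrow 0$ then yields
\begin{equation*}
\int_0^T\!\!\int_\Omega g\,\phi\,dx\,dt+\int_\Omega u_0(x)\,\phi(x,0)\,dx\;<\;+\infty.
\end{equation*}

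The decisive step is the singular lower bound on $\phi$. Time-reversing via $\tilde\phi(x,t):=\phi(x,T-t)$, the function $\tilde\phi$ is a non-trivial non-negative weak supersolution of the forward equation $\tilde\phi_t+(-\Delta)^s\tilde\phi-\lambda\tilde\phi/|x|^{2s}\geq 0$ on $\Omega\times(0,T-t_3)$, non-triviality being guaranteed by the strong comparison principle since $\psi\gneq 0$. Corollary \ref{coroHar} then provides $C>0$ with $\tilde\phi(x,t)\geq C|x|^{-\gamma}$ on $B_r(0)\times(T-t_2,T-t_1)$, equivalently $\phi(x,t)\geq C|x|^{-\gamma}$ on $B_r(0)\times(t_1,t_2)$. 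Inserting this into the previous display proves $\int_{t_1}^{t_2}\!\!\int_{B_r(0)}|x|^{-\gamma}g\,dx\,dt<+\infty$. The bound for $u_0$ is obtained by the same scheme applied on the shifted interval $[\tau,T]$ with $\tilde u(\cdot,\tau)$ in place of the initial datum; the resulting estimate $\int_{B_r(0)}|x|^{-\gamma}\tilde u(\cdot,\tau)\,dx\leq C$ is uniform in small $\tau>0$, and Fatou together with $\tilde u(\cdot,\tau)\to u_0$ in $L^1(\Omega)$ yields $\int_{B_r(0)}|x|^{-\gamma}u_0\,dx<+\infty$.

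The main obstacle is the rigorous construction of the test function and the verification of the hypotheses of Corollary \ref{coroHar} for its time-reverse: one must check that each $\phi_\varepsilon$ is admissible in Definition \ref{veryweak}, that the limit $\phi$ and the product $\phi/|x|^{2s}$ retain the integrability needed by the Corollary, and that the weak supersolution relation is preserved in the limit. Once this is in place, the duality estimate and the singular lower bound combine mechanically to give both claimed integrabilities.
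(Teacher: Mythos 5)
Your proposal follows essentially the same duality scheme as the paper: test the weak supersolution against (an approximation of) a positive solution of the backward dual problem, recombine the two $\lambda$-terms into a non-negative discardable remainder, pass to the limit by monotone convergence, and invoke Corollary \ref{coroHar} for the $|x|^{-\gamma}$ lower bound on the dual solution. The differences are cosmetic on the $g$-side (the paper takes right-hand side $1$ rather than a compactly supported $\psi$, and builds the approximations by the iteration $-(\varphi_n)_t+(-\Delta)^s\varphi_n=\lambda\varphi_{n-1}/(|x|^{2s}+\tfrac 1n)+1$, which keeps each right-hand side bounded and regular by induction and so makes membership in $\mathcal{T}$ immediate).

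The one place where you diverge substantively is the $u_0$ term, and there your argument has a gap. You need $\phi(x,\tau)\ge C|x|^{-\gamma}$ with $C$ \emph{uniform} as $\tau\downarrow 0$, but Corollary \ref{coroHar} is stated only for cylinders $B_r(0)\times(t_1,t_2)\subset\subset\Omega\times(0,T)$, and its constant $C(N,r_1,t_1,t_2)$ is not claimed to stay bounded as $t_1\to 0$; the cylinder $B_r(0)\times(0,\tau_0)$ touching the initial time is not admissible. The paper sidesteps this entirely by posing the dual problem on the enlarged interval $(-\varepsilon,T)$, so that $t=0$ becomes an interior time and the corollary applies on a cylinder $B_r(0)\times(t_1,t_2)$ with $0\in(t_1,t_2)\subsetneq(-\varepsilon,T)$, yielding $\varphi(x,0)\ge A|x|^{-\gamma}$ directly and making the Fatou/continuity step unnecessary. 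If you adopt that time-extension (which is compatible with your construction, since $\psi$ is supported away from $t=0$ anyway), your argument closes; as written, the claimed uniformity in small $\tau$ is unjustified.
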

\begin{proof} Fix $\varepsilon>0$. Let consider $\varphi_{n}$, the positive solution to
\begin{equation}\label{eq:super}
\left\{\begin{array}{rcl}
-(\varphi_n)_t+(-\Delta)^s \varphi_n&=&\lambda\dfrac{\varphi_{n-1}}{|x|^{2s}+\frac{1}{n}}+1 \quad\mbox{in }\Omega\times (-\varepsilon,T),\\
\varphi_n&=&0\inn(\ren\setminus\Omega)\times  {(} -\varepsilon,T  {]},\\
\varphi_n(x,T)&=&0\inn\Omega,
\end{array}\right.
\end{equation}
with
\begin{equation}\label{eq:super00}
\left\{\begin{array}{rcl}
-(\varphi_0)_t+(-\Delta)^s \varphi_0&=&1 \quad\mbox{in }\Omega\times (-\varepsilon,T),\\
\varphi_0&=&0\inn(\ren\setminus\Omega)\times  {(} -\varepsilon,T  {]},\\
\varphi_0(x,T)&=&0\inn\Omega.
\end{array}\right.
\end{equation}
Notice that $\varphi_0$ is  a strong solution in $\Omega\times   {(} -\varepsilon,T  {]}$, and therefore every $\varphi_n$ is a strong solution
too (see Appendix A). Furthermore,
 $\varphi_{n-1}\leq \varphi_{n}\leq\varphi$, where  $\varphi$ is the positive solution to
$$\left\{\begin{array}{rcl}
-\varphi_{t}+(-\Delta)^{s}\varphi&-&\lambda\dfrac{\varphi}{|x|^{2s}}=1\inn\Omega\times (-\varepsilon,T),\\
\varphi&=&0\inn(\ren\setminus\Omega)\times  ( -\varepsilon,T  {]},\\
\varphi(x,T)&=&0,\quad \Omega.
\end{array}
\right.
$$
As a consequence of          {Corollary \ref{coroHar}}, for any cylinder $C_{r_1,t_1,t_2}:=B_{r}(0)\times (t_{1},t_2)\subset\subset \Omega\times   {(-\varepsilon,T)}$, $0<r<r_1$, there exists a constant  $A=A(N,s, C_{r_1,t_1,t_2})$,  such that
\begin{equation}\label{singVarphi}
\varphi(x,t)\geq \frac{A}{|x|^\gamma},\quad (x,t)\in C_{r_1,t_1,t_2}, \quad \gamma=\frac{N-2s}{2}-\alpha.
\end{equation}
Since $\varphi_n$ is regular and bounded we can use it as a test function in \eqref{problemBG}, hence  we get
\begin{equation*}\begin{split}
\int_{0}^{T}{\int_\Omega{g\varphi_n\,dx}\,dt}&+\int_\Omega{u_0\varphi_n(x,0)\,dx} \\
&         {\leq} -\int_{0}^{T}{\int_\Omega{\tilde{u}(\varphi_n)_t\,dx}\,dt}+\int_{0}^{T}{\int_{         {\R^N}}{\tilde{u}(-\Delta)^s\varphi_n\,dx}\,dt}
-\lambda \int_{0}^{T}{\int_\Omega{\frac{\tilde{u}\varphi_{n}}{|x|^{2s}}\,dx}\,dt}\\
&\leq-\int_{0}^{T}{\int_\Omega{\tilde{u}(\varphi_n)_t\,dx}\,dt}+\int_{0}^{T}{\int_\Omega{\tilde{u}(-\Delta)^s\varphi_n\,dx}\,dt}-\lambda \int_{0}^{T}{\int_\Omega{\frac{\tilde{u}\varphi_{n-1}}{|x|^{2s}+\frac{1}{n}}\,dx}\,dt}\\
&=\int_{0}^{T}{\int_\Omega{\tilde{u}\,dx}\,dt}=C<+\infty.
\end{split}\end{equation*}
Since both integrals in the left hand side are positive, in particular each one is uniformly bounded.  Thus, $\{g\varphi_{n}\}$ is an increasing sequence uniformly bounded in $L^1(\Omega\times(0,T))$, and applying the Monotone Convergence Theorem and
\eqref{singVarphi} we get
\begin{equation*}\begin{split}
C\dint_{t_1}^{t_2}\dint_{B_{r}(0)}&|x|^{-\frac{N-2s}{2}+\alpha} g\,dx dt\leq\int_{t_1}^{t_2}{\int_{B_r(0)}{g\varphi\,dx}\,dt}\\
&\le
\int_{0}^{T}{\int_\Omega{g\varphi\,dx}\,dt}=\lim_{n\rightarrow\infty}\int_{0}^{T}{\int_\Omega{g\varphi_n\,dx}\,dt}<+\infty.
\end{split}\end{equation*}
Likewise, $\{u_0\varphi_n(x,0)\}$ is also an increasing sequence, uniformly bounded in $L^1(\Omega)$, and thus, choosing $t_1$
and $t_2$ so that $0\in (t_1,t_2)\subsetneq (-\varepsilon, T)$,  as above we conclude
$$\tilde{C}\dint_{B_{r}(0)}|x|^{-\frac{N-2s}{2}+\alpha} u_0(x)\,dx\leq \int_\Omega{u_0\varphi(x,0)\,dx}<+\infty.$$
\end{proof}
Conversely, we would like to find the optimal summability conditions on $g$ and $u_0$ to prove existence of weak solution. In this direction, notice that if $g\in L^2(0,T;H^{-s}(\Omega))$ and $u_0\in L^2(\Omega)$, by Remark \ref{coerciveoperator} we can assure the existence of an energy solution of the problem  \eqref{problemBG} whether          {$\lambda< \Lambda_{N,s}$, and in $H(\Omega)$ (see \eqref{Hardynorm}) for $\lambda=\Lambda_{N,s}$}. A sharper result, for a more general class of data, is the following.
\begin{Theorem}\label{suficiente}
Assume $0<\l\le \Lambda_{N,s}$, and that $g$ and $u_0$ satisfy
$$\int_\Omega \frac{u_0}{|x|^\gamma}\,dx<+\infty,\quad \int_0^T\int_\Omega \frac{g}{|x|^\gamma}dx\,dt<+\infty,$$
 where $\gamma$ was defined in \eqref{g1}. Then problem \eqref{problemBG} has a positive weak solution.
\end{Theorem}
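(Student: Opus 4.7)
The plan is to construct the solution as the monotone limit of finite energy solutions of regularized problems, and to control the limit using a duality argument that mirrors the one employed in the proof of Theorem \ref{necesario}.

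\textbf{Step 1: Approximation.} Choose $\lambda_n\nearrow\lambda$ with $\lambda_n<\Lambda_{N,s}$ (take $\lambda_n=\lambda$ if $\lambda<\Lambda_{N,s}$ and $\lambda_n=\lambda(1-1/n)$ if $\lambda=\Lambda_{N,s}$), regularize the potential by $a_n(x):=(|x|^{2s}+1/n)^{-1}\in L^{\infty}(\Omega)$, and truncate the data by $g_n:=\min\{g,n\}$ and $u_{0,n}:=\min\{u_0,n\}$. With these choices, the approximating problem
$$
(P_n)\quad\begin{cases} (u_n)_t+(-\Delta)^s u_n=\lambda_n a_n(x) u_n+g_n &\text{in } \Omega\times(0,T),\\
u_n=0&\text{on }(\R^N\setminus\Omega)\times[0,T),\\
u_n(x,0)=u_{0,n}(x)&\text{in }\Omega,
\end{cases}
$$
has $L^\infty$ coefficients and $L^2$ data, so Remark \ref{coerciveoperator} yields a unique finite energy solution $u_n\geq 0$. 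By Lemma \ref{DebilComparison}, the sequence $\{u_n\}$ is monotone non-decreasing in $n$.

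\textbf{Step 2: A priori $L^1$ bound by duality.} Mimicking the construction in the proof of Theorem \ref{necesario}, build the increasing iterative sequence $\varphi_k$ with $\varphi_k$ solving
$$
-(\varphi_k)_t+(-\Delta)^s\varphi_k=\lambda\,\frac{\varphi_{k-1}}{|x|^{2s}+1/k}+1,\quad \varphi_k=0\text{ outside }\Omega,\quad \varphi_k(x,T)=0,
$$
and let $\varphi:=\lim_k\varphi_k$. The essential ingredient (and the main technical point of the argument) is the upper bound
\begin{equation}\label{upperGamma}
\varphi(x,t)\leq C\,|x|^{-\gamma}\quad\text{for }(x,t)\in\Omega\times[0,T],
\end{equation}
which I would establish by comparison with the explicit stationary solution $|x|^{-\gamma}$ of the homogeneous equation from Lemma \ref{singularity}: a function of the form $M|x|^{-\gamma}\eta(x)+\Psi(x,t)$, where $\eta$ is a radial cut-off and $\Psi$ is the bounded backward solution with source $1$ on $\Omega\setminus B_r(0)$, is a supersolution to the backward problem for $M$ large, and dominates each $\varphi_k$ by the Weak Comparison Principle (Lemma \ref{DebilComparison} applied in reverse time). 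Taking $\varphi$ as test function in $(P_n)$ and using that the coefficient $\lambda-\lambda_n a_n(x)|x|^{2s}\ge 0$ produces
$$
\int_0^T\!\!\int_\Omega u_n\,dx\,dt\leq\int_0^T\!\!\int_\Omega g_n\,\varphi\,dx\,dt+\int_\Omega u_{0,n}\,\varphi(x,0)\,dx\leq C\!\left(\int_0^T\!\!\int_\Omega\frac{g}{|x|^\gamma}dx\,dt+\int_\Omega\frac{u_0}{|x|^\gamma}dx\right)<+\infty,
$$
so that $\{u_n\}$ is bounded in $L^1(\Omega\times(0,T))$ uniformly in $n$.

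\textbf{Step 3: Passage to the limit.} By monotonicity and the uniform $L^1$ bound, the Monotone Convergence Theorem yields $u_n\nearrow u$ a.e. with $u\in L^1(\Omega\times(0,T))$. Fix any nonnegative $\phi\in\mathcal{T}$ and write the weak formulation of $(P_n)$:
$$
\int_0^T\!\!\int_\Omega u_n\bigl(-\phi_t+(-\Delta)^s\phi\bigr)dx\,dt=\int_0^T\!\!\int_\Omega\lambda_n a_n u_n\,\phi\,dx\,dt+\int_0^T\!\!\int_\Omega g_n\phi\,dx\,dt+\int_\Omega u_{0,n}\phi(x,0)\,dx.
$$
Since $\phi$, $\phi_t$ and $(-\Delta)^s\phi$ are bounded, and all of $u_n$, $g_n$, $u_{0,n}$ are monotone in $n$ with controlled $L^1$ limits, every term on the right-hand side involving data passes to the limit by MCT. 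The singular term $\lambda_n a_n(x) u_n$ is also monotone in $n$ and converges pointwise to $\lambda u/|x|^{2s}$; MCT then gives the convergence of the corresponding integral, and the limit is necessarily finite since it equals the difference of the other three limits. This proves that $\lambda u/|x|^{2s}+g\in L^1(\Omega\times(0,T))$ and that $u$ satisfies Definition \ref{veryweak}. Positivity of $u$ on $\Omega\times(0,T)$ follows from Theorem \ref{SMP} (or directly from $u\ge u_1>0$).

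The main obstacle is the upper bound \eqref{upperGamma}. All other ingredients are routine applications of the comparison principles and convergence theorems, but \eqref{upperGamma} is delicate because the coefficient $\lambda/|x|^{2s}$ is critical with respect to the Hardy inequality, so any crude pointwise barrier fails; one must use precisely the singular power $|x|^{-\gamma}$ associated by Lemma \ref{singularity} and Proposition \ref{psi} to the value $\lambda$, multiplied by a suitable cut-off to handle the inhomogeneous term away from the origin.
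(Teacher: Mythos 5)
Your overall strategy (monotone approximation, duality against a test function behaving like $|x|^{-\gamma}$, passage to the limit by monotone convergence) is the same as the paper's, but two points that the paper treats at length are missing or only gestured at.

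First, the upper bound $\varphi\le C|x|^{-\gamma}$, which you correctly identify as the crux, is not actually proved: you propose a barrier $M|x|^{-\gamma}\eta(x)+\Psi(x,t)$ but do not verify that it is a supersolution. For the nonlocal operator the cut-off produces the error $(-\Delta)^s\bigl(|x|^{-\gamma}(\eta-1)\bigr)$, which has a favourable sign near the origin but must be absorbed by $\Psi$ in the transition annulus with a source of size $CM$, not $1$; moreover the comparison with the iterates $\varphi_k$ requires an induction (each $\varphi_k$ solves a problem whose right-hand side involves $\varphi_{k-1}$), none of which is carried out. The paper instead obtains \eqref{upBoundPhi} from the machinery of Section \ref{HH}: the weak Harnack inequality (Theorem \ref{harnack}) for the ground-state-transformed function $v=|x|^{\gamma}\varphi$ gives the lower bound, and Proposition \ref{boundednessV} (an Aronson--Serrin type $L^\infty$ estimate for $v$) gives the upper bound. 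Your barrier route is plausible but is precisely the "delicate" step you leave open, so as written the a priori estimate of Step 2 is not established.

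Second, Definition \ref{veryweak} requires $u\in\mathcal{C}([0,T);L^1(\Omega))$, and monotone a.e.\ convergence plus a uniform $L^1(\Omega\times(0,T))$ bound does not give this. The paper devotes the second half of its proof to showing that $\{u_n\}$ is a Cauchy sequence in $\mathcal{C}([0,T];L^1(\Omega))$, testing the equation for $u_n-u_m$ against $T_1(u_n-u_m)$ and using the primitive $\Psi(s)=\int_0^sT_1(\sigma)\,d\sigma$ (the argument of Prignet). Your proposal omits this entirely, so the limit is not shown to be a weak solution in the sense of Definition \ref{veryweak}. A smaller point: to conclude $\lambda u/|x|^{2s}\in L^1(\Omega\times(0,T))$ you test against $\phi\in\mathcal{T}$, which vanishes at $t=T$; the paper avoids the resulting degeneracy near the terminal time by working with a test function $\tilde\varphi$ solving the backward problem on $(-\varepsilon,T_1)$ with $T_1>T$, so that $\tilde\varphi\ge\bar C>0$ on $B_r(0)\times[0,T]$. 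You would need the same device.
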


\begin{proof}
Consider the approximated problems
\begin{equation}\label{approxBG}
\left\{
\begin{array}{rcl}
u_{nt}+(-\Delta)^s u_{n}&=&\lambda\dfrac{u_{n-1}}{|x|^{2s}+\frac{1}{n}}+g_n\inn \Omega\times (0,T),\\
u_{n}(x,t)&>&0\inn \Omega\times (0,T),\\
u_{n}(x,t)&=&0\inn (\ren\setminus\Omega)\times[ 0,T),\\
u_n(x,0)&=&T_{n}({u_0(x)})\mbox{ if }x\in \Omega,\\
\end{array}
\right.
\end{equation}
where
\begin{equation}
\left\{
\begin{array}{rcl}
u_{0t}+(-\Delta)^{s} u_{0}&=&g_1\inn \Omega\times (0,T),\\
u_{0}(x,t)&>&0\inn \Omega\times (0,T),\\
u_{0}(x,t)&=&0\inn (\ren\setminus\Omega)\times[ 0,T),\\
u_0(x,0)&=&T_{1}({u_0(x)})\mbox{ if }x\in \Omega,\\
\end{array}
\right.
\end{equation}
with $g_n= T_{n}(g)$ and
$$T_n(g)=
\begin{cases}g\hbox{ if }|g|\le n,\\
n\dfrac{g}{|g|}\hbox{ if }|g|>n.
\end{cases}$$
By Lemma \ref{DebilComparison}, it follows
that $u_0\leq u_1\leq \cdots\leq u_{n-1}\leq u_n \inn \ren \times (0,T)$.
Note that, since the right hand sides of these problems are bounded, every $u_n$ is actually an energy solution.

Consider  $\varphi$ the solution of the problem
\begin{equation}\label{varphiprob}
\left\{
\begin{array}{rcl}
-\varphi_t+(-\Delta)^s\varphi-\lambda\dfrac{\varphi}{|x|^{2s}}&=&1\quad\hbox{ in }\Omega\times (  {-\varepsilon},T),\\
\varphi&>&0\quad\hbox{ in }\Omega\times (  {-\varepsilon},T),\\
\varphi&=&0\quad\hbox{ on }(\mathbb{R}^N\setminus\Omega)\times   {(-\varepsilon,T]},\\
{\varphi(x,T)}&=& C\quad\hbox{ in }\Omega,
\end{array}
\right.
\end{equation}
{where $C>0$}. As a consequence of the weak Harnack inequality, Theorem
\ref{harnack}, and Proposition \ref{boundednessV}, for any cylinder $B_r(0)\times [t_1,t_2]\subset
\Omega\times (-\varepsilon,T)$ we find $c_1,\,c_2>0$ such that
\begin{equation}\label{upBoundPhi}
\frac{c_1}{|x|^\gamma}\le \varphi(x,t)\le \frac{c_2}{|x|^\gamma}.
\end{equation}
Since $\varphi$ also belongs to $L^2(0,T;H_0^s(\Omega))$, we can
use it as a test function in \eqref{approxBG}. Thus,
\begin{eqnarray*}
\int_0^T\int_{\Omega}{(u_n)_t\varphi\,dx\,dt}+\int_0^T\int_{\mathbb{R}^N}{u_n(-\Delta)^s\varphi\,dx\,dt}&=&\lambda\int_0^T\int_{\Omega}{\frac{u_{n-1}\varphi}{|x|^{2s}+\frac{1}{n}}\,dx\,dt}+\int_0^T\int_{\Omega}{g_n\varphi\,dx\,dt}\\
&\leq&\lambda\int_0^T\int_{\Omega}{\frac{u_n\varphi}{|x|^{2s}}\,dx\,dt}+\int_0^T\int_{\Omega}{g_n\varphi\,dx\,dt}.
\end{eqnarray*}
     Integrating in time and applying \eqref{varphiprob} and \eqref{upBoundPhi}, we conclude that
\begin{eqnarray*}
C\int_{\Omega} u_n(x,T)dx +\int_0^T\int_{\Omega}{u_n\,dx\,dt}&\leq&\int_0^T\int_{\Omega}{g_n\varphi\,dx\,dt}+\int_{\Omega}{T_n(u_0(x))\varphi(x,0)\,dx}\\
&\leq& \int_0^T\int_{\Omega}{g\varphi\,dx\,dt}+\int_{\Omega}{u_0(x)\varphi(x,0)\,dx}\\
&\leq& C\int_0^T\int_{\Omega}{\frac{g}{|x|^\gamma}\,dx\,dt}+C\int_{\Omega}{\frac{u_0(x)}{|x|^\gamma}\,dx}\\
&<&+\infty,
\end{eqnarray*}
by hypotheses. Hence, since the sequence $\{u_n\}-{n\in \mathbb{N}}$ is increasing, we can define $u:=\lim_{n\rightarrow\infty}u_n$, and conclude that $u\in L^1(\Omega\times(0,T))$
by applying the Monotone Convergence Theorem.

     Notice that, using the same computations as
above and integrating in $\Omega\times [0,t]$ with $t\le T$,
by considering the estimates on $\{u_n\}_{n\in\mathbb{N}}$, we
reach that
\begin{equation}\label{suppp}
\sup_{t\in [0,T]}\int_{\Omega} u_n(x,t)dx
+\int_0^T\int_{\Omega}{u_n\,dx\,dt}\le C\,  \mbox{
 for all   } n.
\end{equation}
Fix $T_1>T$, and define $\tilde{\varphi}$ as the unique
solution to the problem
\begin{equation}\label{bbo}
\left\{
\begin{array}{rcl}
-\tilde{\varphi}_t+(-\Delta)^s\tilde{\varphi}&=&1\quad\hbox{ in }\Omega\times ({-\varepsilon},T_1),\\
\tilde{\varphi}&>&0\quad\hbox{ in }\Omega\times ({-\varepsilon},T_1),\\
\tilde{\varphi}&=&0\quad\hbox{ on }(\mathbb{R}^N\setminus\Omega)\times {(-\varepsilon,T_1]},\\
\varphi(x,T_1)&=& 0\hbox{ in }\Omega.
\end{array}
\right.
\end{equation}
It is clear that $\tilde{\varphi}\in L^\infty(\Omega\times (-\varepsilon,{T_1}))$ and
$\tilde{\varphi}(x,t)\ge \bar{C}>0$ for all $(x,t)\in B_r(0)\times
[0,T]$, where $B_r(0)\subset \subset \Omega$. Now, using
$\tilde{\varphi}$ as a test function in \eqref{approxBG} and
integrating in $\Omega\times (0,T)$, it follows that
$$
\int_{\Omega}u_n(x,T)\tilde{\varphi}(x,T)\,dx\,dt+\int_0^T\int_{\Omega}{u_n\,dx\,dt}\geq
\lambda\int_0^T\int_{\Omega}\frac{u_{n-1}\tilde{\varphi}}{|x|^{2s}+
\frac{1}{n}}\,dx\,dt.
$$
Thus
$$
\lambda\int_0^T\int_{\Omega}\frac{u_{n-1}\tilde{\varphi}}{|x|^{2s}+
\frac{1}{n}}\,dx\,dt\le C\sup_{\{t\in [0,T]\}}\int_{\Omega}
u_n(x,t)dx +\int_0^T\int_{\Omega}{u_n\,dx\,dt}\le C\, \;\mbox{
 for all   }\; n.
$$
Hence
\begin{eqnarray*}
&\dyle \int_0^T\int_{\Omega}\frac{u_{n-1}}{|x|^{2s}+
\frac{1}{n}}\,dx\,dt=\dyle
\int_0^T\int_{B_r(0)}\frac{u_{n-1}}{|x|^{2s}+
\frac{1}{n}}\,dx\,dt+\dyle \int_0^T\int_{\Omega\backslash
B_r(0)}\frac{u_{n-1}}{|x|^{2s}+
\frac{1}{n}}\,dx\,dt\\
&\le C\dyle
\int_0^T\int_{\Omega}\frac{u_{n-1}\tilde{\varphi}}{|x|^{2s}+
\frac{1}{n}}\,dx\,dt+C\dyle
\int_0^T\int_{\Omega}u_{n-1}\,dx\,dt\le C.
\end{eqnarray*}
Therefore, by the Monotone Convergence Theorem we conclude that
$$
\dfrac{u_{n-1}}{|x|^{2s}+\frac{1}{n}}+g_n\uparrow
\dfrac{u}{|x|^{2s}}+g\mbox{  strongly in }L^1(\Omega\times (0,T)).
$$
To conclude that $u$ is a weak solution to problem \eqref{problemBG},
it remains to check that  $u\in \mathcal{C}([0,T);L^1(\Omega))$. We claim that $\{u_n\}_{n\in\mathbb{N}}$ is a Cauchy sequence in $\mathcal{C}([0,T]; L^{1}(\Omega))$, and hence the result follows. In order to prove this, we closely follow the
arguments in \cite{PR}.

For $n,m\in \mathbb{N}$, such that $n\geq
m$,  denote $u_{n,m}:= u_n-u_m$, and $g_{n,m}:= g_n-g_m$. Clearly,  $u_{n,m}, g_{n,m}\ge 0$.
We set
$$
C_{         {n,m}}:= \lambda\dint_{0}^{t}\!\!\!\dint_{\Omega}
\dfrac{u_{n,m}}{|x|^{2s}}\,dx\,d\tau+  {\int_0^t
\dint_{\Omega} g_{n,m}T_1(u_{n,m})\,dx\,d\tau},
$$
and then $C_{         {n,m}}\to 0$ as $n,m\to \infty$.

By the definition of the approximated problems in \eqref{approxBG}
and the linearity of the operator, for $t\leq T$,
$$
\dint_{0}^{t}\!\!\! \dint_{\Omega} (u_{n,m})_{t}
T_{1}(u_{n,m})\,dx\,d\tau+\dint_{0}^{t}\!\!\! \dint_{\Omega}
(-\Delta)^{s} (u_{n,m}) T_{1}(u_{n,m})\,dx\,d\tau\leq C_{         {n,m}}.
$$
Since $u_{n,m}\in L^{2}(0,T; H_{0}^{s}(\Omega))$, it follows that (see \cite{LPPS} for a detailed proof)
$$
\dint_{0}^{t}\!\!\! \dint_{\Omega} (-\Delta)^{s} (u_{n,m}) T_{1}(u_{n,m})\,dx\,d\tau \geq \dint_{0}^{t} \| T_{1}(u_{n,m})\|_{H_{0}^{s}(\Omega)}^{2}\,d\tau\geq 0,
$$
and therefore,
$$
\dint_{0}^{t}\!\!\! \dint_{\Omega} (u_{n,m})_{t} T_{1}(u_{n,m})\,dx\,d\tau\leq C_{n,m}.
$$
Let us define
$$
\Psi(s):=\int_0^s T_1(\sigma) d\sigma.
$$
Since $u_n\in \mathcal{C}([0,T]; L^{2}(\Omega))$, then $$ \dint_{0}^{t}\!\!\!
\dint_{\Omega} (u_{n,m})_{t} T_{1}(u_{n,m})\,dx\,d\tau =
\dint_{\Omega} (\Psi (u_{n,m})(t)- \Psi (u_{n,m})(0))\,dx. $$

     Thus
$$
\dint_{\Omega} \Psi (u_{n,m})(t)\,dx \le C_{n,m}+\dint_{\Omega}
\Psi (u_{n,m})(0)\,dx.
$$
Taking into account that  $\Psi (u_{n,m})(0)=\Psi (T_n(u_0)-T_m(u_0))$ and by noticing that $\Psi(s)\le |s|$ and $T_n(u_0)-T_m(u_0)\to 0$
strongly in $L^1(\O)$ as $n,m\to \infty$, we obtain that
$$
\dint_{\Omega} \Psi (u_{n,m})(t)\,dx \to 0 \mbox{  as  }n,m\to
\infty,\;\;\mbox{         { uniformly in $t$}}.
$$
Now, since
$$
\dint_{|u_{n,m}|<1}\, \frac{|u_{n,m}|^{2}(t)}{2}\,dx + \dint_{|u_{n,m}|>1}\,
|u_{n,m}| (t)\,dx\leq \dint_{\Omega} \Psi
(u_{n,m})(t)\,dx, $$
         {we conclude that $u_{n,m}\rightarrow 0$ uniformly in $t$.}

Thus $\{u_n\}_{n\in\mathbb{N}}$ is a Cauchy sequence in $\mathcal{C}([0,T];
L^{1}(\Omega))$ and passing to the limit in the weak formulation
of the approximated problems, one obtains that $u$ is a positive
weak solution of problem \eqref{problemBG} in $\Omega\times
(0,T)$.
\end{proof}

Next, we see that $\Lambda_{N,s}$ provides a real restriction on $\lambda$.
\begin{Proposition}\label{largeLambda}
If $\l> \Lambda_{N, s}$, problem \eqref{problemBG} has no positive weak supersolution.
\end{Proposition}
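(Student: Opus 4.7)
The proof would be by contradiction. Suppose $\tilde u$ is a positive weak supersolution of \eqref{problemBG} with $\lambda>\Lambda_{N,s}$. The plan is to reuse the iterative adjoint construction from the proof of Theorem~\ref{necesario}: for each $n\in\mathbb{N}$ let $\varphi_n$ be the bounded strong solution of
$$
\left\{
\begin{array}{rcl}
-(\varphi_n)_t+(-\Delta)^s\varphi_n&=&\dfrac{\lambda\,\varphi_{n-1}}{|x|^{2s}+\frac{1}{n}}+1\mbox{ in } \Omega\times(-\varepsilon,T),\\
\varphi_n&=&0\mbox{ in }(\mathbb{R}^N\setminus\Omega)\times(-\varepsilon,T],\\
\varphi_n(\cdot,T)&=&0\mbox{ in }\Omega,
\end{array}
\right.
$$
starting from $\varphi_0$, the solution of the corresponding problem with $\lambda=0$. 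By Lemma~\ref{DebilComparison} the sequence $\{\varphi_n\}$ is monotone non-decreasing. Testing the weak supersolution inequality for $\tilde u$ against $\varphi_n$, exactly as in the proof of Theorem~\ref{necesario}, produces the uniform bound
\begin{equation}\label{planineq}
\int_0^T\!\!\!\int_\Omega g\,\varphi_n\,dx\,dt+\int_\Omega u_0(x)\,\varphi_n(x,0)\,dx\le\int_0^T\!\!\!\int_\Omega\tilde u\,dx\,dt=:C<\infty.
\end{equation}
Since $u_0\gneq 0$, to reach a contradiction it suffices to prove that the monotone limit of $\varphi_n(\cdot,0)$ is $+\infty$ on a set of positive measure where $u_0>0$: indeed, \eqref{planineq} would then fail by monotone convergence.

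The main obstacle is precisely to establish this divergence for $\lambda>\Lambda_{N,s}$. The natural strategy is spectral: one introduces the Dirichlet eigenvalue problem for the selfadjoint operator $L_n:=(-\Delta)^s-\lambda/(|x|^{2s}+\frac{1}{n})$, whose bounded potential guarantees the existence of a principal eigenpair $(\mu_n,\phi_n)$ with $\phi_n>0$ and $\|\phi_n\|_{L^2(\Omega)}=1$. By the variational characterization of $\mu_n$ together with the scaled trial functions $\psi_\varepsilon(x):=\psi(x/\varepsilon)$, where $\psi\in C_0^\infty(\Omega)$ is any fixed function satisfying $\tfrac{a_{N,s}}{2}\|\psi\|^2_{H_0^s(\Omega)}<\lambda\int\psi^2/|x|^{2s}\,dx$ (such $\psi$ exists precisely because $\lambda>\Lambda_{N,s}$ and $\Lambda_{N,s}$ is \emph{not attained}), a standard scaling computation gives $Q_\lambda(\psi_\varepsilon)/\|\psi_\varepsilon\|_{L^2}^2\sim\varepsilon^{-2s}\bigl[\tfrac{a_{N,s}}{2}\|\psi\|^2_{H_0^s}-\lambda\int\psi^2/|x|^{2s}\bigr]$, whence $\mu_n\to-\infty$ along a suitable diagonal sequence.

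The technically subtle part is to transfer the divergence $|\mu_n|\to\infty$ into a lower bound for $\int_\Omega u_0\,\varphi_n(x,0)\,dx$. To do so I would pass to the time-reversed function $\bar\varphi_n(x,t):=\varphi_n(x,T-t)$, which satisfies the \emph{forward} iterative problem with zero initial datum and constant source $1$; a Duhamel representation for the bounded-potential semigroup $e^{-tL_n}$ combined with the spectral expansion of the constant function $1$ in the eigenbasis of $L_n$ then yields an estimate of the form
$$
\int_\Omega u_0(x)\,\varphi_n(x,0)\,dx\ge C\,\frac{e^{|\mu_n|T}-1}{|\mu_n|}\,\Big(\int_\Omega u_0\,\phi_n\,dx\Big)\Big(\int_\Omega\phi_n\,dx\Big),
$$
whose prefactor grows super-polynomially. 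The remaining delicate task is to guarantee that the two projections $\int u_0\phi_n$ and $\int\phi_n$ do not decay too fast along the diagonal sequence; this will be done by quantifying the concentration rate of $\phi_n$ at the origin (via the same scaled $\psi_\varepsilon$) and matching it against a uniform positive pointwise lower bound for $u_0$ on compact subsets of $\Omega\setminus\{0\}$, itself provided by the strong maximum principle (Theorem~\ref{SMP}) and the weak Harnack inequality (Lemma~\ref{Harnack inequality}) applied to $\tilde u$, after a harmless translation in time. Once this competition between concentration and exponential growth is controlled, $\int_\Omega u_0\,\varphi_n(x,0)\,dx\to+\infty$, contradicting \eqref{planineq} and establishing the nonexistence of positive weak supersolutions for $\lambda>\Lambda_{N,s}$.
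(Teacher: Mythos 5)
Your overall strategy (show that the adjoint approximations $\varphi_n$ blow up and contradict the uniform bound obtained by testing the supersolution) is the classical Baras--Goldstein route, and it is viable in principle, but as written it has genuine gaps precisely at the steps you flag as ``subtle'' or ``delicate''. First, your spectral/Duhamel representation is applied to the wrong object: $\varphi_n$ solves the \emph{iterated} problem $-(\varphi_n)_t+(-\Delta)^s\varphi_n=\lambda\varphi_{n-1}/(|x|^{2s}+\tfrac1n)+1$, not the autonomous problem $-(\varphi_n)_t+L_n\varphi_n=1$ with $L_n=(-\Delta)^s-\lambda/(|x|^{2s}+\tfrac1n)$, so the semigroup $e^{-tL_n}$ and the eigenbasis of $L_n$ do not expand $\varphi_n$; you would first need a comparison between the iterated and the autonomous approximations, which you do not provide. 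Second, the claimed lower bound $\int_\Omega u_0\,\varphi_n(x,0)\,dx\ge C\,\tfrac{e^{|\mu_n|T}-1}{|\mu_n|}\bigl(\int u_0\phi_n\bigr)\bigl(\int\phi_n\bigr)$ is asserted, not derived, and the whole argument collapses unless one also proves that $\int u_0\phi_n$ and $\int\phi_n$ do not decay faster than the exponential prefactor grows; since $\phi_n$ concentrates at the origin at a rate you have not quantified, this is exactly the hard part of the proof and it is left open. A proof that ends with ``once this competition is controlled'' is a plan, not a proof.

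For comparison, the paper's argument is much shorter and avoids spectral theory entirely: rewrite the inequality as
\begin{equation*}
\tilde u_t+(-\Delta)^s\tilde u-\Lambda_{N,s}\frac{\tilde u}{|x|^{2s}}\ge(\lambda-\Lambda_{N,s})\frac{\tilde u}{|x|^{2s}}+g,
\end{equation*}
so that $\tilde u$ is a weak supersolution at the critical level $\Lambda_{N,s}$ (i.e.\ $\alpha=0$, $\gamma=\tfrac{N-2s}{2}$) with the enlarged source $G:=(\lambda-\Lambda_{N,s})\tilde u|x|^{-2s}+g$. Theorem \ref{necesario} then forces $G\,|x|^{-\frac{N-2s}{2}}\in L^1$ on small cylinders, while Corollary \ref{coroHar} gives $\tilde u\ge C|x|^{-\frac{N-2s}{2}}$ there; combining the two yields $|x|^{-N}\in L^1(B_r(0)\times(t_1,t_2))$, which is false. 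If you want to salvage your approach, you would essentially be re-proving (a quantitative version of) the machinery that the paper has already packaged into Theorem \ref{necesario} and Corollary \ref{coroHar}; using those results directly is both shorter and avoids all the eigenvalue asymptotics.
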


\begin{proof}
Consider $\tilde{u}$ as a weak supersolution
to the problem
\begin{equation}\left\{
\begin{array}{rcl}
u_t+(-\Delta)^s u-\Lambda_{N,s}\dfrac{u}{|x|^{2s}}&= &(\l-\Lambda_{N,s})\dfrac{u}{|x|^{2s}}+g \inn\Omega\times (0,T),\\
u(x,t)&>&0\inn \Omega\times (0,T),\\
u(x,t)&=&0\inn (\ren\setminus\Omega)\times[ 0,T).\\
\end{array}\right.
\end{equation}
Since in the left hand side the constant is $\Lambda_{N,s}$, we are in the case $\alpha=0$, and by Theorem \ref{necesario}, necessarily
$$\left((\l-\Lambda_{N,s})\dfrac{\tilde{u}}{|x|^{2s}}+g\right)|x|^{-\frac{N-2s}{2}}\in L^1(B_r(0)\times (t_1,t_2)),$$
for any $B_r(0)\times (t_1,t_2)\subset\subset\Omega\times(0,T)$ small enough. In particular, this implies
$$(\l-\Lambda_{N,s})\dfrac{\tilde{u}}{|x|^{2s}}|x|^{-\frac{N-2s}{2}}\in L^1(B_r(0)\times (t_1,t_2)),$$
and hence, applying          {Corollary \ref{coroHar}} again,
$$(\l-\Lambda_{N,s})|x|^{-N}\in L^1(B_r(0)\times (t_1,t_2)),$$
what is a contradiction. Therefore, there does not exist a positive supersolution if $\l> \Lambda_{N, s}$.
\end{proof}
\begin{remark}
The previous nonexistence result implies that for $\lambda>\Lambda_{N,s}$ an instantaneous and complete blow up phenomena occurs. The proof is a simple adaptation of Theorem \ref{th:blowup}, where this result is proved for a more involved semilinear problem.
\end{remark}

Furthermore, we can state a nonexistence result that shows the
optimality of the power $p=1$ in the singular term
$\dfrac{u^p}{|x|^{2s}}$. The proof for this nonlocal problem
closely follows the classical case, due to H. Brezis and X.
Cabr\'{e} (see \cite{BC}).
\begin{Theorem}
Let $p>1$, and let $u\ge 0$ satisfy
$$u_t+(-\Delta)^su\ge \frac{u^p}{|x|^{2s}}\hbox{ in }\Omega\times (0,T),$$
in the weak sense. Then $u\equiv 0$.
\end{Theorem}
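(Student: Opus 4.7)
The plan is to adapt the Brezis--Cabr\'e strategy from \cite{BC} to the fractional parabolic setting. The guiding idea is that a positive supersolution of the nonlinear inequality is automatically a supersolution of the linear Hardy problem with \emph{effective} coefficient $u^{p-1}$, and the Harnack-type lower bounds from Section \ref{HH} force $u^{p-1}$ to blow up near the origin; choosing a small enough subcylinder we make this effective coefficient exceed $\Lambda_{N,s}$, contradicting the nonexistence result of Proposition \ref{largeLambda}.

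Explicitly, I argue by contradiction, assuming $u\not\equiv 0$. Since $u$ is in particular a nonnegative supersolution of $u_t+(-\Delta)^{s}u\ge 0$, Theorem \ref{SMP} gives $u>0$ throughout $\Omega\times(0,T)$, and Lemma \ref{Harnack inequality} then produces a cylinder $Q_0=B_{r_0}(0)\times(t_1,t_2)\subset\subset\Omega\times(0,T)$ and a constant $c_0>0$ such that $u\ge c_0$ on $Q_0$. If already $c_0^{p-1}>\Lambda_{N,s}$ we are done; hence we may assume $\lambda_0:=c_0^{p-1}\le\Lambda_{N,s}$. In that case $u_t+(-\Delta)^{s}u\ge \lambda_0\,u/|x|^{2s}$ on $Q_0$, and (the local version of) Corollary \ref{coroHar}, which is a direct consequence of the weighted weak Harnack inequality in Theorem \ref{harnack}, yields a subcylinder $Q_0'\subset Q_0$ and a constant $C_1>0$ with $u(x,t)\ge C_1|x|^{-\gamma}$ on $Q_0'$, where $\gamma=\gamma(\lambda_0)>0$. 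Since $|x|^{-\gamma}\to\infty$ as $x\to 0$, for any prescribed $M>0$ we can choose a smaller ball $B_{r_1}(0)$ (with $r_1\le (C_1/M)^{1/\gamma}$) so that $u\ge M$ on $Q_1=B_{r_1}(0)\times(t_1',t_2')\subset Q_0'$. Fixing $M$ with $M^{p-1}>\Lambda_{N,s}$, we obtain on $Q_1$ the inequality
\[
u_t+(-\Delta)^{s}u\ \ge\ \frac{u^{p}}{|x|^{2s}}\ \ge\ M^{p-1}\,\frac{u}{|x|^{2s}},
\]
so $u$ is a positive local supersolution of a linear Hardy problem whose spectral parameter is strictly above $\Lambda_{N,s}$.

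The technical hurdle is to localize the nonexistence argument of Proposition \ref{largeLambda} to the subcylinder $Q_1$, since that proposition is formulated globally on $\Omega$. However, its proof only uses Theorem \ref{necesario} and Corollary \ref{coroHar}, whose conclusions are themselves local --- a local integrability condition $\iint|x|^{-(N-2s)/2}g\,dx\,dt<\infty$ in small cylinders, and a pointwise lower bound in small cylinders. Rewriting the inequality on $Q_1$ as
\[
u_t+(-\Delta)^{s}u-\Lambda_{N,s}\frac{u}{|x|^{2s}}\ \ge\ (M^{p-1}-\Lambda_{N,s})\frac{u}{|x|^{2s}},
\]
the local version of Theorem \ref{necesario} (applied with $\alpha=0$, i.e.\ $\gamma=(N-2s)/2$) forces $(M^{p-1}-\Lambda_{N,s})|x|^{-(N-2s)/2-2s}u\in L^1$ near the origin, while the local version of Corollary \ref{coroHar} with $\lambda=\Lambda_{N,s}$ gives $u\ge C|x|^{-(N-2s)/2}$ in a small cylinder around the origin. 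Multiplying these together would require $|x|^{-N}$ to be integrable near the origin, which is impossible. This is the desired contradiction, and hence $u\equiv 0$.
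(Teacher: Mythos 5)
Your argument is correct in substance, but it takes a genuinely different route from the paper's. The paper proves this theorem by the direct Brezis--Cabr\'e barrier method: from $u\ge\varepsilon$ on a cylinder (Theorem \ref{SMP}) it applies the bounded concave truncation $\phi$ with $\phi'(\sigma)=\sigma^{-p}$ for $\sigma\ge\varepsilon$, uses the pointwise inequality $(-\Delta)^s\phi(u)\ge\phi'(u)(-\Delta)^s u$ to get $(\phi(u))_t+(-\Delta)^s\phi(u)\ge |x|^{-2s}$, and then compares the bounded function $\phi(u)$ with the explicit unbounded subsolution $\varepsilon\,(t-t_1)\log(1/|x|)$, the technical core being the computation $(-\Delta)^s\log(1/|x|)\le C|x|^{-2s}$. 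You instead linearize: $u\ge c_0$ turns $u^p/|x|^{2s}$ into $\lambda_0\,u/|x|^{2s}$ with $\lambda_0=c_0^{p-1}$, Corollary \ref{coroHar} upgrades this to $u\ge C_1|x|^{-\gamma}$ with $\gamma>0$, and shrinking the ball pushes the effective spectral parameter above $\Lambda_{N,s}$, at which point the mechanism of Proposition \ref{largeLambda} (Theorem \ref{necesario} at $\alpha=0$ combined with Corollary \ref{coroHar}, producing the non-integrable $|x|^{-N}$) yields the contradiction; this works uniformly for every $p>1$ since $M$ only needs to exceed $\Lambda_{N,s}^{1/(p-1)}$. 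What each approach buys: yours avoids the explicit kernel computation for $\log(1/|x|)$ and the comparison principle, but it leans on the full weighted Harnack machinery of Sections \ref{HH} and \ref{4}, whereas the paper's proof is essentially self-contained modulo that one radial computation. The one step you should write out explicitly is the localization you wave at: it is not only the \emph{conclusions} of Theorem \ref{necesario} and Corollary \ref{coroHar} that are local, their \emph{hypotheses} must also be transplanted to the subcylinder, i.e.\ you must check that $u\chi_{B_{r_1}(0)}$ is a weak supersolution of the Dirichlet problem posed on $B_{r_1}(0)\times(t_1',t_2')$ with zero exterior data. This follows because $u\ge 0$ and $(-\Delta)^s\phi\le 0$ outside the support of a nonnegative test function $\phi$, so discarding the exterior contribution only strengthens the supersolution inequality; it is exactly the localization the paper performs silently in the proof of Theorem \ref{nonexistence}. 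With that detail supplied, your proof is complete.
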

\begin{proof}
Consider a cylinder $B_\tau(0)\times (t_1,t_2)$. If $u\gneq 0$, by the Maximum Principle (Theorem \ref{SMP}), we know that there exists $\varepsilon>0$ so that
$$u\geq\varepsilon \hbox{ in }B_\tau(0)\times (t_1,t_2).$$
Let define
$$\phi(s)=
\begin{cases}
\dfrac{1}{(p-1)\varepsilon^{p-1}}-\dfrac{1}{(p-1)s^{p-1}}\hbox{ if }s\geq\varepsilon,\\
\,\\
\dfrac{1}{\varepsilon^p}(s-\varepsilon)\hbox{ if }s<\varepsilon.
\end{cases}$$
Notice that $0\leq \phi<+\infty$ in $[\varepsilon,+\infty)$, $\phi(\varepsilon)=0$, $\phi'(\varepsilon)=0$, and $\phi$ is a
$\mathcal{C}^1$ function satisfying $\phi'(s)=\dfrac{1}{s^p}$ for $s\geq\varepsilon$. Moreover, since $\phi$ is concave,  it follows that $(-\Delta)^s(\phi(u))\geq \phi'(u)(-\Delta)^su$ and thus,
$$(\phi(u))_t+(-\Delta)^s(\phi(u))\geq \phi'(u)\left(u_t+(-\Delta)^su\right)\geq \frac{1}{|x|^{2s}} \hbox{ in  }B_\tau(0)\times
(t_1,t_2)$$ with $u\geq\varepsilon.$

Without loss of generality we can assume that $\tau=1$. Define
$w(x,t)=(t-t_1)\vartheta(x)$ where
$$
\vartheta (x)= \left\{
\begin{array}{lll}
\log\left(\dfrac{1}{|x|}\right) &\mbox{  if   } &|x|<1,\\
0 &\mbox{  if   }&|x|\ge 1,
\end{array}
\right.
$$
then $w(x,t_1)=0$ in $B_1(0)$ and  $w(x,t)=0$ in $\ren\backslash
B_1(0)$.

We claim that
$$
w_t+(-\Delta)^s w\le \dfrac{C}{|x|^{2s}} \mbox{  in }B_1(0)\times
(t_1,t_2)
$$
where $C\equiv C(t_1,t_2)>0$. Indeed, we have
$$
w_t+(-\Delta)^sw=\vartheta(x)+(t-t_1)(-\Delta)^s\vartheta.$$ It is
clear that $\vartheta(x)\le \dfrac{C_1}{|x|^{2s}}\mbox{  in
}B_1(0)\times (t_1,t_2)$, and hence to prove the claim we have to show that
$$
(-\Delta)^s\vartheta(x)\le \dfrac{C_2}{|x|^{2s}}\mbox{ for all
}x\in B_1(0).
$$
In fact,
\begin{eqnarray*}
(-\Delta)^s\vartheta(x) &= &\int_{\ren}
\dfrac{(\vartheta(x)-\vartheta(y))}{|x-y|^{N+2s}}dy\\
&=& \int_{\{|y|<1\}}
\dfrac{(\vartheta(x)-\vartheta(y))}{|x-y|^{N+2s}}dy +\int_{\{|y|>1\}}
\dfrac{\vartheta(x)}{|x-y|^{N+2s}}dy\\
&= & I_1(x)+I_2(x).
\end{eqnarray*}
We closely follow the arguments  in \cite{FV} to estimate the integrals above.
By setting $r:=|x|$ and $\rho:=|y|$, then $x=rx', y=\rho y'$ where
$|x'|=|y'|=1$. Thus,
$$
I_1(x)=\dyle \int_0^1\log(\frac{\rho}{r})\rho^{N-1}\left(
\dint\limits_{|y'|=1}\dfrac{dH^{N-1}(y')}{|r x'-\rho y'|^{N+2s}}
\right) \,d\rho.
$$
Calling $\sigma:=\dfrac{\rho}{r}$, then
$$
I_1(x)=\frac{D_1(|x|)}{|x|^{2s}},
$$
where $$D_1(r)=\dint\limits_0^{\frac{1}{r}}
\log(\sigma)\sigma^{N-1}K(\sigma)\,d\sigma,
$$
and
$$ K(\sigma):=\dint\limits_{|y'|=1}\dfrac{dH^{N-1}(y')}{|x'-\sigma
y'|^{N+2s}}=2\frac{\pi^{\frac{N-1}{2}}}{\Gamma(\frac{N-1}{2})}\int_0^\pi
\frac{\sin^{N-2}(\eta)}{(1-2\sigma \cos
(\eta)+\sigma^2)^{\frac{N+2s}{2}}}d\eta.
$$
In the same way we have
$$
I_2(x)=\frac{D_2(|x|)}{|x|^{2s}} \mbox{  where    }
D_2(r)=-\log(r)\dint\limits_{\frac{1}{r}}^{+\infty}
\sigma^{N-1}K(\sigma)\,d\sigma.$$ Combining the estimates above,
we get
$$
(-\Delta)^s\vartheta(x)=\frac{D(|x|)}{|x|^{2s}}
$$
with
$$
D(r):= D_1(r)+D_2(r)=\dint\limits_0^{\frac{1}{r}}
\log(\sigma)\sigma^{N-1}K(\sigma)\,d\sigma-\log(r)\dint\limits_{\frac{1}{r}}^{+\infty}
\sigma^{N-1}K(\sigma)\,d\sigma.
$$
Notice that $K(\sigma)\le C|1-\sigma|^{-1-2s}$ as $\sigma \to 1$
and $K\left(\frac{1}{\sigma}\right)=\sigma^{N+2s}K(\sigma)$ for all
$\sigma>0$.

If $s\le \frac 12$, then using the behavior of $K$ at $+\infty$ we can
easily prove that $|D_1(r)|+|D_2(r)|\le C$ for all $r\le 1$. To study the general case we need to do some sharp computations.
Since
$$
D(r)\le \dint\limits_0^{+\infty}
\log(\sigma)\sigma^{N-1}K(\sigma)\,d\sigma=\bar{D}, $$ then to
finish we have to show that $|\Bar{D}|<\infty$.
Notice that
$$
\bar{D}=\dint\limits_0^{1}
\log(\sigma)\sigma^{N-1}K(\sigma)\,d\sigma+\dint\limits_1^{+\infty}
\log(\sigma)\sigma^{N-1}K(\sigma)\,d\sigma.$$
By putting $\theta:=\dfrac{1}{\sigma}$ in the first integral, and by using
the fact that $K\left(\dfrac{1}{\theta}\right)=\theta^{N+2s}K(\theta)$, it
follows that
\begin{eqnarray*}
\bar{D}&= &
\dint\limits_1^{+\infty}K(\sigma)\log(\sigma)(\sigma^{N-1}-\sigma^{2s-1})d\sigma.
\end{eqnarray*}
Now, due to the behavior of $K$ near $1$ and
$\infty$ we obtain that $0<\bar{D}<\infty$. Thus
$$
(-\Delta)^s\vartheta(x)\le \frac{\bar{D}}{|x|^{2s}}.
$$
Hence
$$
w_t+(-\Delta)^s w\le \dfrac{C}{|x|^{2s}} \mbox{  in }B_1(0)\times
(t_1,t_2),
$$
where $C=C_1+(t_2-t_1)\bar{D}$ and the claim follows.

Fixed $\e>0$  such that $\e C<<1$,  we obtain that
$$(\phi(u)-\e w)_t+(-\Delta)^s(\phi(u)-\e w)\geq 0\hbox{ in }B_1(0)\times (t_1,t_2),$$
in the weak sense. Since $(\phi(u)-\e w)(x,t_1)\ge 0$ in $B_1(0)$
and $(\phi(u)-\e w)(x,t_1)\ge 0$ in $\ren \backslash B_1(0)\times
(t_1,t_2)$, then the comparison principle implies that $\phi(u)-\e
w\geq 0$ in $B_\tau(0)\times (t_1,t_2)$. Since $w$ is unbounded,
we reach a contradiction with the fact that $\phi$ is a bounded
function, and the proof is finished.
\end{proof}

Thus, as a straightforward consequence we obtain the following result.

\begin{Corollary}
Let $g\in L^1(\Omega\times(0,T))$, $g\ge 0$, and $p>1$. Therefore, the problem
\begin{equation*}
\left\{
\begin{array}{rcl}
 u_t+(-\Delta)^{s} u&=&\l\dfrac{\,u^p}{|x|^{2s}}+g\mbox{ in } \Omega\times (0,T),\\
u(x,t)&=&0\inn (\ren\setminus\Omega)\times[ 0,T),\\
u(x,0)&=&u_0(x)\gneq 0 \mbox{ if }x\in\O,
\end{array}
\right.
\end{equation*}
has no positive weak solution.
\end{Corollary}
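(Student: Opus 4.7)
The plan is to reduce the corollary directly to the preceding theorem, using only the sign of $g$ and a scalar rescaling. Suppose for contradiction that $u$ is a positive weak solution of the stated problem. Since $g\ge 0$ and the test functions $\phi\in\mathcal{T}$ appearing in Definition \ref{veryweak} are nonnegative, dropping the $g$ term in the weak formulation \eqref{eq:subsuper} preserves the inequality; thus $u$ is a weak supersolution of
\[
u_t+(-\Delta)^s u \;\geq\; \l\,\frac{u^p}{|x|^{2s}} \quad\text{in } \Omega\times(0,T).
\]
To match the normalization of the preceding theorem (which has coefficient $1$ in front of $u^p/|x|^{2s}$), I would rescale by setting $v:=\l^{1/(p-1)} u$. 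By linearity of $\partial_t$ and of $(-\Delta)^s$, the inequality transforms into
\[
v_t+(-\Delta)^s v \;\geq\; \frac{v^p}{|x|^{2s}} \quad\text{in } \Omega\times(0,T),
\]
still understood in the weak sense of Definition \ref{veryweak}. The preceding theorem then forces $v\equiv 0$, hence $u\equiv 0$.

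This is the desired contradiction: the positivity requirement $u(x,t)>0$ in $\Omega\times(0,T)$ is violated. Equivalently, since $u_0\gneq 0$ and $u$ is a nonnegative weak supersolution of the homogeneous fractional heat equation (discarding the nonnegative right-hand side), the strong maximum principle of Theorem \ref{SMP} would force $u>0$ strictly, which is incompatible with $u\equiv 0$. The only subtlety is checking that the weak-supersolution notion is stable both under dropping the nonnegative term $g$ and under the scalar rescaling $v=\l^{1/(p-1)}u$; both are immediate from the linearity of the left-hand side of \eqref{eq:subsuper} and the nonnegativity of the admissible test functions, so no new work beyond the preceding theorem is required.
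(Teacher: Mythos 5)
Your proposal is correct and is exactly the argument the paper intends: the corollary is stated as a ``straightforward consequence'' of the preceding nonexistence theorem, obtained by discarding the nonnegative term $g$ in the weak formulation and absorbing $\lambda>0$ via the rescaling $v=\lambda^{1/(p-1)}u$, which indeed normalizes the coefficient of $v^p/|x|^{2s}$ to $1$. Both reductions are legitimate for the weak (super)solution notion of Definition \ref{veryweak}, so nothing further is needed.
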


\section{Existence and nonexistence results for a semilinear problem}\label{5}
     The goal of this section is to study how the addition of a semilinear term of the form $u^p$, with $p>1$, interferes with the solvability of the previous problems. As in the classical heat equation, see \cite{APP}, the relevant feature is that for every $0<\lambda<\Lambda_{N,s}$ there exists a threshold for the existence, $p_{+}(\lambda,s)$, that depends on the spectral parameter. Indeed, we will consider  the problem
\begin{equation}\label{semilinearprob}
\left\{
\begin{array}{rcl}
 u_t+(-\Delta)^{s} u&=&\l\dfrac{\,u}{|x|^{2s}}+u^p+          {f}\mbox{ in } \Omega\times (0,T),\\
u(x,t)&>&0\inn \Omega\times (0,T),\\
u(x,t)&=&0\inn (\ren\setminus\Omega)\times[ 0,T),\\
u(x,0)&=&u_0(x) \mbox{ if }x\in\O,
\end{array}
\right.
\end{equation}
with $p>1$ and          {$0<\lambda<\Lambda_{N,s}$}. By weak or energy solutions of this problem, we mean solutions in the sense of Definition \ref{veryweak} and Definition \ref{energysol} by fixing $F=\l\dfrac{\,u}{|x|^{2s}}+u^p+ c f$.

     We will prove that there exists such critical exponent $p_{+}(\lambda,s)$ so that one can prove existence of solution for problem \eqref{semilinearprob} whether $1<p<p_+(\lambda,s)$, and nonexistence for $p> p_+(\lambda,s)$. Following the same ideas as in \cite{BMP}, one can expect $p_{+}(\lambda,s)$ to depend on $s$ and $\lambda$, and in particular to satisfy
$$
p_{+}(\lambda,s)= 1+\dfrac{2s}{\frac{N-2s}{2}-\alpha}=1+\dfrac{2s}{\gamma}.
$$
Note that if $\lambda=\Lambda_{N,s}$, namely, $\alpha=0$, then $p_{+}(\lambda,s)= 2^{*}_{s}-1$, and if $\lambda=0$, i.e., $\alpha=\dfrac{N-2s}{2}$, then $p_{+}(\lambda,s)=\infty$.

We will need some auxiliary results that allow us to build a
solution whenever we have a supersolution. To prove existence of a
weak solution to \eqref{semilinearprob} with $L^1$ data from a
weak supersolution,   we will consider  the {\it solution obtained
as limit of solutions of approximated problems} (see  for instance \cite{DaA}  in the local parabolic operators case).

\begin{Lemma}\label{Comparison}
If $\bar{u}\in \mathcal{C}([0,T); L^{1}(\Omega))$ is a weak positive
supersolution to the equation in \eqref{semilinearprob} with $\lambda\leq
\Lambda_{N,s}$ and $f\in L^1(\Omega\times (0,T))$, then there exists a positive minimal weak solution to problem \eqref{semilinearprob} obtained as limit of solutions of approximated problems.
\end{Lemma}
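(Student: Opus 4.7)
The plan is a monotone iteration scheme driven by the weak comparison principle of Lemma \ref{DebilComparison}, in which the given supersolution $\bar u$ serves as a uniform pointwise barrier. Setting $u_0 \equiv 0$, I would define $u_n$ for $n \ge 1$ as the unique bounded energy solution of the linear problem
\begin{equation*}
\begin{cases}
(u_n)_t + (-\Delta)^s u_n = \lambda \dfrac{u_{n-1}}{|x|^{2s}+\tfrac{1}{n}} + T_n(u_{n-1}^p) + T_n(f) & \text{in } \Omega \times (0,T),\\
u_n = 0 & \text{in } (\mathbb{R}^N \setminus \Omega) \times [0,T),\\
u_n(x,0) = T_n(u_0(x)) & \text{in } \Omega,
\end{cases}
\end{equation*}
where $T_n$ is the standard truncation at level $n$. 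The right-hand side lies in $L^\infty$, so existence of $u_n$ in the finite-energy class is guaranteed by the Milgram--Vishik theory recalled after Definition \ref{energysol}.

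Next, I would verify inductively that $0 \le u_{n-1} \le u_n \le \bar u$. The monotonicity in $n$ follows from Lemma \ref{DebilComparison} applied to the difference $u_n - u_{n-1}$, whose equation has nonnegative data because the Hardy weight $(|x|^{2s} + \tfrac{1}{n})^{-1}$ is increasing in $n$, the truncations are monotone, and $s \mapsto s^p$ is increasing. The upper bound $u_n \le \bar u$ is the key step: once $u_{n-1} \le \bar u$, one checks that $\bar u$ is itself a supersolution to the linear equation defining $u_n$, since $\lambda u_{n-1}/(|x|^{2s}+\tfrac1n) \le \lambda \bar u/|x|^{2s}$, $T_n(u_{n-1}^p) \le \bar u^p$, and $T_n(f) \le f$. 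A second invocation of Lemma \ref{DebilComparison}, valid in the whole range $\lambda \le \Lambda_{N,s}$, then yields $u_n \le \bar u$. Setting $u := \lim_n u_n$ (monotone, hence existing a.e.), the pointwise bound by $\bar u$ together with the fact that $\bar u$ is a weak supersolution (so $\lambda \bar u/|x|^{2s}$, $\bar u^p$ and $f$ all lie in $L^1(\Omega \times (0,T))$) allows me to pass to the limit in the weak formulation via monotone convergence, recovering Definition \ref{veryweak} for $u$.

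The step I expect to be the main obstacle is the continuity requirement $u \in \mathcal{C}([0,T); L^1(\Omega))$ built into Definition \ref{veryweak}. I would handle it exactly as in the closing part of the proof of Theorem \ref{suficiente}: for $n \ge m$, test the linear equation satisfied by $u_{n,m} := u_n - u_m$ against $T_1(u_{n,m}) \in L^2(0,T;H_0^s(\Omega))$, exploit the nonnegativity of $\int (-\Delta)^s w\, T_1(w)\, dx \ge 0$, and bound the right-hand side by $\lambda \int \frac{u_{n,m}}{|x|^{2s}}\, T_1(u_{n,m})\, dx\, dt$ plus the analogous terms coming from the truncations of $u^p$ and $f$; all of these are uniformly small in $n,m$ by $\bar u$-dominated convergence. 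This yields uniform smallness of $\int_\Omega \Psi(u_{n,m})(t)\, dx$ with $\Psi(s) = \int_0^s T_1(\sigma)\, d\sigma$, and hence Cauchyness of $\{u_n\}$ in $\mathcal{C}([0,T]; L^1(\Omega))$. Finally, minimality is automatic: if $v$ is any nonnegative weak solution to \eqref{semilinearprob}, the same induction with $v$ in place of $\bar u$ gives $u_n \le v$ for every $n$, and therefore $u \le v$ in the limit.
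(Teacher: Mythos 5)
Your proposal is correct and follows essentially the same route as the paper: a monotone iteration on approximated problems, the weak comparison principle of Lemma \ref{DebilComparison} to get $u_{n-1}\le u_n\le \bar u$, monotone convergence to pass to the limit in the weak formulation, and the Cauchy-in-$\mathcal{C}([0,T];L^1(\Omega))$ argument borrowed from Theorem \ref{suficiente} for the continuity requirement. The only (harmless) deviation is that you also truncate the nonlinear term $u_{n-1}^p$ so that each right-hand side is bounded and the Milgram--Vishik theory applies directly, whereas the paper keeps $u_{n-1}^p$ untruncated and invokes the $L^1$-data existence theory; your explicit minimality argument is a small bonus the paper leaves implicit.
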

\begin{proof} If $\bar{u}$ is a positive supersolution to \eqref{semilinearprob} with
$\lambda\leq \Lambda_{N,s}$, we construct a sequence $\{u_n\}_{n\in\mathbb{N}}$
starting with
\begin{equation}\label{eq:v0}
\left\{
\begin{array}{rcl}
u_{0t}+(-\Delta)^{s} u_{0}&=&T_{1}(f)\inn \Omega\times (0,T),\\
u_{0}(x,t)&>&0\inn \Omega\times (0,T),\\
u_{0}(x,t)&=&0\inn (\ren\setminus\Omega)\times[ 0,T),\\
u_0(x,0)&=&T_{1}({u_0(x)})\mbox{ if }x\in \Omega.\\
\end{array}
\right.
\end{equation}
By the  Weak Comparison Principle (Lemma \ref{DebilComparison}), it follows
that $u_0\leq \bar{u}\inn \ren \times (0,T)$. By iteration we define
\begin{equation}\label{eq:vn}
\left\{
\begin{array}{rcl}
u_{nt}+(-\Delta)^s u_{n}&=&\lambda\dfrac{u_{n-1}}{|x|^{2s}+\frac{1}{n}}+ u_{n-1}^{p}+T_{n}(f)\inn \Omega\times (0,T),\\
u_{n}(x,t)&>&0\inn \Omega\times (0,T),\\
u_{n}(x,t)&=&0\inn (\ren\setminus\Omega)\times[ 0,T),\\
u_n(x,0)&=&T_{n}({u_0(x)})\mbox{ if }x\in \Omega.\\
\end{array}
\right.
\end{equation}
In fact, $\{u_n\}_{n\in \mathbb{N}}\subset
\mathcal{C}([0,T); L^{{1}}(\Omega))\cap
L^p([0,T);L^{p}(\Omega))$ (see \cite{LPPS}).
As above it follows that $u_{0}\leq \dots\leq u_{n-1}\leq u_{n}\leq
\bar{u}\inn \ren\times (0,T)$, so we obtain the pointwise limit
$u:=\lim u_n$ that verifies $u\le \bar{u}$ and
\begin{equation}\label{eq:v}
\left\{
\begin{array}{rcl}
u_{t}+(-\Delta)^{s} u&=&\lambda\dfrac{u}{|x|^{2s}}+u^{p}+f\inn \Omega\times (0,T),\\
u(x,t)&>&0\inn \Omega\times (0,T),\\
u(x,t)&=&0\inn (\ren\setminus\Omega)\times[ 0,T),\\
u(x,0)&=& {u_0(x)} \mbox{ if }x\in \Omega,
\end{array}
\right.
\end{equation}
in the weak sense.          {The fact that $u\in\mathcal{C}([0,T);L^1(\Omega))$ follows as in the proof of Theorem \ref{suficiente}.}
 \end{proof}
Likewise, if the supersolution belongs to the energy space, the solution we find will be also an energy solution.
\begin{Lemma}\label{ApproxEnergy}
If $\bar{u}\in L^2(0,T; H_0^s(\Omega))$ with $\bar{u}_t\in L^{2}(0,T; H^{-s}(\O))$ is a positive finite energy
supersolution to \eqref{semilinearprob} with $\lambda\leq
\Lambda_{N,s}$ and $f\in L^2(0,T;H^{-s}(\Omega))$, then there exists a positive minimal energy solution to problem \eqref{semilinearprob} obtained as limit of solutions of the approximated problems.
\end{Lemma}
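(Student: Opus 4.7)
The plan is to adapt the monotone iteration from Lemma \ref{Comparison} to the energy setting. Take $u_0$ to be the finite energy solution of \eqref{eq:v0} with truncated data $T_1(f)$ and $T_1(u_0(x))$, whose existence follows from the Milgram-type theorem recalled before Remark \ref{coerciveoperator}. Inductively, let $u_n$ solve \eqref{eq:vn} in the energy sense, with source $\lambda u_{n-1}/(|x|^{2s}+1/n) + u_{n-1}^p + T_n(f)$. By the inductive hypothesis $u_{n-1}\le \bar{u}$ in $\ren\times(0,T)$, and since $\bar{u}\in L^2(0,T;H_0^s(\O))$ with $\bar{u}^p+\lambda\bar{u}/|x|^{2s}+f\in L^2(0,T;H^{-s}(\O))$ by the definition of energy supersolution, the source at step $n$ belongs to $L^2(0,T;H^{-s}(\O))$, making each $u_n$ a well-defined energy solution. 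Applying the Energy Comparison Principle (Lemma \ref{Comparison Principle}) to successive pairs $u_{n-1},u_n$ and to $u_n,\bar{u}$ yields the monotonicity $u_0\le u_1\le\cdots\le u_n\le \bar{u}$.

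Next I would establish uniform energy estimates. Testing the equation for $u_n$ with $u_n$ gives
\begin{equation*}
\begin{split}
\frac{1}{2}\|u_n(t)\|_{L^2(\O)}^2 &+ \frac{a_{N,s}}{2}\int_0^t \|u_n\|_{H_0^s(\O)}^2\,d\tau \\
&\le \int_0^t\!\!\int_\O\Big(\frac{\lambda u_{n-1}u_n}{|x|^{2s}+1/n}+u_{n-1}^p u_n+T_n(f)u_n\Big)dx\,d\tau + \tfrac{1}{2}\|T_n(u_0)\|_{L^2(\O)}^2.
\end{split}
\end{equation*}
Since $u_{n-1}\le u_n$, the singular term is bounded by $\lambda\int u_n^2/|x|^{2s}$, which by the fractional Hardy inequality \eqref{hardy} can be absorbed into the $H_0^s$ norm with coercivity factor $1-\lambda/\Lambda_{N,s}$; in the threshold case $\lambda=\Lambda_{N,s}$ the same computation is carried out in the Hilbert space $H(\O)$ of Remark \ref{coerciveoperator}, using \eqref{improvedHardy}. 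The semilinear contribution is controlled via $\int\bar{u}^p u_n\le\|\bar{u}^p\|_{L^2(0,T;H^{-s}(\O))}\|u_n\|_{L^2(0,T;H_0^s(\O))}$, which is finite by the supersolution hypothesis, and Young's inequality absorbs a fraction back into the energy norm; the term with $T_n(f)$ is handled analogously. Thus $\{u_n\}$ is uniformly bounded in $L^\infty(0,T;L^2(\O))\cap L^2(0,T;H_0^s(\O))$, and the equation then forces $\{(u_n)_t\}$ to be uniformly bounded in $L^2(0,T;H^{-s}(\O))$.

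Finally, I would extract weak limits $u_n\weakly u$ in $L^2(0,T;H_0^s(\O))$ and $(u_n)_t\weakly u_t$ in $L^2(0,T;H^{-s}(\O))$. Combining the pointwise monotone convergence $u_n\uparrow u\le\bar{u}$ with Aubin--Lions compactness in $L^2(\O\times(0,T))$, dominated convergence (with majorant $\bar{u}^p$) yields $u_{n-1}^p\to u^p$ and $u_{n-1}/(|x|^{2s}+1/n)\to u/|x|^{2s}$ in $L^2(0,T;H^{-s}(\O))$. Passing to the limit in the weak formulation of \eqref{eq:vn} shows that $u$ is a finite energy solution of \eqref{semilinearprob}; minimality follows because any other positive finite energy supersolution $v$ satisfies $u_n\le v$ for all $n$ by the same comparison, hence $u\le v$. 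The hardest step will be the uniform control at the threshold $\lambda=\Lambda_{N,s}$: there the standard Hardy inequality is no longer strict, forcing the iteration to be set up in $H(\O)$ and making the compact embedding coming from \eqref{improvedHardy} essential to upgrade the weak convergences to the strong convergence needed in the superlinear term $u_{n-1}^p$.
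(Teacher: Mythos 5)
Your overall scheme coincides with the paper's: build the monotone sequence $u_n$ of energy solutions of \eqref{eq:vn}, order it below $\bar u$ via Lemma \ref{Comparison Principle}, obtain uniform bounds in $L^2(0,T;H_0^s(\Omega))$ and on $(u_n)_t$ in $L^2(0,T;H^{-s}(\Omega))$, and pass to the weak limit. The one genuine difference is how the singular term is controlled in the energy estimate. You test with $u_n$, bound the Hardy term by $\lambda\int u_n^2/|x|^{2s}$ and \emph{absorb} it into the left-hand side using \eqref{hardy}, which costs you the factor $1-\lambda/\Lambda_{N,s}$ and forces a separate treatment of $\lambda=\Lambda_{N,s}$ in the space $H(\Omega)$. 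The paper instead never absorbs anything: since $0\le u_n\le\bar u$ pointwise, every term on the right-hand side is majorized by the corresponding quantity for $\bar u$, i.e.
$$
\frac{a_{N,s}}{2}\|u_n\|^2_{L^2(0,T;H_0^s(\Omega))}\le \lambda\dint_0^T\!\!\dint_\Omega\frac{\bar u^2}{|x|^{2s}}\,dx\,dt+\dint_0^T\!\!\dint_\Omega\bar u^{p+1}\,dx\,dt+\dint_0^T\!\!\dint_\Omega f\bar u\,dx\,dt+\tfrac12\dint_\Omega\bar u(x,0)^2\,dx,
$$
a fixed finite constant (finiteness of the first term follows from $\bar u\in L^2(0,T;H_0^s(\Omega))$ and \eqref{hardy} itself, and of the second from the duality pairing of $\bar u^p\in L^2(0,T;H^{-s}(\Omega))$ with $\bar u$). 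This gives the full $H_0^s$ bound uniformly for all $\lambda\le\Lambda_{N,s}$ with no coercivity argument. What you identify as ``the hardest step'' is therefore a non-issue in the paper's version; more importantly, as you have written it your threshold argument does not quite close: an absorption in $H(\Omega)$ only yields a uniform bound in the $H(\Omega)$ norm, which is strictly weaker than the $L^2(0,T;H_0^s(\Omega))$ bound needed to conclude that the limit is a finite energy solution in the sense of Definition \ref{energysol}. The fix is exactly the pointwise majorization above. The remaining steps (weak compactness for $(u_n)_t$, limit passage in the nonlinear and singular terms by monotone/dominated convergence against a fixed test function, minimality by comparison) are fine; the Aubin--Lions step is superfluous given the monotone pointwise convergence.
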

\begin{proof}
Proceeding as in the proof of Lemma \ref{Comparison} and using the Comparison Principle for energy solutions (Lemma \ref{Comparison Principle}), we can build a sequence $\{u_n\}_{n\in\mathbb{N}}$ of energy solutions of the approximated problems \eqref{eq:vn}, so that,
$$u_0\leq u_1\leq ...\leq u_n\leq...\leq \bar{u} \quad\hbox{ in }\mathbb{R}^N\times (0,T).$$

     Hence, by the Monotone Convergence Theorem we can define $u:=\lim_{n\rightarrow \infty}u_n\leq \bar{u}$. Moreover, applying the energy formulation of $u_n$,
\begin{eqnarray*}
\|u_n\|^2_{L^2(0,T;H_0^s(\Omega))}&=&\int_0^T{\|u_n(\cdot,t)\|^2_{H_0^s(\Omega)}\,dt}=\int_0^T{\int_Q{\frac{(u_n(x,t)-u_n(y,t))^2}{|x-y|^{N+2s}}\,dx\,dy}\,dt}\\
&=&\frac{2}{a_{N,s}}\left\{\int_0^T\int_\Omega\left(\lambda\frac{u_n^2}{|x|^{2s}}+u_n^{p+1}+T_n(f)u_n\right)dx\,dt-\int_0^T\int_\Omega(u_n)_tu_n\,dx\,dt\right\}\\
&=& \frac{2}{a_{N,s}}\left\{\int_0^T\int_\Omega\left(\lambda\frac{u_n^2}{|x|^{2s}}+u_n^{p+1}+T_n(f)u_n\right)dx\,dt-{\frac 12}\int_\Omega u_n(x,T)^2\,dx\right.\\
&&\left.+{\frac 12}\int_\Omega u_n(x,0)^2\,dx\right\}\\
&\leq&\frac{2}{a_{N,s}}\left\{\int_0^T\int_\Omega\left(\lambda\frac{\bar{u}^2}{|x|^{2s}}+\bar{u}^{p+1}+f\bar{u}\right)dx\,dt+{\frac 12}\int_\Omega \bar{u}(x,0)^2\,dx\right\}\\
&\leq& C.
\end{eqnarray*}

     Thus, up to a subsequence, we know that $u_n\rightharpoonup u$ in $L^2(0,T;H_0^s(\Omega))$. Likewise, for every $0\leq t \leq T$,
\begin{eqnarray*}
\|(u_n)_t\|_{H^{-s}(\Omega)}&=&\sup_{\|\varphi\|_{H_0^s(\Omega)}\leq 1}\bigg|\int_\Omega{(u_n)_t\varphi\,dx}\bigg|\\
&\leq& \sup_{\|\varphi\|_{H_0^s(\Omega)}\leq 1}\bigg|\int_Q\frac{(u_n(x)-u_n(y))(\varphi(x)-\varphi(y))}{|x-y|^{N+2s}}\,dx\,dy\bigg|\\
& &+\sup_{\|\varphi\|_{H_0^s(\Omega)}\leq 1}\left\{\int_\Omega{u_n^p\varphi\,dx}+\lambda\int_\Omega{\frac{u_n\varphi}{|x|^{2s}}\,dx}+\int_\Omega{T_n(f)\varphi}\right\}\\
&\leq&\sup_{\|\varphi\|_{H_0^s(\Omega)}\leq 1}\left\{\|u_n\|_{H_0^s(\Omega)}\|\varphi\|_{H_0^s(\Omega)}+\int_\Omega{\bar{u}^p\varphi\,dx}+\lambda\int_\Omega{\frac{\bar{u}\varphi}{|x|^{2s}}\,dx}+\int_\Omega{f\varphi}\,dx\right\}\\
&\leq& C(\|u_n\|_{H_0^s(\Omega)}{+1}+\|f\|_{L^2(\Omega)}).
\end{eqnarray*}
Hence,
\begin{equation*}
\int_0^T{\|(u_n)_t\|_{H^{-s}(\Omega)}^2\,dt}\leq C(\|u_n\|^2_{L^2(0,T;H_0^s(\Omega))}{+1}+\|f\|_{L^2(0,T;L^2(\Omega))}^2)\leq C,
\end{equation*}
and therefore, up to a subsequence, $(u_n)_t\rightharpoonup u_t$ in $L^2(0,T;H^{-s}(\Omega))$,
and we can pass to the limit to conclude that $u$ is a finite energy solution to \eqref{semilinearprob}.

\end{proof}

\subsection{Nonexistence results for $p> p_+(\lambda,s)$. Instantaneous and complete blow up.}\label{51}
     Assume first that $p$ is greater than the threshold exponent $p_+(\lambda,s)$. Thus, we can formulate the nonexistence result as follows.
\begin{Theorem}\label{nonexistence}
Let $\lambda\leq \Lambda_{N,s}$. If $p> p_+(\lambda,s)$, then problem \eqref{semilinearprob} has no positive weak supersolution. In the case where $f\equiv 0$,
the unique nonnegative supersolution is $u\equiv 0$.
\end{Theorem}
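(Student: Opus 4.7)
The plan is to combine the sharp local lower bound provided by the weighted weak Harnack inequality (Corollary~\ref{coroHar}) with the sharp necessary integrability condition of Theorem~\ref{necesario}, via a bootstrap that exploits the critical identity $p_+(\lambda,s)=1+\frac{2s}{\gamma}$.

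Suppose for contradiction that $u$ is a positive weak supersolution to \eqref{semilinearprob}. Since $u$ is in particular a positive weak supersolution to the linear problem \eqref{problemBG} with source $g:=u^p+f\ge u^p$, Corollary~\ref{coroHar} furnishes a cylinder $Q_0:=B_{r_0}(0)\times(t_1,t_2)\subset\subset\Omega\times(0,T)$ and a constant $c_0>0$ such that $u(x,t)\ge c_0|x|^{-\gamma}$ on $Q_0$, where $\gamma=\frac{N-2s}{2}-\alpha$. When $\gamma(p+1)\ge N$ (which is automatic when $\lambda=\Lambda_{N,s}$, since then $p+1>2^*_s$), Theorem~\ref{necesario} forces $|x|^{-\gamma}u^p\in L^1(Q_0)$, which already contradicts the pointwise bound $|x|^{-\gamma}u^p\ge c_0^p|x|^{-\gamma(p+1)}$; hence the result is immediate in this range.

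When $\gamma(p+1)<N$, I iterate. Having established $u\ge c_k|x|^{-\delta_k}$ on a cylinder $Q_k\subset Q_0$ (starting from $\delta_0=\gamma$), I introduce the weak solution $v_k$, furnished by Theorem~\ref{suficiente}, to
\begin{equation*}
\begin{cases}
\partial_tv_k+(-\Delta)^sv_k-\lambda\dfrac{v_k}{|x|^{2s}}=c_k^p|x|^{-p\delta_k}\chi_{Q_k} & \text{in }\Omega\times(t_1,T),\\
v_k=0 & \text{on }(\mathbb{R}^N\setminus\Omega)\times[t_1,T),\\
v_k(\cdot,t_1)=0 & \text{in }\Omega,
\end{cases}
\end{equation*}
which is admissible precisely while $p\delta_k+\gamma<N$. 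The Weak Comparison Principle (Lemma~\ref{DebilComparison}) gives $u\ge v_k$ on $\Omega\times(t_1,T)$, and an analysis of the spatial singularity of $v_k$ through the ground-state transformation (Lemma~\ref{GSrepresentation}) together with the weighted weak Harnack inequality (Theorem~\ref{harnack}) yields $v_k(x,t)\ge c_{k+1}|x|^{-\delta_{k+1}}$ on a smaller cylinder, with $\delta_{k+1}=p\delta_k-2s$. Because $p>1+2s/\gamma$, one has $(p-1)\delta_0>2s$, a condition preserved by the recursion, so $\{\delta_k\}$ is strictly increasing with $\delta_k\to+\infty$. After finitely many steps $\delta_k\ge(N-\gamma)/p$, hence $|x|^{-\gamma}u^p\ge c|x|^{-\gamma-p\delta_k}\notin L^1$ near the origin, contradicting Theorem~\ref{necesario}. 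For the case $f\equiv 0$, any nontrivial nonnegative weak supersolution is strictly positive on $\Omega\times(0,T)$ by the Strong Maximum Principle (Theorem~\ref{SMP}), so the same chain of implications forces $u\equiv 0$.

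The main obstacle is the quantitative upgrade step: showing that a source of the form $|x|^{-p\delta_k}$ in the linear Hardy problem produces a response whose singularity is exactly of order $|x|^{-(p\delta_k-2s)}$. This requires tracking the weighted problem satisfied by $|x|^\gamma v_k$ and applying Theorem~\ref{harnack} to a source of the appropriate homogeneity, in parallel with the treatment of the homogeneous equation in Section~\ref{3}. The effectiveness of the whole bootstrap rests on the strict inequality $p>p_+(\lambda,s)$, which is precisely what guarantees that $\{\delta_k\}$ is strictly increasing at every stage, and therefore witnesses the sharpness of the threshold.
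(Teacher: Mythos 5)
Your strategy is genuinely different from the paper's. The paper argues in one shot: it builds the minimal solution $u=\lim u_n$ of the approximated problems on a small cylinder, tests those problems with $\dfrac{|\phi|^2}{u_n}$, and uses the Picone inequality (Theorem \ref{Picone}) together with the lower bound $u\ge C|x|^{-\gamma}$ from Corollary \ref{coroHar} to produce the inequality
$$
C\,\|\phi\|^2_{H^s_0(\Omega)}\;\ge\;C'\dint_{B_r(0)}\dfrac{\phi^2}{|x|^{(p-1)\gamma}}\,dx ,
$$
which is impossible because $p>p_+(\lambda,s)$ forces $(p-1)\gamma>2s$, contradicting the sharpness of the Hardy inequality. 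Your route instead bootstraps the singularity exponent and aims at contradicting the necessary condition of Theorem \ref{necesario}. The skeleton is coherent: the recursion $\delta_{k+1}=p\delta_k-2s$ with $\delta_0=\gamma$ is strictly increasing exactly because $p>1+2s/\gamma$, the intermediate exponents stay below $\bar\gamma$ for as long as the iteration is admissible, and the terminal step does contradict Theorem \ref{necesario}.

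However, there is a genuine gap at the heart of the argument: the upgrade step asserting that the solution $v_k$ of the linear Hardy heat equation with source $c_k^p|x|^{-p\delta_k}\chi_{Q_k}$ satisfies $v_k\gtrsim|x|^{-(p\delta_k-2s)}$ on a smaller cylinder. You attribute this to the ground-state transformation plus Theorem \ref{harnack}, but the weighted weak Harnack inequality only bounds a weighted $L^q$ average of $v_k$ by $\inf v_k$; it sees the positivity of the solution, not the strength of the source, and therefore can only reproduce the baseline rate $|x|^{-\gamma}$ of Corollary \ref{coroHar}. To extract the improved rate $|x|^{-(p\delta_k-2s)}$ one must actually construct a subsolution (e.g.\ a truncated power $|x|^{-\delta_{k+1}}$ localized in space--time, using that $\lambda(\delta_{k+1})>\lambda$ for $\gamma<\delta_{k+1}<\bar\gamma$ by Lemma \ref{singularity}), control the tail errors created by the truncation, and invoke a comparison principle valid for these unbounded comparison functions. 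None of this is carried out, and none of it is available off the shelf in the paper; you flag it yourself as ``the main obstacle.'' As written, the proof is therefore incomplete at its crucial quantitative step. The first branch of your argument (the case $\gamma(p+1)\ge N$, in particular $\lambda=\Lambda_{N,s}$) is correct and self-contained, but in general that case does not cover $p>p_+(\lambda,s)$, so the gap is not avoidable by that shortcut.
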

\begin{proof}
Without loss of generality, we can assume $f\in L^{\infty}(\Omega\times (0,T))$.
We argue by contradiction. Assume that $\tilde{u}$ is a positive weak supersolution of \eqref{semilinearprob}. Then $\tilde{u}_t+(-\Delta)^s
\tilde{u}-\l\dfrac{\tilde{u}}{|x|^2}\geq 0$ in $\Omega\times (0,T)$ in the weak sense.

     Since $\tilde{u}$  is also a weak supersolution in any $B_{R}(0)\times (T_{1}, T_2)\subset\subset\Omega\times (0,T)$, then  by  Lemma \ref{Comparison}, the  problem
\begin{equation}\label{eq:prob*}
\left\{
\begin{array}{rcl}
 u_t+(-\Delta)^{s} u&=&\l\dfrac{\,u}{|x|^{2s}}+u^p+f\mbox{ in } B_{R}(0)\times (T_{1}, T_2),
 \\
u(x,t)&>&0\inn  B_{R}(0)\times (T_1,T_2),\\
u(x,t)&=&0\inn (\ren\setminus B_{R}(0))\times[T_1,T_2),\\
u(x,T_1)&=&\tilde{u}(x,T_1)
 \mbox{ if }x\in  B_{R}(0),
\end{array}
\right.
\end{equation}
has a minimal solution $u$ obtained by approximation of truncated problems in
$B_{R}(0)\times (T_1,T_2)$. In particular  $u=
\lim u_{n}$, with          {$u_{n}\in L^{\iy}(B_{R}(0)\times (T_1,T_2))$ being the energy solution to \eqref{eq:vn}} in  $B_{R}(0)\times (T_1,T_2)$.\\
Notice that ${u}_t+(-\Delta)^{s} {u}-\l\dfrac{{u}}{|x|^{2s}}\geq
0\inn B_{r_1}(0)\times (T_1,T_2)$ in the weak sense, and
therefore, by           {Corollary \ref{coroHar}},  for any cylinder $B_{r}(0)\times (t_{1}, t_2)$,
with $0<r<r_1<R$, $0<T_1<t_1<t_2<T_2\leq T$ there exists a constant
$C=C(N,r_1, t_1, t_2)$ such that ${u}\geq C
|x|^{-\frac{N-2s}{2}+\alpha}$ and then for $r$ small enough $u>1$
in $B_{r}(0)\times (T_{1}, T_2)$.

     In particular, since $u\in
L^1(\Omega\times (0,T))$, then using the fact that $u\ge 1$ in
$B_{r}(0)\times (T_{1}, T_2)$, we reach that $\log(u)\in L^p(
B_{r}(0)\times (t_{1}, t_2))$,
 for all $p\ge 1$.
By a suitable scaling, we can assume that the cylinder is
$B_{r}(0)\times (0,\tau)$.

Let
$\phi\in \mathcal{C}^{\infty}_0(B_r(0))$, then using
$\dfrac{|\phi|^{2}}{u_n}$  as a test function in the approximated
problems \eqref{eq:vn} and applying the Picone (Theorem \ref{Picone}) and Sobolev (Theorem \ref{Sobolev}) inequalities,
\begin{equation*}\begin{split}
\dint_{0}^{\tau}\dint_{B_r(0)} u_n^{p-1}\phi^2\,dx\,dt&\leq
\dint_0^\tau\dint_{B_r(0)} \dfrac{|\phi|^{2}}{u_n}u_{n t}dxdt+\dint_0^\tau\dint_{B_r(0)} (- \Delta)^{s} u_n\dfrac{|\phi|^{2}}{u_n}dx\,dt\\\
&\le\dint_{B_r(0)}|\log u_n(x,\tau)|
\phi^2\,dx+ C'(N,s,\tau)\|\phi\|^2_{H^{s}_{0}(\Omega)}.
\end{split}\end{equation*}
Therefore, passing to the limit as $n$ tends to infinity, and
 considering that ${u}\geq C
|x|^{-\frac{N-2s}{2}+\alpha}$ in $B_r(0)\times (0,\tau)$, we
obtain
$$
\begin{array}{lcl}
\dint_{B_r(0)}|\log u(x,\tau)|
\phi^2\,dx+C' (N,s,\tau)\|\phi\|^2_{H^{s}_{0}(\Omega)}&\geq&
\dint_{0}^{\tau}\dint_{B_r(0)} u^{p-1}\phi^2\,dx\,dt\\
&\geq& C
\dint_{0}^{\tau}\dint_{B_r(0)}\dfrac{\phi^2}{|x|^{(p-1)(\frac{N-2s}{2}-\alpha)}}dx dt.
\end{array}
$$
Using H\"{o}lder and Sobolev inequalities, it follows that
$$
\begin{array}{rcl}
\dint_{B_r(0)}|\log (u(x,\tau))|
|\phi|^{2}dx&\leq&\left(\dint_{B_{r}(0)}|\phi|^{2_{s}^{*}}\,dx\right)^{\frac{2}{2_{s}^{*}}}\left(\dint_{B_{r}(0)}|\log
u(x,\tau)|^{\frac{N}{2s}}\,dx\right)^{\frac{2s}{N}}\\&\leq&
\left(\dint_{B_{r}(0)}|\log
u(x,\tau)|^{\frac{N}{2s}}\,dx\right)^{\frac{2s}{N}}{S}\|\phi\|^{2}_{H^{s}_{0}(\Omega)},
\end{array}
$$
where $S$ is the optimal constant in the Sobolev embedding.
Thus we have
$$\left[C'(N,s,\tau)+\left(\dint_{B_{r}(0)}|\log
u(x,\tau)|^{\frac{N}{2s}}\,dx\right)^{\frac{2s}{N}}{S}\right] \|\phi\|^{2}_{H^{s}_{0}(\Omega)}
\geq C
\dint_{B_r(0)}\dfrac{\phi^2}{|x|^{(p-1)(\frac{N-2s}{2}-\alpha)}}{dx}.
$$
Since $p>p_{+}(\lambda,s)$ then, for a cylinder small enough,
$(p-1)\left(\dfrac{N-2s}{2}-\alpha\right)>2s$ and we obtain a
contradiction with the Hardy inequality.
\end{proof}

The previous nonexistence result is very strong in the sense that a complete and instantaneous
blow up phenomenon occurs. That is, if $u_n$ is the solution to the approximated problems \eqref{eq:vn}, then $u_n(x,t)\to \infty$ as $n\to\infty$, where $(x,t)$ is an arbitrary point in $\Omega\times
(0,T)$.

\begin{Theorem}\label{th:blowup}
Let $ u_{n}$ be a solution to the problem \eqref{eq:vn} with
$p>p_+(\lambda,s)$. Then $u_{n}(x_{0},t_0)\rightarrow \iy$, for
all $(x_{0},t_0)\in\O\times(0,T)$.
\end{Theorem}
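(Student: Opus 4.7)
The plan is to argue by contradiction: assume that at some point $(x_0,t_0)\in\Omega\times(0,T)$ the sequence $u_n(x_0,t_0)$ does not blow up, and derive from this the existence of a positive weak supersolution to \eqref{semilinearprob}, in contradiction with Theorem \ref{nonexistence}. By the Weak Comparison Principle (Lemma \ref{DebilComparison}), $\{u_n\}_n$ is monotone non-decreasing, so the pointwise limit $u:=\lim_n u_n$ is well defined in $[0,\infty]$, and the contrary hypothesis reads $M:=u(x_0,t_0)<\infty$.

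The key observation is that, since the right-hand side of \eqref{eq:vn} is non-negative, each $u_n$ is a non-negative supersolution of $w_t+(-\Delta)^s w=0$ on $\Omega\times(0,T)$. By the weak parabolic Harnack inequality (Lemma \ref{Harnack inequality}), applied after translating so that $(x_0,t_0)$ belongs to the future cylinder $R^+$, we obtain
\[
\iint_{R^-}u_n\,dx\,dt\;\le\;C\operatorname*{ess\,inf}_{R^+}u_n\;\le\;C\,u_n(x_0,t_0)\;\le\;CM
\]
uniformly in $n$. Monotone convergence then yields $u\in L^1(R^-)$, hence $u<\infty$ almost everywhere in $R^-$. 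Iterating this bound along a chain of overlapping cylinders (starting from generic points of $R^-$ where $u$ is finite, and progressing both in space and backwards in time) propagates the estimate and produces uniform $L^1$ bounds on $\{u_n\}$ on any compact subset of $\Omega\times(0,t_0)$. In particular, the chain may be arranged to reach a full neighborhood of $\{0\}\times(\tau_1,\tau_2)$ for some $0<\tau_1<\tau_2<t_0$.

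The next step is to pass to the limit in the weak formulation of \eqref{eq:vn}. Because the integrand on the right-hand side, $\lambda u_{n-1}/(|x|^{2s}+1/n)+u_{n-1}^p+T_n(f)$, is monotone non-decreasing in $n$, the Monotone Convergence Theorem will apply as soon as local integrability of $\lambda u/|x|^{2s}$ and $u^p$ is established. To this end, I would test the approximated problems against the dual adjoint function $\varphi$ from Theorem \ref{necesario}, which satisfies $\varphi\ge c|x|^{-\gamma}$ on any interior cylinder around the origin. This yields a uniform bound on $\iint\bigl(\lambda u_n/|x|^{2s}+u_n^p+f\bigr)\varphi\,dx\,dt$, providing the required $L^1$ control weighted by $|x|^{-\gamma}$ and enabling the passage to the limit. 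Consequently, $u$ is a positive weak supersolution to \eqref{semilinearprob} on a subcylinder of $\Omega\times(0,t_0)$ containing the origin in space, which contradicts Theorem \ref{nonexistence} since $p>p_+(\lambda,s)$.

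The main obstacle is the third step: a pure $L^1_{\mathrm{loc}}$ bound on $u$ does not by itself imply $u/|x|^{2s}\in L^1_{\mathrm{loc}}$ near the origin, so the argument must exploit the dual test function with its explicit singularity $|x|^{-\gamma}$ (rather than a constant test function) in order to absorb the Hardy term and the nonlinearity simultaneously. Once this integrability is in place, the monotonicity of the scheme makes all passages to the limit automatic by Beppo Levi.
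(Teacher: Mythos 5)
Your overall architecture is exactly the paper's: argue by contradiction, use the weak parabolic Harnack inequality of Lemma \ref{Harnack inequality} together with a chain of overlapping cylinders to transport the pointwise finiteness at $(x_0,t_0)$ into a uniform bound $\iint_{B_r(0)\times(t_1,t_2)}u_n\,dx\,dt\le C$ on a cylinder centered at the origin, then use duality to upgrade this to $L^1$ control of the right--hand side, pass to the limit by monotone convergence, and contradict Theorem \ref{nonexistence}. The first two steps match the paper. The gap is in your third step, and you have in fact half-diagnosed it yourself: you correctly observe that the $L^1$ bound on $u_n$ does not control $u/|x|^{2s}$, but the remedy you propose — testing against the singular adjoint function $\varphi$ of Theorem \ref{necesario}, which solves $-\varphi_t+(-\Delta)^s\varphi-\lambda\varphi/|x|^{2s}=1$ and behaves like $|x|^{-\gamma}$ — does not deliver the bound you claim. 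Testing \eqref{eq:vn} against this $\varphi$ gives, after integrating by parts in time,
\begin{equation*}
\int\!\!\!\int u_n\,dx\,dt+\lambda\int\!\!\!\int \frac{u_n\varphi}{|x|^{2s}}\,dx\,dt
=\lambda\int\!\!\!\int\frac{u_{n-1}\varphi}{|x|^{2s}+\frac1n}\,dx\,dt+\int\!\!\!\int u_{n-1}^p\varphi\,dx\,dt+\int\!\!\!\int T_n(f)\varphi\,dx\,dt+\int_\Omega T_n(u_0)\varphi(x,0)\,dx,
\end{equation*}
because $-\varphi_t+(-\Delta)^s\varphi=1+\lambda\varphi/|x|^{2s}$. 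The Harnack chain controls only the first term on the left; the term $\lambda\iint u_n\varphi/|x|^{2s}$ is not controlled and essentially cancels (up to the $1/n$ regularization, and with the wrong sign for your purposes) against the Hardy term on the right. Hence you obtain neither a uniform bound on $\iint u_{n-1}^p\varphi$ nor, crucially, on $\iint u_{n-1}/(|x|^{2s}+\frac1n)$. Without the latter, $\lambda u/|x|^{2s}+u^p\in L^1$ is not established, so the limit $u$ does not qualify as a weak supersolution in the sense of Definition \ref{veryweak}, and the contradiction with Theorem \ref{nonexistence} is not reached. (The singular adjoint function is the right tool in Theorem \ref{suficiente}, where one estimates $\iint u_n$ \emph{from} the data; here the logic runs in the opposite direction.)

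The paper closes this step with the \emph{bounded} adjoint caloric function instead: let $\tilde\varphi$ solve $-\tilde\varphi_t+(-\Delta)^s\tilde\varphi=\chi_{B_r(0)\times[t_1,t_2]}$ in $\Omega\times(0,T)$ with $\tilde\varphi(\cdot,T)=0$ and zero exterior condition. Then $\tilde\varphi$ is bounded and $\tilde\varphi\ge\eta>0$ on $B_r(0)\times(t_1,t_2)$, and testing \eqref{eq:vn} against it yields
\begin{equation*}
C\ge\int_{t_1}^{t_2}\!\!\int_{B_r(0)}u_n\,dx\,dt\ \ge\ \eta\int_{t_1}^{t_2}\!\!\int_{B_r(0)}\Big(\lambda\frac{u_{n-1}}{|x|^{2s}+\frac1n}+u_{n-1}^p+T_n(f)\Big)\,dx\,dt,
\end{equation*}
i.e. the full right--hand side, Hardy term included, is uniformly bounded in $L^1$ of the small cylinder because it appears only on the right of the identity while the left is exactly the quantity the Harnack chain already controls. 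Monotone convergence then gives $u/|x|^{2s},\,u^p\in L^1(B_r(0)\times(t_1,t_2))$ and the desired weak supersolution. With this substitution your argument coincides with the paper's; as a minor additional remark, the inequality $\operatorname{ess\,inf}_{R^+}u_n\le C\,u_n(x_0,t_0)$ needs $(x_0,t_0)$ to lie in $R^+$ and some (lower semi)continuity or Lebesgue-point normalization of $u_n$, a point the paper also treats lightly.
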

\begin{proof} Without loss of generality, we can assume that $\l\le \Lambda_N$.
The existence of a positive solution to problem
\eqref{eq:vn} is clear and, due to the Comparison Principle, we know that $u_n\le u_{n+1}$ for all $n\in\mathbb{N}$.

     Suppose by contradiction that there exists
$(x_0,t_0)\in\O\times (0,T)$ such that
$$u_{n}(x_0,t_0)\rightarrow C_0<\iy\,\,\mbox{  as  } n\to \infty.$$
By using the Harnack
inequality (see Lemma \ref{Harnack inequality}),  there exists $s_0>0$ and a positive constant $C=
C(N,s_0,t_0,\beta)$ such that
$$
\iint_{R_0^-}u_{n}(x,t)\,dx\,dt\leq C\mbox{ ess}
\inf\limits_{R_0^+}u_{n}\leq C,
$$
where $R_0^-=B_{s_0}(x_0)\times (t_0-\frac{3}{4}\beta,
t_0-\frac{1}{4}\beta) $ and $R_0^{+}=B_{s_0}(x_0)\times
(t_0+\frac{1}{4}\beta, t_0+\frac{3}{4}\beta)$.

     Without loss of generality, we can suppose $x_0=0$. Otherwise, we can find a finite sequence of points $\{x_i\}_{i=0}^M$, ending with $x_M=0$, and of radius $\{s_i\}_{i=0}^M$ such that $B_{s_i}(x_i)\subset\Omega$, $B_{s_i}(x_i)\cap B_{s_{i+1}}(x_{i+1})\neq\emptyset$, for all $i=0,\ldots,M$ and, by the Hanarck inequality,
$$
\iint_{R_i^-}u_{n}(x,t)\,dx\,dt\leq C\mbox{ ess}
\inf\limits_{R_i^+}u_{n},
$$
where $R_i^-=B_{s_i}(x_i)\times (t_i-\frac{3}{4}\beta,
t_i-\frac{1}{4}\beta) $ and $R_i^{+}=B_{s_i}(x_i)\times
(t_i+\frac{1}{4}\beta, t_i+\frac{3}{4}\beta)$, $t_i\in (0, T)$ and $\beta$ is small enough so that $t_i-\frac{3}{4}\beta>0$ and $t_i+\frac{3}{4}\beta<T$ for all $i=0,\ldots,M$. Let us choose now $t_i=t_{i-1}-\beta$ for $i=1,\ldots,M$. Note that in this case
$$(t_i+\frac{1}{4}\beta, t_i+\frac{3}{4}\beta)=(t_{i-1}-\frac{3}{4}\beta,
t_{i-1}-\frac{1}{4}\beta),$$
and in particular, $R_i^+\cap R_{i-1}^-\neq \emptyset$. Thus,
\begin{eqnarray*}
\iint_{R_{         {M}}^-}u_n(x,t)\,dx\,dt&\leq&\mbox{ ess}\inf\limits_{R_{         {M}}^+}u_{n}(x,t)\leq \mbox{ ess}\inf\limits_{R_{         {M}}^+\cap R_{{         {M}}-1}^-}u_{n}(x,t)\\
&\leq&\frac{1}{|R_{         {M}}^+\cap R_{{         {M}}-1}^-|}\iint_{R_{         {M}}^+\cap R_{{         {M}}-1}^-}u_n(x,t)\,dx\,dt\\
&\leq& \frac{1}{|R_{         {M}}^+\cap R_{{         {M}}-1}^-|}\iint_{R_{{         {M}}-1}^-}u_n(x,t)\,dx\,dt\\
&\leq&\ldots\leq C\iint_{R_0^-}u_n(x,t)\,dx\,dt\leq \tilde{C}.
\end{eqnarray*}

     Therefore, supposing $x_0=0$, by the Monotone Convergence Theorem there
exists $u\ge 0$  such that $u_n\uparrow u$ strongly in
$L^1(B_{r}(0)\times (t_1, t_2))$. Let now $\varphi$ be the solution to the problem
\begin{equation}\left\{
\begin{array}{rcl}\label{traslation}
-\varphi_t+(-\Delta)^{s}\varphi&=&\chi_{B_{r}(0)\times [t_1,t_2]}\inn\Omega\times (0,T),\\
\varphi(x,t)&>&0\inn \Omega\times (0,T),\\
\varphi(x,t)&=&0\inn (\ren\setminus\Omega)\times[ 0,T),\\
\varphi(x,T)&=&0\inn\O.
\end{array}\right.
\end{equation}
Note that, due to the regularity of the right hand sides of problems \eqref{eq:vn} and \eqref{traslation}, both $u_n$ and $\varphi$ are in the energy space, and thus both can be used as test functions in the energy formulation of the problems. Indeed, considering first $u_n$ as test function in \eqref{traslation} and then, after integrating by parts, $\varphi$ in \eqref{eq:vn}, and defining $\eta:=\inf_{B_{r}(0)\times (t_1, t_2)}{\varphi(x,t)}$, we have
\begin{eqnarray*}
C&\geq&\dint_{t_1}^{t_2}\dint_{B_{r}(0)}u_n(x,t)\,dx\,dt\\
&\geq&\l\dint_{0}^{T}\dint_{\O}\dfrac{u_{n-1}}{|x|^{2s}+\frac{1}{n}}\varphi\,dx\,dt+\dint_{0}^{T}\dint_{\O}u_{n-1}^p
\varphi\,dx\,dt+\dint_{0}^{T}\dint_{\O}T_n(f)\varphi\,dx\,dt\\
&\geq&\l\eta\dint_{t_1}^{t_2}\dint_{B_r(0)}\dfrac{u_{n-1}}{|x|^{2s}+\frac{1}{n}}\,dx\,dt+\eta\dint_{t_1}^{t_2}\dint_{B_r(0)}u_{n-1}^p\,dx\,dt+\eta\dint_{t_1}^{t_2}\dint_{B_r(0)}T_n(f)\,dx\,dt.
\end{eqnarray*}
By the Monotone Convergence Theorem,
$$
\begin{array}{rcl}
u_{n-1}^p&\rightarrow& u^p\inn L^1(B_{r}(0)\times (t_1, t_2)),\\
\dfrac{u_{n-1}}{|x|^{2s}+\frac{1}{n}}&\nearrow&\dfrac{u}{|x|^{2s}}\inn
L^{1}(B_{r}(0)\times (t_1, t_2)),\\
T_n(f)&\rightarrow& f\inn L^1(B_{r}(0)\times (t_1, t_2)).
\end{array}
$$
Thus it follows that $u$ is a weak supersolution to
\eqref{eq:prob} in {$B_{r}(0)\times (t_1, t_2)$}, a
contradiction with Theorem \ref{nonexistence}. \end{proof}

\subsection{Existence results for $1<p<p_+(\lambda,s)$.}\label{52}

The goal  now  is to
consider the complementary interval of powers, $1<p<p_+(\lambda,s)$, and to prove that under some
suitable hypotheses on $f$ and $u_0$, problem \eqref{semilinearprob} has a positive solution.
We will consider here the case
$f\equiv 0$. For the case $f\not\equiv 0$, see Remark \ref{rk:problems}.

First of all, notice that if $0<\lambda\le \Lambda_{N,s}$ and
$1<p<p_+(\lambda,s)$, the stationary problem
\begin{equation}\label{eq:equil}
 (-\Delta)^{s} u=\l\dfrac{\,u}{|x|^{2s}}+u^p \hbox{  in } \Omega,\quad u>0\mbox{ in }\Omega, \quad u=0 \hbox{  in } \mathbb{R}^{N}\setminus \Omega,
 \end{equation}
has a positive supersolution $w$, depending on the following cases:
\begin{itemize}
\item[(A)] $0<\lambda<\Lambda_{N,s}$:
In Proposition 2.3 of \cite{BMP}, the authors find a positive solution to the problem
\begin{equation}\label{eq:concconv}
 (-\Delta)^{s} u=\l\dfrac{\,u}{|x|^{2s}}+u^p +\mu u^q \hbox{  in } \Omega, \quad u>0\mbox{ in } \Omega, \quad u=0 \hbox{  in } \mathbb{R}^{N}\setminus \Omega,
 \end{equation}
 for $\mu$ small enough, $0<q<1$ and $1<p<p_+(\lambda,s)$. In particular, for every $\mu\geq 0$ this solution is a supersolution of \eqref{eq:equil}. Note that for $1<p\leq 2^{*}_{s}-1$ this supersolution is in the energy space, and for $2^{*}_{s}-1< p<p_+(\lambda,s)$, it is a weak positive supersolution.
\item[(B)] If $\l=\Lambda_{N,s}$, then
$p_+(\lambda,s)=2^{*}_{s}-1$. Thus, instead of $H_0^s(\Omega)$, we
consider the Hilbert space $H(\Omega)$ defined in
\eqref{Hardynorm}. Since $H(\Omega)$ is compactly embedded in
$L^p(\Omega)$ for all $1\leq p < 2^*_s$, classical variational
methods in the space $H(\O)$ allow us to prove the existence of
a positive solution $w$ to the stationary problem
\eqref{eq:equil}.
\end{itemize}
\begin{Theorem}\label{th:100}
Assume that $0<
\l\leq \Lambda_{N,s}$ and $1<p<p_+(\lambda,s)$. Suppose that
$u_0(x)\le \overline{w}$, where $\overline{w}$ is a supersolution to the stationary problem
$$(-\Delta)^s w=\l\dfrac{\,w}{|x|^{2s}}+w^p \mbox{ in } \O,\quad w(x)>0\mbox{ in }\Omega,\quad w(x)=0\onn\mathbb{R}^{N}\setminus \O.$$
 Then for all $T>0$, the  problem
\begin{equation}\label{eq:probexis}
\left\{
\begin{array}{rcl}
 u_t+(-\Delta)^{s} u&=&\l\dfrac{\,u}{|x|^{2s}}+u^p \mbox{ in }\O\times
 (0,T),\\
 u(x,t)&=&0\inn (\ren\setminus\Omega)\times[ 0,T),\\
u(x,0)&=&u_0(x) \mbox{ if }x\in\O,
\end{array}
\right.
\end{equation}
has a global positive solution. If $\overline{w}$ is a weak supersolution, the solution will be also weak, and likewise, if $\overline{w}$ is an energy supersolution, problem \eqref{eq:probexis} will have an energy solution.

\end{Theorem}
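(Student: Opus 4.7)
The strategy is to promote the stationary supersolution $\overline{w}$ to a time-independent parabolic supersolution and then quote Lemmas \ref{Comparison} and \ref{ApproxEnergy} verbatim. This mirrors the classical parabolic monotone iteration argument, with the Hardy potential handled by the comparison principles already set up in Section \ref{2}.

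First I would set $\tilde{w}(x,t) := \overline{w}(x)$ for every $t \in [0,T]$. Since $\partial_t \tilde{w} \equiv 0$, the stationary supersolution inequality
\[
(-\Delta)^s \overline{w} \geq \lambda \frac{\overline{w}}{|x|^{2s}} + \overline{w}^p \quad \text{in } \Omega
\]
upgrades immediately to
\[
\partial_t \tilde{w} + (-\Delta)^s \tilde{w} \geq \lambda \frac{\tilde{w}}{|x|^{2s}} + \tilde{w}^p \quad \text{in } \Omega \times (0,T),
\]
in exactly the same weak (or energy) sense in which $\overline{w}$ is a supersolution. Combined with $\tilde{w} = 0$ in $(\mathbb{R}^N \setminus \Omega) \times [0,T)$ and $\tilde{w}(x,0) = \overline{w}(x) \geq u_0(x)$, this identifies $\tilde{w}$ as a positive supersolution of \eqref{eq:probexis} with $f \equiv 0$. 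Moreover, the temporal regularity of $\tilde{w}$ on $\Omega \times (0,T)$ reduces to the spatial regularity of $\overline{w}$: in the regime $1 < p \leq 2^*_s - 1$ of case (A), as well as in case (B) where $\overline{w} \in H(\Omega)$, we have $\tilde{w} \in L^2(0,T;H_0^s(\Omega))$ (respectively $L^2(0,T;H(\Omega))$) with $\tilde{w}_t \equiv 0 \in L^2(0,T;H^{-s}(\Omega))$, while in the range $2^*_s - 1 < p < p_+(\lambda,s)$ of case (A), $\tilde{w}$ belongs to $\mathcal{C}([0,T);L^1(\Omega))$.

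Once $\tilde{w}$ is at hand, I would invoke Lemma \ref{ApproxEnergy} in the energy regime (using the $H(\Omega)$-variant of the Comparison Principle pointed out in the remark after Lemma \ref{Comparison Principle} when $\lambda = \Lambda_{N,s}$) to produce a positive minimal finite energy solution $u$ with $u \leq \tilde{w}$, and Lemma \ref{Comparison} in the purely weak regime to produce a positive minimal weak solution $u \in \mathcal{C}([0,T);L^1(\Omega))$ with $u \leq \tilde{w}$. Since the bound $u(x,t) \leq \overline{w}(x)$ is uniform in time, the construction is valid on $[0,T]$ for arbitrary $T > 0$, yielding a global-in-time solution.

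The one technical point requiring attention is that, along the approximation scheme \eqref{eq:vn} used internally in the proofs of these two lemmas, the domination $u_n \leq \tilde{w}$ must be propagated inductively. This follows directly from the Weak Comparison Principle of Lemma \ref{DebilComparison} (or the Energy Comparison Principle of Lemma \ref{Comparison Principle}) applied to the linear problem satisfied by $u_n$ with source $u_{n-1}^p \leq \tilde{w}^p$ and initial datum $T_n(u_0) \leq \overline{w}$, invoking monotonicity of $s \mapsto s^p$ and the inductive hypothesis $u_{n-1} \leq \tilde{w}$. The validity of these comparison principles precisely requires $\lambda \leq \Lambda_{N,s}$, which is the hypothesis of the theorem; this is the main potential obstacle, and it has already been resolved by the work of Section \ref{2}.
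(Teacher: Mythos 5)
Your argument is exactly the paper's: the time-independent extension of $\overline{w}$ is a parabolic supersolution of \eqref{eq:probexis} because $\partial_t\overline{w}=0$ and $\overline{w}\ge u_0$, after which Lemma \ref{Comparison} (weak case) or Lemma \ref{ApproxEnergy} (energy case) yields the minimal positive solution. The additional remarks on propagating $u_n\le\overline{w}$ through the iteration are already contained in the proofs of those lemmas, so the proposal is correct and coincides with the paper's proof.
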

\begin{proof}
Since $\overline{w}(x)\ge u_0(x)$ for all $x\in \Omega$, then
$\overline{w}$ is a positive supersolution to problem
\eqref{eq:probexis}. Hence, we conclude just by applying Lemma
\ref{Comparison}, whether $\overline{w}$ is a weak supersolution,
or Lemma \ref{ApproxEnergy}, if $\overline{w}$ is an energy
supersolution.
 \end{proof}
\begin{remark}\label{rk:problems}

\

\textsc{(I)}
 With the results above we find the optimality of the power $p_+(\lambda,s)$, what was our main aim.
 Nevertheless, it could be interesting to know the optimal class of data for which there
 exists a solution and  the regularity of such solutions according to the regularity of the data.
 In this direction, considering $g=u^p+cf$ in problem \eqref{problemBG}, Theorem \ref{necesario} establishes that, necessarily,
 $$\int_{B_r(0)}|x|^{-\frac{N-2s}{2}+\alpha}u_0\,dx<+\iy,$$
 if we expect to find a solution of problem \eqref{semilinearprob}.

\textsc{(II)}
In the presence of  a source term $f\gneqq 0$,  if $f(x,t)\le \frac{c_0(t)}{|x|^{2s}}$
with $c_0(t)$ bounded and {\it sufficiently small}, then the computation above allows us to prove the
existence of a supersolution. Then the existence of a minimal solution to problem \eqref{eq:prob} follows for
all $p<p_+(\lambda,s)$.
\end{remark}

\appendix
\section{Regularity in bounded domains: Relation between weak solutions and viscosity solutions} \label{appreg}
The regularity of the solutions to the problem
\begin{equation}\label{regularity}
\left\{
\begin{array}{rcl}
 u_t+(-\Delta)^{s} u&=&f(x,t)\mbox{ in }\mathbb{R}^N\times (0,T),\\
u(x,0)&=&u_0(x) \mbox{ if }x\in \mathbb{R}^N,
\end{array}
\right.
\end{equation}
according to the regularity of the data can be seen in \cite{CF}. In this situation the properties of the fundamental solution play a crucial role.

We will try to prove, in particular, that the  class of test functions,
\begin{equation*}
\mathcal{T}:=\Big\{\phi:\mathbb{R}^N\times [0,T]\rightarrow\mathbb{R},\,\hbox{ s.t.}
\left\{\begin{array}{lll}\quad &\phi_t+(-\Delta)^s\phi=\varphi,\, \varphi\in L^\infty(\Omega\times (0,T)),\\
\quad&\phi= 0\inn (\ren\setminus \Omega)\times (0,T],\quad \phi(x,0)=0 \inn \Omega
\end{array}
\right.
 \Big\},
\end{equation*}
is a class of \textit{regular functions} for which the equation is also verified in an almost everywhere way.
In this sense we will say that the functions in $\mathcal{T}$ are classical          { or strong} solutions to the fractional heat equation.

To start we  will consider such a function $\phi$ as an energy solution to the fractional heat equation.
According to the results in \cite{LPPS} we find that every $\phi\in \mathcal{T}$ belongs in particular to
$L^\infty(\Omega\times (0,T))$ and by using the results in \cite{FK} we find that $\phi$ is Hölder continuous in  $\Omega\times (0,T)$.

Moreover in \cite{FRos} the authors prove the  following sharper result.
\begin{Theorem}\label{FRos} Assume that $\O$ is a $\mathcal{C}^{1,1}$ bounded domain in $\mathbb{R}^N$ and let $v_0\in L^2(\Omega)$.
Consider $v$ the unique weak solution to problem
\begin{equation}\label{equ:hom}
\left\{\begin{array}{rcll}
v_t+(-\Delta)^s v&=&0 \quad&\hbox{  in  } \Omega\times(0,\infty)\\
v&=&0 \quad&\hbox{  in  }\mathbb{R}^N\setminus \Omega\times(0,\infty)\\
v(x,0)&=&v_0(x)\quad&\hbox{  in  } \Omega.
\end{array}
\right.
\end{equation}
Then, for each $\epsilon>0$,
\begin{itemize}
\item[$i)$]
\begin{equation}
\sup\limits_{t>\epsilon}\|u(\cdot, t)\|_{\mathcal{C}^s}\le C_1(\epsilon) \|v_0\|_{L^2(\Omega)}.
\end{equation}
\item [$ii)$]
\begin{equation}
\sup\limits_{t>\epsilon}\|\frac{u(\cdot, t)}{\delta^s}\|_{\mathcal{C}^{s-\eta}}\le C_2(\epsilon) \|v_0\|_{L^2(\Omega)}\;\;\mbox{         {for any $\eta>0$}}.
\end{equation}
\end{itemize}
Both constants $C_1$ and $C_2$ blow up when $\epsilon\to 0$.

 In addition, for every $j\in \mathbb{N}$,
\begin{equation}
\sup\limits_{t>\epsilon}\|\dfrac{\p^ju(\cdot, t)}{\p
t^j}\|_{\mathcal{C}^s}\le C_j(\epsilon) ||v_0||_{L^2(\Omega)}.
\end{equation}

\end{Theorem}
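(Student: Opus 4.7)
\begin{pf}[Proof Proposal.]
The plan is to reduce the parabolic regularity to the already-known elliptic boundary regularity of Ros-Oton--Serra, using the smoothing properties of the Dirichlet fractional heat semigroup as the bridge. Let $A$ denote $(-\Delta)^s$ with zero Dirichlet (exterior) conditions on $\Omega$, and let $\{\phi_k, \lambda_k\}_{k\ge 1}$ be its $L^2(\Omega)$ eigensystem, with $0 < \lambda_1 \le \lambda_2 \le \cdots \to \infty$. Then the unique weak solution of \eqref{equ:hom} admits the spectral representation
\[
v(x,t) = \sum_{k\ge 1} c_k \, e^{-\lambda_k t}\, \phi_k(x), \qquad c_k = \langle v_0,\phi_k\rangle_{L^2(\Omega)}.
\]

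First I would establish the quantitative $L^2 \to L^\infty$ smoothing of the semigroup $e^{-tA}$: by a Nash/Moser iteration (or, equivalently, by the ultracontractivity of subordinated Brownian semigroups on bounded domains), there is $C > 0$ such that $\|e^{-tA}\|_{L^2\to L^\infty} \le C t^{-N/(4s)}$ for $t\in(0,1]$. Composing this with the spectral bound $\|A^j e^{-\tau A}\|_{L^2\to L^2} \le (j/(e\tau))^j$ gives, for every $j\ge 0$ and $t\ge \epsilon$,
\[
\|\partial_t^j v(\cdot,t)\|_{L^\infty(\Omega)} \;=\; \|(-A)^j e^{-tA} v_0\|_{L^\infty(\Omega)} \;\le\; C_j(\epsilon)\, \|v_0\|_{L^2(\Omega)},
\]
since $\partial_t$ commutes with $A$ on the spectral representation. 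In particular $g(\cdot,t):=-v_t(\cdot,t)$ is uniformly bounded in $L^\infty(\Omega)$ for $t\ge \epsilon$, with norm controlled by $\|v_0\|_{L^2(\Omega)}$.

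The second ingredient is elliptic regularity. For each $t\ge\epsilon$ the function $v(\cdot,t)$ is the unique solution of the stationary problem $(-\Delta)^s v = g(\cdot,t)$ in $\Omega$, $v=0$ in $\R^N\setminus\Omega$, with $g(\cdot,t)\in L^\infty(\Omega)$ uniformly. The boundary regularity theory of Ros-Oton--Serra in $C^{1,1}$ domains then yields $\|v(\cdot,t)\|_{\mathcal{C}^s(\R^N)} \le C\|g(\cdot,t)\|_{L^\infty(\Omega)}$, proving (i), and the sharp quotient estimate $\|v(\cdot,t)/\delta^s\|_{\mathcal{C}^{s-\eta}(\overline\Omega)} \le C(\eta)\|g(\cdot,t)\|_{L^\infty(\Omega)}$ for every $\eta>0$, proving (ii). For (iii), the same argument is applied to $w:=\partial_t^j v$, which satisfies the fractional heat equation on $\Omega\times(\epsilon/2,\infty)$ with initial trace $w(\cdot,\epsilon/2)\in L^\infty(\Omega) \subset L^2(\Omega)$ already bounded by $C_j(\epsilon)\|v_0\|_{L^2(\Omega)}$; the previous step then gives the $\mathcal{C}^s$-estimate for $w$.

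The main obstacle is the quantitative ultracontractivity step: one needs the $t^{-N/(4s)}$ decay of $\|e^{-tA}\|_{L^2\to L^\infty}$ on a bounded domain, not just on $\R^N$. This can be obtained either from the Dirichlet heat-kernel upper bounds of Chen--Kim--Song for stable-like processes, or self-contained by a Moser iteration based on the fractional Sobolev embedding together with the eigenvalue growth $\lambda_k \gtrsim k^{2s/N}$; in either form it is the only nontrivial analytic input, since the elliptic Ros-Oton--Serra estimates and the commutation of $\partial_t$ with the semigroup are by now standard.
\end{pf}
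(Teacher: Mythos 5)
Your argument is sound, but note first that the paper does not actually prove this theorem: it is quoted from the reference \cite{FRos} (Fernandez-Real--Ros-Oton), and the only indication given of the proof is that it proceeds by ``regularity up to the boundary of the normalized eigenfunction of the elliptic Dirichlet problem and the use of separation of variables.'' That is, the cited proof estimates each mode $c_k e^{-\lambda_k t}\phi_k$ separately, using the elliptic boundary regularity of $\phi_k$ (which solves $(-\Delta)^s\phi_k=\lambda_k\phi_k$ with H\"older right-hand side, hence enjoys the sharp $\phi_k/\delta^s\in \mathcal{C}^{s-\eta}$ estimate) together with polynomial-in-$\lambda_k$ growth of the relevant norms, and then sums the series thanks to the factors $e^{-\lambda_k t}$ for $t\ge\epsilon$. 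Your route is genuinely different in organization: you use ultracontractivity of the Dirichlet semigroup plus the spectral bound $\|A^je^{-\tau A}\|_{L^2\to L^2}\le (j/(e\tau))^j$ to get uniform $L^\infty$ control of all time derivatives, and then freeze time and treat $v(\cdot,t)$ as the solution of the \emph{elliptic} Dirichlet problem with datum $g=-v_t(\cdot,t)$. This buys a cleaner separation between the parabolic smoothing (done once, abstractly) and the elliptic boundary regularity (applied once, not mode by mode), at the price of needing the quantitative $L^2\to L^\infty$ bound $\|e^{-tA}\|\le Ct^{-N/(4s)}$ as an extra input, which you correctly identify as the nontrivial analytic ingredient. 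One point deserves more care: for part (ii) the sharp quotient estimate $\|u/\delta^s\|_{\mathcal{C}^{s-\eta}}\le C\|g\|_{L^\infty}$ is not the statement of the basic Ros-Oton--Serra theorem for merely bounded data (which yields only $\mathcal{C}^\alpha$ for some small $\alpha>0$); within your scheme this is easily repaired by first running your part (iii) argument with $j=1$ to conclude that $g=-v_t(\cdot,t)$ is uniformly in $\mathcal{C}^s$, and then invoking the elliptic quotient estimate for H\"older-continuous right-hand sides, exactly as the eigenfunction approach does.
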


The proof in \cite{FRos} is a consequence of the regularity up to the boundary of the normalized eigenfunction of the elliptic
Dirichlet problem and the use of separation of variables.

As a consequence of the previous result we have the following extension property.
\begin{Corollary}
 Let $v$ be as in Theorem \ref{FRos}, then $v$ can be extended as  a continuous function to $\mathbb{R}^N\times(0,\infty)$.
\end{Corollary}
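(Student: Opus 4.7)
The plan is to combine the interior H\"older regularity in (i) with the boundary regularity statement in (ii), and to use the time regularity coming from the bound on $\partial_t v$. The key observation is that the Dirichlet exterior value $v\equiv 0$ on $(\mathbb{R}^N\setminus\Omega)\times (0,\infty)$ already extends $v$ to a pointwise-defined function on $\mathbb{R}^N\times(0,\infty)$, so the content is to verify that this extension is continuous, the only delicate locus being $\partial\Omega$.

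Fix $\epsilon>0$ and $T>\epsilon$. For each $t\in[\epsilon,T]$, part (ii) of Theorem \ref{FRos} asserts that the function $w(\cdot,t):=v(\cdot,t)/\delta^s$ lies in $\mathcal{C}^{s-\eta}(\overline{\Omega})$ with a bound uniform in $t$, where $\delta(x)=\mathrm{dist}(x,\partial\Omega)$. Since $\delta^s\in\mathcal{C}^{s}(\overline{\Omega})$ vanishes on $\partial\Omega$, the product $v(\cdot,t)=\delta^s\,w(\cdot,t)$ is continuous on $\overline{\Omega}$ and vanishes there. Gluing with $v\equiv 0$ on $\mathbb{R}^N\setminus\Omega$, we conclude that $v(\cdot,t)$ is continuous on all of $\mathbb{R}^N$ for each fixed $t\in[\epsilon,T]$.

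Next I would upgrade separate continuity to joint continuity. The third statement of Theorem \ref{FRos} gives $\sup_{t\in[\epsilon,T]}\|\partial_tv(\cdot,t)\|_{\mathcal{C}^s}\leq C(\epsilon)$, whence in particular $\|\partial_tv(\cdot,t)\|_{L^\infty(\Omega)}\leq C(\epsilon)$. Thus $v(x,\cdot)$ is Lipschitz in $t$ with a constant uniform in $x\in\Omega$; combined with $v\equiv 0$ outside $\Omega$, the same uniform Lipschitz bound holds on $\mathbb{R}^N$. Given $(x_n,t_n)\to(x_0,t_0)$ with $t_0\in[\epsilon,T]$, the decomposition
$$
|v(x_n,t_n)-v(x_0,t_0)|\leq |v(x_n,t_n)-v(x_n,t_0)|+|v(x_n,t_0)-v(x_0,t_0)|
$$
controls the first term by $C(\epsilon)|t_n-t_0|$ and the second by the spatial continuity of $v(\cdot,t_0)$ established above. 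Since $\epsilon>0$ and $T$ were arbitrary, the extension is continuous on $\mathbb{R}^N\times(0,\infty)$.

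The main obstacle is at the boundary $\partial\Omega\times(0,\infty)$, where the interior values produced by parabolic regularity must agree with the prescribed exterior value $0$ in a genuinely continuous fashion. The $\delta^s$\nobreakdash-weighted H\"older estimate in Theorem \ref{FRos}(ii) is precisely what is needed: it simultaneously quantifies the rate at which $v$ decays to $0$ near $\partial\Omega$ and guarantees that this decay is uniform up to the boundary, so no ad hoc barrier or comparison argument is required.
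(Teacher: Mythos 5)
Your argument is correct and is essentially the paper's own (the paper offers no written proof, presenting the corollary as an immediate consequence of Theorem \ref{FRos}): spatial continuity of the zero extension across $\partial\Omega$ from the uniform bound on $v/\delta^s$ in $\mathcal{C}^{s-\eta}(\overline{\Omega})$ together with the vanishing of $\delta^s$ on $\partial\Omega$, and joint continuity from the uniform bound on $\partial_t v$. The only remark worth adding is that part (i), read as a global $\mathcal{C}^s(\mathbb{R}^N)$ bound on the zero extension as in Fernandez-Real--Ros-Oton, already yields the spatial step directly, but your route through (ii) is equally valid and in addition quantifies the decay rate near $\partial\Omega$.
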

We refer to  \cite{ChD2} (Definition 2.8) for the concept of viscosity solution (and the definition of the class $S$ of test functions),  and to Theorem 5.3 of the same article  to prove
the stability of viscosity solutions by uniform limits.
Therefore, since we are working in the linear setting, via a convenient mollification  we can assume that $v$ is uniform limit
on compact sets of viscosity solutions for approximated problems and thus it is a viscosity solution.

Hence, for a fixed $(x_0,t_0)\in \O\times (0,T)$, we can assume that $v$ is regular in a neighborhood of $(x_0,t_0)$.

We prove that $v$ is a viscosity subsolution (and in a similar way we prove that it is a supersolution).
Consider a test function $\phi\in S$ (see \cite{ChD2} to see the definition of $S$) such that
\begin{itemize}
\item[$i)$] $v(x_0,t_0)=\phi(x_0,t_0)$,
\item[$ii)$] $v(y,s)<\phi(y,s)$.
\end{itemize}
Then
\begin{equation*}\begin{split}
\phi_{t^-}(         {x_0},t_0)&+         {(-\Delta )^s}\phi(x_0,t_0)=\displaystyle\lim_{h\to 0^+}\frac{\phi(x_0,t_0)-\phi(x_0,t_0-h)}{h}+\int_{         {\R^N}} \frac{\phi(x_0, t_0)-\phi(y,t_0)}{|x-y|^{N+2s}}         {\,dy} \\
&\displaystyle\le\lim_{h\to 0^+}\frac{v(x_0,t_0)-v(x_0,t_0-h)}{h}+\int_{         {\R^N}} \frac{v(x_0, t_0)-v(y,t_0)}{|x-y|^{N+2s}}         {\,dy}=0
\end{split}\end{equation*}
Hence, summarizing, we obtain that $v$ is a viscosity subsolution and in a similar way supersolution.

As an application of the previous result
for $f\in L^\infty(\Omega\times(0,T))\cap\mathcal{C}^{\alpha,\beta}(\Omega\times(0,T))$, we get a similar result for the problem
\begin{equation}\label{equ:ques}
\left\{\begin{array}{rcll}
u_t+(-\Delta)^s u&=&f(x,t) \quad&\hbox{  in  } \Omega\times(0,\infty)\\
u(x,t)&=&0 \quad&\hbox{  in  } (\mathbb{R}^N\setminus\Omega)\times(0,\infty)\\
u(x,0)&=&0 \quad&\hbox{  in  } \Omega.
\end{array}
\right.
\end{equation}
Indeed,   for fixed $\tau>0$ we solve
\begin{equation}\label{equ:ques1}
\left\{\begin{array}{rcll}
v_{t}(x,t,\tau)+(-\Delta)^s v(x,t,\tau)&=&0 \quad&\hbox{  in  } \Omega\times(\tau,\infty)\\
v(x,t,\tau)&=&0 \quad&\hbox{  in  }(\mathbb{R}^N\setminus\Omega)\times(\tau,\infty)\\
v(x,\tau,\tau)&=&f(x,\tau) \quad&\hbox{  in  } \Omega.
\end{array}
\right.
\end{equation}
By separation of variables as in \cite{FRos}, we obtain
$$v(x,t,\tau)=\sum_0^\infty c_k(\tau) e_k(x)e^{-\lambda_k (t-\tau)}$$
where $(\lambda_k, e_k)$ are the eigenvalues and the normalized eigenfunction respectively of the \textit{fractional laplacian} and the Fourier coefficients are defined by
$$c_k(\tau)=\int_\Omega f(y,\tau)e_k(y)dy.$$
Then as above, $v(x,t,\tau)$ can be extended by continuity to the
whole space $\mathbb{R}^N\times (\tau,\infty)$.

It is easy to check that
$$u(x,t):=\int_0^t v(x,t,\tau) d\tau,$$
is the extension to $\mathbb{R}^N\times (0,\infty)$ of the unique solution to \eqref{equ:ques}. By mollification and by the stability of
viscosity solutions by uniform limits in compact sets, we can assume $u$ regular in the point where we want to test.
As before, assume  $\phi\in S$ verifying
\begin{itemize}
\item[$i)$] $u(x_0,t_0)=\phi(x_0,t_0)$,
\item[$ii)$] $u(y,s)<\phi(y,s)$.
\end{itemize}
Then
\begin{equation*}\begin{split}
\phi_{t^-}(         {x_0},t_0)&         {+(-\Delta )^s}\phi(x_0,t_0)=\displaystyle\lim_{h\to 0^+}\frac{\phi(x_0,t_0)-\phi(x_0,t_0-h)}{h}+\int_{         {\R^N}} \frac{\phi(x_0, t_0)-\phi(y,t_0)}{|x-y|^{N+2s}}         {\,dy}\\
&\le \displaystyle\lim_{h\to 0^+}\frac{u(x_0,t_0)-u(x_0,t_0-h)}{h}+\int_{         {\R^N}} \frac{u(x_0, t_0)-u(y,t_0)}{|x-y|^{N+2s}}         {\,dy}=f(x,t)
\end{split}\end{equation*}
Therefore,  $u$ is a viscosity subsolution. Analogously, we prove that $u$ is a viscosity supersolution.

Then we can use the regularity results in \cite{ChD1, ChD2, JX}. In particular, by using  Corollaries 2.6. 2.7 in \cite{JX},
for a second member Hölder continuous in space-time,  we find that the equation is verified in the pointwise classical meaning, i.e.,
is a strong solution in the sense of Definition 1.3 in \cite{BPSV}.

\section{Fundamental inequalities}
We explain in this Appendix the functional results used in the previous sections. Recall first that we defined in \eqref{defMuNu} the measures $\mu$,
$\nu$ as
$$d\mu:=\dfrac{dx}{|x|^{2\g}},\qquad \hbox{ and }\qquad d\nu:=
\dfrac{dxdy}{|x|^{\gamma}|y|^{\gamma}|x-y|^{N+2s}}. $$

Let begin by the next extension lemma whose proof follows using
the same arguments of \cite{Adams} (see also \cite{DPV}).
\begin{Lemma}\label{ext}
Let $\Omega\subset \ren$ be a smooth domain. Then for all
$w\in Y^{s,\g}(\Omega)$, there exists $\tilde{w}\in
Y^{s,\g}(\ren)$ such that $\tilde{w}_{|\Omega}=w$ and
$$
\|\tilde{w}\|_{Y^{s,\g}(\ren)}\le C \|{w}\|_{Y^{s,\g}(\Omega)},
$$
where $C:= C(N,s,\O, \gamma)>0$.
\end{Lemma}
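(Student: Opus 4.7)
The plan is to reduce the statement to the classical $H^s$-extension theorem by means of a partition of unity adapted to the location of the origin. Since $0\in\Omega$, the boundary $\partial\Omega$ is at a positive distance from the origin; hence the weights $|x|^{-\gamma}$ and $|y|^{-\gamma}$ are bounded above and below on any neighborhood of $\partial\Omega$. This means that, away from the origin, the weighted $Y^{s,\gamma}$-norm is equivalent to the standard $H^s$-norm, and the existing extension results (see \cite{DPV}) can be applied directly.

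Concretely, I would fix $r_0>0$ with $\overline{B_{2r_0}(0)}\subset\Omega$ and take a finite cover $\{V_j\}_{j=0}^{k}$ of $\overline\Omega$ such that $V_0=B_{r_0}(0)$, while each $V_j$ for $j\ge 1$ is disjoint from $\overline{B_{r_0/2}(0)}$ and the family $\{V_j\}_{j\ge 1}$ covers $\partial\Omega$. Let $\{\eta_j\}$ be a smooth subordinate partition of unity and decompose $w=\sum_{j=0}^{k}\eta_j w$. The piece $\eta_0 w$ is compactly supported in $\Omega$, so I would extend it by zero to all of $\mathbb{R}^N$, obtaining $E_0 w$. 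For $j\ge 1$, on $\mathrm{supp}(\eta_j)$ the weight $|x|^{-\gamma}$ is pinched between two positive constants, so the weighted and unweighted norms are equivalent there; the classical extension lemma for $H^s$ then yields $E_j w\in H^s(\mathbb{R}^N)$ with compact support in an enlarged set $\tilde V_j$ still disjoint from the origin and with $\|E_j w\|_{H^s(\mathbb{R}^N)}\le C\|\eta_j w\|_{H^s(V_j\cap\Omega)}\le C\|w\|_{Y^{s,\gamma}(\Omega)}$. The desired extension is $\tilde w:=\sum_{j=0}^{k} E_j w$, which agrees with $w$ on $\Omega$.

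To bound $\|\tilde w\|_{Y^{s,\gamma}(\mathbb{R}^N)}$, the $L^2(d\mu)$-part is straightforward, since near the origin only $E_0 w$ contributes (controlled by $\|w\|_{L^2(\Omega,d\mu)}$) while each $E_j w$ for $j\ge 1$ is supported where $|x|^{-2\gamma}$ is bounded. For the seminorm I would use
\[
(\tilde w(x)-\tilde w(y))^2\le (k+1)\sum_{j=0}^{k}\bigl(E_j w(x)-E_j w(y)\bigr)^2
\]
and estimate each summand. When both $x,y$ lie in a region bounded away from $0$, the weight factor $|x|^{-\gamma}|y|^{-\gamma}$ is comparable to a constant and the integral is controlled by $\|E_j w\|_{H^s(\mathbb{R}^N)}^2$. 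When both $x,y\in\Omega$ (for the $E_0 w$ piece) one directly recovers $|||w|||_{Y^{s,\gamma}(\Omega)}^2$, up to absorbing the Lipschitz cutoff $\eta_0$ via a standard Leibniz-type inequality.

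The main obstacle will be handling the mixed interaction terms, where one variable lies in $\mathrm{supp}(E_0 w)\subset B_{r_0}(0)$ and the other in $\mathrm{supp}(E_j w)\subset\tilde V_j$ with $j\ge 1$ (or vice versa), so that one of the factors $|x|^{-\gamma}$, $|y|^{-\gamma}$ is large while the other is bounded. The key observation is that, by construction, these two supports are separated by a positive distance, so the kernel $|x-y|^{-N-2s}$ is uniformly bounded on the mixed region; the singular weight (say $|x|^{-\gamma}$ with $x$ near $0$) pairs with $E_0 w(x)^2$ to give a multiple of $\int_{B_{r_0}}|E_0 w|^2|x|^{-\gamma}\,dx\le r_0^{\gamma}\|w\|^2_{L^2(\Omega,d\mu)}$, while the bounded weight on the other variable is integrated against a compactly supported $L^2$ function, and the tail at infinity decays like $|y|^{-\gamma-N-2s}$ which is integrable. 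Combining the three types of estimates yields $\|\tilde w\|_{Y^{s,\gamma}(\mathbb{R}^N)}\le C(N,s,\Omega,\gamma)\|w\|_{Y^{s,\gamma}(\Omega)}$, as claimed.
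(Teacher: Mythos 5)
Your argument is correct and is essentially the route the paper intends: the paper offers no proof of Lemma \ref{ext} beyond the remark that it ``follows using the same arguments of \cite{Adams}'', i.e.\ the classical partition-of-unity reduction to the unweighted $H^s$ extension theorem, which is exactly what you carry out, isolating a ball around the origin (extended by zero) from patches near $\partial\Omega$ where $|x|^{-\gamma}$ is pinched between positive constants so that the $Y^{s,\gamma}$- and $H^s$-norms are equivalent. The one step that genuinely involves the weight and should be written out is the Leibniz/commutator term for the cutoff $\eta_0$ near the origin, namely the bound $\int_\Omega(\eta_0(x)-\eta_0(y))^2|x|^{-\gamma}|x-y|^{-N-2s}\,dx\le C\,|y|^{-\gamma}$ (split $|x-y|<|y|/2$, where $|x|\ge|y|/2$, from its complement, further separating $|x|<|y|/4$, and use $|\eta_0(x)-\eta_0(y)|\le\min\{L|x-y|,2\}$ together with $\gamma<N$ and $2-2s>0$), which is what lets you absorb that term into $\|w\|^2_{L^2(\Omega,d\mu)}$.
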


     Recall that $Y^{s,\g}_0(\O)$ was defined as the completion of
$\mathcal{C}^\infty_0(\O)$ with respect to the norm of
$Y^{s,\g}(\O)$. It is clear that if $\phi\equiv C\in
Y^{s,\g}_0(\O)$, then $C\equiv 0$.

If  $\Omega$ is a bounded regular domain, we can prove the next Poincar\'e inequality.
\begin{Theorem}\label{poincare}
There exists a positive constant $C:=C(\O,N,s  {,\gamma})$ such
that for all $\phi\in \mathcal{C}^\infty_0(\O)$, we have
$$ C\dint_{\O}\phi^2(x)d\mu\le \int_{\O}
\int_{\O}(\phi(x)-\phi(y))^2d\nu.$$
\end{Theorem}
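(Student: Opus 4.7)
The strategy is to deduce the weighted Poincar\'e inequality from the fractional Hardy inequality (Theorem \ref{DH}) via the Frank--Lieb--Seiringer ground state representation (Lemma \ref{GSrepresentation}).

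Fix $\phi\in \mathcal{C}_0^\infty(\O)$ with $\O\subset B_R(0)$ and set $\psi:=|x|^{-\g}\phi$. Since $\psi$ is singular at the origin whenever $\phi(0)\neq 0$, regularize it: let $\eta_\e\in \mathcal{C}^\infty(\ren)$ satisfy $\eta_\e\equiv 0$ on $B_\e(0)$, $\eta_\e\equiv 1$ outside $B_{2\e}(0)$ and $0\le\eta_\e\le 1$, so that $\psi_\e:=\eta_\e\psi\in \mathcal{C}_0^\infty(\ren)$ and $|x|^\g\psi_\e=\eta_\e\phi$. Lemma \ref{GSrepresentation} applied to $\psi_\e$ reads
\begin{equation*}
\dint_{\ren}|\xi|^{2s}|\widehat{\psi_\e}|^2\,d\xi - \Psi_{N,s}(\g)\dint_{\ren}\frac{\psi_\e^2}{|x|^{2s}}\,dx = \frac{a_{N,s}}{2}\dint\!\!\!\dint_{\ren\times\ren}(\eta_\e\phi(x)-\eta_\e\phi(y))^2\,d\nu.
\end{equation*}
Combining with Hardy's inequality (Theorem \ref{DH}) for $\psi_\e$ and using that $\delta:=\Lambda_{N,s}-\Psi_{N,s}(\g)>0$ (by Proposition \ref{psi} together with the hypothesis $0<\g<(N-2s)/2$), this gives
\begin{equation*}
\frac{a_{N,s}}{2}\dint\!\!\!\dint_{\ren\times\ren}(\eta_\e\phi(x)-\eta_\e\phi(y))^2\,d\nu \ge \delta\dint_{\ren}\frac{(\eta_\e\phi)^2}{|x|^{2s+2\g}}\,dx.
\end{equation*}
Passing to the limit $\e\to 0$ by dominated convergence (justified because $2s+2\g<N$, making $\phi^2|x|^{-2s-2\g}$ integrable, and because the Lipschitz regularity and compact support of $\phi$ provide an $\e$-independent dominant for the double integral) yields
\begin{equation*}
\frac{a_{N,s}}{2}\dint\!\!\!\dint_{\ren\times\ren}(\phi(x)-\phi(y))^2\,d\nu \ge \delta\dint_{\ren}\frac{\phi^2}{|x|^{2s+2\g}}\,dx \ge \delta R^{-2s}\dint_{\O}\phi^2\,d\mu,
\end{equation*}
the last inequality using $|x|\le R$ on $\mathrm{supp}(\phi)$.

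To descend from the integral over $\ren\times\ren$ to the one over $\O\times\O$, write
\begin{equation*}
\dint\!\!\!\dint_{\ren\times\ren}(\phi-\phi)^2\,d\nu = \dint_{\O}\!\!\dint_{\O}(\phi-\phi)^2\,d\nu + 2\dint_{\O}\phi^2(x)\frac{K(x)}{|x|^\g}\,dx,\quad K(x):=\dint_{\O^c}\frac{dy}{|y|^\g|x-y|^{N+2s}}.
\end{equation*}
The excess term is controlled by patching local estimates: on each ball $B_r\subset\O$ small enough that $K$ is uniformly bounded, the previous chain already yields the desired Poincar\'e bound, and a finite partition-of-unity covering $\O$ together with a compactness argument produces the universal constant $C=C(\O,N,s,\g)$ claimed in the statement.

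The main obstacle is precisely this last descent: the kernel $K(x)$ may blow up as $x\to\p\O$ (notably for $s\ge \tfrac12$), so a naive global bound on $K$ is unavailable and the local-to-global patching must be carried out with care, exploiting the fact that the dominant singularity on the right-hand side sits at the origin, where $K(x)|x|^{-\g}$ stays bounded while $|x|^{-2s-2\g}$ diverges.
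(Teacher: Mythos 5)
Your first step is sound and, modulo the cut-off limit (which deserves a real argument, since the commutator piece $\int\!\!\int\phi(y)^2(\eta_\e(x)-\eta_\e(y))^2\,d\nu$ must be shown to vanish rather than merely dominated), it reproduces Theorem \ref{generalh} of the appendix, which the paper obtains instead from Lemma \ref{hardy001} and the Picone inequality. The genuine gap is the final ``descent'' from $\ren\times\ren$ to $\O\times\O$, and it is not a technical loose end: it is essentially the entire content of the theorem. Your decomposition gives
$$\int_\O\int_\O(\phi(x)-\phi(y))^2\,d\nu=\int_{\ren}\int_{\ren}(\phi(x)-\phi(y))^2\,d\nu-2\int_\O\phi^2(x)\frac{K(x)}{|x|^\g}\,dx,$$
so the excess term enters with the wrong sign and must be absorbed into the lower bound $\delta R^{-2s}\int_\O\phi^2\,d\mu$. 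Since $K(x)\sim\operatorname{dist}(x,\p\O)^{-2s}$ near the boundary while $|x|^{-\g}$ and $|x|^{-2\g}$ stay bounded there, no pointwise absorption is possible, and controlling $\int_\O\phi^2K(x)|x|^{-\g}\,dx$ amounts to a fractional boundary Hardy inequality (a delicate statement, problematic at $s=\tfrac12$) that you neither state nor prove. The proposed patching also fails as described: on a small ball $B_r\subset\O$ the restriction of $\phi$ is not compactly supported in $B_r$, so ``the previous chain'' does not apply to it, and Poincar\'e constants on the pieces of a partition of unity cannot simply be summed into a global constant without a connectivity argument.

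The paper avoids all of this by proving the inequality directly through compactness: it sets $\lambda_1(\O)$ equal to the infimum of the Rayleigh quotient, assumes it vanishes, takes a sequence normalized in $L^2(\O,d\mu)$ with vanishing seminorm, uses the weighted Sobolev inequality of \cite{AR}, the compact embedding of $Y^{s,0}(\O)$ into $L^2(\O)$ and Vitali's theorem to extract a strong $L^2(d\mu)$ limit, and concludes that the limit is a nonzero constant in $Y^{s,\g}_0(\O)$, a contradiction. The ``compactness argument'' you allude to in your last sentence is thus the whole proof, not a finishing touch. Note also that repairing your route via the extension Lemma \ref{ext} would be circular: the paper's own proof of Theorem \ref{hardyGG} invokes Theorem \ref{poincare} precisely to absorb the $L^2(d\mu)$ term produced by the extension.
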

\begin{proof}
{If $\phi\equiv 0$, the inequality follows trivially.} Thus, let
us define
$$
\lambda_1(\Omega):=\inf_{\{\phi\in
\mathcal{C}^\infty_0(\O),{\phi\not\equiv
0}\}}\dfrac{\dyle\int_{\O}
\int_{\O}(\phi(x)-\phi(y))^2d\nu}{\dint_{\O}\phi^2(x)d\mu}.
$$
Hence, to prove the lemma we need to check that $\lambda_1(\Omega)>0$. We argue by
contradiction, that is, let us suppose $\lambda_1(\Omega)=0$. Then we get the
existence of $\{\phi_n\}_{n\in \mathbb{N}}\subset
\mathcal{C}^\infty_0(\O)$ such that
$$
\dint_{\O}\phi^2_n(x)d\mu=1\mbox{  and  }\int_{\O}
\int_{\O}(\phi_n(x)-\phi_n(y))^2d\nu\to 0\mbox{  as   }n\to
\infty.$$
It is clear that $\|\phi_n\|_{Y^{s,\g}(\O)}\le C$, and hence
we reach the existence of $\bar{\phi}\in Y^{s,\g}(\O)$ such that
$\phi_n\rightharpoonup \bar{\phi}$ weakly in $Y^{s,\g}(\O)$.

         {From the Sobolev inequality in \cite{AR} it follows
$$\dyle
\int_{\Omega}\dfrac{|\phi_n|^{2^*_s}}{|x|^{2^*_s\gamma}}dx\le
C(N,s,\O,\gamma)\|\phi_n\|_{Y^{s,\gamma}(\Omega)}\leq \bar{C},$$
with $\bar{C}$ independen of $n$.}
%
%

     Using the fact that $Y^{s,\g}(\O)\subset Y^{s,0}(\O)$, it follows from
\cite{Adams} (see also \cite{DPV}) that $\phi_n\to
\bar{\phi}$ strongly in $L^2(\O)$. Hence, combining the estimates above and using Vitali's Lemma we
obtain that, up to a subsequence,
$$
\phi_n\to \bar{\phi}\mbox{  strongly  in   }L^2(\Omega,d\mu),
$$
and thus,
\begin{equation}\label{int1}
\dint_{\O}{\bar{\phi}}^2(x)d\mu=1.
\end{equation}
Since $\|\bar{\phi}\|_{Y^{s,\g}(\Omega)}\le
\|\phi_n\|_{Y^{s,\g}(\Omega)}$, taking into consideration that
$\int_{\O} \int_{\O}(\phi_n(x)-\phi_n(y))^2d\nu\to 0$ as $n\to
\infty$, we get
$$
\phi_n\to \bar{\phi}\mbox{  strongly  in   } Y^{s,\g}(\O),\mbox{
thus }\int_{\O} \int_{\O}(\bar{\phi}(x)-\bar{\phi}(y))^2d\nu=0.
$$
Hence $\bar{\phi}\equiv C$. Now,   since $\bar{\phi}\in
Y^{s,\g}_0(\Omega)$, necessarily $\bar{\phi}\equiv 0$, a
contradiction with \eqref{int1}.
\end{proof}

As a direct application of Theorem \ref{poincare} we obtain that
if $\Omega$ is a bounded regular domain, then every $w\in
Y^{s,\g}_0(\Omega)$ satisfies
$$
\|\tilde{w}\|_{Y^{s,\g}(\ren)}\le C
\Big(\dint_{\O}\dint_{\O}(w(x)-w(y)|^2d\nu\Big)^{\frac 12},
$$
where $C:=C(N,s,\O,\gamma)>0$ and $\tilde{w}$ is the extension of $w$
given in Lemma \ref{ext}.

     Define now the operator $$ L_{\g,\O} (w)(x):=
  {a_{N,s}}\, P.V. \int_{\O} (w(x)-w(y))K(x,y)dy,\hbox{ where }{K(x,y):=\frac{1}{|x|^{\gamma}|y|^{\gamma}|x-y|^{N+2s}}.}
$$

     In the case $\Omega=\ren$, we have the next result.
\begin{Lemma}\label{hardy001}
If $w(x):=|x|^{-\theta}$, with
$0<\theta<(N-2s-2\g)$, then there exists a positive
constant          {$C:=C(N,s,\g,{\theta})$} such that
$$
L_{\g,\ren} (w)(x)= C\frac{w(x)}{|x|^{2s+2\g}}\:\:\: a.e\: \mbox{
in }         {\R^N}.
$$
\end{Lemma}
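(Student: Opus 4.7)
The plan is to exploit the homogeneity of both $w(x)=|x|^{-\theta}$ and the kernel $K(x,y)$ via the scaling $y=|x|z$. Writing
$$L_{\gamma,\mathbb{R}^N}(w)(x) = a_{N,s}\,\text{P.V.}\int_{\mathbb{R}^N}\frac{|x|^{-\theta}-|y|^{-\theta}}{|x|^{\gamma}|y|^{\gamma}|x-y|^{N+2s}}\,dy,$$
substituting $y=|x|z$ and using $dy=|x|^N dz$, $|y|=|x||z|$, $|x-y|=|x||e_x-z|$ with $e_x:=x/|x|$, yields
$$L_{\gamma,\mathbb{R}^N}(w)(x) = a_{N,s}\,|x|^{-\theta-2\gamma-2s}\,\text{P.V.}\int_{\mathbb{R}^N}\frac{1-|z|^{-\theta}}{|z|^{\gamma}|e_x-z|^{N+2s}}\,dz.$$
By the rotational invariance of the integral over all of $\mathbb{R}^N$, the inner integral is independent of $e_x$; call it $C_0$. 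This already gives the asserted identity $L_{\gamma,\mathbb{R}^N}(w)(x)=\frac{C}{|x|^{2\gamma+2s}}w(x)$ with $C=a_{N,s}C_0$, a constant depending only on $N,s,\gamma,\theta$.

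Next I verify the principal value is well-defined. Near $z=0$ the integrand is of order $|z|^{-\theta-\gamma}$, and the hypothesis $\theta<N-2s-2\gamma$ implies $\theta+\gamma<N-2s-\gamma<N$, giving integrability. Near infinity it is of order $|z|^{-\gamma-N-2s}$, which is integrable since $\gamma+2s>0$. Near $z=e_x$ the factor $1-|z|^{-\theta}$ vanishes to first order, so the apparent singularity of order $|z-e_x|^{-N-2s+1}$ is either integrable (for $s<1/2$) or cancelled by the odd part via the principal value (for $s\ge 1/2$).

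To compute $C$ explicitly and prove positivity, I would invoke the Ground State Representation (Lemma \ref{GSrepresentation}). Taking $u(x)=|x|^{-\theta-\gamma}$ (so that $v:=|x|^{\gamma}u=|x|^{-\theta}$) and justifying the passage to a pointwise identity on $\mathbb{R}^N\setminus\{0\}$ by cutoff/mollification, the associated Euler--Lagrange relation reads
$$|x|^{\gamma}L_{\gamma}(|x|^{-\theta}) = (-\Delta)^{s}(|x|^{-\theta-\gamma}) - \lambda(\alpha)\,\frac{|x|^{-\theta-\gamma}}{|x|^{2s}},$$
with $\lambda(\alpha)=\Lambda_{N,s}+\Phi_{N,s}(\gamma)$. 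Repeating the Fourier computation in the proof of Lemma \ref{singularity} for a general exponent $\sigma\in(0,N-2s)$ gives $(-\Delta)^{s}(|x|^{-\sigma})=c(\sigma)|x|^{-\sigma-2s}$ with
$$c(\sigma):=2^{2s}\,\frac{\Gamma(\tfrac{\sigma+2s}{2})\,\Gamma(\tfrac{N-\sigma}{2})}{\Gamma(\tfrac{\sigma}{2})\,\Gamma(\tfrac{N-\sigma-2s}{2})}.$$
Specialising at $\sigma=\theta+\gamma$ identifies $C=c(\theta+\gamma)-c(\gamma)$.

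Positivity then follows from Lemma \ref{LambdaVsSing}, which encodes both the symmetry $c(\sigma)=c(N-2s-\sigma)$ (coming from $\lambda(\alpha)=\lambda(-\alpha)$) and the strict monotonicity of $c$ on $(0,\tfrac{N-2s}{2})$. The hypothesis $0<\theta<N-2s-2\gamma$ places $\theta+\gamma$ strictly between $\gamma$ and $N-2s-\gamma$, i.e.\ strictly inside the interval whose endpoints both take the value $c(\gamma)$; monotonicity up to $\tfrac{N-2s}{2}$ and back down then forces $c(\theta+\gamma)>c(\gamma)$, hence $C>0$. The main obstacle in this route is the rigorous passage from the variational identity \eqref{GroundState} to a pointwise identity for the non--$L^2$ function $|x|^{-\theta-\gamma}$; this requires a standard but careful truncation and limiting argument. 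A more direct alternative would be to evaluate the rotationally invariant $C_0$ by spherical decomposition and the Funk--Hecke formula, but the resulting special--function manipulations are more cumbersome than the Ground State route sketched above.
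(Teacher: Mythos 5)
Your reduction to a constant is exactly the paper's first step: the paper passes to polar coordinates $x=rx'$, $y=\rho y'$ and sets $\sigma=\rho/r$, which is the radial form of your substitution $y=|x|z$, arriving at $L_{\g,\ren}(w)=\Lambda_{N,s,\g}\,w(x)|x|^{-2s-2\g}$ with $\Lambda_{N,s,\g}=a_{N,s}\int_0^{\infty}(\sigma^{\theta}-1)\sigma^{N-\g-\theta-1}K(\sigma)\,d\sigma$, where $K(\sigma)$ is the angular kernel of \eqref{kkk}. Where you genuinely diverge is in proving finiteness and positivity of the constant. The paper stays entirely inside this one-dimensional integral: using the symmetry $K(1/\sigma)=\sigma^{N+2s}K(\sigma)$ it folds $(0,1)$ onto $(1,\infty)$ and rewrites
$$\Lambda_{N,s,\g}=a_{N,s}\int_1^{\infty}K(\sigma)\,(\sigma^{\theta}-1)\bigl(\sigma^{N-\g-\theta-1}-\sigma^{2s+\g-1}\bigr)\,d\sigma,$$
where the hypothesis $0<\theta<N-2s-2\g$ makes both factors strictly positive, and the double vanishing at $\sigma=1$ tames the $|1-\sigma|^{-1-2s}$ singularity of $K$ for every $s\in(0,1)$ (this replaces your case split $s<\tfrac12$ versus P.V.\ cancellation). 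Your alternative — identifying $C=c(\theta+\g)-c(\g)$ via the ground-state representation, the Fourier formula of Lemma \ref{singularity}, and the monotonicity/symmetry of Lemma \ref{LambdaVsSing} — buys an explicit value of $C$ and a conceptual link to the rest of the paper, but it rests on upgrading the quadratic-form identity \eqref{GroundState} to a pointwise operator identity applied to $|x|^{-\theta-\g}$, which is neither $L^2$ nor of finite energy; you correctly flag this truncation argument as the missing step, and it is precisely the work the paper's elementary symmetrization avoids. As written, your positivity argument is therefore a sketch rather than a proof, while your first part stands on its own.
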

\begin{proof}
In $\ren$, the operator has the form
$$
L_{\g,\ren} (w)(x):= {a_{N,s}} P.V. \int_{\ren}
\dfrac{(w(x)-w(y))}{|x|^{\gamma}|y|^{\gamma}|x-y|^{N+2s}}dy.
$$
As in the proof of Theorem 5.5, we  closely follow the arguments used in \cite{FV}.

By setting $r:=|x|$ and $\rho:=|y|$, then $x=rx'$, and $y=\rho y'$ where
$|x'|=|y'|=1$. Thus,
$$
\begin{array}{rcl}
L_{\g,\ren}(w)(x)&=& \dfrac{a_{N,s}}{|x|^{\g}}
\dint\limits_0^{+\infty}\dfrac{(r^{-{\theta}}-\rho^{-{\theta}})\rho^{N-1}}{\rho^{\gamma}
r^{N+2s}}\left(
\dint\limits_{|y'|=1}\dfrac{dH^{         {N}-1}(y')}{|x'-\dfrac{\rho}{r}
y'|^{N+2s}} \right) \,d\rho.
\end{array}
$$
Set now $\sigma:=\dfrac{\rho}{r}$. Then
$$
L_{\gamma,
\ren}(w)(x)=\dfrac{a_{N,s}\, w(x)}{|x|^{2s+2\gamma}}\dint\limits_0^{+\infty}
(1-\sigma^{-{\theta}})\sigma^{N-\g-1}
\left(\dint\limits_{|y'|=1}\dfrac{dH^{         {N}-1}(y')}{|x'-\sigma
y'|^{N+2s}} \right) \,d\sigma.
$$
Define
$$ K(\sigma):=\dint\limits_{|y'|=1}\dfrac{dH^{         {N}-1}(y')}{|x'-\sigma
y'|^{N+2s}},
$$
then
\begin{equation}\label{kkk}
K(\sigma)=2\frac{\pi^{\frac{N-1}{2}}}{\Gamma(\frac{N-1}{2})}\int_0^\pi
\frac{\sin^{N-2}(\eta)}{(1-2\sigma \cos
(\eta)+\sigma^2)^{\frac{N+2s}{2}}}d\eta.
\end{equation}
Thus
$$
L_{\gamma, \ren}(w)
=\Lambda_{N,s,\gamma}\dfrac{w(x)}{|x|^{2s+2\gamma}},
$$
where
$$
\Lambda_{N,s,\gamma}=a_{N,s}\dint_0^{+\infty}
(\sigma^{\theta}-1)\sigma^{N-\g-\theta-1}
K(\sigma)d\sigma.
$$
As in \cite{FV}, taking into consideration the behavior of $K$
near $\sigma={1}$ and at $+\infty$, we can prove that
$|\Lambda_{N,s,\gamma}|<\infty$. To conclude we just have to show
that $\Lambda_{N,s,\gamma}>0$.

     Since $K(\frac{1}{s})=s^{N+2s}K(s)$ for all $s>0$, we get
\begin{equation*}
\begin{array}{lll}
\Lambda_{N,s,\gamma} &= &\dyle \int_0^{1}
(\sigma^{\theta}-1)\sigma^{N-\g-\theta-1} K(\sigma)d\sigma +
\int_1^{\infty} (\sigma^{\theta}-1)\sigma^{N-\g-\theta-1}
K(\sigma)d\sigma\\
&&\\
&= &-\dyle \int_{1}^\infty (\xi^{\theta}-1)\xi^{2s+\g-1}
K(\xi)d\xi+\int_1^{\infty}
(\sigma^{\theta}-1)\sigma^{N-\g-\theta-1}
K(\sigma)d\sigma\\
&&\\
&=& \dyle\int_1^{\infty}K(\sigma)
(\sigma^{\theta}-1)(\sigma^{N-\gamma-\theta-1}-\sigma^{2s+\g-1})d\sigma.
\end{array}
\end{equation*}
Since $0<\theta<N-2s-2\gamma$, then the results follows.

\end{proof}

Next we formulate an extension of a well-known Picone identity, that in the case of regular functions and the Laplacian operator
was obtained by Picone in \cite{Pi} (see  \cite {AP}  for an integral extension related to positive Radon measures).

\begin{Theorem}{\it (Picone's Type Inequality).}\label{Picone}
Consider $u, v\in H_0^s(\Omega)$, where  $(-\Delta)^s u=\tilde{\nu}$ is a bounded Radon measure in $\Omega$, and $u\gneq 0$. Then,
\begin{equation}\label{81}
\dint_{\O} \dfrac{(-\Delta)^s u}{u} v^2\,dx\leq \frac{a_{N,s}}{2}\|v\|_{H_0^s(\O)}^2.
\end{equation}
\end{Theorem}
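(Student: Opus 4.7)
The plan is to combine a pointwise algebraic inequality with the bilinear form associated to $(-\Delta)^s$, using $v^2/u$ (suitably regularized) as a test function.

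\textbf{Step 1: Pointwise inequality.} First I would establish the elementary inequality
\begin{equation*}
(a-b)\left(\frac{c^2}{a}-\frac{d^2}{b}\right)\le (c-d)^2 \qquad \text{for all } a,b>0,\ c,d\ge 0.
\end{equation*}
Expanding the left hand side, this reduces to $\frac{bc^2}{a}+\frac{ad^2}{b}\ge 2cd$, which is just AM--GM. Equality holds if and only if $\frac{c}{a}=\frac{d}{b}$.

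\textbf{Step 2: The regular case.} I would first prove the inequality under the additional assumptions that $u\ge \delta>0$ in $\Omega$ and $u\in L^\infty$. In that situation $v^2/u$ is an admissible test function in $H^s_0(\Omega)$, and the duality identity preceding \eqref{equivalencia} yields
\begin{equation*}
\int_\Omega \frac{(-\Delta)^s u}{u}\,v^2\,dx = \frac{a_{N,s}}{2}\int_Q \frac{(u(x)-u(y))\bigl(v^2(x)/u(x)-v^2(y)/u(y)\bigr)}{|x-y|^{N+2s}}\,dx\,dy.
\end{equation*}
Applying the pointwise inequality of Step 1 with $a=u(x)$, $b=u(y)$, $c=v(x)$, $d=v(y)$ inside the integrand gives
\begin{equation*}
\int_\Omega \frac{(-\Delta)^s u}{u}\,v^2\,dx \le \frac{a_{N,s}}{2}\int_Q \frac{(v(x)-v(y))^2}{|x-y|^{N+2s}}\,dx\,dy = \frac{a_{N,s}}{2}\|v\|_{H_0^s(\Omega)}^2,
\end{equation*}
which is exactly \eqref{81}.

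\textbf{Step 3: Removing the regularity assumptions.} For a general $u\gneq 0$ in $H_0^s(\Omega)$ with $(-\Delta)^s u=\tilde\nu$ a bounded Radon measure, I would replace $u$ by $u_\varepsilon:=u+\varepsilon$ (so that $v^2/u_\varepsilon\in H_0^s(\Omega)\cap L^\infty(\Omega)$ when $v\in H_0^s(\Omega)$) and apply the bilinear identity to $u$ tested against $v^2/u_\varepsilon$:
\begin{equation*}
\int_\Omega \frac{v^2}{u_\varepsilon}\,d\tilde\nu = \frac{a_{N,s}}{2}\int_Q \frac{(u(x)-u(y))\bigl(v^2(x)/u_\varepsilon(x)-v^2(y)/u_\varepsilon(y)\bigr)}{|x-y|^{N+2s}}\,dx\,dy.
\end{equation*}
The pointwise inequality of Step 1 still applies with $a=u_\varepsilon(x),\,b=u_\varepsilon(y),\,c=v(x)\sqrt{u(x)/u_\varepsilon(x)},\,d=v(y)\sqrt{u(y)/u_\varepsilon(y)}$, and since $u(x)/u_\varepsilon(x)\le 1$ we obtain the upper bound $\frac{a_{N,s}}{2}\|v\|_{H^s_0(\Omega)}^2$ uniformly in $\varepsilon$. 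On the left hand side, by the strong maximum principle (Theorem~\ref{SMP}) $u>0$ a.e., so $v^2/u_\varepsilon\uparrow v^2/u$ monotonically, and Fatou's lemma (applied to the positive and negative parts of $\tilde\nu$, using boundedness of the measure) allows passage to the limit $\varepsilon\to 0^+$, yielding \eqref{81}.

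\textbf{Main obstacle.} The genuine difficulty lies in Step 3: $\tilde\nu$ is only a signed measure, $v^2/u$ need not belong to $H_0^s(\Omega)$, and $u$ may vanish on a nonnegligible set for the measure $\tilde\nu$. Handling this properly requires combining the $\varepsilon$-regularization with possibly a simultaneous truncation of $v$ (say $T_k(v)$), checking that the pointwise bound survives the regularization uniformly, and then exploiting the positivity of $u$ given by the strong maximum principle to identify the limit on the left hand side. The algebraic pointwise step itself is easy; the delicate work is making sure no mass of $\tilde\nu$ is lost in the limiting procedure.
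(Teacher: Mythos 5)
Your proposal follows exactly the route the paper has in mind: the paper does not reproduce a proof but refers to \cite{LPPS} and explicitly notes that ``the proof relies in a pointwise inequality,'' which is precisely your Step 1 combined with testing against $v^2/u$ (regularized). One small repair in Step 3: with $c=v(x)\sqrt{u(x)/u_\varepsilon(x)}$ and $d=v(y)\sqrt{u(y)/u_\varepsilon(y)}$ the quantity $c^2/a$ equals $v^2(x)u(x)/u_\varepsilon^2(x)$, which is not the integrand in your display, and $(c-d)^2$ need not be bounded by $(v(x)-v(y))^2$; the correct and simpler choice is $a=u_\varepsilon(x)$, $b=u_\varepsilon(y)$, $c=v(x)$, $d=v(y)$, using that $u(x)-u(y)=u_\varepsilon(x)-u_\varepsilon(y)$. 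With that adjustment the argument is sound and coincides with the standard proof the paper cites.
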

     See \cite{LPPS} for a proof. It is worthy to point out  that the proof relies in a pointwise inequality.
Therefore, we can reformulate the Picone inequality as follows.
\begin{Corollary}\label{pic}
Let $w\in          {Y^{s,\g}(\O)}$ be such that $w>0$ in $\O$. Assume that
$L_{\g, \O}(w)= \tilde{\nu}$ with $\tilde{\nu}\in L^1_{loc}(\ren)$ and $\tilde{\nu}\gneqq
0$, then for all $u\in \mathcal{C}^\infty_0(\O)$, we have
\begin{equation}\label{picone1}
\frac {  {a_{N,s}}}{2}
\dint_{\O}\dint_{\O}\dfrac{|u(x)-u(y)|^{2}}{|x-y|^{N+2s}}\dfrac{dxdy}{|x|^\g|y|^\g}\ge
\langle L_{\g,\O} (w),\frac{u^2}{w}\rangle_{Y_{0}^{s,\gamma}(\Omega)}.
\end{equation}
\end{Corollary}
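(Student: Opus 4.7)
The strategy is to imitate the strategy used for Theorem \ref{Picone}, which, as noted by the authors, is driven by a pointwise algebraic inequality. Concretely, for any $a,b>0$ and $c,d\in\mathbb R$ one has
$$
(c-d)^2 \;\ge\; (a-b)\Bigl(\tfrac{c^2}{a}-\tfrac{d^2}{b}\Bigr),
$$
as can be checked by expanding and using $2|cd|\le (c^2 b/a)+(d^2 a/b)$. Specializing $a:=w(x),\ b:=w(y),\ c:=u(x),\ d:=u(y)$ and dividing by $|x|^{\gamma}|y|^{\gamma}|x-y|^{N+2s}$, we obtain the pointwise estimate
$$
|u(x)-u(y)|^{2}\,K(x,y)\;\ge\;(w(x)-w(y))\Bigl(\tfrac{u^{2}(x)}{w(x)}-\tfrac{u^{2}(y)}{w(y)}\Bigr)K(x,y).
$$

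The plan is then to integrate this inequality on $\Omega\times\Omega$ and symmetrize. Multiplying by $a_{N,s}/2$, the left-hand side is exactly the quantity appearing in the corollary. For the right-hand side, a change of variables $x\leftrightarrow y$ (together with the symmetry $K(x,y)=K(y,x)$) shows that
$$
\tfrac{a_{N,s}}{2}\!\int_{\Omega}\!\int_{\Omega}\!(w(x)-w(y))\Bigl(\tfrac{u^{2}(x)}{w(x)}-\tfrac{u^{2}(y)}{w(y)}\Bigr)d\nu = a_{N,s}\!\int_{\Omega}\!\tfrac{u^{2}(x)}{w(x)}\!\int_{\Omega}\!(w(x)-w(y))K(x,y)\,dy\,dx,
$$
and the inner integral is by definition $L_{\gamma,\Omega}(w)(x)=\tilde\nu(x)$. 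Thus the right-hand side equals $\int_{\Omega} \tilde\nu\,\tfrac{u^2}{w}\,dx$, which is precisely $\langle L_{\gamma,\Omega}(w),u^{2}/w\rangle$ in the weak sense prescribed by the hypothesis $\tilde\nu\in L^{1}_{\text{loc}}$. This chain of identities yields the claimed inequality.

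The delicate point, and the step I would expect to require the most care, is the admissibility of $u^{2}/w$ as a test function for $L_{\gamma,\Omega}(w)$, so that both the symmetrization and the identification with the duality bracket are rigorous. Since $u\in\mathcal C^{\infty}_{0}(\Omega)$ we have $\operatorname{supp}(u^{2}/w)\Subset\Omega$ and $u^{2}/w\ge 0$; moreover if $w$ blows up (e.g.\ like a power of $|x|^{-1}$ near the origin, as in the relevant application $w(x)=|x|^{-\theta}$ of Lemma \ref{hardy001}) then $u^{2}/w$ is even bounded and vanishes at the singular set, so that $\int_\Omega \tilde\nu\,(u^{2}/w)\,dx$ is a well-defined finite integral thanks to $\tilde\nu\gneqq 0$ and $\tilde\nu\in L^{1}_{\text{loc}}$. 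One must also ensure that Fubini applies to justify the symmetrization; this follows because the pointwise inequality gives an integrand bounded above by $|u(x)-u(y)|^{2}K(x,y)$, which is integrable by the Poincar\'e-type inequality of Theorem \ref{poincare} applied to $u\in\mathcal C^{\infty}_{0}(\Omega)\subset Y^{s,\gamma}_{0}(\Omega)$. A standard approximation of $w$ by $w_{\varepsilon}:=w+\varepsilon$ and of $u^{2}/w$ by $u^{2}/w_{\varepsilon}$, together with the monotone/dominated convergence theorem, removes any ambiguity in the case where $w$ vanishes or blows up inside $\operatorname{supp}(u)$, and concludes the argument.
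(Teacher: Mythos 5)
Your proposal is correct and follows exactly the route the paper intends: the paper gives no separate proof of Corollary \ref{pic}, stating only that the Picone inequality of Theorem \ref{Picone} ``relies in a pointwise inequality'' and can therefore be reformulated for the weighted kernel, which is precisely the pointwise estimate $(c-d)^2\ge (a-b)(c^2/a-d^2/b)$ you integrate against $d\nu$ and symmetrize. Your added care about the admissibility of $u^2/w$ and the $w+\varepsilon$ regularization supplies details the paper omits; the only caveat is that the one-sided bound by $|u(x)-u(y)|^2K(x,y)$ does not by itself license Fubini for a sign-changing integrand, but the $\varepsilon$-approximation you invoke handles this in the standard way.
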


As a consequence we get the next Hardy type inequality.
\begin{Theorem}\label{generalh}
There exists a positive constant $C(N,s  {,\gamma})$
such that for all $\phi\in \mathcal{C}^\infty_0(\ren)$, we have
$$ C\dint_{\ren}\frac{\phi^2(x)}{|x|^{2s+2\g}}dx\le \int_{\ren}
\int_{\ren}(\phi(x)-\phi(y))^2d\nu.$$
\end{Theorem}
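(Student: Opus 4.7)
The plan is to combine the Picone--type inequality of Corollary \ref{pic} with the explicit computation of $L_{\gamma,\R^N}$ on a power function provided by Lemma \ref{hardy001}. First, I would fix a parameter $\theta$ in the admissible range of Lemma \ref{hardy001}, for instance $\theta:=\frac{N-2s-2\gamma}{2}\in(0,N-2s-2\gamma)$, and set $w(x):=|x|^{-\theta}$. By Lemma \ref{hardy001} one then has, pointwise a.e. in $\R^N\setminus\{0\}$,
$$
L_{\gamma,\R^N}(w)(x)=\Lambda\,\frac{w(x)}{|x|^{2s+2\gamma}},
$$
for some explicit positive constant $\Lambda=\Lambda(N,s,\gamma,\theta)>0$.

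The second step is to apply the Picone--type inequality in the form of Corollary \ref{pic}, with $\Omega=\R^N$, the above $w$, and an arbitrary $\phi\in\mathcal{C}^\infty_0(\R^N)$. This gives
$$
\frac{a_{N,s}}{2}\iint_{\R^N\times\R^N}\frac{|\phi(x)-\phi(y)|^2}{|x-y|^{N+2s}}\,\frac{dx\,dy}{|x|^\gamma|y|^\gamma}\;\ge\;\int_{\R^N}L_{\gamma,\R^N}(w)\,\frac{\phi^2}{w}\,dx\;=\;\Lambda\int_{\R^N}\frac{\phi^2(x)}{|x|^{2s+2\gamma}}\,dx,
$$
which is precisely the claimed inequality with $C=\frac{2\Lambda}{a_{N,s}}$.

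The one technical point that needs care is that Corollary \ref{pic} is stated for $w\in Y^{s,\gamma}(\Omega)$, while our choice $w=|x|^{-\theta}$ is neither in $Y^{s,\gamma}(\R^N)$ nor bounded away from $0$ and $\infty$. I would handle this by going back to the pointwise algebraic inequality underlying Picone,
$$
(a-b)\left(\frac{u^2}{a}-\frac{v^2}{b}\right)\le (u-v)^2\qquad\text{for all }a,b>0,\ u,v\in\R,
$$
applied with $a=w(x)$, $b=w(y)$, $u=\phi(x)$, $v=\phi(y)$. Multiplying by the symmetric positive kernel $K(x,y)=|x|^{-\gamma}|y|^{-\gamma}|x-y|^{-N-2s}$ and integrating over $\R^N\times\R^N$, the right--hand side is finite because $\phi$ has compact support, and by Fubini together with the identity $\int(w(x)-w(y))K(x,y)\,dy=a_{N,s}^{-1}L_{\gamma,\R^N}(w)(x)$ (valid in the principal-value sense), the left--hand side collapses to $2a_{N,s}^{-1}\int L_{\gamma,\R^N}(w)\,\phi^2/w\,dx$. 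If needed, this step can be rigorously justified by truncating $w$ to $w_\varepsilon(x):=(|x|\vee\varepsilon)^{-\theta}\wedge\varepsilon^{-1}$, checking dominated/monotone convergence of both sides, and letting $\varepsilon\to 0^+$ using the explicit polynomial behaviour of $L_{\gamma,\R^N}(w)$ near $0$ and at infinity provided by Lemma \ref{hardy001}.

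The main obstacle is therefore not the algebra but this last regularization: one must ensure that all integrals involved are absolutely convergent at $x=0$ and at infinity when passing to the limit. Since $\phi$ has compact support, the delicate region is a fixed neighbourhood of the origin, where $L_{\gamma,\R^N}(w_\varepsilon)\phi^2/w_\varepsilon$ is controlled by $|x|^{-2s-2\gamma}\phi^2(x)$, which is locally integrable once $2s+2\gamma<N$ (ensured by the constraint $0<\theta<N-2s-2\gamma$). Once this is checked, the proof is complete and yields the explicit constant $C=2a_{N,s}^{-1}\Lambda_{N,s,\gamma,\theta}>0$.
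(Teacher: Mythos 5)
Your proposal follows exactly the paper's own route: choose $w(x)=|x|^{-\theta}$ in the admissible range of Lemma \ref{hardy001} and feed the resulting identity $L_{\gamma,\ren}(w)=C\,w(x)\,|x|^{-2s-2\gamma}$ into the Picone inequality of Corollary \ref{pic}. The only differences are that you make explicit the regularization needed because $w\notin Y^{s,\gamma}(\ren)$ (a point the paper leaves implicit) and that you take the endpoint $\theta=\tfrac{N-2s-2\gamma}{2}$ where the paper takes $\theta<\tfrac{N-2s-2\gamma}{2}$; since Lemma \ref{hardy001} only requires $0<\theta<N-2s-2\gamma$ and its constant remains strictly positive there, your argument goes through.
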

\begin{proof}
Let $\phi\in \mathcal{C}^\infty_0(\ren)$ and define
$w(x):=|x|^{-\theta}$, with $0<\theta<\frac{N-2s-2\g}{2}$. Then, by Lemma \ref{hardy001},
$$
L_{\g,\ren} (w)(x)= C\frac{w(x)}{|x|^{2s+2\g}}\:\:\: a.e\:
\mbox{ in }\ren.
$$
Using \eqref{picone1} it follows that
$$
\dint_{\ren}\dint_{\ren}\dfrac{|\phi(x)-\phi(y)|^{2}}{|x-y|^{N+2s}}\dfrac{dxdy}{|x|^\g|y|^\g}\ge
\langle L_{\g,\ren} (w),\frac{\phi^2}{w}\rangle_{Y_{0}^{s,\gamma}(\Omega)}=
C(N,s,\gamma)\dint_{\ren}\frac{\phi^2(x)}{|x|^{2s+2\g}}dx.$$ Hence
we conclude.
\end{proof}

In the case where $\Omega$ is a bounded domain, we have:

\begin{Theorem}\label{hardyGG}
There exists a positive constant $C(\O,N,s,\gamma)$  such that for
all $\phi\in \mathcal{C}^\infty_0(\O)$, we have
$$ C\dint_{\O}\frac{\phi^2(x)}{|x|^{2s+2\g}}dx\le \int_{\O}
\int_{\O}(\phi(x)-\phi(y))^2d\nu.$$
\end{Theorem}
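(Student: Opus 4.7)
The plan is to bootstrap the global Hardy inequality of Theorem \ref{generalh} to the bounded domain $\O$ by composing the extension operator of Lemma \ref{ext} with the Poincar\'e inequality of Theorem \ref{poincare}.

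Given $\phi\in\mathcal{C}^{\infty}_{0}(\O)\subset Y^{s,\g}(\O)$, I first apply Lemma \ref{ext} to obtain an extension $\tilde\phi\in Y^{s,\g}(\ren)$ with $\tilde\phi|_\O=\phi$ and $\|\tilde\phi\|_{Y^{s,\g}(\ren)}\le C_{1}\|\phi\|_{Y^{s,\g}(\O)}$, for some $C_{1}=C_{1}(\O,N,s,\g)$. I then apply Theorem \ref{generalh}, which I must first extend by density from $\mathcal{C}^{\infty}_{0}(\ren)$ to all of $Y^{s,\g}(\ren)$. This extension is done by approximating $\tilde\phi$ by a sequence $\{\tilde\phi_{n}\}\subset\mathcal{C}^{\infty}_{0}(\ren)$ convergent in the $Y^{s,\g}(\ren)$ norm (and pointwise a.e.\ along a subsequence): the seminorm $\iint(\tilde\phi_n(x)-\tilde\phi_n(y))^{2}d\nu$ passes to the limit by continuity, while the weighted $L^{2}$ integral $\int\tilde\phi_n^{2}\,|x|^{-2s-2\g}dx$ passes by Fatou's lemma. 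Applying the limiting inequality to $\tilde\phi$ and using $\tilde\phi\equiv\phi$ on $\O$ yields
$$
C_{2}\int_{\O}\frac{\phi^{2}(x)}{|x|^{2s+2\g}}\,dx\le\int_{\ren}\int_{\ren}(\tilde\phi(x)-\tilde\phi(y))^{2}d\nu\le\|\tilde\phi\|_{Y^{s,\g}(\ren)}^{2}\le C_{1}^{2}\|\phi\|_{Y^{s,\g}(\O)}^{2}.
$$

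It remains to absorb the $L^{2}(\O,d\mu)$ piece of $\|\phi\|_{Y^{s,\g}(\O)}^{2}$. By Theorem \ref{poincare}, there is $C_{3}=C_{3}(\O,N,s,\g)$ such that $\int_{\O}\phi^{2}d\mu\le C_{3}\iint_{\O\times\O}(\phi(x)-\phi(y))^{2}d\nu$, hence $\|\phi\|_{Y^{s,\g}(\O)}^{2}\le (C_{3}+1)\iint_{\O\times\O}(\phi(x)-\phi(y))^{2}d\nu$. Chaining the two estimates gives the desired inequality with $C=C_{2}/(C_{1}^{2}(C_{3}+1))$.

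The main (mild) obstacle is the density extension of Theorem \ref{generalh}: the weight $|x|^{-2s-2\g}$ is singular, so some care is needed to ensure the limiting argument is not vacuous. The Fatou step circumvents this precisely: although a priori $\tilde\phi$ need not lie in $L^{2}(|x|^{-2s-2\g}dx)$, each smooth $\tilde\phi_n$ does by Theorem \ref{generalh}, and the bound is uniform in $n$ (because it is controlled by the $Y^{s,\g}(\ren)$ norm), so Fatou forces the same integrability and inequality for the limit $\tilde\phi$. Beyond this, the argument is bookkeeping of constants.
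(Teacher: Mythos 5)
Your proof is correct and follows essentially the same route as the paper: extend $\phi$ to $\tilde\phi\in Y^{s,\g}(\ren)$ via Lemma \ref{ext}, apply the global weighted Hardy inequality of Theorem \ref{generalh} to $\tilde\phi$, and absorb the $L^{2}(\O,d\mu)$ part of $\|\phi\|_{Y^{s,\g}(\O)}$ using the Poincar\'e inequality of Theorem \ref{poincare}. Your explicit density/Fatou justification for applying Theorem \ref{generalh} to the (non-smooth) extension $\tilde\phi$ is a detail the paper leaves implicit, but it does not change the argument.
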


\begin{proof}
Let $\phi\in \mathcal{C}^\infty_0(\Omega)$ and define
$\tilde{\phi}$ to be the extension of $\phi$ to $\ren$ given in
Lemma \ref{ext}. Then from Theorem \ref{generalh}, we get
$$
\dint_{\ren}\dint_{\ren}\dfrac{|\tilde{\phi}(x)-\tilde{\phi}(y)|^{2}}{|x-y|^{N+2s}}\dfrac{dxdy}{|x|^\g|y|^\g}\ge
C(N,s,\gamma)\dint_{\ren}\frac{\tilde{\phi}^2(x)}{|x|^{2s+2\g}}dx.$$
Now, using the fact that $\tilde{\phi}_{|\Omega}=\phi$ and combining
the results of Lemma \ref{ext} and Theorem \ref{poincare}, we
reach the desired result.
\end{proof}

In the case of nonzero boundary conditions, we obtain the following version of the Hardy inequality.
\begin{Theorem}\label{general000}
There exists a positive constant $C(\O,N,s,\gamma)$ such that for all
$\phi\in Y^{s,\g}(\O)$, we have
$$ C\dint_{\Omega}\frac{\phi^2(x)}{|x|^{2s+2\g}}dx\le \int_{\O}
\int_{\Omega}(\phi(x)-\phi(y))^2d\nu+ \dint_{\Omega}\phi^2(x)d\mu.
$$
\end{Theorem}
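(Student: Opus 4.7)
The plan is to deduce this weighted Hardy inequality with nonzero boundary data from the whole-space version (Theorem \ref{generalh}) by way of the extension result of Lemma \ref{ext}. The two previous Hardy-type theorems in the appendix establish the inequality for compactly supported test functions in $\mathbb{R}^N$ and in $\Omega$ respectively; here the task is to pay for the loss of the compact support hypothesis by including the weighted $L^2$ term $\int_\Omega \phi^2\,d\mu$ on the right-hand side. The weighted $L^2$ term is exactly what shows up in $\|\phi\|_{Y^{s,\gamma}(\Omega)}$ but not in $|||\phi|||_{Y^{s,\gamma}_0(\Omega)}$, so it is natural to use the full $Y^{s,\gamma}$-norm on the right.

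Concretely, given $\phi\in Y^{s,\gamma}(\Omega)$, I would invoke Lemma \ref{ext} to produce an extension $\tilde\phi\in Y^{s,\gamma}(\mathbb{R}^N)$ with $\tilde\phi|_{\Omega}=\phi$ and
\[
\|\tilde\phi\|_{Y^{s,\gamma}(\mathbb{R}^N)}^2 \;\le\; C(N,s,\Omega,\gamma)\,\|\phi\|_{Y^{s,\gamma}(\Omega)}^2
\;=\;C\left(\int_\Omega \phi^2\,d\mu + \int_\Omega\!\!\int_\Omega (\phi(x)-\phi(y))^2 d\nu\right).
\]
Next I would apply Theorem \ref{generalh} to $\tilde\phi$ (this requires extending that statement from $\mathcal{C}^\infty_0(\mathbb{R}^N)$ to all of $Y^{s,\gamma}(\mathbb{R}^N)$ by a standard density/approximation argument, since both sides are continuous with respect to the $Y^{s,\gamma}(\mathbb{R}^N)$-norm). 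This yields
\[
\int_\Omega \frac{\phi^2(x)}{|x|^{2s+2\gamma}}\,dx \;\le\; \int_{\mathbb{R}^N}\frac{\tilde\phi^2(x)}{|x|^{2s+2\gamma}}\,dx \;\le\; \frac{1}{C(N,s,\gamma)}\int_{\mathbb{R}^N}\!\!\int_{\mathbb{R}^N}(\tilde\phi(x)-\tilde\phi(y))^2\,d\nu,
\]
and the last quantity is dominated by $\|\tilde\phi\|_{Y^{s,\gamma}(\mathbb{R}^N)}^2$, which by the extension bound is in turn dominated by $\|\phi\|_{Y^{s,\gamma}(\Omega)}^2$. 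Chaining these estimates gives the stated inequality with constant $C(\Omega,N,s,\gamma)>0$.

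The main technical point is the density step: I need $\mathcal{C}^\infty_0(\mathbb{R}^N)$ (or at least some class of nice functions) to be dense in $Y^{s,\gamma}(\mathbb{R}^N)$ so as to transfer Theorem \ref{generalh} to $\tilde\phi$. Standard mollification combined with a cutoff away from the origin works, provided one checks that the singular weights $|x|^{-\gamma}$ in $d\nu$ and $|x|^{-2\gamma}$ in $d\mu$ do not obstruct the approximation; this is fine for $0<\gamma<(N-2s)/2$, which is our range. Everything else in the argument is bookkeeping on the norms, with no cancellation or sharp constant tracking required since only existence of the positive constant $C$ is claimed.
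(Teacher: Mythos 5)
Your proof is correct and follows essentially the same route as the paper: extend $\phi$ to $\tilde\phi\in Y^{s,\gamma}(\ren)$ via Lemma \ref{ext}, apply the whole-space Hardy inequality of Theorem \ref{generalh} to $\tilde\phi$, and then control $\|\tilde\phi\|_{Y^{s,\gamma}(\ren)}^2$ by $\|\phi\|_{Y^{s,\gamma}(\Omega)}^2$. The density step you flag (passing from $\mathcal{C}^\infty_0(\ren)$ to all of $Y^{s,\gamma}(\ren)$ in Theorem \ref{generalh}) is left implicit in the paper, so your version is if anything slightly more careful.
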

\begin{proof}
Fix $\phi\in Y^{s,\g}(\O)$ and define $\tilde{\phi}$ as the
extension of $\phi$ to $\ren$ given in Lemma \ref{ext}. Then from
Theorem \ref{generalh}, we get
$$
\dint_{\ren}\dint_{\ren}\dfrac{|\tilde{\phi}(x)-\tilde{\phi}(y)|^{2}}{|x-y|^{N+2s}}\dfrac{dxdy}{|x|^\g|y|^\g}\ge
C(N,s,\gamma)\dint_{\ren}\frac{\tilde{\phi}^2(x)}{|x|^{2s+2\g}}dx\ge
C(N,s,\gamma)\dint_{\Omega}\frac{\phi^2(x)}{|x|^{2s+2\g}}dx.
$$ Since $\|\tilde{\phi}\|_{Y^{s,\g}(\ren)}\le C(\O)
\|\phi\|_{Y^{s,\g}(\Omega)}$, then the result follows.
\end{proof}

Notice now that when $\phi\in \mathcal{C}^\infty_0(\O)$, the
following Sobolev inequality holds (see \cite{AR}),
\begin{equation}\label{Sob}
\Big(\dint\limits_{\O}
\dfrac{|\phi(x)|^{2^*_{s}}}{|x|^{\g
2^*_s}}\,dx\Big)^{\frac{2}{2^*_{s}}}\le
C(\O,N,s,\g)\dint_{\O}\dint_{\O}(\phi(x)-\phi(y))^2d\nu.
\end{equation}
Moreover, in the particular case $\phi\in Y^{s,\g}(B_R)$, as an application of Theorem \ref{general000} we can prove the following improved inequality.
\begin{Theorem}\label{sobolev}
Let {$R>0$} and $\phi\in Y^{s,\g}(B_R)$. Then, there exists
$C:=C(N,s,R,\gamma)>0$ such that
\begin{equation}\label{sobol}
C\dyle \Big(\int_{B_R}
\dfrac{|\phi|^{2^*_s}}{|x|^{2^*_s\g}}dx\Big)^{\frac{2}{2^*_s}}\le
\int_{B_R} \int_{B_R}(\phi(x)-\phi(y))^2d\nu+ R^{-2s}
\int_{B_R}\phi^2d\mu.
\end{equation}
\end{Theorem}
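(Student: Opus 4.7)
A direct change of variables $x=Rz$ shows that the inequality is scale-invariant: both sides scale like $R^{N-2s-2\gamma}$, and the factor $R^{-2s}$ is exactly what matches the scalings of the two right-hand side terms. Thus it suffices to prove the statement for $R=1$. The plan is to reduce to the homogeneous inequality \eqref{Sob} (valid for compactly supported functions) by extending $\phi$ to the whole space and multiplying by a cutoff, and then to absorb the commutator error into the $L^2(d\mu)$ term on the right.

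By Lemma \ref{ext}, extend $\phi$ to $\tilde\phi\in Y^{s,\gamma}(\mathbb{R}^N)$ with $\|\tilde\phi\|_{Y^{s,\gamma}(\mathbb{R}^N)}\le C\|\phi\|_{Y^{s,\gamma}(B_1)}$, and fix $\eta\in \mathcal{C}_0^\infty(B_2)$ with $0\le\eta\le 1$, $\eta\equiv 1$ on $B_1$, and $|\nabla\eta|\le C$. By approximation, $\eta\tilde\phi\in Y^{s,\gamma}_0(B_2)$, so \eqref{Sob} applies and gives
\begin{equation*}
\left(\int_{B_1}\frac{|\phi|^{2^*_s}}{|x|^{2^*_s\gamma}}\,dx\right)^{2/2^*_s}\le \left(\int_{B_2}\frac{|\eta\tilde\phi|^{2^*_s}}{|x|^{2^*_s\gamma}}\,dx\right)^{2/2^*_s}\le C\int_{B_2}\int_{B_2}(\eta(x)\tilde\phi(x)-\eta(y)\tilde\phi(y))^2\,d\nu,
\end{equation*}
where the first step uses $\eta\equiv 1$ and $\tilde\phi=\phi$ on $B_1$. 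The elementary bound $(ab-cd)^2\le 2a^2(b-d)^2+2d^2(a-c)^2$ together with $\eta\le 1$ splits the right-hand side into a main term $I:=2\int_{B_2}\int_{B_2}(\tilde\phi(x)-\tilde\phi(y))^2\,d\nu$ and a commutator term $II:=2\int_{B_2}|y|^{-\gamma}\tilde\phi(y)^2 J(y)\,dy$, where $J(y):=\int_{B_2}(\eta(x)-\eta(y))^2|x|^{-\gamma}|x-y|^{-N-2s}\,dx$. Term $I$ is bounded by $\|\tilde\phi\|^2_{Y^{s,\gamma}(\mathbb{R}^N)}\le C\|\phi\|^2_{Y^{s,\gamma}(B_1)}$ by Lemma \ref{ext}.

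The main obstacle is term $II$: one needs $|y|^\gamma J(y)\le C$ uniformly on $B_2$ so that, using $|y|^\gamma\le 2^\gamma$, one concludes $II\le C\int_{B_2}\tilde\phi(y)^2|y|^{-2\gamma}\,dy\le C\|\phi\|^2_{Y^{s,\gamma}(B_1)}$. To estimate $J(y)$, split the $x$-integral into $\{|x-y|\le 1\}$ and $\{|x-y|>1\}$. The far piece is trivially bounded by $(\eta(x)-\eta(y))^2\le 4$ and integrability of $|x|^{-\gamma}$ on $B_2$. For the near piece, using $(\eta(x)-\eta(y))^2\le C|x-y|^2$ the integrand becomes $|x|^{-\gamma}|x-y|^{-(N+2s-2)}$, which is handled by the standard three-region decomposition $\{|x|\le|y|/2\}$, $\{|y|/2<|x|<2|y|\}$, $\{|x|\ge 2|y|\}$ together with $N+2s-2<N$; the worst contribution comes from the middle annulus and yields $J(y)\le C(1+|y|^{-\gamma})$. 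Combining $I$ and $II$ proves the case $R=1$, and scaling recovers the general statement.
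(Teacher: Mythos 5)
Your proof is correct, but it follows a genuinely different route from the paper's. The paper does not localize a homogeneous inequality by extension and cutoff: instead it sets $\phi_1(x):=\phi(x)/|x|^{\g}$, applies the \emph{unweighted} fractional Sobolev inequality on $B_1$ (from \cite{Adams}, which already carries the zero--order term $\int_{B_1}\phi_1^2\,dx$ on the right, so no extension is needed at this stage), and then converts the Gagliardo seminorm of $\phi_1$ back into the weighted seminorm of $\phi$ via the exact identity $(\phi_1(x)-\phi_1(y))^2=\frac{(\phi(x)-\phi(y))^2}{|x|^{\g}|y|^{\g}}+\big(\frac{\phi^2(x)}{|x|^{\g}}-\frac{\phi^2(y)}{|y|^{\g}}\big)\big(\frac{1}{|x|^{\g}}-\frac{1}{|y|^{\g}}\big)$; the resulting error term $\int_{B_1}L_{0,B_1}(|x|^{-\g})\,\phi^2|x|^{-\g}\,dx$ is controlled by the radial computation $L_{0,B_1}(|x|^{-\g})\le C|x|^{-\g-2s}$ (as in Lemma \ref{hardy001}) followed by the weighted Hardy inequality of Theorem \ref{general000}. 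Your argument instead keeps the weight throughout, invokes the homogeneous weighted inequality \eqref{Sob} from \cite{AR} for the truncated extension $\eta\tilde\phi$, and absorbs the cutoff commutator into the $L^2(d\mu)$ term; this is the more standard localization scheme and avoids the Hardy inequality, at the price of Lemma \ref{ext} and the kernel estimate for $J$. Both routes are legitimate and rest on comparable external inputs. One detail you gloss over: in the outer region $\{|x|\ge 2|y|\}$ of your decomposition the naive bound $\int_{B_2}|x|^{2-N-2s-\g}\,dx$ diverges when $\g\ge 2-2s$, so the "worst case" is not only the middle annulus; you must use that there $|x|\gtrsim|x-y|$ and that the admissible radii satisfy $|x-y|\ge|y|$, which caps the radial integral and yields a contribution $\le C|y|^{2-2s-\g}\le C|y|^{-\g}$ on $B_2$ --- so your stated conclusion $J(y)\le C(1+|y|^{-\g})$ does hold, but not quite for the reason given. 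A second, minor point is the density claim that $\eta\tilde\phi\in Y^{s,\g}_0(B_2)$, which is needed to pass \eqref{Sob} from $\mathcal{C}^\infty_0$ to your function; this is standard by mollification but deserves a word.
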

\begin{pf}
We prove the result for $R=1$, and then \eqref{sobol}
follows by a scaling argument. We set
$\phi_1(x):=\frac{\phi(x)}{|x|^\g}$. Then from
\cite{Adams} we know that
\begin{equation}\label{sobol1}
C\dyle \Big(\int_{B_1}
|\phi_1|^{2^*_s}dx\Big)^{\frac{2}{2^*_s}}\le \int_{B_1}
\int_{B_1}\dfrac{(\phi_1(x)-\phi_1(y))^2}{|x-y|^{N+2s}}dxdy+
\int_{B_1}\phi_1^2dx.
\end{equation}
To get the desired result we just have to estimate the term
$$
\int_{B_1}
\int_{B_1}\dfrac{(\phi_1(x)-\phi_1(y))^2}{|x-y|^{N+2s}}dx         {dy}.
$$
Since
\begin{eqnarray*}
(\phi_1(x)-\phi_1(y))^2 &=&
\dfrac{(\phi(x)-\phi(y))^2}{|x|^{\g}|y|^{\g}}  {+}\left(\frac{\phi^{  {2}}(x)}{|x|^{\g}}-\frac{\phi^{  {2}}(y)}{|y|^{\g}}\right)\left(\frac{1}{|x|^{\g}}-\frac{1}{|y|^{\g}}\right),
\end{eqnarray*}
it follows that
$$
\dyle\int_{B_1}
\int_{B_1}\dfrac{(\phi_1(x)-\phi_1(y))^2}{|x-y|^{N+2s}}dx  {dy}\le
\dyle\int_{B_1}
\int_{B_1}\dfrac{(\phi(x)-\phi(y))^2}{|x|^{\g}|y|^{\g}|x-y|^{N+2s}}dx
{dy}+ \int_{B_1} L_{0,{B_1}}(|x|^{-\g})\dfrac{\phi^2(x)}{|x|^{
{\g}}}dx.
$$
Proceeding as in the proof of Lemma \ref{hardy001}, since $0<\gamma<\dfrac{N-2s}{2}$, we can prove that $ L_{0,
{B_1}}(|x|^{-\g})\le \dfrac{C}{|x|^{\g+2s}}$, and hence
$$\dyle\int_{B_1}
\int_{B_1}\dfrac{(\phi_1(x)-\phi_1(y))^2}{|x-y|^{N+2s}}dx  {dy}\le
 \dyle \int_{B_1}
\int_{B_1}\dfrac{(\phi(x)-\phi(y))^2}{|x|^{\g}|y|^{\g}|x-y|^{N+2s}}dx  {dy}+
C\int_{B_1}\dfrac{\phi^2}{|x|^{2s+2\g}}{dx}.
$$
Finally, using Theorem \ref{general000} and substituting
$\phi(x)=|x|^{\gamma}\phi_1(x)$, we reach
\eqref{sobol}.
\end{pf}

We state now a weighted version of the Poincar\'e-Wirtinger
inequality used in the proof of Lemma \ref{log}.

\begin{Theorem}\label{PW}
Let $w\in Y^{s,\g}(  {B_1})$ and assume that $\psi$ is a radial
decreasing function such that $\text{supp}\:\psi\subset B_1$ and
$0\lneqq \psi\le 1$. Define
$$
W_\psi:=\dfrac{\dyle\int_{B_{1}}w(x)\psi(x)d\mu}{\dyle\int_{B_1}\psi(x)d\mu}.
$$
Then, there exists $C:=C(N,s,\psi)>0$ such that
$$ \int_{B_1}(w(x)-W_\psi)^2\psi(x)d\mu\le C\int_{B_1}
\int_{B_1}(w(x)-w(y))^2\min\{\psi(x),\psi(y)\}d\nu.
$$
\end{Theorem}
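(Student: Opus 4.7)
The plan is to prove Theorem~\ref{PW} by a compactness-contradiction argument. Starting from the algebraic identity
\[
\int (w-W_\psi)^2 \psi\,d\mu \;=\; \frac{1}{2V}\iint (w(x)-w(y))^2 \psi(x)\psi(y)\,d\mu(x)\,d\mu(y), \qquad V:=\int\psi\,d\mu,
\]
the inequality reduces to comparing this weighted bilinear form with $\iint (w(x)-w(y))^2\min\{\psi(x),\psi(y)\}\,d\nu$; but the ratio of the two kernels, $\max\{\psi(x),\psi(y)\}\,|x-y|^{N+2s}/(|x|^\gamma|y|^\gamma)$, is unbounded as either point approaches the origin, so no pointwise comparison is available and one must argue indirectly.

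Suppose for contradiction that the inequality fails. Then there exists a sequence $\{w_n\}\subset Y^{s,\gamma}(B_1)$ with $\int(w_n-W_{\psi,n})^2\psi\,d\mu = 1$ and $D_n := \iint(w_n(x)-w_n(y))^2\min\{\psi(x),\psi(y)\}\,d\nu\to 0$. Replacing $w_n$ by $\tilde w_n := w_n - W_{\psi,n}$, we may assume $\int\tilde w_n\psi\,d\mu = 0$. Since $\text{supp}(\psi)$ is compactly contained in $B_1$ and $\psi$ is radial decreasing, there exists $\rho_1<1$ with $\text{supp}(\psi)\subset\overline{B_{\rho_1}}$ and, for each $\rho<\rho_1$, $\psi\geq c_\rho>0$ on $B_\rho$. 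Using $\min\{\psi(x),\psi(y)\}\geq c_\rho$ on $B_\rho\times B_\rho$, we obtain
\[
\int_{B_\rho}\tilde w_n^2\,d\mu\leq c_\rho^{-1}, \qquad \iint_{B_\rho\times B_\rho}(\tilde w_n(x)-\tilde w_n(y))^2\,d\nu\leq c_\rho^{-1}D_n\to 0,
\]
bounding $\{\tilde w_n\}$ in $Y^{s,\gamma}(B_\rho)$. The compact embedding $Y^{s,\gamma}(B_\rho)\hookrightarrow L^2(B_\rho,d\mu)$ (the ingredient already used in the proof of Theorem~\ref{poincare}: classical fractional compactness plus Vitali via the weighted Sobolev inequality of \cite{AR}), combined with a diagonal extraction over $\rho_k\nearrow\rho_1$, produces a subsequence converging in $L^2_{\mathrm{loc}}(B_{\rho_1},d\mu)$ to some $\tilde w_\infty$, which must be constant on $B_{\rho_1}$ by weak lower semicontinuity of the Dirichlet form; passing $\int\tilde w_n\psi\,d\mu=0$ to the limit then forces $\tilde w_\infty\equiv 0$.

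The main obstacle is to upgrade this local convergence to $\tilde w_n\to 0$ in $L^2(\psi\,d\mu)$ up to the boundary of $\text{supp}(\psi)$, where $\psi$ may degenerate. Fix a small annulus $A := B_{\rho_0}\setminus B_{\rho_0/2}\subset B_{\rho_1}$ on which both $\psi\geq c_0>0$ and $|x|^\gamma$ is bounded below, and set $\bar a_n := |A|_\mu^{-1}\int_A \tilde w_n\,d\mu$, which tends to $0$ by the local convergence. For $y\in B_{\rho_1}\setminus B_{\rho_0}$, Cauchy--Schwarz gives
\[
(\tilde w_n(y)-\bar a_n)^2 \;\leq\; |A|_\mu^{-1}\int_A (\tilde w_n(x)-\tilde w_n(y))^2\,d\mu(x),
\]
and integrating against $\psi(y)\,d\mu(y)$ produces a double integral against a kernel whose ratio to $\min\{\psi(x),\psi(y)\}\,d\nu(x,y)$ is uniformly bounded on $A\times(B_{\rho_1}\setminus B_{\rho_0})$; here we use $\min\{\psi(x),\psi(y)\}=\psi(y)$ (since $|y|>|x|$) together with the positive lower bounds on $|x|$, $|y|$, and $\psi(x)$. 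This yields $\int_{B_{\rho_1}\setminus B_{\rho_0}}(\tilde w_n-\bar a_n)^2\psi\,d\mu \leq C D_n\to 0$, while the complementary estimate $\int_{B_{\rho_0}}(\tilde w_n-\bar a_n)^2\psi\,d\mu\to 0$ follows from the local $L^2(d\mu)$ convergence and $\psi\leq 1$. Combining the two bounds gives $\int\tilde w_n^2\psi\,d\mu\to 0$, contradicting the normalization $\int\tilde w_n^2\psi\,d\mu = 1$ and closing the argument.
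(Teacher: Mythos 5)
Your argument is correct, but it is genuinely different from the one in the paper. The paper's proof is a two-line reduction: it sets $\Psi(x):=\psi(x)/|x|^{2\gamma}$, observes that $\Psi$ is again radial and decreasing, applies the weighted Poincar\'e inequality of Dyda--Kassmann \cite[Corollary 6]{DyK} with weight $\Psi$ and Lebesgue measure, and then concludes via the elementary pointwise comparison $\min\{\psi(x)|x|^{-2\gamma},\psi(y)|y|^{-2\gamma}\}\le \min\{\psi(x),\psi(y)\}\,|x|^{-\gamma}|y|^{-\gamma}$, proved by monotonicity of $s\mapsto\psi(s)s^{-2\gamma}$. So your opening remark that ``no pointwise comparison is available'' is accurate only for the naive comparison between the variance form and the target kernel; after absorbing $|x|^{-2\gamma}$ into the weight, a pointwise comparison does work, which is precisely the paper's trick. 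What your route buys is self-containedness (no appeal to \cite{DyK}, only to the compact embedding already exploited in Theorem~\ref{poincare}); what it costs is that the constant is non-effective, coming from a contradiction argument, whereas the paper's constant is inherited explicitly from \cite{DyK}. The genuinely delicate step in your proof --- upgrading interior convergence to control in $L^2(\psi\,d\mu)$ near $\partial(\operatorname{supp}\psi)$ where $\psi$ degenerates --- is handled correctly by the annulus averaging, since for $x\in A$ and $|y|\ge\rho_0$ one has $\min\{\psi(x),\psi(y)\}=\psi(y)$ and $|x-y|^{N+2s}/(|x|^{\gamma}|y|^{\gamma})$ is bounded there. One presentational wrinkle: the claim ``$\bar a_n\to0$'' rests on $\tilde w_\infty\equiv0$, which you deduce from passing the mean-zero condition to the limit; with only local convergence this passage needs the uniform bound $\int\tilde w_n^2\psi\,d\mu=1$ to control the tail $\int_{B_{\rho_1}\setminus B_\rho}\tilde w_n\psi\,d\mu$ by Cauchy--Schwarz and absolute continuity of $\psi\,d\mu$ (alternatively, first conclude $\tilde w_n-\bar a_n\to0$ in $L^2(\psi\,d\mu)$ and then read off $\bar a_n\to0$ from $\int\tilde w_n\psi\,d\mu=0$). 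Either repair is immediate, so this is an ordering issue rather than a gap.
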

\begin{proof}
Define $\Psi(x):=\dfrac{\psi(x)}{|x|^{2\g}}$, that is
a radial decreasing function. Then using \cite[Corollary 6]{DyK} we get
$$ \int_{B_1}(w(x)-\bar{W}_\Psi)^2\Psi(x)dx\le
C\int_{B_1}
\int_{B_1}\dfrac{(w(x)-w(y))^2}{|x-y|^{N+2s}}\min\{\Psi(x),\Psi(y)\}dxdy,
$$
where
$$\bar{W}_\Psi=\dfrac{\int_{B_1}w(x)\Psi(x)dx}{\int_{B_1}\Psi(x)dx}.
$$
Substituting $\Psi$ by its value, we get
$$
\int_{B_1}(w(x)-\bar{W}_\Psi)^2\Psi(x)dx=\int_{B_1}(w(x)-W_\psi)^2\psi(x)d\mu,
$$
and
$$
\int_{B_1}
\int_{B_1}\dfrac{(w(x)-w(y))^2}{|x-y|^{N+2s}}\min\{\Psi(x),\Psi(y)\}dxdy=
\int_{B_1}
\int_{B_1}\dfrac{(w(x)-w(y))^2}{|x-y|^{N+2s}}
\min\Big\{\frac{\psi(x)}{|x|^{2\g}}, \frac{\psi(y)}{|y|^{2\g}}\Big\}dxdy.$$
Hence, to finish we just have to
show that
$$
\min\Big\{\frac{\psi(x)}{|x|^{2\g}},
\frac{\psi(y)}{|y|^{2\g}}\Big\}\le\frac{\min\{\psi(x),\psi(y)\}}{|x|^\g|y|^\g}
\mbox{  in  }B_1\times B_1.
$$
Without loss of generality we can assume that $|x|\ge |y|$.

     Define $H(s):=\frac{\psi(s)}{s^{2\g}}$, that is a decreasing
function in $(0,1)$. Let $s_1:=|x|$ and $s_2:=|y|$, then
$$
\min\Big\{\frac{\psi(x)}{|x|^{2\g}}, \frac{\psi(y)}{|y|^{2\g}}\Big\}=H(s_1).$$ Using that $\psi$ is
decreasing, we obtain that $\psi(s_1)\le \psi(s_2)$. Thus
$$
\frac{\min\{\psi(x),\psi(y)\}}{|x|^\g|y|^\g}=\frac{\psi(s_1)}{s_1^{\g}s_2^{\g}}.
$$
Since $s_2\le s_1\le 1$, we conclude that $H(s_1)\le
\dfrac{\psi(s_1)}{s_1^{\g}s_2^{\g}}$ and the result follows.
\end{proof}

\end{document}